\patchcmd{\@thm}{\thm@headfont{\scshape}}{\thm@headfont{\scshape\bfseries}}{}{}
\patchcmd{\@thm}{\thm@notefont{\fontseries\mddefault\upshape}}{}{}{}
\let\thm@indent\indent}{\let\thm@indent\indent}%
  {}{}
\newtheorem{theorem}[equation]{Theorem}
\newtheorem{lemma}[equation]{Lemma}
\newtheorem{proposition}[equation]{Proposition}
\newtheorem{corollary}[equation]{Corollary}
\theoremstyle{definition}
\newtheorem{definition}[equation]{Definition}
\theoremstyle{definition}
\newtheorem{remark}[equation]{Remark}
\theoremstyle{remark}
\numberwithin{subsection}{section}
\numberwithin{equation}{subsection}
\newcommand{\iso}{\xrightarrow{
   \,\smash{\raisebox{-0.50ex}{\ensuremath{\scriptstyle\sim}}}\,}}
\title
[Locally analytic representations]
{Locally analytic representations in the \'{e}tale coverings of the Lubin-Tate moduli space}
\author{Mihir Sheth}
\address{School of Mathematics, Tata Institute of Fundamental Research \\ Homi Bhabha Road, Mumbai - 400005, India.}
\email{mihir@math.tifr.res.in}
\subjclass[2010]{22E50, 14L05, 11S31}
\begin{document}
\maketitle
\begin{abstract} 
The Lubin-Tate moduli space $X_{0}^{\textnormal{rig}}$ is a $p$-adic analytic open unit polydisc which parametrizes deformations of a formal group $H_{0}$ of finite height defined over an algebraically closed field of characteristic $p$. It is known that the natural action of the automorphism group $\textnormal{Aut}(H_{0})$ on $X^{\textnormal{rig}}_{0}$ gives rise to locally analytic representations on the topological duals of the spaces $H^{0}(X^{\text{rig}}_{0},(\mathcal{M}^{s}_{0})^{\mathrm{rig}})$ of global sections of certain equivariant vector bundles $(\mathcal{M}^{s}_{0})^{\mathrm{rig}}$ over $X^{\mathrm{rig}}_{0}$. In this article, we show that this result holds in greater generality. On the one hand, we work in the setting of deformations of formal modules over the valuation ring of a finite extension of $\mathbb{Q}_{p}$. On the other hand, we also treat the case of representations arising from the vector bundles $(\mathcal{M}^{s}_{m})^{\mathrm{rig}}$ over the deformation spaces $X^{\mathrm{rig}}_{m}$ with Drinfeld level-$m$-structures. Finally, we determine the space of locally finite vectors in $H^{0}(X^{\text{rig}}_{m},(\mathcal{M}^{s}_{m})^{\mathrm{rig}})$. Essentially, all locally finite vectors arise from the global sections of invertible sheaves over the projective space via pullback along the Gross-Hopkins period map.    
\end{abstract}
\tableofcontents
\section{Introduction} 
\indent The theory of locally analytic representations provides a suitable framework to study continuous $p$-adic representations of $p$-adic reductive groups in the context of the $p$-adic Langlands program. Introduced by Schneider-Teitelbaum and later developed by Emerton, the notion of a \emph{locally analytic representation} $V$ of a $p$-adic Lie group $G$ is (roughly) defined by the property that, for each vector $v\in V$, the orbit map $G\longrightarrow V$, $g\longmapsto g(v)$, is locally on $G$ given by a convergent power series with coefficients in $V$ (cf. \cite{eme04}, \cite{stladist}). Thus the category of locally analytic representations encompasses classical smooth representations, finite dimensional algebraic representations as well as tensor products of these two, the so-called locally algebraic representations. A fundamental theorem of locally analytic representation theory establishes an anti-equivalence between the category of locally analytic representations on vector spaces of compact type and the category of continuous $D(G)$-modules on nuclear Fr\'{e}chet spaces via duality functor, where $D(G)$ is the algebra of \emph{locally analytic distributions} on $G$ (cf. \cite{stladist}, Corollary 3.4).
\\
\indent  First non-trivial examples of locally analytic representations coming from geometry were found by Morita in his investigation of the $p$-adic upper half plane or the \emph{Drinfeld's upper half space} of dimension 1 (cf. \cite{mor}). In general, if $K$ is a finite extension of $\mathbb{Q}_{p}$ then the Drinfeld's upper half space $Y^{\text{rig}}_{0}$ of dimension $h-1$ is obtained by deleting all $K$-rational hyperplanes from the projective space $\mathbb{P}^{h-1}_{K}$. The natural action of $GL_{h}(K)$ on the projective space stabilizes $Y^{\text{rig}}_{0}$. Restricting any $GL_{h}(K)$-equivariant vector bundle $\mathcal{F}$ on $\mathbb{P}^{h-1}_{K}$ to $Y^{\text{rig}}_{0}$ gives rise to a locally analytic $GL_{h}(K)$-representation on the strong topological dual of the nuclear Fr\'{e}chet space $\mathcal{F}(Y^{\text{rig}}_{0})$ of its global sections (cf. \cite{orl}, \cite{stpboundary}). The upper half space and its \'{e}tale coverings give rise to the \emph{Drinfeld's upper half space $Y^{\mathrm{rig}}_{\infty}$ at infinity} which is a moduli space parametrizing certain EL Rapoport-Zink data. The dual space to $Y^{\text{rig}}_{\infty}$ is the \emph{Lubin-Tate moduli space $X^{\textnormal{rig}}_{\infty}$ at infinity} parametrizing the dual EL Rapoport-Zink data (cf. \cite{schwein}, Section 7). Analogous to the general linear group action on the upper half space, there is a natural action of another $p$-adic Lie group $\Gamma$ on the Lubin-Tate moduli space $X^{\text{rig}}_{0}$ and its \'{e}tale covers. While examining this action of $\Gamma$, Kohlhaase first showed that, in this case too, one obtains locally analytic representations on the dual space of the global sections of certain equivariant vector bundles over the Lubin-Tate moduli space provided $K=\mathbb{Q}_{p}$ (cf. \cite{kohliwamo}, Theorem 3.5). The aim of this article is twofold, firstly to generalize Kohlhaase's result to any finite base extension $K$ of $\mathbb{Q}_{p}$ and extend it to the finite \'{e}tale coverings of the Lubin-Tate moduli space, secondly to compute locally finite (algebraic) vectors in the concerned representations in order to understand their structure. \\        
\indent  To describe our results in detail, let $p$ be a prime number and $K$ be a finite extension of $\mathbb{Q}_{p}$ with ring of integers $\mathfrak{o}$, uniformizer $\varpi$ and residue class field $k$. Let us denote by $\breve{K}$ the completion of the maximal unramified extension of $K$ and by $\breve{\mathfrak{o}}$ its ring of integers. Fix a (unique) one-dimensional formal $\mathfrak{o}$-module $H_{0}$ over an algebraic closure $\overline{k}$ of $k$ of finite height $h$. The Lubin-Tate moduli space is a formal scheme $X_{0}$ parametrizing deformations of $H_{0}$ to complete local $\breve{\mathfrak{o}}$-algebras with residue field $\overline{k}$\footnote{We will mostly refer to the generic fibre $X^{\text{rig}}_{0}$ as the Lubin-Tate moduli space.}. Adding level structures to the moduli problem, Drinfeld showed that the formal scheme $X_{m}$ parametrizing deformations equipped with a level-$m$-structure is a finite flat covering of $X_{0}$, and $X_{0}$ is (non-canonically) isomorphic to the formal spectrum $\text{Spf}(\breve{\mathfrak{o}}[[u_{1},\ldots, u_{h-1}]])$ (cf. \cite{dr}, Proposition 4.2 and Proposition 4.3). Passing to the generic fibres of the formal schemes, one obtains a tower of rigid $\breve{K}$-analytic spaces $(X^{\text{rig}}_{m})_{m\in\mathbb{N}_{0}}$ carrying commuting actions of the covering group $GL_{h}(\mathfrak{o})$ and of the automorphism group $\Gamma\cong\mathfrak{o}_{B_{h}}^{\times}$ of $H_{0}$, where $\mathfrak{o}_{B_{h}}$ is the maximal order of the central $K$-division algebra $B_{h}$ of invariant $1/h$. The covering group action on $X^{\text{rig}}_{m}$ factors through the finite group $GL_{h}(\mathfrak{o}/\varpi^{m}\mathfrak{o})$ making $X^{\text{rig}}_{m}$ an \'{e}tale Galois cover of the open unit polydisc $X^{\text{rig}}_{0}$ with Galois group $GL_{h}(\mathfrak{o}/\varpi^{m}\mathfrak{o})$. On the other hand, the $\Gamma$-action on $X^{\text{rig}}_{m}$ is much more complicated and is the one we are interested in. These group actions are of significance, as they realize the local Jacquet-Langlands correspondence on the $l$-adic \'{e}tale cohomology of the Lubin-Tate tower, as conjectured by Carayol (cf. \cite{car}. \cite{strdeform}). \\
\indent  Let us consider the $\Gamma$-equivariant vector bundles over $X^{\text{rig}}_{m}$ induced by the $s$-fold tensor power of the Lie algebra of the universal formal $\mathfrak{o}$-module $\mathbb{H}^{(m)}$ at level $m$ for any integer $s$, and denote by $M^{s}_{m}$ the global sections of these vector bundles. The $\Gamma$-action on the nuclear $\breve{K}$-Fr\'{e}chet space $M^{s}_{m}$ is semi-linear for its action on $\mathcal{O}_{X^{\text{rig}}_{m}}(X^{\text{rig}}_{m})=M^{0}_{m}$. In our first main result, we prove that the strong topological $\breve{K}$-linear dual $(M^{s}_{m})'_{b}$ of $M^{s}_{m}$ is a locally $K$-analytic representation of $\Gamma$ for all $s\in\mathbb{Z}$ and $m\geq 0$ (cf. Theorem \ref{genlaK0} and Theorem \ref{genlaKm}). The proof of local analyticity essentially follows the Kohlhaase's approach in \cite{kohliwamo} and consists of the following two steps:
\begin{enumerate}[leftmargin=*]
\item[1.] The Gross-Hopkins' \emph{$p$-adic period map} $\Phi:X^{\text{rig}}_{0}\longrightarrow\mathbb{P}^{h-1}_{\breve{K}}$ constructed in \cite{gh} can be used to explicitly find out the $\Gamma$-action on the \emph{fundamental domain} $D$ of $X^{\text{rig}}_{0}$. We first show that the explicitly known $\Gamma$-action on the sections $M^{s}_{D}$ over $D$ is locally $K$-analytic by direct computations.
\item[2.] Using the structure theory of the locally $\mathbb{Q}_{p}$-analytic distribution algebra $D(\Gamma_{\mathbb{Q}_{p}})$, we then show that the continuous $\Gamma$-action on $M^{s}_{m}$ extends to a continuous action of $D(\Gamma_{\mathbb{Q}_{p}})$. Finally, to deduce that this action factors through a continuous action of the locally $K$-analytic distribution algebra $D(\Gamma)$, we use the step 1 and the \'{e}taleness of the covering morphisms.
\end{enumerate}

\indent  Our second main result concerns computing the subspace of $(M^{s}_{m})_{\text{lf}}$ of locally finite vectors in the $\Gamma$-representations $M^{s}_{m}$. A \emph{locally finite vector} is a vector contained in a finite dimensional subrepresentation of some open subgroup of $\Gamma$. Consider the $\breve{K}$-linear algebraic representation $B_{h}\otimes_{K_{h}}\breve{K}$ on which $\Gamma$ acts by the left multiplication, and let $\breve{K}_{m}$ denote the $m$-th Lubin-Tate extension of $\breve{K}$ equipped with a smooth $\Gamma$-action via $\mathfrak{o}_{B_{h}}^{\times}\xrightarrow{\textnormal{Nrd}}\mathfrak{o}^{\times}\twoheadrightarrow(\mathfrak{o}/\varpi^{m}\mathfrak{o})^{\times}\cong\text{Gal}(\breve{K}_{m}/\breve{K})$. We show that there is an isomorphism 
\begin{equation*}
(M^{s}_{m})_{\textnormal{lf}}\cong
\breve{K}_{m}\otimes_{\breve{K}}\textnormal{Sym}^{s}(B_{h}\otimes_{K_{h}}\breve{K})\cong\breve{K}_{m}\otimes_{\breve{K}}\mathcal{O}_{\mathbb{P}^{h-1}_{\breve{K}}}(s)(\mathbb{P}^{h-1}_{\breve{K}}).
\end{equation*} of $\Gamma$-representations for all $m\geq 0$ and $s\in\mathbb{Z}$ (cf. Corollary \ref{lf1}, Theorem \ref{lf2}, Theorem \ref{lf3}). Moreover, $(M^{s}_{m})_{\text{lf}}$ is a finite dimensional semi-simple locally algebraic representation. To prove the above isomorphism, we extensively use the action of the Lie algebra of $\Gamma$ obtained from the Gross-Hopkins' period map. The other key ingredients of the proof are the \emph{generic flatness} of the line bundle induced by the Lie algebra of the universal additive extension (cf. \cite{gh}, Section 21), Strauch's result on geometrically connected components of $X^{\text{rig}}_{m}$ (cf. \cite{strgeo}) and Fargues' \emph{cellular decomposition} of the Lubin-Tate tower (cf. \cite{fgl}, Section I.7).\\
\indent We expect our both results to hold in much greater generality. The $\Gamma$-equivariant vector bundles that we consider arise as the pullbacks of the invertible sheaves $\mathcal{O}_{\mathbb{P}^{h-1}_{\breve{K}}}(s)$ on the projective space along the Gross-Hopkins' period map (cf. Remark \ref{our_line_bundle_is_a_pullback_of_O(s)*}). Given any $\Gamma$-equivariant vector bundle $\mathcal{G}$ over the projective space $\mathbb{P}_{\breve{K}}^{h-1}$, we believe that the representations realized on the global sections over $X^{\text{rig}}_{m}$ of its pullback along the period map are dual to locally analytic representations, and the locally algebraic part again comes from the global sections of $\mathcal{G}$. However, we don't have a proof as of now. Another major open question concerning the locally analytic $\Gamma$-representations $(M^{s}_{m})'_{b}$ is whether they are \emph{admissible} or not in the sense of \cite{stadmrep}, Section 6. The similar representations in the example of the Drinfeld's upper half space and its first \'{e}tale covering are known to be admissible (cf. \cite{pss}). However, the presence of the spherical Hecke algebra action on the space of global rigid analytic functions on the Lubin-Tate moduli space raises questions on the admissibility of $(M^{s}_{m})'_{b}$ (cf. \cite{kohliwath}, Proposition 3.3 and Remark 3.5). We would also like to mention the work \cite{lo15} of Chi Yu Lo, showing the analyticity of the action of a certain rigid analytic group associated to $\Gamma$ on a particular closed polydisc of $X^{\textrm{rig}}_{0}$, which will likely be relevant in further investigations of locally analytic representations coming from the Lubin-Tate moduli space. 
\subsection*{Acknowledgements} The results presented in this article form an integral part of the author's Ph.D. thesis conducted at the Fakult\"{a}t f\"{u}r Mathematik, Universit\"{a}t Duisburg-Essen, Germany under the supervision of Jan Kohlhaase. The author is extremely grateful to his supervisor for introducing and explaining the problem, for suggesting valuable ideas, and for many insightful discussions. The author is also thankful to an anonymous referee for many thoughtful remarks and corrections.
\subsection*{Notation and conventions} $\mathbb{N}$ and $\mathbb{N}_{0}$ denote the set of positive integers and the set of non-negative integers respectively. If $\alpha=(\alpha_{1},\ldots ,\alpha_{r})\in\mathbb{N}^{r}_{0}$ is an $r$-tuple of non-negative integers and $T=(T_{1},\ldots ,T_{r})$ is a family of indeterminates for some $r\in\mathbb{N}$, then we set $|\alpha|:=\alpha_{1}+\ldots+\alpha_{r}$, and $T^{\alpha}:=T_{1}^{\alpha_{1}}\cdots T_{r}^{\alpha_{r}}$. Unless stated otherwise, all rings are considered to be commutative with identity. A ring extension $A\subseteq B$ will be denoted by $B|A$, and its degree by $[B:A]$ if it is finite and free. Let $p$ be a fixed prime number and let $K$ be a finite field extension of $\mathbb{Q}_{p}$ with the valuation ring $\mathfrak{o}$. We fix a uniformizer $\varpi$ of $K$ and let $k:=\mathfrak{o}/\varpi\mathfrak{o}$ denote its residue class field of characteristic $p$ and cardinality $q$. The absolute value $\vert\cdot\vert$ of $K$ is assumed to be normalized through $\vert p\vert =p^{-1}$. We denote by $\breve{K}$ the completion of the maximal unramified extension of $K$, and by $\breve{\mathfrak{o}}$ its valuation ring. We denote by $\sigma$ the Frobenius automorphism of an algebraic closure $\overline{k}$ of $k$, as well as its unique lift to a ring automorphism of $\breve{\mathfrak{o}}$ and the induced field automorphism of $\breve{K}$. We also fix an algebraic closure $\overline{\breve{K}}$ of $\breve{K}$ and denote its valuation ring by $\overline{\breve{\mathfrak{o}}}$. The absolute value $|\cdot|$ on $K$ extends uniquely to $\breve{K}$, and to $\overline{\breve{K}}$. For a positive integer $h$, let $K_{h}$ be the unramified extension of $K$ of degree $h$, $\mathfrak{o}_{h}$ be its valuation ring, and $B_{h}$ be the central $K$-division algebra of invariant $1/h$. We fix an embedding $K_{h} \hookrightarrow B_{h}$ and a uniformizer $\Pi$ of $B_{h}$, satisfying $\Pi^{h}=\varpi$. Let $\text{Nrd}:B_{h}\longrightarrow K$ denote the reduced norm of $B_{h}$ over $K$. The symbol $\mathbb{P}^{h-1}_{\breve{K}}$ always denotes the $(h-1)$-dimensional rigid analytic projective space over $\breve{K}$.
\section{Drinfeld's coverings of the Lubin-Tate moduli space and the group actions}
\indent  We begin with a quick introduction to the Lubin-Tate deformation problem equipped with Drinfeld's level structures. Then we prove the main result of this section namely the continuity of the $\Gamma$-action on the universal deformation rings. 
\subsection{Deformations of formal $\mathfrak{o}$-modules with level structures}
\indent Recall from \cite{gh} that a one-dimensional formal $\mathfrak{o}$-module $F$ over a local $\mathfrak{o}$-algebra $A$ is, after having fixed a formal coordinate, given by a formal power series $F(X,Y)\in A[[X,Y]]$, together with a ring homomorphism $[\hspace{.1cm}\cdot\hspace{.1cm}]_{F}:\mathfrak{o}\longrightarrow \textnormal{End}(F)$ such that $[\lambda]_{A}(X)\equiv i_{A}(\lambda)X$ (mod deg 2), where $i_{A}:\mathfrak{o}\longrightarrow A$ is the structure morphism. Let $H_{0}$ be a one-dimensional formal $\mathfrak{o}$-module of finite height $h$ over $\overline{k}$ which is defined over $k$. According to \cite{dr}, Proposition 1.6 and 1.7, the formal module $H_{0}$ is unique up to isomorphism, and one has \begin{equation}\label{endH0}
\text{End}(H_{0})\cong\mathfrak{o}_{B_{h}}
\end{equation}  where $\mathfrak{o}_{B_{h}}$ is the valuation ring of the central $K$-division algebra $B_{h}$ of invariant $1/h$. Let $\mathcal{C}$ be the category of commutative unital complete Noetherian local $\breve{\mathfrak{o}}$-algebras $R=(R,\mathfrak{m}_{R})$ with residue class field $\overline{k}$. The Lubin-Tate deformation problem considers liftings of $H_{0}$ to the objects of $\mathcal{C}$ together with certain additional data defined below. 
\begin{definition} Let $R$ be an object of $\mathcal{C}$ and $H$ be a formal $\mathfrak{o}$-module over $R$, given by a power series $H(X,Y)\in R[[X,Y]]$.
\begin{enumerate}[leftmargin=*]
\item[1.] A pair $(H,\rho)$, where $\rho:H_{0}\iso H\otimes_{R}\overline{k}$ is an isomorphism of formal $\mathfrak{o}$-modules over $\overline{k}$, is called \emph{a deformation of $H_{0}$ to $R$}.
\item[2.] Denote by $(\mathfrak{m}_{R},+_{H})$ the abstract $\mathfrak{o}$-module $\mathfrak{m}_{R}$ in which addition and \linebreak $\mathfrak{o}$-multiplication are defined as $x+_{H}y:=H(x,y)$ and $ax:=[a]_{H}(x)$ respectively for all $x,y\in\mathfrak{m}_{R}$, $a\in\mathfrak{o}$. For a non-negative integer $m$, a \emph{Drinfeld level-$m$-structure on $H$} is a homomorphism  $\phi : \big(\frac{\varpi^{-m}\mathfrak{o}}{\mathfrak{o}}\big)^{h}\longrightarrow (\mathfrak{m}_{R},+_{H})$ of abstract $\mathfrak{o}$-modules such that $\prod_{\alpha\in(\frac{\varpi^{-m}\mathfrak{o}}{\mathfrak{o}})^{h}}(X-\phi(\alpha))$ divides $[\varpi^{m}]_{H}(X)$ in $R[[X]]$.
\item[3.] We call the triple $(H,\rho,\phi)$ \emph{a deformation of $H_{0}$ to $R$ with level-m-structure} if $(H,\rho)$ is a deformation of $H_{0}$ to $R$ and $\phi$ is a Drinfeld level-$m$-structure on $H$. 
\end{enumerate}
\end{definition}
\noindent Two deformations $(H,\rho,\phi)$ and $(H',\rho',\phi')$ of $H_{0}$ to $R$ with level-$m$-structures are isomorphic if there is an isomorphism $f:H\iso H'$ of formal $\mathfrak{o}$-modules over $R$ making the following diagrams commutative.
\begin{displaymath}
\xymatrix@C=13pt{
         H\otimes_{R}\overline{k}\ar[rr]^{f\otimes_{R}\overline{k}}& & H'\otimes_{R}\overline{k}    \\
          & H_{0} \ar[ul]^{\rho}\ar[ur]_{\rho'}&} 
\hspace{1cm}
\xymatrix@C=2pt{
        (\mathfrak{m}_{R},+_{H}) \ar[rr]^{f}& &(\mathfrak{m}_{R},+_{H'})  \\
                   &\big(\frac{\varpi^{-m}\mathfrak{o}}{\mathfrak{o}}\big)^{h} \ar[ul]^{\phi}\ar[ur]_{\phi'}&}
\end{displaymath}
\indent  For any integer $m\geq 0$, consider the set valued functor Def$_{m}:\mathcal{C}\longrightarrow \textnormal{Set}$, which associates to an object $R$ of $\mathcal{C}$ the set of isomorphism classes of deformations of $H_{0}$ to $R$ with level-$m$-structures. For a morphism $\varphi:R\longrightarrow R'$ in $\mathcal{C}$, Def$_{m}(\varphi)$ is defined by sending a class $[(H, \rho, \phi)]$ to the class $[(H\otimes_{R}R',\rho,\varphi\circ\phi)]$. Notice that $\rho:H_{0}\iso H\otimes_{R}\overline{k}\cong (H\otimes_{R}R')\otimes_{R'}\overline{k}$. We denote the triple $(H\otimes_{R}R',\rho,\varphi\circ\phi)$ by $\varphi_{*}(H,\rho,\phi)$ for simplicity. 
\begin{theorem}[Lubin-Tate, Drinfeld]\label{fundthm}\hfill
\begin{enumerate}[leftmargin=*]
\item[\textnormal{1.}] The functor $\textnormal{Def}_{m}$ is representable by a regular local ring $R_{m}$ of dimension $h$ for all $m\geq 0$. 
\item[\textnormal{2.}] For any two integers $0\leq m\leq m'$, the natural transformation $\textnormal{Def}_{m'}\longrightarrow\textnormal{Def}_{m}$ of functors defined by sending a class $[(H,\rho,\phi)]$ to $[(H,\rho,\phi\vert_{(\frac{\varpi^{-m}\mathfrak{o}}{\mathfrak{o}})^{h}})]$ induces a homomorphism of local rings $R_{m}\longrightarrow R_{m'}$ which is finite and flat.
\item[\textnormal{3.}] The ring $R_{0}$ is non-canonically isomorphic to the ring $\breve{\mathfrak{o}}[[u_{1},\ldots,u_{h-1}]]$ of formal power series in $h-1$ indeterminates over $\breve{\mathfrak{o}}$.
\end{enumerate}
\end{theorem}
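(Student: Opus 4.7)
The plan is to treat the three assertions in sequence, beginning with (3) and the level $m=0$ case of (1), since this is the original Lubin--Tate theorem from which everything else cascades. First I would verify Schlessinger's criteria for the functor $\textnormal{Def}^{0}_{H_{0}}:\mathcal{C}\longrightarrow\textnormal{Set}$: automorphism-freeness of deformations (so that $\textnormal{Def}^{0}_{H_{0}}$ is really set-valued rather than a groupoid) follows from the fact that, for a height-$h$ formal $\mathfrak{o}$-module in characteristic zero, the logarithm exists and uniquely determines isomorphisms lifting a given one at the special fibre. The tangent space $\textnormal{Def}^{0}_{H_{0}}(k^{\textnormal{sep}}[\varepsilon]/(\varepsilon^{2}))$ can then be computed by deforming the $[\varpi]_{H_{0}}$-series: because $H_{0}$ has height $h$, the space of deformations is parametrized by the $(h-1)$ intermediate coefficients of $[\varpi]$ (the leading coefficient $\varpi$ and the top coefficient, of degree $q^{h}$, being rigid). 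This yields $(h-1)$-dimensionality of the tangent space and simultaneously shows that the deformation problem is unobstructed, so the universal ring is a power series ring $\breve{\mathfrak{o}}[[u_{1},\ldots,u_{h-1}]]$, giving (3) and hence (1) for $m=0$. Concretely, I would exhibit a universal formal $\mathfrak{o}$-module $\mathbb{H}^{(0)}$ over $R_{0}$ whose $[\varpi]$-series is a distinguished polynomial of Weierstrass type $\varpi X + u_{1}X^{q}+\cdots+u_{h-1}X^{q^{h-1}}+X^{q^{h}}$ modulo higher degree terms, and check that any deformation factors through this family by successively matching Taylor coefficients.

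Next, for general $m$, I would follow Drinfeld's construction. Having fixed $\mathbb{H}^{(0)}$ over $R_{0}$, a level-$m$-structure corresponds to an $\mathfrak{o}$-module map $\phi:(\varpi^{-m}\mathfrak{o}/\mathfrak{o})^{h}\to (\mathfrak{m},+_{\mathbb{H}^{(0)}})$ whose image gives a full set of $\varpi^{m}$-torsion divisors. Set $R_{m}$ to be the quotient of $R_{0}[[X_{1},\ldots,X_{h}]]$ that imposes the condition that $\prod_{\alpha}(X-\phi(\alpha))$ divides $[\varpi^{m}]_{\mathbb{H}^{(0)}}(X)$, where the $X_{i}$ parametrize $\phi$ on a basis of $(\varpi^{-m}\mathfrak{o}/\mathfrak{o})^{h}$. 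The key technical step is to show, by induction on $m$, that $R_{m}$ is a regular local ring of dimension $h$: one reduces to the case $m'=m+1$ and checks that adjoining the next layer of $\varpi$-division points of a chosen basis vector amounts to solving a single Eisenstein-type equation whose derivative does not vanish, thanks to the height-$h$ hypothesis. This simultaneously gives representability of $\textnormal{Def}^{m}_{H_{0}}$ and regularity, completing (1), and also verifies the universal property: a deformation $(H,\rho,\phi)$ over $R$ determines a classifying map $R_{0}\to R$ from the underlying deformation, together with elements $\phi(e_{i})\in\mathfrak{m}_{R}$ satisfying exactly the equations defining $R_{m}$.

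For (2), finiteness of $R_{m}\to R_{m'}$ is immediate from the construction once (1) is established, because $R_{m'}$ is built from $R_{m}$ by adjoining finitely many elements (the additional torsion points for $m+1,\ldots,m'$), each a root of a monic polynomial; alternatively, one notes that $R_{m'}$ is integral over $R_{m}$ and, being Noetherian local with $R_{m}$-module generators corresponding to coset representatives of $GL_{h}(\mathfrak{o}/\varpi^{m'})\to GL_{h}(\mathfrak{o}/\varpi^{m})$, is finite as an $R_{m}$-module. Flatness is then the miracle flatness theorem (also known as Hironaka's criterion or Auslander--Buchsbaum via $\textnormal{pd}_{R_{m}}R_{m'}=0$): both $R_{m}$ and $R_{m'}$ are regular, hence Cohen--Macaulay, of the same Krull dimension $h$ by (1), and a finite local homomorphism between Cohen--Macaulay local rings of equal dimension is automatically flat.

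The main obstacle is the inductive step in (1) establishing regularity of $R_{m}$: one must verify that the successive equations cutting out the full level structure are genuinely Eisenstein over the previous level, with the right derivative non-vanishing at the closed point, which is where the divisibility condition $\prod_{\alpha}(X-\phi(\alpha))\mid [\varpi^{m}]_{H}(X)$ plays its delicate role and distinguishes Drinfeld's notion of level structure from the naive one. Everything else — universality, finiteness, flatness — is comparatively formal once this regularity statement is in hand.
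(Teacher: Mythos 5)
The paper offers no proof of this statement at all: it simply cites \cite{dr}, Propositions 4.2 and 4.3. Your sketch is, in outline, a faithful reconstruction of that argument -- the Lubin--Tate tangent-space computation and unobstructedness for $m=0$, Drinfeld's induction on the level for $m>0$, and miracle flatness for part (2) -- so the overall strategy is the right one. Two of your justifications need repair, though. First, rigidity of deformations (absence of automorphisms reducing to the identity) cannot be deduced from the logarithm ``in characteristic zero'': the test objects of $\mathcal{C}$ include rings such as $k^{\textnormal{sep}}[\varepsilon]/(\varepsilon^{2})$ which are not $\mathfrak{o}$-flat, so no logarithm exists over them. The correct mechanism is the injectivity of reduction on endomorphism rings of finite-height formal modules modulo nilpotents -- precisely the unnumbered lemma the paper proves in \S 2.1 about $\textnormal{End}(\mathbb{H}^{(m)}_{n+1})\hookrightarrow\textnormal{End}(\mathbb{H}^{(m)}_{n})$. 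Second, your parenthetical claim that a finite local homomorphism between Cohen--Macaulay local rings of equal dimension is automatically flat is false (the normalization of $k[[x,y]]/(xy)$ is a finite birational map of one-dimensional Cohen--Macaulay rings that is not flat); miracle flatness requires the \emph{base} to be regular and the target Cohen--Macaulay. Since part (1) gives regularity of $R_{m}$, your application is nonetheless valid -- only the stated general principle is wrong.

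The one place where the outline genuinely undersells the difficulty is the inductive regularity step, which you correctly identify as the crux but describe as ``solving a single Eisenstein-type equation.'' In passing from level $m$ to level $m+1$ one adjoins $h$ new elements $x_{1},\ldots,x_{h}$ with $[\varpi]_{\mathbb{H}}(x_{i})=\phi^{(m)}(e_{i})$, and the key point is that these $h$ elements generate $\mathfrak{m}_{R_{m+1}}$, so that the embedding dimension is at most $h=\dim R_{m+1}$. The inclusion $\mathfrak{m}_{R_{m}}R_{m+1}\subseteq(x_{1},\ldots,x_{h})$ follows for $m\geq 1$ from $\phi^{(m)}(e_{i})=[\varpi]_{\mathbb{H}}(x_{i})\in(x_{i})$ once one knows inductively that the level-$m$ torsion points generate $\mathfrak{m}_{R_{m}}$; the base case is where the Drinfeld divisibility condition does its work, since writing $[\varpi]_{\mathbb{H}}(X)=u(X)\prod_{\alpha}(X-\phi(\alpha))$ with $u$ a unit and comparing coefficients exhibits $\varpi$ and $u_{1},\ldots,u_{h-1}$ as elements of the ideal generated by the torsion points. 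One must also separately check that $R_{m+1}$ is a nonzero object of $\mathcal{C}$ (pro-representability is not free once level structures are imposed). None of this derails your plan, but these are exactly the computations constituting Drinfeld's Proposition 4.3, and a complete proof would have to carry them out rather than gesture at them.
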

\begin{proof}
See \cite{dr}, Proposition 4.2 and 4.3.
\end{proof}
\indent  Let us denote the universal deformation of $H_{0}$ to $R_{m}$ with level-$m$-structure by the triple $(\mathbb{H}^{(m)},\rho^{(m)},\phi^{(m)})$. Here $\mathbb{H}^{(m)}=\mathbb{H}^{(0)}\otimes_{R_{0}} R_{m}$, i.e., the universal formal $\mathfrak{o}$-module $\mathbb{H}^{(m)}$ over $R_{m}$ is given by the base change of the universal formal $\mathfrak{o}$-module $\mathbb{H}^{(0)}$ over $R_{0}$ under the map $R_{0}\longrightarrow R_{m}$ induced by Part 2, Theorem \ref{fundthm}. We note that, since $R_{0}\cong\breve{\mathfrak{o}}[[u_{1},\ldots,u_{h-1}]]$ is an integral domain, the flatness of the map $R_{0}\longrightarrow R_{m}$ implies $R_{0}\hookrightarrow R_{m}$ for all $m\geq 0$. \\
\indent  By the universal property, given an object $R$ of $\mathcal{C}$ and a deformation $(H,\rho,\phi)$ of $H_{0}$ to $R$ with level-$m$-structure, there is a unique $\breve{\mathfrak{o}}$-linear local ring \linebreak homomorphism $\varphi : R_{m}\longrightarrow R$ such that $\text{Def}_{m}(\varphi)([(\mathbb{H}^{(m)},\rho^{(m)},\phi^{(m)})])$=\linebreak$[\varphi_{*}(\mathbb{H}^{(m)},\rho^{(m)},\phi^{(m)})]=[(H,\rho,\phi)]$. The unique isomorphism  between the deformations $\varphi_{*}(\mathbb{H}^{(m)},\rho^{(m)},\phi^{(m)})$ and $(H,\rho,\phi)$ over $R$ will be denoted by \linebreak $[\varphi]:\varphi_{*}(\mathbb{H}^{(m)},\rho^{(m)},\phi^{(m)})\iso (H,\rho,\phi)$ (cf. \cite{gh}, Proposition 12.10). 
\subsection{The group actions}
\label{the_group_actions}
For all $m\geq 0$, the functor $\text{Def}_{m}$ admits natural commuting left actions of the groups $\Gamma:=\textnormal{Aut}(H_{0})$ and $G_{0}:=GL_{h}(\mathfrak{o})$ for which the morphisms $\textnormal{Def}_{m'}\longrightarrow\textnormal{Def}_{m}$ of functors mentioned in Part 2, Theorem \ref{fundthm} are equivariant. On $R$-valued points, they are given by 
\begin{equation*}
[(H,\rho,\phi)]\longmapsto[(H,\rho\circ\gamma^{-1},\phi)]\hspace{.2cm}\text{and}\hspace{.2cm}[(H,\rho,\phi)]\longmapsto[(H,\rho,\phi\circ g^{-1})]
\end{equation*} 
for $\gamma\in\Gamma,g\in G_{0}$. Here $g^{-1}\in GL_{h}(\mathfrak{o})$ acts on the free $(\frac{\mathfrak{o}}{\varpi^{m}\mathfrak{o}})$-module $(\frac{\varpi^{-m}\mathfrak{o}}{\mathfrak{o}})^{h}$ by considering it as an $\mathfrak{o}$-module via the natural reduction map $\mathfrak{o}\twoheadrightarrow\frac{\mathfrak{o}}{\varpi^{m}\mathfrak{o}}$. Because of the representability, these actions give rise to commuting left actions of $\Gamma$ and $G_{0}$ on the universal deformation rings $R_{m}$. We use the same letters $\gamma$ and $g$ to denote the automorphisms of $R_{m}$ induced by $\gamma\in\Gamma$ and by $g\in G_{0}$ respectively. It is immediate from the definition that the $G_{0}$-action on $R_{m}$ factors through a quotient by the $m$-th principal congruence subgroup $G_{m}:=1+\varpi^{m}M_{h}(\mathfrak{o})$ of $G_{0}$. For $m'\geq m\geq 0$, the induced action of $G_{m}/G_{m'}$ makes $R_{m'}[\frac{1}{\varpi}]$ \'{e}tale and Galois over $R_{m}[\frac{1}{\varpi}]$ with Galois group $G_{m}/G_{m'}$ (cf. Theorem 2.1.2 (ii), \cite{strdeform}).\\
\indent  The actions of $\Gamma$ and $G_{0}$ on $R_{m}$ induce semilinear actions of $\Gamma$ and $G_{0}$ on the Lie algebra $\text{Lie}(\mathbb{H}^{(m)})$ of the universal formal $\mathfrak{o}$-module $\mathbb{H}^{(m)}$. Recall that the Lie algebra $\text{Lie}(\mathbb{H}^{(m)})$ of $\mathbb{H}^{(m)}$ is the tangent space $\text{Hom}_{R_{m}}((X)/(X)^{2},R_{m})$ of its coordinate ring $R_{m}[[X]]$ (equipped with the trivial Lie bracket). We now describe the $\Gamma$-action on $\text{Lie}(\mathbb{H}^{(m)})$; the $G_{0}$-action is defined likewise. Given $\gamma\in\Gamma$, extend the ring automorphism $\gamma$ of $R_{m}$ to $R_{m}[[X]]$ by sending $X$ to itself. This induces a homomorphism \begin{equation*}
\gamma_{*}:\textnormal{Lie}(\mathbb{H}^{(m)})\longrightarrow\textnormal{Lie}(\gamma_{*}\mathbb{H}^{(m)})
\end{equation*} of additive groups. The isomorphism $[\gamma]:\gamma_{*}\mathbb{H}^{(m)}\iso\mathbb{H}^{(m)}$ also induces a natural $R_{m}$-linear map \begin{equation*}
\textnormal{Lie}([\gamma]):\textnormal{Lie}(\gamma_{*}\mathbb{H}^{(m)})\longrightarrow\textnormal{Lie}(\mathbb{H}^{(m)}).
\end{equation*}  We define $\gamma:\textnormal{Lie}(\mathbb{H}^{(m)})\longrightarrow\textnormal{Lie}(\mathbb{H}^{(m)})$ as the composite of these two maps, i.e., $\gamma:=\textnormal{Lie}([\gamma])\circ\gamma_{*}$.\\
\indent  Given another element $\gamma'\in\Gamma$, let $\gamma'_{*}[\gamma]:\gamma'_{*}(\gamma_{*}\mathbb{H}^{(m)})\iso\gamma'_{*}\mathbb{H}^{(m)}$ be the isomorphism obtained by applying $\gamma'$ to the coefficients of $[\gamma]$. Then $[\gamma']\circ\gamma'_{*}[\gamma]$ is an isomorphism between the formal $\mathfrak{o}$-modules $(\gamma'\gamma)_{*}\mathbb{H}^{(m)}$ and $\mathbb{H}^{(m)}$ over $R_{m}$. Therefore by uniqueness, we have $[\gamma'\gamma]=[\gamma']\circ\gamma'_{*}[\gamma]$. One also checks easily that the following diagram commutes.
$$	
\xymatrixcolsep{4pc}
\xymatrix{
\text{Lie}(\gamma_{*}\mathbb{H}^{(m)})\ar[d]_{\gamma'_{*}}\ar[r]^{\text{Lie}([\gamma])}&\text{Lie}(\mathbb{H}^{(m)})\ar[d]^{\gamma'_{*}}\\
\text{Lie}(\gamma'_{*}(\gamma_{*}\mathbb{H}^{(m)}))\ar[r]_{\text{Lie}(\gamma'_{*}[\gamma])}&\text{Lie}(\gamma'_{*}\mathbb{H}^{(m)})}$$ Then it follows that \begin{align}\label{gamma_action_on_lie_algebra}
\text{Lie}([\gamma'\gamma])\circ(\gamma'\gamma)_{*}&=\text{Lie}([\gamma'])\circ\text{Lie}(\gamma'_{*}[\gamma])\circ\gamma'_{*}\circ\gamma_{*}\\&=\text{Lie}([\gamma'])\circ(\gamma'_{*}\circ\text{Lie}([\gamma])\circ(\gamma'_{*})^{-1})\circ\gamma'_{*}\circ\gamma_{*}\nonumber\\&=\text{Lie}([\gamma'])\circ\gamma'_{*}\circ\text{Lie}([\gamma])\circ\gamma_{*}\nonumber.
\end{align}
\indent  Thus we obtain an action of $\Gamma$ (and of $G_{0}$) on the additive group $\textnormal{Lie}(\mathbb{H}^{(m)})$ which is semilinear for the action of $\Gamma$ (and of $G_{0}$ respectively) on $R_{m}$ because $\gamma_{*}$ is semilinear. Given a positive integer $s$, we denote by $\textnormal{Lie}(\mathbb{H}^{(m)})^{\otimes s}$ the $s$-fold tensor product of $\textnormal{Lie}(\mathbb{H}^{(m)})$ over $R_{m}$ with itself. This is a free $R_{m}$-module of rank 1 with a semi-linear action of $\Gamma$ defined by $\gamma(\delta_{1}\otimes\cdots\otimes\delta_{s}):=\gamma(\delta_{1})\otimes\cdots\otimes\gamma(\delta_{s})$. Set $\textnormal{Lie}(\mathbb{H}^{(m)})^{\otimes 0}:=R_{m}$ and $\textnormal{Lie}(\mathbb{H}^{(m)})^{\otimes s}:=\textnormal{Hom}_{R_{m}}(\textnormal{Lie}(\mathbb{H}^{(m)})^{\otimes (-s)},R_{m})$ if $s$ is a negative integer. In the latter case, a semi-linear action of $\Gamma$ is defined by $\gamma(\varphi)(\delta_{1}\otimes\cdots\otimes\delta_{-s}):=\gamma(\varphi(\gamma^{-1}(\delta_{1})\otimes\cdots\otimes\gamma^{-1}(\delta_{-s})))$. The semi-linear actions of $G_{0}$ on the $s$-fold tensor products are defined similarly. As before,  for all $s\in\mathbb{Z}$, the $G_{0}$-action on $\textnormal{Lie}(\mathbb{H}^{(m)})^{\otimes s}$ factors through $G_{0}/G_{m}$.
\begin{remark}\label{Gamma_and_G0_actions_commute}
Using that the group actions of $\Gamma$ and $G_{0}$ on $R_{m}$ commute, one can show that they commute on $\textnormal{Lie}(\mathbb{H}^{(m)})^{\otimes s}$ as follows. It suffices to show the commutativity for $s=1$. Since the $G_{0}$-action is defined likewise, we may use (\ref{gamma_action_on_lie_algebra}) for $\gamma\in\Gamma$ and $g\in G_{0}$. As a result, we get \begin{align*}
\text{Lie}([g])\circ g_{*}\circ\text{Lie}([\gamma])\circ\gamma_{*}&=\text{Lie}([g\gamma])\circ (g\gamma)_{*}\\&=\text{Lie}([\gamma g])\circ(\gamma g)_{*}\\&=\text{Lie}([\gamma])\circ\gamma_{*}\circ\text{Lie}([g])\circ g_{*}.
\end{align*}
\end{remark} 
\indent  We are primarily interested in the action of $\Gamma$, a $p$-adic Lie gorup, on \linebreak $\text{Lie}(\mathbb{H}^{(m)})^{\otimes s}$. Before describing the underlying Lie group structure of $\Gamma$, we refer the reader to \cite{schplie}, page 38, page 47 and page 89 for the definitions of a locally analytic map, a locally analytic manifold and a locally analytic group respectively. By (\ref{endH0}), we have $\Gamma\cong\mathfrak{o}_{B_{h}}^{\times}$. Recall that the division algebra $B_{h}$ is a $K_{h}$-vector space of dimension $h$ with basis $\lbrace\Pi^{i}\rbrace_{0\leq i\leq h-1}$ whose multiplication is determined by the relations $\Pi^{h}=\varpi$ and $\Pi\lambda=\lambda^{\sigma}\Pi$ for all $\lambda\in K_{h}$ ($\lambda^{\sigma}$ denotes the image of $\lambda$ under the Frobenius automorphism $\sigma$). Thus, any $\gamma\in\Gamma=\mathfrak{o}_{B_{h}}^{\times}$ can be uniquely written as \begin{equation*}
\gamma = \sum_{i=0}^{h-1}\lambda_{i}\Pi^{i}
\end{equation*}
with $\lambda_{0}\in\mathfrak{o}_{h}^{\times}$  and $\lambda_{1},\dots ,\lambda_{h-1}\in \mathfrak{o}_{h}$. The map \begin{align}\label{chart_for_Gamma}
&\hspace{.6cm}\psi:\Gamma\longrightarrow K^{h}_{h}\\&\sum_{i=0}^{h-1}\lambda_{i}\Pi^{i}\longmapsto(\lambda_{0},\lambda_{1}\dots \lambda_{h-1})\nonumber
\end{align} identifies $\Gamma$ with a compact open subset $\mathfrak{o}_{h}^{\times}\times\mathfrak{o}_{h}^{h-1}$ of $K_{h}^{h}$ making it into a compact open locally $K_{h}$-analytic submanifold of $K_{h}^{h}$. The composition map \begin{equation*}
\psi(\Gamma)\times\psi(\Gamma)\xrightarrow{\psi^{-1}\times\psi^{-1}}\Gamma\times\Gamma\xrightarrow{\textnormal{multiplication}}\Gamma\xrightarrow{\psi}\psi(\Gamma)
\end{equation*} from an open subset in $K_{h}^{2h}$ to $K_{h}^{h}$ can be easily seen to be locally $K$-analytic since each component of this map is a composition of a polynomial and a $K$-linear Frobenius automorphism $\sigma$, both being locally $K$-analytic. Therefore, $\Gamma$ is a locally $K$-analytic group. However, notice that $\Gamma$ is not a locally $K_{h}$-analytic group because $\sigma:\mathfrak{o}_{h}^{\times}\longrightarrow\mathfrak{o}_{h}^{\times}$ is not locally $K_{h}$-analytic unless $h=1$.\\
\indent  Now, being a compact and a totally disconnected Hausdorff topological group, $\Gamma$ is a profinite topological group. A basis of neighbourhoods of the identity is given by the normal subgroups $\Gamma_{i}:=1+\varpi^{i}\mathfrak{o}_{B_{h}}=1+\varpi^{i}\textnormal{End}(H_{0})$, $i\geq 1$ of finite index. Let us put $\Gamma_{0}:=\Gamma$. Our aim is to show that the $\Gamma$-action on $\text{Lie}(\mathbb{H}^{(m)})^{\otimes s}$ is continuous, i.e., the action map $\Gamma\times\text{Lie}(\mathbb{H}^{(m)})^{\otimes s}\longrightarrow\text{Lie}(\mathbb{H}^{(m)})^{\otimes s}$ is continuous for the $\mathfrak{m}_{R_{m}}$-adic topology on $\text{Lie}(\mathbb{H}^{(m)})^{\otimes s}$, and for the product of profinite and $\mathfrak{m}_{R_{m}}$-adic topology on the left hand side. But, first we need a couple of lemmas. For any two non-negative integers $n$ and $m$, set $\mathbb{H}_{n}^{(m)}:=\mathbb{H}^{(m)}\otimes_{R_{m}}(R_{m}/\mathfrak{m}_{R_{m}}^{n+1})$. We have $H_{0}\cong\mathbb{H}_{0}^{(m)}$ via $\rho^{(m)}$ for all $m\geq 0$.
\begin{lemma} If $n$ and $m$ are non-negative integers then the homomorphism of $\mathfrak{o}$-algebras $\textnormal{End}(\mathbb{H}_{n+1}^{(m)})\longrightarrow \textnormal{End}(\mathbb{H}_{n}^{(m)})$, induced by reduction modulo $\mathfrak{m}_{R_{m}}^{n+1}$, is injective.
\end{lemma}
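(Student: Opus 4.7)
The plan is to show that if $f\in\textnormal{End}(\mathbb{H}^{(m)}_{n+1})$ lies in the kernel of the reduction map, then $f=0$, by exploiting the fact that its coefficients live in a square-zero ideal and invoking the height-$h$ property of $H_{0}$ through the $\mathfrak{o}$-equivariance of $f$. To begin, I would set $\bar{R}:=R_{m}/\mathfrak{m}_{R_{m}}^{n+2}$ and let $I\subseteq\bar{R}$ denote the image of $\mathfrak{m}_{R_{m}}^{n+1}$. Since $2(n+1)\geq n+2$ for $n\geq 0$, the ideal $I$ is square-zero; and since $\mathfrak{m}_{R_{m}}\cdot\mathfrak{m}_{R_{m}}^{n+1}\subseteq\mathfrak{m}_{R_{m}}^{n+2}$, it is annihilated by the maximal ideal of $\bar{R}$, hence is naturally a $k^{\textnormal{sep}}$-vector space. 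In particular, for any $c\in I$ and $a\in\bar{R}$ one has $ac=\bar{a}c$, where $\bar{a}$ denotes the image of $a$ in $k^{\textnormal{sep}}$.

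Suppose now that $f\in\textnormal{End}(\mathbb{H}^{(m)}_{n+1})$ reduces to zero modulo $\mathfrak{m}_{R_{m}}^{n+1}$, and write $f(X)=\sum_{i\geq 1}c_{i}X^{i}$ with $c_{i}\in I$. Two immediate consequences of $I^{2}=0$ are the following: for any $P\in\bar{R}[[X]]$, the power series $f(P(X))$ depends only on the reduction of $P$ modulo $\mathfrak{m}_{R_{m}}$; and $f(X)f(Y)=0$, so the formal group law condition $f(\mathbb{H}^{(m)}(X,Y))=\mathbb{H}^{(m)}(f(X),f(Y))$ degenerates to $f(H_{0}(X,Y))=f(X)+f(Y)$. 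The crucial input is the $\mathfrak{o}$-equivariance condition $f([\varpi]_{\mathbb{H}^{(m)}}(X))=[\varpi]_{\mathbb{H}^{(m)}}(f(X))$. On the right-hand side, $[\varpi]_{\mathbb{H}^{(m)}}(f(X))=\varpi_{R}\cdot f(X)+O(f(X)^{2})$, where $\varpi_{R}$ is the image of $\varpi$ in $R_{m}$: the higher-order terms vanish because $f(X)^{2}\in I^{2}[[X]]=0$, and the linear term vanishes because $\varpi_{R}$ lies in the maximal ideal of $\bar{R}$ and therefore annihilates $I$. On the left-hand side, the first observation above shows that only the mod-$\mathfrak{m}_{R_{m}}$ reduction of $[\varpi]_{\mathbb{H}^{(m)}}(X)$ matters, which is $[\varpi]_{H_{0}}(X)$. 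The equivariance thus collapses to the single identity $f([\varpi]_{H_{0}}(X))=0$ inside $I[[X]]$.

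To finish, I would invoke the height of $H_{0}$: write $[\varpi]_{H_{0}}(X)=g(X^{q^{h}})$ with $g\in k^{\textnormal{sep}}[[Y]]$ and $g'(0)=u\in(k^{\textnormal{sep}})^{\times}$. Expanding $f(g(X^{q^{h}}))=\sum_{i\geq 1}c_{i}g(X^{q^{h}})^{i}=0$, the coefficient of $X^{nq^{h}}$ is of the form $c_{n}u^{n}+\sum_{i<n}c_{i}b_{i,n}$ for certain elements $b_{i,n}\in k^{\textnormal{sep}}$, and a straightforward induction on $n$ forces $c_{n}=0$ for every $n\geq 1$, so $f=0$. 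I expect the main obstacle to be nothing more than careful bookkeeping of the various mod-$\mathfrak{m}_{R_{m}}$ reductions and the application of $I^{2}=0$ at the right moments; beyond the height-$h$ property, which supplies the injectivity of the substitution $X\mapsto [\varpi]_{H_{0}}(X)$ on $I[[X]]$, no deeper ingredient seems to be needed.
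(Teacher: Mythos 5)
Your proof is correct, and it is organized differently from the paper's. The paper proves the (formally stronger, composite) statement that $\textnormal{End}(\mathbb{H}_{n}^{(m)})\to\textnormal{End}(\mathbb{H}_{0}^{(m)})$ is injective for every $n$, by induction on $n$: the induction hypothesis is what forces the coefficients of a kernel element to lie in $\mathfrak{m}_{R_{m}}^{n}$, after which one shows $[\varpi]\circ f=0$, invokes $f\circ[\varpi]=[\varpi]\circ f$, and extracts coefficients degree by degree using the height-$h$ shape of $[\varpi]$. You instead attack the one-step statement head-on, observing that the kernel of $\textnormal{End}(\mathbb{H}_{n+1}^{(m)})\to\textnormal{End}(\mathbb{H}_{n}^{(m)})$ automatically consists of series with coefficients in the square-zero ideal $I=\mathfrak{m}_{R_{m}}^{n+1}/\mathfrak{m}_{R_{m}}^{n+2}$, which is moreover a $k^{\textnormal{sep}}$-vector space; this makes the vanishing of $[\varpi](f(X))$ and the reduction of $f([\varpi](X))$ to $f([\varpi]_{H_{0}}(X))$ immediate, with no induction on $n$. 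The computational heart — commuting $f$ with $[\varpi]$ and reading off coefficients of $X^{nq^{h}}$ via the unit $g'(0)$ — is the same in both arguments, but your square-zero/tangent-space framing is the more direct proof of the lemma as literally stated, while the paper's induction buys the composite injectivity down to level $0$ as an explicit intermediate statement (which it then uses to regard all $\textnormal{End}(\mathbb{H}_{n}^{(m)})$ as subalgebras of $\textnormal{End}(\mathbb{H}_{0}^{(m)})$). One cosmetic point: you should say explicitly that you identify $\mathbb{H}^{(m)}\otimes_{R_{m}}k^{\textnormal{sep}}$ with $H_{0}$ via $\rho^{(m)}$ before writing $[\varpi]_{H_{0}}$, and note that height is preserved under this isomorphism; this is harmless and is the same identification the paper makes.
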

\begin{proof}
Let $m\geq 0$ be arbitrary. We show by induction on $n$ that the ring homomorphism $i_{n}:\textnormal{End}(\mathbb{H}_{n}^{(m)})\longrightarrow\textnormal{End}(\mathbb{H}_{0}^{(m)})$, induced by reduction modulo the maximal ideal, is injective for every $n\in\mathbb{N}_{0}$. The case $n=0$ is trivial. Let $n\geq 1$ and assume that $i_{n-1}$ is injective. Since $H_{0}$ is of height $h$, we have $[\varpi]_{\mathbb{H}_{0}^{(m)}}(X)\equiv\overline{u}X^{q^{h}}\mod{\textnormal{deg}\hspace{.1cm} q^{h}+1}$ for some $u\in R_{m}^{\times}$. Then $[\varpi]_{\mathbb{H}_{0}^{(m)}}=i_{n}([\varpi]_{\mathbb{H}_{n}^{(m)}})$ implies that \begin{equation*}
[\varpi]_{\mathbb{H}_{n}^{(m)}}(X)\equiv\overline{\varpi}X+\overline{b_{2}}X^{2}+\cdots+\overline{b_{q^{h}-1}}X^{q^{h}-1}+\overline{u}X^{q^{h}}\mod{\textnormal{deg}\hspace{.1cm} q^{h}+1} \end{equation*} for some $b_{2},\cdots,b_{q^{h}-1}\in\mathfrak{m}_{R_{m}}$.\\
\indent  Now let $f(X)=\sum_{i=1}^{\infty}\overline{a_{i}}X^{i}\in\textnormal{End}(\mathbb{H}_{n}^{(m)})$ such that $i_{n}(f)=0$, i.e., $a_{i}\in\mathfrak{m}_{R_{m}}$ for all $i\geq 1$. We need to show that $a_{i}\in\mathfrak{m}_{R_{m}}^{n+1}$ for all $i\geq 1$. However, the induction hypothesis implies that $a_{i}\in\mathfrak{m}_{R_{m}}^{n}$. Thus $[\varpi]_{\mathbb{H}_{n}^{(m)}}\circ f=0$. Since $[\varpi]_{\mathbb{H}_{n}^{(m)}}\circ f=f\circ[\varpi]_{\mathbb{H}_{n}^{(m)}}$, we get $a_{i}u^{i}\in\mathfrak{m}_{R_{m}}^{n+1}$ by induction $i$ and hence $a_{i}\in\mathfrak{m}_{R_{m}}^{n+1}$ for all $i\geq 1$.
\end{proof}
\indent The above lemma allows us to consider all the $\mathfrak{o}$-algebras $\text{End}(\mathbb{H}^{(m)}_{n})$ as subalgebras of $\text{End}(\mathbb{H}^{(m)}_{0})$.
\begin{proposition}\label{subringprop} For all $n\geq 0$, $m\geq 0$, the subalgebra $\textnormal{End}(\mathbb{H}_{n}^{(m)})$ of $\textnormal{End}(\mathbb{H}_{0}^{(m)})$ contains $\varpi^{n}\textnormal{End}(\mathbb{H}_{0}^{(m)})$.
\end{proposition}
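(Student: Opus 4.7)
The plan is to proceed by induction on $n \geq 0$, keeping $m$ fixed throughout. The base case $n=0$ is a tautology. For the inductive step, fix $\phi_{0}\in\textnormal{End}(\mathbb{H}_{0}^{(m)})$ and assume that $\varpi^{n-1}\phi_{0}$ is the image under reduction of some $\psi\in\textnormal{End}(\mathbb{H}_{n-1}^{(m)})$. My goal is to produce an explicit lift of $\varpi^{n}\phi_{0}$ to $\textnormal{End}(\mathbb{H}_{n}^{(m)})$.

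The natural candidate is $\phi:=[\varpi]_{\mathbb{H}^{(m)}}\circ\tilde{\psi}\in(R_{m}/\mathfrak{m}_{R_{m}}^{n+1})[[X]]$, where $\tilde{\psi}(X)$ is any coefficient-wise lift of $\psi$ with $\tilde{\psi}(0)=0$. By construction, $\phi$ reduces to $[\varpi]_{H_{0}}\circ\varpi^{n-1}\phi_{0}=\varpi^{n}\phi_{0}$ in $\textnormal{End}(\mathbb{H}_{0}^{(m)})$, so the remaining task is to verify that $\phi$ respects the formal $\mathfrak{o}$-module structure; that is, $\phi(\mathbb{H}^{(m)}(X,Y))=\mathbb{H}^{(m)}(\phi(X),\phi(Y))$ and $\phi([a]_{\mathbb{H}^{(m)}}(X))=[a]_{\mathbb{H}^{(m)}}(\phi(X))$ in $(R_{m}/\mathfrak{m}_{R_{m}}^{n+1})[[X]]$ for all $a\in\mathfrak{o}$.

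Since $[\varpi]_{\mathbb{H}^{(m)}}$ is itself an endomorphism, both conditions reduce to showing that $[\varpi]_{\mathbb{H}^{(m)}}$ annihilates, modulo $\mathfrak{m}_{R_{m}}^{n+1}$, the defects
\begin{equation*}
\Delta(X,Y):=\tilde{\psi}(\mathbb{H}^{(m)}(X,Y))-\mathbb{H}^{(m)}(\tilde{\psi}(X),\tilde{\psi}(Y)),\quad \Delta_{a}(X):=\tilde{\psi}([a]_{\mathbb{H}^{(m)}}(X))-[a]_{\mathbb{H}^{(m)}}(\tilde{\psi}(X)),
\end{equation*}
when substituted into its formal argument. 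The inductive hypothesis forces all coefficients of these defects to lie in $\mathfrak{m}_{R_{m}}^{n}$. The key power-series computation combines (i) the fact that defect powers $\Delta^{k}$ with $k\geq 2$ vanish in $(R_{m}/\mathfrak{m}_{R_{m}}^{n+1})[[X,Y]]$ because $\mathfrak{m}_{R_{m}}^{2n}\subseteq\mathfrak{m}_{R_{m}}^{n+1}$ for $n\geq 1$, so only the first-order term of the formal Taylor expansion of $[\varpi]_{\mathbb{H}^{(m)}}$ survives; with (ii) the height-$h$ condition exploited in the preceding lemma's proof, which forces $[\varpi]_{\mathbb{H}^{(m)}}(X)\in\mathfrak{m}_{R_{m}}R_{m}[[X]]+X^{q^{h}}R_{m}[[X]]$ and supplies an extra factor of $\mathfrak{m}_{R_{m}}$ to land the surviving linear contribution inside $\mathfrak{m}_{R_{m}}^{n+1}$.

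The main obstacle will be the careful bookkeeping of the divisibilities coming from the $X^{q^{h}}$-tail of $[\varpi]_{\mathbb{H}^{(m)}}$; the extra structure needed comes from the shape of $\tilde{\psi}$, whose low-degree coefficients inherit $\mathfrak{m}_{R_{m}}$-divisibility from the factorization $\varpi^{n-1}\phi_{0}=[\varpi^{n-1}]_{H_{0}}\circ\phi_{0}$ over $k^{\textnormal{sep}}$, which has leading degree $q^{(n-1)h}$. As a slicker alternative that sidesteps the bookkeeping, one may invoke Drinfeld's rigidity theorem for formal $\mathfrak{o}$-modules (\cite{dr}, Proposition 1.7): applied to the $\mathfrak{o}$-algebra $R_{m}/\mathfrak{m}_{R_{m}}^{n+1}$ with its nilpotent ideal $\mathfrak{m}_{R_{m}}/\mathfrak{m}_{R_{m}}^{n+1}$ of nilpotency class $n+1$, it shows that the cokernel of the reduction map $\textnormal{End}(\mathbb{H}_{n}^{(m)})\rightarrow\textnormal{End}(\mathbb{H}_{0}^{(m)})$ is annihilated by $\varpi^{n}$, which is precisely the inclusion $\varpi^{n}\textnormal{End}(\mathbb{H}_{0}^{(m)})\subseteq\textnormal{End}(\mathbb{H}_{n}^{(m)})$.
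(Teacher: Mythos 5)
Your induction and your candidate lift $\phi=[\varpi]_{\mathbb{H}^{(m)}_{n}}\circ\tilde{\psi}$ are exactly the paper's; the gap is in the verification that $\phi$ is an endomorphism. Because you measure the defect with the \emph{ordinary} difference, you must compare $[\varpi](B+\Delta)$ with $[\varpi](B)$, where $B:=\mathbb{H}^{(m)}(\tilde{\psi}(X),\tilde{\psi}(Y))$ is \emph{not} small, and the surviving linear term of that expansion is $[\varpi]'(B)\cdot\Delta$. To land this in $\mathfrak{m}_{R_{m}}^{n+1}$ you need every coefficient of the derivative $[\varpi]'(Z)$ to lie in $\mathfrak{m}_{R_{m}}$. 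Your stated containment $[\varpi](X)\in\mathfrak{m}_{R_{m}}R_{m}[[X]]+X^{q^{h}}R_{m}[[X]]$ does not give this: the coefficient $c_{q^{h}+1}$ of $[\varpi]$ may be a unit, so $[\varpi]'$ acquires the possibly-unit coefficient $(q^{h}+1)c_{q^{h}+1}$, and then $[\varpi]'(B)\Delta$ only visibly lies in $\mathfrak{m}_{R_{m}}^{n}$. Appealing to the shape of $\tilde{\psi}$ cannot repair this, since no divisibility of the coefficients of $\tilde{\psi}$ makes $B$ small. The correct repair is the full strength of the height condition: modulo $\mathfrak{m}_{R_{m}}$ the series $[\varpi](Z)$ is a power series in $Z^{q^{h}}$, so $c_{j}\in\mathfrak{m}_{R_{m}}$ whenever $q^{h}\nmid j$, while $jc_{j}\in q^{h}R_{m}\subseteq pR_{m}\subseteq\mathfrak{m}_{R_{m}}$ whenever $q^{h}\mid j$; hence $[\varpi]'(Z)\in\mathfrak{m}_{R_{m}}[[Z]]$ and the linear term dies. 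As written, your argument stops exactly at the point you flag as "the main obstacle" and the hint you give points in the wrong direction.

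The paper avoids all of this by measuring the defect with the formal-group difference: with $D:=\tilde{\psi}(X+_{\mathbb{H}^{(m)}_{n}}Y)-_{\mathbb{H}^{(m)}_{n}}\tilde{\psi}(X)-_{\mathbb{H}^{(m)}_{n}}\tilde{\psi}(Y)$, every coefficient of $D$ lies in $\mathfrak{m}_{R_{m}}^{n}$, and $[\varpi]\circ D=\varpi D+(\text{terms involving }D^{k},\ k\geq 2)$ lands in $\mathfrak{m}_{R_{m}}^{n+1}$ using only $\varpi\in\mathfrak{m}_{R_{m}}$ and $\mathfrak{m}_{R_{m}}^{2n}\subseteq\mathfrak{m}_{R_{m}}^{n+1}$ --- no Taylor expansion and no use of the height condition at this step; since $[\varpi]$ is additive for $+_{\mathbb{H}^{(m)}_{n}}$, the conclusion $\varpi\tilde{\psi}(X+_{\mathbb{H}^{(m)}_{n}}Y)=\varpi\tilde{\psi}(X)+_{\mathbb{H}^{(m)}_{n}}\varpi\tilde{\psi}(Y)$ is immediate. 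Your fallback of citing Drinfeld's rigidity (\cite{dr}, Proposition 1.7) is legitimate in principle --- the proposition is a special case of it, applied to the nilpotent ideal $\mathfrak{m}_{R_{m}}/\mathfrak{m}_{R_{m}}^{n+1}$ --- but it outsources precisely the computation at issue, and the paper (following \cite{kohliwamo}) deliberately gives the self-contained argument.
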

\begin{proof}
Let $m\geq 0$ be arbitrary. We proceed by induction on $n$, the case $n=0$ being trivial. Let $n\geq 1$ and assume the assertion to be true for $n-1$. Let $\varphi\in\varpi^{n}\textnormal{End}(\mathbb{H}_{0}^{(m)})$. By induction hypothesis, we have $\varphi\in\varpi\text{End}(\mathbb{H}^{(m)}_{n-1})$. Now for any $\psi\in\text{End}(\mathbb{H}^{(m)}_{n-1})$, choose a power series $\tilde{\psi}\in(R_{m}/\mathfrak{m}_{R_{m}}^{n+1})[[X]]$ with trivial constant term such that $\tilde{\psi}\mod{\mathfrak{m}_{R_{m}}^{n}}=\psi$. The power series $\varpi\tilde{\psi}=[\varpi]_{\mathbb{H}_{n}^{(m)}}\circ\tilde{\psi}$ is a lift of $\varpi\psi=[\varpi]_{\mathbb{H}_{n-1}^{(m)}}\circ\psi$. We claim that $\varpi\tilde{\psi}\in\text{End}(\mathbb{H}^{(m)}_{n})$ and $(\varpi\psi\longmapsto\varpi\tilde{\psi}):\varpi\text{End}(\mathbb{H}^{(m)}_{n-1})\longrightarrow\text{End}(\mathbb{H}^{(m)}_{n})$ is a well-defined injective map. The proposition then follows from the claim. \\
\indent  First, let us see why $\varpi\tilde{\psi}$ defines an endomorphism of $\mathbb{H}^{(m)}_{n}$. Since $\psi\in\text{End}(\mathbb{H}^{(m)}_{n-1})$, we have\begin{align*}
 0&=\psi(X+_{\mathbb{H}^{(m)}_{n-1}}Y)-_{\mathbb{H}^{(m)}_{n-1}}\psi(X)-_{\mathbb{H}^{(m)}_{n-1}}\psi(Y)\\&=(\tilde{\psi}(X+_{\mathbb{H}^{(m)}_{n}}Y)-_{\mathbb{H}^{(m)}_{n}}\tilde{\psi}(X)-_{\mathbb{H}^{(m)}_{n}}\tilde{\psi}(Y))\mod{\mathfrak{m}_{R_{m}}^{n}}.
\end{align*} Thus all the coefficients of the power series $(\tilde{\psi}(X+_{\mathbb{H}^{(m)}_{n}}Y)-_{\mathbb{H}^{(m)}_{n}}\tilde{\psi}(X)-_{\mathbb{H}^{(m)}_{n}}\tilde{\psi}(Y))$ lie in $\mathfrak{m}^{n}_{R_{m}}/\mathfrak{m}^{n+1}_{R_{m}}$. Since $\varpi\in\mathfrak{m}_{R_{m}}$ and $(\mathfrak{m}_{R_{m}}^{n})^{k}\subseteq \mathfrak{m}_{R_{m}}^{n+1}$ for all integers $k>1$, we get $[\varpi]_{\mathbb{H}_{n}^{(m)}}\circ(\tilde{\psi}(X+_{\mathbb{H}^{(m)}_{n}}Y)-_{\mathbb{H}^{(m)}_{n}}\tilde{\psi}(X)-_{\mathbb{H}^{(m)}_{n}}\tilde{\psi}(Y))=0$. Consequently, $\varpi\tilde{\psi}(X+_{\mathbb{H}^{(m)}_{n}}Y)=\varpi\tilde{\psi}(X)+_{\mathbb{H}^{(m)}_{n}}\varpi\tilde{\psi}(Y)$. Similarly one shows that \begin{equation*}
 0=[\varpi]_{\mathbb{H}_{n}^{(m)}}\circ([a]_{\mathbb{H}^{(m)}_{n}}\circ\tilde{\psi}-_{\mathbb{H}^{(m)}_{n}}\tilde{\psi}\circ[a]_{\mathbb{H}^{(m)}_{n}})=[a]_{\mathbb{H}^{(m)}_{n}}\circ\varpi\tilde{\psi}-_{\mathbb{H}^{(m)}_{n}}\varpi\tilde{\psi}\circ[a]_{\mathbb{H}^{(m)}_{n}} 
\end{equation*} for all $a\in\mathfrak{o}$. Therefore $\varpi\tilde{\psi}\in\text{End}(\mathbb{H}^{(m)}_{n})$.\\
\indent  To see that the above map is well-defined, take another lift $\tilde{\psi}'$ of $\psi$ with trivial constant terms. Then $(\tilde{\psi}'-_{\mathbb{H}_{n}^{(m)}}\tilde{\psi})\mod{\mathfrak{m}_{R_{m}}^{n}}=\psi-_{\mathbb{H}_{n-1}^{(m)}}\psi=0$. Thus $[\varpi]_{\mathbb{H}_{n}^{(m)}}\circ(\tilde{\varphi}'-_{\mathbb{H}_{n}^{(m)}}\tilde{\varphi})=0$ as above. Hence $\varpi\tilde{\psi}'=\varpi\tilde{\psi}$. Finally, the injectivity is clear because $\varpi\tilde{\psi_{1}}=\varpi\tilde{\psi_{2}}$ implies $\varpi\psi_{1}=\varpi\psi_{2}$ after reduction modulo $\mathfrak{m}_{R_{m}}^{n}$.
\end{proof}
\begin{theorem}\label{ctsthm} For all $n\geq 0$, $m\geq 0$, the induced action of $\Gamma_{n+m}$ on $R_{m}/\mathfrak{m}_{R_{m}}^{n+1}$ is trivial. Thus the map $((\gamma,f)\mapsto \gamma(f)):\Gamma\times R_{m}\longrightarrow R_{m}$ is continuous where the left hand side carries the product topology. 
\end{theorem}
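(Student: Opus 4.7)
The plan is to use the universal property of $R_m$ to reduce the triviality of the $\Gamma_{n+m}$-action on $R_m/\mathfrak{m}_{R_m}^{n+1}$ to the construction of a suitable isomorphism of deformations. Writing $\phi_n^{(m)}:=\phi^{(m)}\bmod\mathfrak{m}_{R_m}^{n+1}$, an element $\gamma\in\Gamma$ acts trivially on $R_m/\mathfrak{m}_{R_m}^{n+1}$ if and only if the deformations $(\mathbb{H}_n^{(m)},\rho^{(m)},\phi_n^{(m)})$ and $(\mathbb{H}_n^{(m)},\rho^{(m)}\circ\gamma^{-1},\phi_n^{(m)})$ over $R_m/\mathfrak{m}_{R_m}^{n+1}$ are isomorphic, since both then correspond to the reduction map $R_m\twoheadrightarrow R_m/\mathfrak{m}_{R_m}^{n+1}$ by uniqueness. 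So the problem reduces to exhibiting an automorphism $f$ of $\mathbb{H}_n^{(m)}$ whose reduction modulo the maximal ideal realizes $\gamma^{-1}$ (as an element of $\mathrm{Aut}(\mathbb{H}_0^{(m)})$ via $\rho^{(m)}$) and which fixes every $\phi_n^{(m)}(\alpha)$.

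The construction uses Proposition \ref{subringprop} in a two-step form. Since $\gamma\in\Gamma_{n+m}$, write $\gamma^{-1}=1+\varpi^{n+m}\nu$ with $\nu\in\mathrm{End}(H_0)$. By Proposition \ref{subringprop}, $\varpi^{n}\nu$ lifts to an endomorphism $\tilde\beta\in\mathrm{End}(\mathbb{H}_n^{(m)})$; setting $\tilde\varphi:=[\varpi^m]_{\mathbb{H}_n^{(m)}}\circ\tilde\beta$ then produces an endomorphism of $\mathbb{H}_n^{(m)}$ lifting $\varpi^{n+m}\nu$. The sum $f:=\mathrm{id}+\tilde\varphi$, formed in the endomorphism ring $\mathrm{End}(\mathbb{H}_n^{(m)})$, lifts $\gamma^{-1}\in\mathrm{End}(H_0)$ and is automatically an automorphism since its reduction is.

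The decisive step, which I expect to be the main obstacle, is checking that $f\circ\phi_n^{(m)}=\phi_n^{(m)}$. The deliberate factorization $\tilde\varphi=[\varpi^m]\circ\tilde\beta$ is designed precisely for this: for $\alpha\in(\varpi^{-m}\mathfrak{o}/\mathfrak{o})^h$, the $\mathfrak{o}$-linearity of $\phi_n^{(m)}$ gives $[\varpi^m]_{\mathbb{H}_n^{(m)}}(\phi_n^{(m)}(\alpha))=\phi_n^{(m)}(\varpi^m\alpha)=0$, and since $\tilde\beta$ commutes with the $\mathfrak{o}$-action, $\tilde\varphi(\phi_n^{(m)}(\alpha))=\tilde\beta([\varpi^m]_{\mathbb{H}_n^{(m)}}(\phi_n^{(m)}(\alpha)))=0$. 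Hence $f$ is an isomorphism of deformations, and the universal property forces the induced action of $\gamma$ on $R_m/\mathfrak{m}_{R_m}^{n+1}$ to be the identity.

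The continuity statement is then formal. A basic open neighbourhood of $\gamma_0(r_0)\in R_m$ has the form $\gamma_0(r_0)+\mathfrak{m}_{R_m}^{n+1}$. Any $\gamma\in\gamma_0\Gamma_{n+m}$ can be written $\gamma=\gamma_0\gamma''$ with $\gamma''\in\Gamma_{n+m}$, and any $r\in r_0+\mathfrak{m}_{R_m}^{n+1}$ satisfies $\gamma''(r)\equiv r\pmod{\mathfrak{m}_{R_m}^{n+1}}$ by the triviality established above. Applying $\gamma_0$, which preserves $\mathfrak{m}_{R_m}^{n+1}$ as a local ring automorphism, yields $\gamma(r)\equiv\gamma_0(r_0)\pmod{\mathfrak{m}_{R_m}^{n+1}}$, which gives the desired continuity.
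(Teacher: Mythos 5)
Your proof is correct and follows essentially the same route as the paper's. Both proofs reduce, via the universal property, to exhibiting an isomorphism between $(\mathbb{H}_n^{(m)},\rho^{(m)},\phi_n^{(m)})$ and $(\mathbb{H}_n^{(m)},\rho^{(m)}\circ\gamma^{-1},\phi_n^{(m)})$; both use Proposition~\ref{subringprop} to place $\rho^{(m)}\circ\gamma^{-1}\circ(\rho^{(m)})^{-1}$ (your $f=\mathrm{id}+[\varpi^m]\circ\tilde\beta$, the paper's $1+\varepsilon\varpi^m$) inside $1+\varpi^m\mathrm{End}(\mathbb{H}_n^{(m)})$; and both conclude that this automorphism fixes the level structure because $[\varpi^m]$ annihilates the $\varpi^m$-torsion points.
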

\begin{proof}
Let $n$ and $m$ be arbitrary non-negative integers. Let $\gamma\in\Gamma_{n+m}$ and  pr$^{(m)}_{n}:R_{m}\longrightarrow R_{m}/\mathfrak{m}_{R_{m}}^{n+1}$ denote the natural projection. Consider the level-$m$-structure $\phi^{(m)}_{n}:=\textnormal{pr}^{(m)}_{n}\circ\phi^{(m)}$ on $\mathbb{H}_{n}^{(m)}$ and consider the deformation $(\mathbb{H}_{n}^{(m)},\rho^{(m)}\circ\gamma^{-1},\phi^{(m)}_{n})$ of $H_{0}$ to $R_{m}/\mathfrak{m}_{R_{m}}^{n+1}$ with  this level-$m$-structure. Let $\gamma_{n}^{(m)}:R_{m}\longrightarrow R_{m}/\mathfrak{m}_{R_{m}}^{n+1}$ denote the unique ring homomorphism for which there exists an isomorphism $[\gamma_{n}^{(m)}]:(\gamma^{(m)}_{n})_{*}(\mathbb{H}^{(m)},\rho^{(m)},\phi^{(m)})\iso(\mathbb{H}_{n}^{(m)},\rho^{(m)}\circ\gamma^{-1},\phi^{(m)}_{n})$. Note that also the ring homomorphism pr$^{(m)}_{n}\circ\gamma:R_{m}\longrightarrow R_{m}/\mathfrak{m}_{m}^{n+1}$ admits an isomorphism of deformations \begin{align*}
(\text{pr}^{(m)}_{n}\circ\gamma)_{*}(\mathbb{H}^{(m)},\rho^{(m)},\phi^{(m)})&=(\text{pr}^{(m)}_{n})_{*}(\gamma_{*}(\mathbb{H}^{(m)},\rho^{(m)},\phi^{(m)}))\\&\cong(\mathbb{H}_{n}^{(m)},\rho^{(m)}\circ\gamma^{-1},\phi^{(m)}_{n}).
\end{align*} Therefore by uniqueness, we have pr$^{(m)}_{n}\circ\gamma=\gamma^{(m)}_{n}$ and  $[\gamma^{(m)}_{n}]=[\gamma]\mod{\mathfrak{m}_{R_{m}}^{n+1}}$.\\
\indent  Since the map $(\sigma\mapsto\rho^{(m)}\circ\sigma\circ(\rho^{(m)})^{-1})$ is an isomorphism End$(H_{0})\iso\textnormal{End}(\mathbb{H}^{(m)}_{0})$ of $\mathfrak{o}$-algebras, Proposition~\ref{subringprop} shows that $\rho^{(m)}\circ\gamma^{-1}\circ(\rho^{(m)})^{-1}\in 1+\varpi^{m}\textnormal{End}(\mathbb{H}^{(m)}_{n})\subseteq\textnormal{Aut}(\mathbb{H}^{(m)}_{n})$. We claim that $(\rho^{(m)}\circ\gamma^{-1}\circ(\rho^{(m)})^{-1})\circ\phi^{(m)}_{n}=\phi^{(m)}_{n}$. Write $\rho^{(m)}\circ\gamma^{-1}\circ(\rho^{(m)})^{-1}=1+\varepsilon\varpi^{m}$ for some $\varepsilon\in\textnormal{End}(\mathbb{H}^{(m)}_{n})$ and let $\alpha\in(\frac{\varpi^{-m}\mathfrak{o}}{\mathfrak{o}})^{h}$ be arbitrary. Then \begin{align*}
(\rho^{(m)}\circ\gamma^{-1}\circ(\rho^{(m)})^{-1})(\phi^{(m)}_{n}(\alpha))&=(1+\varepsilon\varpi^{m})(\phi^{(m)}_{n}(\alpha))\\&=\phi^{(m)}_{n}(\alpha)+_{\mathbb{H}^{(m)}_{n}}\varepsilon(\varpi^{m}(\phi^{(m)}_{n}(\alpha)))\\&=\phi^{(m)}_{n}(\alpha)+_{\mathbb{H}^{(m)}_{n}}\varepsilon(\phi^{(m)}_{n}(\varpi^{m}\alpha))\\&=\phi^{(m)}_{n}(\alpha)+_{\mathbb{H}^{(m)}_{n}}\varepsilon(\phi^{(m)}_{n}(0))\\&=\phi^{(m)}_{n}(\alpha).
\end{align*} Therefore, the automorphism $\rho^{(m)}\circ\gamma^{-1}\circ(\rho^{(m)})^{-1}$ of $\mathbb{H}^{(m)}_{n}$ defines an isomorphism of deformations \begin{align*}
&(\mathbb{H}^{(m)}_{n},\rho^{(m)},\phi^{(m)}_{n})\\&\cong(\mathbb{H}^{(m)}_{n},(\rho^{(m)}\circ\gamma^{-1}\circ(\rho^{(m)})^{-1})\circ\rho^{(m)},(\rho^{(m)}\circ\gamma^{-1}\circ(\rho^{(m)})^{-1})\circ\phi^{(m)}_{n})\\&=(\mathbb{H}_{n}^{(m)},\rho^{(m)}\circ\gamma^{-1},\phi^{(m)}_{n}). 
\end{align*} However, $(\mathbb{H}^{(m)}_{n},\rho^{(m)},\phi^{(m)}_{n})=\textnormal{(pr}^{(m)}_{n})_{*}(\mathbb{H}^{(m)},\rho^{(m)},\phi^{(m)})$. By uniqueness again, we have pr$^{(m)}_{n}=\textnormal{pr}^{(m)}_{n}\circ\gamma=\gamma^{(m)}_{n}$. This implies that $\Gamma_{n+m}$ acts trivially on $R_{m}/\mathfrak{m}_{R_{m}}^{n+1}$ and $[\gamma]\mod{\mathfrak{m}_{R_{m}}^{n+1}}=\rho^{(m)}\circ\gamma^{-1}\circ(\rho^{(m)})^{-1}$.
\end{proof}
\indent  The $R_{m}$-module $\textnormal{Lie}(\mathbb{H}^{(m)})^{\otimes s}$ is complete and Hausdorff for the $\mathfrak{m}_{R_{m}}$-adic topology because it is free of finite rank. By the semi-linearity of the $\Gamma$-action, the $R_{m}$-submodules $\mathfrak{m}_{R_{m}}^{n}\textnormal{Lie}(\mathbb{H}^{(m)})^{\otimes s}$ are $\Gamma$-stable for any non-negative integer $n$.
\begin{theorem}\label{ctsthm2}
Let $s$, $n$, $m$ be integers with $n\geq 0$ and $m\geq 0$. The induced action of $\Gamma_{2n+m+1}$ on $\textnormal{Lie}(\mathbb{H}^{(m)})^{\otimes s}/\mathfrak{m}_{R_{m}}^{n+1}\textnormal{Lie}(\mathbb{H}^{(m)})^{\otimes s}$ is trivial. Thus the map $((\gamma,\delta)\mapsto \gamma(\delta)):\Gamma\times \textnormal{Lie}(\mathbb{H}^{(m)})^{\otimes s}\longrightarrow \textnormal{Lie}(\mathbb{H}^{(m)})^{\otimes s}$ is continuous where the left hand side carries the product topology.
\end{theorem}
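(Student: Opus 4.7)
The plan is to deduce the triviality of the induced $\Gamma_{2n+m+1}$-action on $\textnormal{Lie}(\mathbb{H}^{(m)})^{\otimes s}/\mathfrak{m}_{R_{m}}^{n+1}\textnormal{Lie}(\mathbb{H}^{(m)})^{\otimes s}$ directly from Theorem \ref{ctsthm} and the defining decomposition $\gamma=\textnormal{Lie}([\gamma])\circ\gamma_{*}$ of the semi-linear action on $\textnormal{Lie}(\mathbb{H}^{(m)})$. Once this is established, continuity of the action map follows at once from the semi-linearity and the $\Gamma$-stability of each $\mathfrak{m}_{R_{m}}^{n+1}\textnormal{Lie}(\mathbb{H}^{(m)})^{\otimes s}$.

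I would begin by reducing to the case $s=1$. For $s=0$ the statement is Theorem \ref{ctsthm} itself, since $\Gamma_{2n+m+1}\subseteq\Gamma_{n+m}$. For $s>0$, the multilinear expansion $\gamma(\delta_{1}\otimes\cdots\otimes\delta_{s})=\gamma(\delta_{1})\otimes\cdots\otimes\gamma(\delta_{s})$ shows that if each $\gamma(\delta_{i})-\delta_{i}$ lies in $\mathfrak{m}_{R_{m}}^{n+1}\textnormal{Lie}(\mathbb{H}^{(m)})$, then $\gamma(\delta_{1}\otimes\cdots\otimes\delta_{s})-\delta_{1}\otimes\cdots\otimes\delta_{s}$ lies in $\mathfrak{m}_{R_{m}}^{n+1}\textnormal{Lie}(\mathbb{H}^{(m)})^{\otimes s}$. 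For $s<0$, applying the positive case to $\textnormal{Lie}(\mathbb{H}^{(m)})^{\otimes(-s)}$ together with Theorem \ref{ctsthm} shows that for any basis $e$ and any $\varphi\in\textnormal{Lie}(\mathbb{H}^{(m)})^{\otimes s}$ one has $\gamma(\varphi)(e)=\gamma(\varphi(\gamma^{-1}(e)))\equiv\varphi(e)\pmod{\mathfrak{m}_{R_{m}}^{n+1}}$, as $\gamma^{-1}\in\Gamma_{2n+m+1}$ as well.

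The heart of the argument is the case $s=1$. Applying Theorem \ref{ctsthm} with $2n+1$ in place of $n$, one obtains that $\Gamma_{2n+m+1}$ acts trivially on $R_{m}/\mathfrak{m}_{R_{m}}^{2n+2}$ and that $[\gamma]\equiv\rho^{(m)}\circ\gamma^{-1}\circ(\rho^{(m)})^{-1}\pmod{\mathfrak{m}_{R_{m}}^{2n+2}}$. Since $\gamma^{-1}-1\in\varpi^{2n+m+1}\textnormal{End}(H_{0})$, the right hand side takes the form $1+\varpi^{2n+m+1}\tilde{\eta}$ for some $\tilde{\eta}\in\textnormal{End}(\mathbb{H}_{0}^{(m)})$. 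Taking the formal derivative at $X=0$ yields $[\gamma]'(0)\in 1+\mathfrak{m}_{R_{m}}^{2n+m+1}\subseteq 1+\mathfrak{m}_{R_{m}}^{n+1}$, so the $R_{m}$-linear map $\textnormal{Lie}([\gamma])$, which is multiplication by $[\gamma]'(0)$, is the identity on $\textnormal{Lie}(\mathbb{H}^{(m)})/\mathfrak{m}_{R_{m}}^{n+1}\textnormal{Lie}(\mathbb{H}^{(m)})$. The map $\gamma_{*}$ sends $r\frac{d}{dX}$ to $\gamma(r)\frac{d}{dX}$, and is also the identity on this quotient thanks to the triviality of $\gamma$ on $R_{m}/\mathfrak{m}_{R_{m}}^{2n+2}$. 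Composing the two factors settles the $s=1$ case.

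For continuity, given $(\gamma_{0},\delta_{0})$ and $n\geq 0$, the product $\gamma_{0}\Gamma_{2n+m+1}\times(\delta_{0}+\mathfrak{m}_{R_{m}}^{n+1}\textnormal{Lie}(\mathbb{H}^{(m)})^{\otimes s})$ is an open neighborhood of $(\gamma_{0},\delta_{0})$, and the decomposition $\gamma_{0}\gamma(\delta_{0}+\tau)-\gamma_{0}\delta_{0}=\gamma_{0}(\gamma\delta_{0}-\delta_{0})+\gamma_{0}\gamma(\tau)$ lies inside $\mathfrak{m}_{R_{m}}^{n+1}\textnormal{Lie}(\mathbb{H}^{(m)})^{\otimes s}$ by the triviality result and the $\Gamma$-stability of this submodule. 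I expect the main obstacle to be the derivative computation of $[\gamma]'(0)$ above, and in particular the realization that one must apply Theorem \ref{ctsthm} with $n$ replaced by $2n+1$; it is this index shift, combined with the extra $\varpi^{m}$ supplied by Proposition \ref{subringprop}, that dictates the exponent $2n+m+1$ appearing in the statement.
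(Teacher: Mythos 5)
Your proposal is correct and follows essentially the same route as the paper: reduction to $s=1$, the identification $[\gamma]\equiv\rho^{(m)}\circ\gamma^{-1}\circ(\rho^{(m)})^{-1}$ coming from Theorem \ref{ctsthm}, and the observation via Proposition \ref{subringprop} that the linear coefficient of this automorphism is congruent to $1$ modulo $\mathfrak{m}_{R_{m}}^{n+1}$. The only cosmetic difference is that you identify $[\gamma]$ modulo $\mathfrak{m}_{R_{m}}^{2n+2}$ and phrase the final step through the derivative $[\gamma]'(0)$, whereas the paper works modulo $\mathfrak{m}_{R_{m}}^{n+1}$ throughout and evaluates tangent vectors on $\overline{X+\varpi^{n+m+1}\varphi(X)}$; these amount to the same computation.
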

\begin{proof}
If we assume the assertion to be true for $s=1$, then by the definition of the action, it is easy to see that it holds for all positive $s$. On the other hand, let $\overline{\varphi}\in\textnormal{Lie}(\mathbb{H}^{(m)})^{\otimes -1}/\mathfrak{m}_{R_{m}}^{n+1}\textnormal{Lie}(\mathbb{H}^{(m)})^{\otimes -1}$ and $\gamma\in\Gamma_{2n+m+1}$. Then by assumption, $\gamma(\delta)-\delta\in\mathfrak{m}_{R_{m}}^{n+1}\text{Lie}(\mathbb{H}^{(m)})$. Write $\gamma^{-1}(\delta)=\delta+\sum_{i=1}^{r}\alpha_{i}\eta_{i}$ with $\alpha_{i}\in\mathfrak{m}_{R_{m}}^{n+1}$ and $\eta_{i}\in\text{Lie}(\mathbb{H}^{(m)})$. Then \begin{align*}
(\varphi-\gamma(\varphi))(\delta)=\varphi(\delta)-\gamma(\varphi)(\delta)&=\varphi(\delta)-\gamma(\varphi(\gamma^{-1}(\delta)))\\&=\varphi(\delta)-\gamma(\varphi(\delta+\sum_{i=1}^{r}\alpha_{i}\eta_{i}))\\&=\varphi(\delta)-\gamma(\varphi(\delta))-\sum_{i=1}^{r}\gamma(\alpha_{i})\gamma(\varphi(\eta_{i})).
\end{align*}  Since $2n+m+1\geq n+m$, by Theorem \ref{ctsthm}, we have $\varphi(\delta)-\gamma(\varphi(\delta))\in\mathfrak{m}_{R_{m}}^{n+1}$. Also $\gamma(\alpha_{i})\in\mathfrak{m}_{R_{m}}^{n+1}$. Therefore $
(\varphi-\gamma(\varphi))(\delta)\in\mathfrak{m}_{R_{m}}^{n+1}$. If $\delta_{0}$ is a basis of $\text{Lie}(\mathbb{H}^{(m)})$ over $R_{m}$, and $\psi\in\textnormal{Lie}(\mathbb{H}^{(m)})^{\otimes -1}$ is defined by $\psi(\delta_{0})=1$, then $\varphi-\gamma(\varphi)=(\varphi-\gamma(\varphi))(\delta_{0})\psi\in\mathfrak{m}_{R_{m}}^{n+1}\textnormal{Lie}(\mathbb{H}^{(m)})^{\otimes -1}$. Thus $\overline{\varphi}=\gamma(\overline{\varphi})$. A similar argument like this can be used to show that the assertion is true for all higher negative $s$. Hence it is sufficient to prove the theorem for $s=1$. \\  
\indent  Let $\gamma\in\Gamma_{2n+m+1}$. By identifying $\textnormal{Lie}(\mathbb{H}^{(m)})/\mathfrak{m}_{R_{m}}^{n+1}\textnormal{Lie}(\mathbb{H}^{(m)})=\textnormal{Lie}(\mathbb{H}^{(m)}_{n})$, Theorem~\ref{ctsthm} and its proof show that the map $\gamma\mod{\mathfrak{m}_{R_{m}}^{n+1}}:\textnormal{Lie}(\mathbb{H}^{(m)}_{n})\longrightarrow\textnormal{Lie}(\mathbb{H}^{(m)}_{n})$ is given by $\textnormal{Lie}(\rho^{(m)}\circ\gamma^{-1}\circ(\rho^{(m)})^{-1})$ where $\rho^{(m)}\circ\gamma^{-1}\circ(\rho^{(m)})^{-1}\in 1+\varpi^{2n+m+1}\textnormal{End}(\mathbb{H}^{(m)}_{0})$ $\subseteq 1+\varpi^{n+m+1}\textnormal{End}(\mathbb{H}^{(m)}_{n})$. Therefore it suffices to show that the natural action of $1+\varpi^{n+m+1}\textnormal{End}(\mathbb{H}^{(m)}_{n})\subset\textnormal{End}(\mathbb{H}^{(m)}_{n})$ on $\textnormal{Lie}(\mathbb{H}_{n}^{(m)})$ is trivial. However, if $\varphi\in\textnormal{End}(\mathbb{H}^{(m)}_{n})$ and $\delta\in\textnormal{Lie}(\mathbb{H}^{(m)}_{n})$, then \begin{align*}
(\textnormal{Lie}(1+\varpi^{n+m+1}\varphi)(\delta))(\overline{X})&=\delta(\overline{(1+\varpi^{n+m+1}\varphi)(X)})\\&=\delta(\overline{X+_{\mathbb{H}^{(m)}_{n}}\varpi^{n+m+1}\varphi(X)})\\&=\delta(\overline{X+\varpi^{n+m+1}\varphi(X)})\\&=\delta(\overline{X})\end{align*} because $\varpi^{n+m+1}\in\mathfrak{m}_{R_{m}}^{n+1}$.
\end{proof}
\begin{remark}\label{iwasawa_alg_action} The $\Gamma$-action on $\textnormal{Lie}(\mathbb{H}^{(m)})^{\otimes s}$ gives rise to an action of the group ring $\breve{\mathfrak{o}}[\Gamma]$ on $\textnormal{Lie}(\mathbb{H}^{(m)})^{\otimes s}$. By Theorem \ref{ctsthm2}, the induced action of $\breve{\mathfrak{o}}[\Gamma]$ on \linebreak $\textnormal{Lie}(\mathbb{H}^{(m)})^{\otimes s}/\mathfrak{m}_{R_{m}}^{n+1}\textnormal{Lie}(\mathbb{H}^{(m)})^{\otimes s}$ factors through $(\breve{\mathfrak{o}}/\varpi^{n+1}\breve{\mathfrak{o}})[\Gamma/\Gamma_{2n+m+1}]$ such that the following diagram with the horizontal action maps and the vertical reduction maps commutes for all $n$.
\begin{displaymath}
\xymatrix@C=70pt{
\frac{\breve{\mathfrak{o}}}{(\varpi^{n+1})}\big[\frac{\Gamma}{\Gamma_{2n+m+1}}\big]\times\frac{\textnormal{Lie}(\mathbb{H}^{(m)})^{\otimes s}}{\mathfrak{m}_{R_{m}}^{n+1}\textnormal{Lie}(\mathbb{H}^{(m)})^{\otimes s}}\ar[d]\ar[r]&\frac{\textnormal{Lie}(\mathbb{H}^{(m)})^{\otimes s}}{\mathfrak{m}_{R_{m}}^{n+1}\textnormal{Lie}(\mathbb{H}^{(m)})^{\otimes s}}\ar[d]\\
\frac{\breve{\mathfrak{o}}}{(\varpi^{n})}\big[\frac{\Gamma}{\Gamma_{2(n-1)+m+1}}\big]\times\frac{\textnormal{Lie}(\mathbb{H}^{(m)})^{\otimes s}}{\mathfrak{m}_{R_{m}}^{n}\textnormal{Lie}(\mathbb{H}^{(m)})^{\otimes s}}\ar[r]&\frac{\textnormal{Lie}(\mathbb{H}^{(m)})^{\otimes s}}{\mathfrak{m}_{R_{m}}^{n}\textnormal{Lie}(\mathbb{H}^{(m)})^{\otimes s}}}
\end{displaymath}
Taking projective limits over $n$, we obtain an action of the Iwasawa algebra $\breve{\mathfrak{o}}[[\Gamma]]$ on $\textnormal{Lie}(\mathbb{H}^{(m)})^{\otimes s}$ that extends the action of $\Gamma$. 
\end{remark}
\subsection{Rigidification and the equivariant vector bundles}
\label{Rigidification and the equivariant vector bundles}
\indent Berthelot's rigidification functor associates to every locally Noetherian adic formal $\breve{\mathfrak{o}}$-scheme whose reduction is a scheme locally of finite type over Spec$(\overline{k})$, a rigid $\breve{K}$-analytic space (cf. \cite{jong}, Section 7). For an affine formal $\breve{\mathfrak{o}}$-scheme $\text{Spf}(A)$, there is a bijection between the closed points of its generic fibre $\text{Spec}(A\otimes_{\breve{\mathfrak{o}}}\breve{K})$ and the points of the associated rigid analytic space. Let's denote by $X^{\textnormal{rig}}_{m}$ the rigidification of the affine formal $\breve{\mathfrak{o}}$-scheme $X_{m}=\textnormal{Spf}(R_{m})$ under Berthelot's functor, and by $R^{\textnormal{rig}}_{m}:=\mathcal{O}_{X^{\textnormal{rig}}_{m}}(X^{\textnormal{rig}}_{m})$ the $\breve{K}$-algebra of the global rigid analytic functions on $X^{\textnormal{rig}}_{m}$.\\
\indent  By functoriality, $X^{\text{rig}}_{m}$ and $R^{\textnormal{rig}}_{m}$ carry commuting (left) actions of $\Gamma$ and $G_{0}$, thus an action of the product group $\Gamma\times G_{0}$, and the $G_{0}$-action factors through $G_{0}/G_{m}$. For $m'\geq m\geq 0$, let \begin{equation*}
\pi_{m',m}:X^{\text{rig}}_{m'}\longrightarrow X^{\text{rig}}_{m}
\end{equation*} denote the morphism of rigid analytic spaces induced by Part 2, Theorem \ref{fundthm} and by functoriality. It follows from the properties of the rigidification functor that the morphism $\pi_{m',m}$ is a finite \'{e}tale Galois covering with Galois group $G_{m}/G_{m'}$ (cf. \cite{jong}, Section 7). Consequently, the ring extension $R^{\text{rig}}_{m'}\big|R^{\text{rig}}_{m}$ is finite Galois with Galois group $G_{m}/G_{m'}$. We note that all covering morphisms are $(\Gamma\times G_{0})$-equivariant.\\
\indent  It follows from the isomorphism $R_{0}\cong\breve{\mathfrak{o}}[[u_{1},\ldots,u_{h-1}]]$ that $X^{\textnormal{rig}}_{0}$ is isomorphic to the $(h-1)$-dimensional rigid analytic open unit polydisc over $\breve{K}$, and the isomorphism $R_{0}\cong\breve{\mathfrak{o}}[[u_{1},\ldots,u_{h-1}]]$ extends to an isomorphism \begin{equation}
\text{$R^{\textnormal{rig}}_{0}\cong\Big\lbrace\sum_{\alpha\in\mathbb{N}_{0}^{h-1}}c_{\alpha}u^{\alpha}\mid c_{\alpha}\in\breve{K}$ and $\lim_{|\alpha|\to\infty}|c_{\alpha}|r^{|\alpha|}=0$ for all $0<r<1$\Big\rbrace}
\end{equation} of $\breve{K}$-algebras. This allows us to view $R^{\textnormal{rig}}_{0}$ as a topological $\breve{K}$-Fr\'{e}chet algebra whose topology is defined by the family of norms $\|\cdot\|_{l}$, given by \begin{equation*}
\Big\|\sum_{\alpha\in\mathbb{N}_{0}^{h-1}}c_{\alpha}u^{\alpha}\Big\|_{l}:=\sup_{\alpha\in\mathbb{N}_{0}^{h-1}}\lbrace\vert c_{\alpha}\vert\vert \varpi\vert^{\vert \alpha\vert /l}\rbrace
\end{equation*} for any positive integer $l$. Let $R^{\textnormal{rig}}_{0,l}$ be the completion of $R^{\textnormal{rig}}_{0}$ with respect to the norm $\|\cdot\|_{l}$. Then \begin{equation*}
R^{\text{rig}}_{0,l}\cong\Big\lbrace\sum_{\alpha\in\mathbb{N}_{0}^{h-1}}c_{\alpha}u^{\alpha}\big|c_{\alpha}\in\breve{K},\lim_{|\alpha|\to\infty}|c_{\alpha}||\varpi|^{|\alpha|/l}=0\Big\rbrace
\end{equation*} is the $\breve{K}$-Banach algebra of rigid analytic functions on the affinoid subdomain \begin{equation*}
\text{$\mathbb{B}_{l}:=\lbrace x\in X^{\textnormal{rig}}_{0}\mid |u_{i}(x)|\leq |\varpi|^{1/l}$ for all $1\leq i\leq h-1\rbrace$}
\end{equation*} of $X^{\textnormal{rig}}_{0}$. Further, $R^{\textnormal{rig}}_{0}\cong\varprojlim_{l}R^{\textnormal{rig}}_{0,l}$ is the topological projective limit of the $\breve{K}$-Banach algebras $R^{\textnormal{rig}}_{0,l}$.\\
\indent  Since $R_{0}$ is a local ring, the finite flat $R_{0}$-module $R_{m}$ is free by \cite{matcrt}, Theorem 7.10, and has rank $r=|G_{0}/G_{m}|$. Let $R^{\text{rig}}_{m,l}$ denote the affinoid $\breve{K}$-algebra of the rigid analytic functions on the affinoid subdomain $\mathbb{B}_{m,l}:=\pi_{m,0}^{-1}(\mathbb{B}_{l})$ of $X^{\text{rig}}_{m}$. Then by \cite{jong}, Lemma 7.2.2, we have \begin{equation}\label{rigidification@l_is_a_basechange}
R^{\text{rig}}_{m,l}\cong R_{m}\otimes_{R_{0}}R^{\text{rig}}_{0,l}
\end{equation}  as $R_{m}|R_{0}$ is finite. Let us fix a basis $\lbrace e_{1},\cdots, e_{r}\rbrace$ of $R_{m}$ over $R_{0}$ and view it as an $R^{\textnormal{rig}}_{0, l}$-basis of $R^{\textnormal{rig}}_{m,l}=R_{m}\otimes_{R_{0}}R^{\textnormal{rig}}_{0,l}$. The next lemma shows that $R^{\textnormal{rig}}_{m,l}$ is a $\breve{K}$-Banach algebra with respect to the norm $\|(f_{1}e_{1}+\cdots +f_{r}e_{r})\|_{l}:=\max_{1\leq i\leq r}\lbrace\|f_{i}\|_{l}\rbrace$, where $f_{i}\in R^{\textnormal{rig}}_{0,l}$ for all $i$, by showing that it is indeed an algebra norm.   
\begin{lemma}\label{lnorm_is_algebra_norm} Let $f=f_{1}e_{1}+\cdots +f_{r}e_{r}$, $g=g_{1}e_{1}+\cdots +g_{r}e_{r}\in R^{\textnormal{rig}}_{m,l}$. Then $\| fg\|_{l}\leq\| f\|_{l}\| g\|_{l}$.
\end{lemma}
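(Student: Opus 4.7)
The plan is to reduce the estimate to the submultiplicativity of the Gauss-type norm on the base Banach algebra $R^{\textnormal{rig}}_{0,l}$, by expanding the product $fg$ in the chosen $R^{\textnormal{rig}}_{0,l}$-basis $\{e_1,\dots,e_r\}$ of $R^{\textnormal{rig}}_{m,l}$. Since $R_m$ is free over $R_0$ with basis $\{e_1,\dots,e_r\}$, there exist structure constants $c_{ij}^{k}\in R_{0}$ such that
\begin{equation*}
e_{i}e_{j}=\sum_{k=1}^{r}c_{ij}^{k}e_{k}\qquad\text{for all }1\leq i,j\leq r.
\end{equation*}
Then $fg=\sum_{i,j}f_{i}g_{j}(e_{i}e_{j})=\sum_{k}\bigl(\sum_{i,j}f_{i}g_{j}c_{ij}^{k}\bigr)e_{k}$, so by definition of $\|\cdot\|_{l}$ on $R^{\textnormal{rig}}_{m,l}$ we have $\|fg\|_{l}=\max_{k}\|\sum_{i,j}f_{i}g_{j}c_{ij}^{k}\|_{l}$.

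The key observation is that every element of $R_{0}\simeq\breve{\mathfrak{o}}[[u_{1},\dots,u_{h-1}]]$ satisfies $\|\cdot\|_{l}\leq 1$, because its power series coefficients lie in $\breve{\mathfrak{o}}$ and $|\varpi|^{|\alpha|/l}\leq 1$ for every multi-index $\alpha$. In particular, $\|c_{ij}^{k}\|_{l}\leq 1$ for all $i,j,k$. Combining this with the ultrametric inequality and the already established submultiplicativity of $\|\cdot\|_{l}$ on the $\breve{K}$-Banach algebra $R^{\textnormal{rig}}_{0,l}$, I obtain
\begin{equation*}
\Bigl\|\sum_{i,j}f_{i}g_{j}c_{ij}^{k}\Bigr\|_{l}\leq\max_{i,j}\|f_{i}\|_{l}\,\|g_{j}\|_{l}\,\|c_{ij}^{k}\|_{l}\leq\Bigl(\max_{i}\|f_{i}\|_{l}\Bigr)\Bigl(\max_{j}\|g_{j}\|_{l}\Bigr)=\|f\|_{l}\,\|g\|_{l},
\end{equation*}
and taking the maximum over $k$ yields the desired inequality $\|fg\|_{l}\leq\|f\|_{l}\|g\|_{l}$.

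There is no real obstacle here; the only point worth flagging is the routine verification that $\|\cdot\|_{l}$ takes values $\leq 1$ on $R_{0}$, which is immediate from the isomorphism $R_{0}\simeq\breve{\mathfrak{o}}[[u_{1},\dots,u_{h-1}]]$ and the explicit definition of $\|\cdot\|_{l}$ recalled earlier in \S\ref{Rigidification and the equivariant vector bundles}. The argument makes no use of the level $m$ beyond the freeness of $R_{m}$ over $R_{0}$ and the fact that the structure constants of the chosen basis lie in $R_{0}$.
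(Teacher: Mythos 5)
Your argument is correct and is essentially identical to the paper's proof: both expand $e_{i}e_{j}$ in the $R_{0}$-basis, observe that the structure constants lie in $R_{0}$ and hence have $\|\cdot\|_{l}\leq 1$, and conclude via the ultrametric inequality together with the (sub)multiplicativity of $\|\cdot\|_{l}$ on $R^{\textnormal{rig}}_{0,l}$. The only cosmetic difference is that the paper invokes the multiplicativity of the Gauss norm on $R^{\textnormal{rig}}_{0,l}$ where you use only submultiplicativity, which suffices.
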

\begin{proof} 
Let $e_{i}e_{j}=\sum_{k=1}^{r}a_{ijk}e_{k}$ for all $1\leq i,j \leq r$. Note that $a_{ijk}\in R_{0}=\breve{\mathfrak{o}}[[u_{1},\dots ,u_{h-1}]]$ and thus $\| a_{ijk}\|_{l}\leq 1$ for all $1\leq i,j,k \leq r$. Also $\|\cdot\|_{l}$ is multiplicative on $R^{\textnormal{rig}}_{0,l}$. Therefore \begin{align*}
\| fg\|_{l}=\max_{1\leq k\leq r}\Big\lbrace\Big\|\sum_{1\leq i,j\leq r}f_{i}g_{j}a_{ijk}\Big\|_{l}\Big\rbrace &\leq\max_{1\leq k\leq r}\Big\lbrace\max_{1\leq i,j\leq r}\| f_{i}g_{j}a_{ijk}\|_{l}\Big\rbrace\\&\leq\max_{1\leq i,j\leq r}\lbrace\| f_{i}\|_{l}\| g_{j}\|_{l}\rbrace\leq\| f\|_{l}\| g\|_{l}.
\end{align*}
\end{proof}
\indent  It then follows from \cite{bgr}, (6.1.3), Proposition 2 that the affinoid topology on $R^{\text{rig}}_{m,l}$ coincides with Banach topology given by the aforementioned norm $\|\cdot\|_{l}$. The natural maps $R^{\textnormal{rig}}_{m,l+1}\longrightarrow R^{\textnormal{rig}}_{m,l}$ induced from $R^{\textnormal{rig}}_{0,l+1}\longrightarrow R^{\textnormal{rig}}_{0,l}$ endow the projective limit \begin{equation*}
 R^{\textnormal{rig}}_{m}\cong\varprojlim_{l}R^{\textnormal{rig}}_{m,l}
\end{equation*} with the structure of a $\breve{K}$-Fr\'{e}chet algebra. Indeed, this projective limit is isomorphic to the $\breve{K}$-algebra of global rigid analytic functions on $X^{\textnormal{rig}}_{m}$ by \cite{jong}, Lemma 7.2.2. Thus we have \begin{equation}\label{rigidification_is_a_basechange}
R^{\textnormal{rig}}_{m}\cong R_{m}\otimes_{R_{0}}R^{\textnormal{rig}}_{0}
\end{equation}  as $R_{m}$ is a finite free $R_{0}$-module, and $R^{\textnormal{rig}}_{m,l}$ can be viewed as the Banach completion of $R^{\textnormal{rig}}_{m}$ with respect to the norm $\|\cdot\|_{l}$ defined as before by $\|(f_{1}e_{1}+\cdots +f_{r}e_{r})\|_{l}:=\max_{1\leq i\leq r}\lbrace\|f_{i}\|_{l}\rbrace$, with $f_{i}\in R^{\textnormal{rig}}_{0}$. \\
\indent  The $\Gamma$-action on $X^{\text{rig}}_{0}$ stabilizes the affinoid subdomains $\mathbb{B}_{l}$ for all positive integers $l$. Indeed, since $\gamma(u_{i})$ belongs to the maximal ideal $(\varpi,u_{1},\dots ,u_{h-1})$ of $R_{0}$, $\|\gamma(u_{i})\|_{l}\leq|\varpi|^{1/l}$ for all $1\leq i\leq h-1$. This implies that $\|\gamma(f)\|_{l}\leq\|f\|_{l}$ for all $f\in R^{\text{rig}}_{0}$. Thus the $\Gamma$-action on $R^{\text{rig}}_{0}$ extends to its completion $R^{\text{rig}}_{0,l}$ for all positive integers $l$. As a consequence, the affinoid subdomains $\mathbb{B}_{m,l}$ of $X^{\text{rig}}_{m}$ are stable under the $(\Gamma\times G_{0})$-action for all $m$ and $l$, and the isomorphism (\ref{rigidification@l_is_a_basechange}) is $(\Gamma\times G_{0})$-equivariant for the diagonal $(\Gamma\times G_{0})$-action on the right. Similarly the isomorphism (\ref{rigidification_is_a_basechange}) is also $(\Gamma\times G_{0})$-equivariant for the diagonal $(\Gamma\times G_{0})$-action on the right.
\begin{remark} By \cite{bgr}, (6.1.3), Theorem 1, the $\breve{K}$-algebra automorphism of an affinoid $\breve{K}$-algebra $R^{\text{rig}}_{m,l}$ corresponding to $(\gamma,g)\in\Gamma\times G_{0}$ is automatically continuous for its $\breve{K}$-Banach topology defined by the norm $\|\cdot\|_{l}$. Since the $\breve{K}$-Fr\'{e}chet topology of $R^{\text{rig}}_{m}$ is given by the family of norms $\|\cdot\|_{l}$, $l\in\mathbb{N}$, the group $\Gamma\times G_{0}$ acts on $R^{\text{rig}}_{m}$ by continuous $\breve{K}$-algebra automorphisms for all $m\geq 0$.
\end{remark} 
\indent  Now recall from \cite{gh}, Section 15 that a $\Gamma$-equivariant vector bundle $\mathcal{M}$ on the formal scheme $X_{m}$ is a locally free $\mathcal{O}_{X_{m}}$-module $\mathcal{M}$ of finite rank equipped with a (left) $\Gamma$-action that is compatible with the $\Gamma$-action on $X_{m}$. Since $X_{m}$ is formally affine, a $\Gamma$-equivariant vector bundle $\mathcal{M}$ on $X_{m}$ is completely determined by its global sections $\mathcal{M}(X_{m})$. Hence, for all $s\in\mathbb{Z}$ and $m\geq 0$, the free $R_{m}$-module $\textnormal{Lie}(\mathbb{H}^{(m)})^{\otimes s}$ of rank 1 equipped with a semilinear $\Gamma$-action gives rise to a $\Gamma$-equivariant line bundle \begin{equation*}
\mathcal{M}^{s}_{m}:=\mathcal{L}\textnormal{ie}(\mathbb{H}^{(m)})^{\otimes s}
\end{equation*} on $X_{m}$. Its rigidification $(\mathcal{M}^{s}_{m})^{\textnormal{rig}}$ is a locally free $\mathcal{O}_{X^{\textnormal{rig}}_{m}}$-module of rank 1 by \cite{jong}, 7.1.11. Let \begin{equation*}
 M^{s}_{m}:=(\mathcal{M}^{s}_{m})^{\textnormal{rig}}(X^{\textnormal{rig}}_{m})
\end{equation*} denote its global sections. Because of the fact that $X_{m}$ is affine, the natural map \begin{equation}\label{G_eqv_iso_eqn1}
R^{\textnormal{rig}}_{m}\otimes_{R_{m}}\textnormal{Lie}(\mathbb{H}^{(m)})^{\otimes s}\longrightarrow M^{s}_{m}
\end{equation} is an isomorphism.
By functoriality, $\Gamma$ acts on $(\mathcal{M}^{s}_{m})^{\textnormal{rig}}$ in such a way that the map (\ref{G_eqv_iso_eqn1}) is $\Gamma$-equivariant for the diagonal $\Gamma$-action on the left and for the $\Gamma$-action induced by functoriality on the right. In particular, the $\Gamma$-action on $M^{s}_{m}$ is semilinear for its action on $R^{\text{rig}}_{m}$, and thus $(\mathcal{M}^{s}_{m})^{\text{rig}}$ is a rigid $\Gamma$-equivariant line bundle on $X^{\text{rig}}_{m}$. In a similar fashion, it can be seen that $(\mathcal{M}^{s}_{m})^{\text{rig}}$ is a rigid $G_{0}$-equivariant line bundle on $X^{\text{rig}}_{m}$, and the actions of $\Gamma$ and $G_{0}$ commute (cf. Remark \ref{Gamma_and_G0_actions_commute}). By functoriality again, the $G_{0}$-action on $(\mathcal{M}^{s}_{m})^{\text{rig}}$ factors through the quotient group $G_{0}/G_{m}$.\\
\indent  For all $s,m$ and $l$, set $M^{s}_{m,l}:=(\mathcal{M}^{s}_{m})^{\text{rig}}(\mathbb{B}_{m,l})$. Then $M^{s}_{m,l}$ is a free $R^{\textnormal{rig}}_{m,l}$-module of rank 1 for which the natural $R^{\textnormal{rig}}_{m,l}$-linear map \begin{equation}\label{G_eqv_iso_eqn3}
R^{\textnormal{rig}}_{m,l}\otimes_{R_{m}}\textnormal{Lie}(\mathbb{H}^{(m)})^{\otimes s}\longrightarrow M^{s}_{m,l}
\end{equation} is an isomorphism (cf. \cite{jong}, 7.1.11), and is $(\Gamma\times G_{0})$-equivariant for the diagonal $(\Gamma\times G_{0})$-action on the left. Endowing $M^{s}_{m}$ and $M^{s}_{m,l}$ with the natural topologies of finitely generated modules over $R^{\textnormal{rig}}_{m}$ and  $R^{\textnormal{rig}}_{m,l}$ respectively, makes them a $\breve{K}$-Fr\'{e}chet space and a $\breve{K}$-Banach space respectively. One then has a topological isomorphism \begin{equation*}
M^{s}_{m}\cong\varprojlim_{l}M^{s}_{m,l}
\end{equation*} for the projective limit topology on the right, and the group $\Gamma\times G_{0}$ acts on $M^{s}_{m}$ by continuous $\breve{K}$-vector space automorphisms for all $s$ and $m$.
\section{Locally analytic representations at higher levels}
\indent  We saw in the previous section that the group $\Gamma=\text{Aut}(H_{0})$ acts on the Fr\'{e}chet space  $M^{s}_{m}$ of the global sections of the rigid $\Gamma$-equivariant line bundle $(\mathcal{M}^{s}_{m})^{\text{rig}}$ over $X^{\text{rig}}_{m}$ by continuous vector space automorphisms. The goal of this section is to show that the strong topological $\breve{K}$-linear dual $(M^{s}_{m})'_{b}$ of $M^{s}_{m}$ with the induced $\Gamma$-action is a locally $K$-analytic representation of $\Gamma$ for all $s\in\mathbb{Z}$ and levels $m\geq 0$.\footnote{In the notation $(M^{s}_{m})'_{b}$, the superscript $'$ indicates the continuous linear dual of the topological vector space $M^{s}_{m}$, while the subscript $b$ stands for bounded and implies that $(M^{s}_{m})'_{b}$ is equipped with the topology of bounded convergence, i.e., the strong topology.} Recall from \cite{stladist}, Section 3 that a locally $K$-analytic representation $V$ of $\Gamma$ (over $\breve{K}$) is a barrelled Hausdorff locally convex $\breve{K}$-vector space equipped with a $\Gamma$-action by continuous linear endomorphisms such that for each $v\in V$, the orbit map $\rho_{v}=(g\mapsto g(v))\in C^{an}(\Gamma, V)$, i.e., $\rho_{v}$ is a $V$-valued locally $K$-analytic function on $\Gamma$.\\
\indent As stated in the introduction, to show the local $K$-analyticity, we are going to make use of the Gross-Hopkins' period morphism $\Phi$ and of the fundamental domain $D$. The next subsection is intended to give a brief overview of these two technical tools.
\subsection{The period morphism and the Gross-Hopkins fundamental domain}
\label{Phi&D}
Let $\mathbb{E}^{(m)}$ denote \emph{the universal additive extension} of the universal formal $\mathfrak{o}$-module $\mathbb{H}^{(m)}$. It sits in the exact sequence \begin{equation}\label{uniaddextses}
0\longrightarrow \textnormal{Lie}((\mathbb{H}^{(m)})')\xrightarrow{\alpha^{(m)}}\textnormal{Lie}(\mathbb{E}^{(m)})\xrightarrow{\beta^{(m)}}\textnormal{Lie}(\mathbb{H}^{(m)})\longrightarrow 0
\end{equation} where $(\mathbb{H}^{(m)})'=\mathbb{G}_{a}\otimes_{R_{m}}\text{Hom}_{R_{m}}(\text{Ext}(\mathbb{H}^{(m)},\mathbb{G}_{a}), R_{m})$ is the additive formal $\mathfrak{o}$-module of dimension $h-1$ associated with the free $R_{m}$-module dual to the module $\text{Ext}(\mathbb{H}^{(m)},\mathbb{G}_{a})$ of extensions of $\mathbb{H}^{(m)}$ by $\mathbb{G}_{a}$ (cf. \cite{gh}, Section 11). The universality induces commuting semi-linear actions of $\Gamma$ and $G_{0}$ on $\textnormal{Lie}((\mathbb{H}^{(m)})')$ and on $\textnormal{Lie}(\mathbb{E}^{(m)})$ for which the maps $\alpha^{(m)}$ and $\beta^{(m)}$ are equivariant, and the $G_{0}$-action factors through $G_{0}/G_{m}$. By rigidification, the short exact sequence (\ref{uniaddextses}) gives rise to an exact sequence \begin{equation*}
 0\longrightarrow(\mathcal{L}\text{ie}((\mathbb{H}^{(m)})')^{\otimes s})^{\text{rig}}\longrightarrow(\mathcal{L}\text{ie}(\mathbb{E}^{(m)})^{\otimes s})^{\text{rig}}\longrightarrow(\mathcal{M}^{s}_{m})^{\text{rig}}\longrightarrow 0
\end{equation*} of the corresponding rigid $(\Gamma\times G_{0})$-equivariant line bundles on $X^{\text{rig}}_{m}$ for all non-negative $s$, and for negative $s$ in the opposite direction. Since $X_{m}$ is an affine formal scheme, by taking global sections, we get an exact sequence \begin{equation}\label{exact_seq_of_global_sections_of_eqv_bundles}
0\longrightarrow R^{\text{rig}}_{m}\otimes_{R_{m}}\text{Lie}((\mathbb{H}^{(m)})')^{\otimes s}\longrightarrow R^{\text{rig}}_{m}\otimes_{R_{m}}\text{Lie}(\mathbb{E}^{(m)})^{\otimes s}\longrightarrow M^{s}_{m}\longrightarrow 0
\end{equation} of $\breve{K}$-linear $(\Gamma\times G_{0})$-representations for $s\geq 0$, and in the opposite direction for $s<0$.
\begin{proposition}\label{Lie(E0)_is_generically_flat} The $\Gamma$-equivariant line bundle $\mathcal{L}\textnormal{ie}(\mathbb{E}^{(0)})$ is generically flat, i.e., there exists a basis $\lbrace c_{0},c_{1},\ldots ,c_{h-1}\rbrace$ of $R^{\textnormal{rig}}_{0}\otimes_{R_{0}}\textnormal{Lie}(\mathbb{E}^{(0)})$ over $R^{\textnormal{rig}}_{0}$ such that the $\breve{K}$-subspace of $R^{\textnormal{rig}}_{0}\otimes_{R_{0}}\textnormal{Lie}(\mathbb{E}^{(0)})$ spanned by $c_{i}$'s is $\Gamma$-stable. Let $B_{h}\otimes_{K_{h}}\breve{K}$ be the $h$-dimensional $\breve{K}$-linear $\Gamma$-representation where the action of $\Gamma\cong\mathfrak{o}_{B_{h}}^{\times}$ is given by left multiplication. Then we have an isomorphism 
\begin{equation}\label{Lie(E0)_is_generically_flat_iso}
R^{\textnormal{rig}}_{0}\otimes_{R_{0}}\textnormal{Lie}(\mathbb{E}^{(0)})\cong R^{\textnormal{rig}}_{0}\otimes_{\breve{K}}(B_{h}\otimes_{K_{h}}\breve{K})
\end{equation} of $R^{\textnormal{rig}}_{0}[\Gamma]$-modules with $\Gamma$ acting diagonally on both sides.
\end{proposition}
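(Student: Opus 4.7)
The plan is to construct a canonical $\breve{K}$-linear, $\Gamma$-equivariant embedding
\begin{equation*}
\iota \colon B_{h} \otimes_{K_{h}} \breve{K} \hookrightarrow R^{\textnormal{rig}}_{0} \otimes_{R_{0}} \textnormal{Lie}(\mathbb{E}^{(0)})
\end{equation*}
whose $R^{\textnormal{rig}}_{0}$-linear extension is the required isomorphism (\ref{Lie(E0)_is_generically_flat_iso}). The two key ingredients are: promoting the semilinear $\Gamma$-action on $\textnormal{Lie}(\mathbb{E}^{(0)})$ to an $R^{\textnormal{rig}}_{0}$-linear action of the full division algebra $B_{h}$ on the rigidification, and then trivializing this $B_{h}$-module using a cyclic generator inherited from the special fibre.

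First I would extend the $\Gamma$-action to a $B_{h}$-action. The natural action of $\mathfrak{o}_{B_{h}} \simeq \textnormal{End}(H_{0})$ on the special fibre $E_{0} := \mathbb{E}^{(0)} \otimes_{R_{0}} k^{\textnormal{sep}}$ arises from the universal property of $E_{0}$ as the universal additive extension of $H_{0}$, in exactly the same manner as the semilinear actions of Remark \ref{Group_actions_on_Lie(E)}. To propagate this action to $R^{\textnormal{rig}}_{0} \otimes_{R_{0}} \textnormal{Lie}(\mathbb{E}^{(0)})$, I would exploit that after inverting $\varpi$ the additive formal $\mathfrak{o}$-module $\mathbb{E}^{(0)}$ is linearized by its logarithm to the vector formal group $\mathbb{G}_{a}^{h}$ over $R^{\textnormal{rig}}_{0}$, so that $K$-linear endomorphisms of the special-fibre Lie algebra lift uniquely and functorially. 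This is the substance of Gross--Hopkins' explicit coordinate calculation in \cite{gh}, §22--§23, whose result I would invoke. By construction, the restriction of the resulting $B_{h}$-action to $\mathfrak{o}_{B_{h}}^{\times} = \Gamma$ recovers the semilinear $\Gamma$-action defined in §\ref{the_group_actions}.

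Next I would trivialize the $B_{h}$-module. Since $H_{0}$ has height exactly $h$, the $\mathfrak{o}_{B_{h}}$-module $\textnormal{Lie}(E_{0})$ is cyclic of rank one (a standard computation from the classification of formal $\mathfrak{o}$-modules over $k^{\textnormal{sep}}$, cf. \cite{gh}, §14). Fix a generator $\bar\omega$ lying in the diagonal $K_{h}$-isotypic component—chosen so that the inner action of $K_{h} \subset B_{h}$ on $\bar\omega$ agrees with the outer scalar action of $K_{h}$ via the embedding $K_{h} \hookrightarrow \breve{K} \subset R^{\textnormal{rig}}_{0}$—and lift it arbitrarily to $\omega \in \textnormal{Lie}(\mathbb{E}^{(0)})$. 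The assignment $b \otimes \lambda \mapsto \lambda \cdot (b\omega)$ is then a well-defined $\breve{K}$-linear map $\iota$, which is $\Gamma$-equivariant by construction because $\Gamma \subset B_{h}$ acts by left multiplication on both sides. Surjectivity of $\iota$ modulo $\mathfrak{m}_{R_{0}}$ (an immediate consequence of $\bar\omega$ being a cyclic generator) upgrades by Nakayama's lemma to surjectivity of the $R^{\textnormal{rig}}_{0}$-linear extension, and then to the desired isomorphism by comparing $R^{\textnormal{rig}}_{0}$-ranks, both sides being free of rank $h$.

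The main obstacle is the construction of the $B_{h}$-action itself, since non-invertible elements of $\textnormal{End}(H_{0})$ do not act directly on the deformation $\mathbb{H}^{(0)}$. The rigidity that one really needs—that in characteristic zero every $K$-linear endomorphism of the special-fibre Lie algebra of the universal additive extension has a unique lift over $R^{\textnormal{rig}}_{0}$—requires a direct power-series argument of the type carried out by Gross--Hopkins, rather than a purely formal deformation-theoretic manipulation. Once this is granted, the only remaining care is in arranging that $\bar\omega$ sits in the correct $K_{h}$-isotypic component so that $\iota$ factors through the tensor product over $K_{h}$ rather than merely over $K$.
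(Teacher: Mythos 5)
There is a genuine gap, and it sits in both halves of your trivialization step. First, the object you propose to construct — an $R^{\textnormal{rig}}_{0}$-linear action of $B_{h}$ on $R^{\textnormal{rig}}_{0}\otimes_{R_{0}}\textnormal{Lie}(\mathbb{E}^{(0)})$ whose restriction to $\mathfrak{o}_{B_{h}}^{\times}=\Gamma$ ``recovers the semilinear $\Gamma$-action'' — cannot exist as stated: a linear action and a semilinear action of $\gamma$ can only coincide if $\gamma$ acts trivially on $R^{\textnormal{rig}}_{0}$, which it does not. What Gross--Hopkins actually produce is a $\Gamma$-stable \emph{$\breve{K}$-subspace} of horizontal sections (the span of the $c_{i}$, built in \cite{gh}, § 21 as $\varpi$-adic limits) on which $\Gamma$ acts $\breve{K}$-linearly; the semilinear action on the whole module is then the diagonal one through (\ref{Lie(E0)_is_generically_flat_iso}). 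Your map $\iota$ would have to land inside that horizontal subspace, so constructing it presupposes exactly the construction you are trying to replace.

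Second, the Nakayama step fails. The ring $R^{\textnormal{rig}}_{0}$ is not local and $\mathfrak{m}_{R_{0}}R^{\textnormal{rig}}_{0}$ is not contained in its Jacobson radical; moreover ``reduction modulo $\mathfrak{m}_{R_{0}}$'' is not even defined on $R^{\textnormal{rig}}_{0}\otimes_{R_{0}}\textnormal{Lie}(\mathbb{E}^{(0)})$, and the quasi-isogeny translates $b\omega$ with $b\in\mathfrak{o}_{B_{h}}$ non-invertible do not preserve the integral lattice $\textnormal{Lie}(\mathbb{E}^{(0)})$, so surjectivity of $\iota$ at the special fibre is not a meaningful input. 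What is really needed is that the $h$ sections remain linearly independent at \emph{every} point of the open polydisc $X^{\textnormal{rig}}_{0}$, i.e.\ that the period determinant is a unit in $R^{\textnormal{rig}}_{0}$; this is the non-formal analytic content of \cite{gh}, § 21--22 (cf.\ the explicit limit formulae (\ref{varphi_i_formulae}) and the determinant estimates there), and it cannot be obtained by a specialization argument at the closed point of $\textnormal{Spf}(R_{0})$, which is not a point of the rigid generic fibre. For comparison, the paper simply quotes \cite{gh}, § 21 for the basis $\lbrace c_{i}\rbrace$ and \cite{gh}, Proposition 22.4 for the identification with $B_{h}\otimes_{K_{h}}\breve{K}$; your outline correctly isolates the cyclicity of the special fibre and the $K_{h}$-eigenspace normalization, but the passage from the special fibre to the whole generic fibre is precisely the part that cannot be made formal.
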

\begin{proof} See \cite{gh}, Proposition 21.8 and Proposition 22.4. 
\end{proof} 
\begin{corollary}\label{Lie(Ems)_generically_flat}
The $(\Gamma\times G_{0})$-equivariant line bundle $\mathcal{L}\textnormal{ie}(\mathbb{E}^{(m)})^{\otimes s}$ is generically flat for all $m\geq 0$ and $s\in\mathbb{Z}$.
\end{corollary}
\begin{proof} 
Since $\mathbb{H}^{(m)}=\mathbb{H}^{(0)}\otimes_{R_{0}}R_{m}$, it follows from the universality of $\mathbb{E}^{(m)}$ that $\mathbb{E}^{(m)}=\mathbb{E}^{(0)}\otimes_{R_{0}}R_{m}$. The isomorphism $\text{Lie}(\mathbb{E}^{(m)})\cong R_{m}\otimes_{R_{0}}\text{Lie}(\mathbb{E}^{(0)})$ of $R_{m}[\Gamma\times G_{0}]$-modules gives rise to an isomorphism $R^{\text{rig}}_{m}\otimes_{R_{m}}\text{Lie}(\mathbb{E}^{(m)})\cong R^{\text{rig}}_{m}\otimes_{R_{0}}\text{Lie}(\mathbb{E}^{(0)})$ of $R^{\text{rig}}_{m}[\Gamma\times G_{0}]$-modules. Then using (\ref{rigidification_is_a_basechange}) and (\ref{Lie(E0)_is_generically_flat_iso}), we have an isomorphism \begin{equation}\label{Lie(Em)_is_generically_flat_iso}
R^{\textnormal{rig}}_{m}\otimes_{R_{m}}\textnormal{Lie}(\mathbb{E}^{(m)})\cong R^{\textnormal{rig}}_{m}\otimes_{\breve{K}}(B_{h}\otimes_{K_{h}}\breve{K})
\end{equation} of $R^{\text{rig}}_{m}[\Gamma\times G_{0}]$-modules, where $\Gamma$ and $G_{0}$ act diagonally on both sides. The action of $G_{0}$ on $B_{h}\otimes_{K_{h}}\breve{K}$ by convention is trivial. The corollary follows after taking tensor powers on both sides.
\end{proof}
\indent  Let $v_{i}$ denote the images of the basis elements $c_{i}$ under the map $R^{\text{rig}}_{0}\otimes_{R_{0}}\text{Lie}(\mathbb{E}^{(0)})\longrightarrow M^{1}_{0}$. According to \cite{gh}, Proposition 23.2, the global sections $\lbrace v_{i}\rbrace_{0\leq i\leq h-1}$ of the line bundle $(\mathcal{M}^{1}_{0})^{\text{rig}}$ have no common zeros on $X^{\text{rig}}_{0}$, and are linearly independent over $\breve{K}$. If $\mathbb{V}$ denotes the $\breve{K}$-subspace of $M^{1}_{0}$ spanned by them, then $\mathbb{V}$ is $\Gamma$-stable, and is isomorphic to $B_{h}\otimes_{K_{h}}\breve{K}$ as a $\Gamma$-representation. Let $\mathbb{P}(\mathbb{V})$ be the projective space of all hyperplanes in $\mathbb{V}$, then the map \begin{align*}
\Phi:X&^{\text{rig}}_{0}\longrightarrow\mathbb{P}(\mathbb{V})\\& x\longmapsto\lbrace v\in \mathbb{V}|v(x)=0\rbrace
\end{align*} is an \'{e}tale surjective morphism of rigid analytic spaces, if $\mathbb{P}(\mathbb{V})$ is identified with the $(h-1)$-dimensional rigid analytic projective space $\mathbb{P}^{h-1}_{\breve{K}}$ (cf. \cite{gh}, Proposition 23.5). The morphism $\Phi:X^{\text{rig}}_{0}\longrightarrow\mathbb{P}^{h-1}_{\breve{K}}$ is called \emph{the period morphism}. In homogeneous projective coordinates, it is given by $\Phi(x)=[\varphi_{0}(x):\ldots:\varphi_{h-1}(x)]$ where $\varphi_{0},\ldots,\varphi_{h-1}\in R^{\textnormal{rig}}_{0}$ are certain global rigid analytic functions without any common zero. These functions can be constructed from the logarithm $g_{0}(X)=\sum_{n\geq 0}a_{n}X^{q^{n}}$ of the universal formal $\mathfrak{o}$-module $\mathbb{H}^{(0)}$ over $R_{0}$ as the limits \begin{align}\label{varphi_i_formulae}
&\varphi_{0}:=\lim_{n\to\infty}\varpi^{n}a_{nh}\\
&\varphi_{i}:=\lim_{n\to\infty}\varpi^{n+1}a_{nh+i}, \hspace{.2cm}\text{if}\hspace{.2cm}1\leq i\leq h-1\nonumber
\end{align} in the Fr\'{e}chet topology of $R^{\text{rig}}_{0}$ (cf. \cite{gh}, (21.6) and (21.13)).\\
\indent  An important property of the period morphism $\Phi$ is that it is $\Gamma$-equivariant for the $\Gamma$-action on $\mathbb{P}^{h-1}_{\breve{K}}$ by fractional linear transformations via the following injective group homomorphism (cf. \cite{kohliwath}, Remark 1.4):  \begin{align}\label{Gamma_as_a_subgroup_of_GLh}
&\hspace{2.5cm} j:\Gamma\hookrightarrow GL_{h}(K_{h})\nonumber \\
&\sum_{i=0}^{h-1}\lambda_{i}\Pi^{i}\longmapsto \begin{pmatrix}
\lambda_{0} & \varpi\lambda_{1} & \varpi\lambda_{2} & \cdots & \cdots & \varpi\lambda_{h-1} \\
\lambda_{h-1}^{\sigma} & \lambda_{0}^{\sigma} & \lambda_{1}^{\sigma} & \cdots & \cdots & \lambda_{h-2}^{\sigma}\\
\lambda_{h-2}^{\sigma^{2}} & \varpi\lambda_{h-1}^{\sigma^{2}} & \lambda_{0}^{\sigma^{2}} & \cdots & \cdots & \lambda_{h-3}^{\sigma^{2}}\\
\vdots  & \vdots  & \ddots & \ddots & &\vdots  \\
\vdots  & \vdots  & &\ddots & \ddots & \vdots  \\ 
\lambda_{1}^{\sigma^{h-1}} & \varpi\lambda_{2}^{\sigma^{h-1}} & \cdots & \cdots & \varpi\lambda_{h-1}^{\sigma^{h-1}} & \lambda_{0}^{\sigma^{h-1}}
\end{pmatrix}
\end{align} 
\indent  The Gross-Hopkins fundamental domain $D$ is the affinoid subdomain of $X^{\textnormal{rig}}_{0}$ defined as follows: \begin{equation}\label{DefnD}
D:=\left\lbrace \text{$x\in X^{\textnormal{rig}}_{0} \hspace{.1cm}\Big\vert\hspace{.1cm} |u_{i}(x)|\leq |\varpi|^{(1-\frac{i}{h})}$ for all $1\leq i \leq h-1$} \right\rbrace \end{equation}   
According to \cite{gh}, Lemma 23.14, the function $\varphi_{0}$ does not have any zeroes on $D$, hence is a unit in $\mathcal{O}_{X^{\textnormal{rig}}_{0}}(D)$. Setting $w_{i}:=\frac{\varphi_{i}}{\varphi_{0}}$ for $1\leq i\leq h-1$, \cite{gh}, Lemma 23.14 implies that the affinoid $\breve{K}$-algebra $\mathcal{O}_{X^{\textnormal{rig}}_{0}}(D)$ is isomorphic to the generalized Tate algebra.  \begin{align}\label{expression_of_O(D)}
&\mathcal{O}_{X^{\textnormal{rig}}_{0}}(D)\cong \breve{K} \langle\varpi^{-(1-\frac{1}{h})}w_{1},\dots ,\varpi^{-(1-\frac{h-1}{h})}w_{h-1}\rangle\\&:=\Big\{\sum_{\alpha\in\mathbb{N}_{0}^{h-1}}c_{\alpha}w^{\alpha}\in\breve{K}[[w_{1},\dots ,w_{h-1}]]\hspace{.1cm}\Big\vert\hspace{.1cm}\lim_{\vert \alpha\vert\to\infty}\vert c_{\alpha}\vert\vert\varpi\vert^{\sum_{i=1}^{h-1}\alpha_{i}(1-\frac{i}{h})}=0\Big\}\nonumber
\end{align} 
\indent  It follows from \cite{fgl}, Remarque I.3.2 that $D$ is stable under the $\Gamma$-action on $X^{\text{rig}}_{0}$. Also, the $\Gamma$-equivariant period morphism $\Phi$ restricts to an isomorphism $\Phi:D\iso\Phi(D)$ over $D$ (cf. \cite{gh}, Corollary 23.15). As a result, we have an explicit formula for the $\Gamma$-action on $\mathcal{O}_{X^{\textnormal{rig}}_{0}}(D)$ similar to the one of Devinatz-Hopkins (cf. \cite{kohliwath}, Proposition 1.3):
\begin{proposition}\label{dh} Fix $i$ with $1\leq i\leq h-1$, and let $\gamma=\sum_{j=0}^{h-1}\lambda_{j}\Pi^{j}\in\Gamma$. Then \begin{equation}\label{dheq}
\gamma(w_{i})=\frac{\sum_{j=1}^{i}\lambda_{i-j}^{\sigma^{j}}w_{j}+\sum_{j=i+1}^{h}\varpi\lambda_{h+i-j}^{\sigma^{j}}w_{j}}{\lambda_{0}+\sum_{j=1}^{h-1}\lambda_{h-j}^{\sigma^{j}}w_{j}}.
\end{equation}
The group $\Gamma$ acts on $\mathcal{O}_{X^{\textnormal{rig}}_{0}}(D)$ by continuous $\breve{K}$-algebra endomorphisms extending its action on $R^{\textnormal{rig}}_{0}$.
\end{proposition}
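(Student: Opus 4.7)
The plan is to exploit the $\Gamma$-equivariance of the period morphism $\Phi$ together with the fact (\cite{gh}, Corollary 23.15) that it restricts to an isomorphism $\Phi|_D\colon D\iso \Phi(D)$ onto a $\Gamma$-stable affinoid subdomain of $\mathbb{P}^{h-1}_{\breve{K}}$. Since $\varphi_0$ has no zeroes on $D$, the functions $w_i=\varphi_i/\varphi_0$ are the pullbacks along $\Phi|_D$ of the affine coordinates on the standard chart $\{X_0\neq 0\}\subset \mathbb{P}^{h-1}_{\breve{K}}$. Consequently the $\Gamma$-action on the $w_i$ is governed entirely by the linear action of $\Gamma$ via $j$ on the projective coordinates.

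The first step is to translate the relation $\Phi\circ\gamma=j(\gamma)\circ\Phi$ into an identity in $\mathcal{O}_{X^{\textnormal{rig}}_0}(D)$. Under the identification $\mathbb{P}(\mathbb{V})\simeq\mathbb{P}^{h-1}_{\breve{K}}$ in the basis arising from $\{1,\Pi^{h-1},\ldots,\Pi\}\subset B_h$, the point $\Phi(x)=[\varphi_0(x):\cdots:\varphi_{h-1}(x)]$ represents the hyperplane in $\mathbb{V}$ with these coefficients; transporting the left $\Gamma$-action on $\mathbb{V}$ to the space of hyperplanes introduces a contragredient twist. Combined with the pullback convention $(\gamma\cdot f)(x):=f(\gamma^{-1}x)$, this produces
\[\gamma(\varphi_i)=\mu_\gamma\cdot \sum_{k=0}^{h-1} j(\gamma)_{k,i}\,\varphi_k\]
for some common factor $\mu_\gamma\in (\mathcal{O}_{X^{\textnormal{rig}}_0}(D))^{\times}$ depending on $\gamma$ but not on $i$. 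Forming the ratio $\gamma(w_i)=\gamma(\varphi_i)/\gamma(\varphi_0)$ cancels $\mu_\gamma$, and dividing numerator and denominator by $\varphi_0$ yields
\[\gamma(w_i)=\frac{\sum_{k=0}^{h-1} j(\gamma)_{k,i}\,w_k}{\sum_{k=0}^{h-1} j(\gamma)_{k,0}\,w_k},\qquad w_0:=1.\]

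The second step is purely computational: substitute the entries of $j(\gamma)$ using the defining relations of $B_h$. For $\gamma=\sum_l\lambda_l\Pi^l$, one expands $\gamma\cdot\Pi^{h-i}=\sum_l\lambda_l\Pi^{l+h-i}$ by applying $\Pi a=a^\sigma\Pi$ repeatedly and reducing with $\Pi^h=\varpi$; collecting in the basis $\{1,\Pi^{h-1},\ldots,\Pi\}$ recovers exactly the matrix displayed in (\ref{Gamma_as_a_subgroup_of_GLh}). Using $\sigma^h=\mathrm{id}$ on $K_h$ and the convention $w_0=w_h=1$, the $j=h$ term in the numerator of (\ref{dheq}) equals $\varpi\lambda_i$, which is precisely the $k=0$ contribution $j(\gamma)_{0,i}\cdot w_0$ to column $i$; the remaining terms reindex to the two pieces of the numerator and to the denominator of (\ref{dheq}) with no further manipulation.

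For the final assertion: since $\Gamma$ preserves $D$, each $\gamma$ restricts to an automorphism of the affinoid $D$, and its pullback $\gamma\colon \mathcal{O}_{X^{\textnormal{rig}}_0}(D)\to \mathcal{O}_{X^{\textnormal{rig}}_0}(D)$ is automatically a continuous $\breve{K}$-algebra endomorphism by \cite{bgr}, (6.1.3), Theorem 1. The $\Gamma$-equivariance of the restriction map $R^{\textnormal{rig}}_0\to\mathcal{O}_{X^{\textnormal{rig}}_0}(D)$ is built into both actions. I expect the principal bookkeeping obstacle to be the hyperplane-versus-line convention for $\mathbb{P}(\mathbb{V})$: it is exactly this contragredient twist that forces $\gamma$ to act in affine coordinates through the \emph{columns} of $j(\gamma)$ rather than its rows, and getting the transposition right is essential for matching (\ref{dheq}).
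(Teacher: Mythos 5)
Your proposal is correct and follows essentially the same route as the paper: the paper's proof simply cites that $\gamma$ acts on the homogeneous coordinates $[\varphi_{0}:\ldots:\varphi_{h-1}]$ by right multiplication with $j(\gamma)$ and invokes \cite{bgr}, (6.1.3), Theorem 1 for continuity, and your argument re-derives that right-multiplication statement (with the contragredient twist from the hyperplane interpretation of $\mathbb{P}(\mathbb{V})$ cancelling against the pullback convention, so that indeed $\gamma(\varphi_{i})$ is, up to a common unit, $\sum_{k}j(\gamma)_{k,i}\varphi_{k}$) before taking the ratio $\gamma(\varphi_{i})/\gamma(\varphi_{0})$. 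The extra bookkeeping you supply is consistent with the conventions used elsewhere in the paper (e.g.\ the expression for $\gamma(\varphi_{0})$ in the proof of Theorem \ref{laofGonMD}), so no gap remains.
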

\begin{proof} This is straightforward since $\gamma$ acts on the projective homogeneous coordinates $[\varphi_{0}:\ldots:\varphi_{h-1}]$ through right multiplication with the matrix $j(\gamma)$ in  (\ref{Gamma_as_a_subgroup_of_GLh}). By \cite{bgr}, (6.1.3), Theorem 1, the induced $\breve{K}$-algebra endomorphism $\gamma$ of the affinoid $\breve{K}$-algebra $\mathcal{O}_{X^{\textnormal{rig}}_{0}}(D)$ is automatically continuous.  
\end{proof}
\begin{remark}\label{our_line_bundle_is_a_pullback_of_O(s)}
\emph{A rigidified extension} $(E,s)$ of $\mathbb{H}^{(0)}$ by $\mathbb{G}_{a}$ is an extension $E$ of $\mathbb{H}^{(0)}$ by $\mathbb{G}_{a}$ together with a section $s:\text{Lie}(\mathbb{H}^{(0)})\longrightarrow\text{Lie}(E)$. The set $\text{RigExt}(\mathbb{H}^{(0)},\mathbb{G}_{a})$ of isomorphism classes of rigidified extensions of $\mathbb{H}^{(0)}$ by $\mathbb{G}_{a}$ is a free $R_{0}$-module of rank $h$, and has a basis $\lbrace g_{0},g_{1},\ldots, g_{h-1}\rbrace$ where $g_{0}\in R_{0}[[X]]$ is the logarithm of $\mathbb{H}^{(0)}$, and $g_{i}:=\frac{\partial g_{0}}{\partial u_{i}}$ for $1\leq i\leq h-1$ (cf. \cite{gh}, Proposition 9.8). Moreover, the $R_{0}$-module $\omega(\mathbb{E}^{(0)})$ of invariant differentials on the universal additive extension is isomorphic to $\text{RigExt}(\mathbb{H}^{(0)},\mathbb{G}_{a})$ (cf. \cite{gh}, (11.4)). Thus, $R^{\textnormal{rig}}_{0}\otimes_{R_{0}}\textnormal{Lie}(\mathbb{E}^{(0)})\cong\text{Hom}_{R_{0}}(\text{RigExt}(\mathbb{H}^{(0)},\mathbb{G}_{a}),R^{\textnormal{rig}}_{0})$. The functions $\varphi_{i}$ in (\ref{varphi_i_formulae}) are precisely $c_{i}(g_{0})$, and the basis $dg_{0}$ of $\omega(\mathbb{H}^{(0)})$ is mapped to $g_{0}$ under the natural map $\omega(\mathbb{H}^{(0)})\longrightarrow\omega(\mathbb{E}^{(0)})$. As a result, the global sections $v_{i}$ and $v_{j}$ of the line bundle $(\mathcal{M}^{1}_{0})^{\text{rig}}$ (see paragraph after Corollary \ref{Lie(Ems)_generically_flat}) are related by the relation $\varphi_{j}v_{i}=\varphi_{i}v_{j}$ for all $0\leq i,j\leq h-1$. Consequently, we have $\varphi_{j}^{s}v_{i}^{s}=\varphi_{i}^{s}v_{j}^{s}$ in $M^{s}_{0}$. Let $U_{i}\subset X^{\text{rig}}_{0}$ be the non-vanishing locus of $\varphi_{i}$, then on $U_{i}\cap U_{j}$, we get $v_{i}^{s}=\frac{\varphi_{i}^{s}}{\varphi_{j}^{s}} v_{j}^{s}$ and $v_{j}^{s}=\frac{\varphi_{j}^{s}}{\varphi_{i}^{s}} v_{i}^{s}$. The $U_{i}$'s cover $X^{\text{rig}}_{0}$ as the functions $\varphi_{i}$'s do not vanish simultaneously at any point on $X^{\text{rig}}_{0}$. This means that $(\mathcal{M}^{s}_{0})^{\text{rig}}\big|_{U_{i}}\cong\mathcal{O}_{X^{\text{rig}}_{0}}\big|_{U_{i}}\varphi_{i}^{s}$ for all $0\leq i\leq h-1$. In particular, for $i=0$, we have an isomorphism $M^{s}_{D}:=(\mathcal{M}^{s}_{0})^{\text{rig}}(D)\cong\mathcal{O}_{X^{\textnormal{rig}}_{0}}(D)\varphi_{0}^{s}$ of $\mathcal{O}_{X^{\textnormal{rig}}_{0}}(D)$-modules which is also $\Gamma$-equivariant. The $\Gamma$-action on $M^{s}_{D}$ is semilinear for its action on $\mathcal{O}_{X^{\textnormal{rig}}_{0}}(D)$.
\end{remark}
\begin{remark}\label{our_line_bundle_is_a_pullback_of_O(s)*} The discussion in Remark \ref{our_line_bundle_is_a_pullback_of_O(s)} shows that the generating global sections $v_{i}$'s of the line bundle $(\mathcal{M}^{1}_{0})^{\text{rig}}$ are the pullbacks $\Phi^{*}(\varphi_{i})$ of $\varphi_{i}$'s along the period morphism $\Phi$ for all $0\leq i\leq h-1$. As a consequence, it follows that \begin{equation*}
(\mathcal{M}^{1}_{0})^{\text{rig}}\cong\Phi^{*}\mathcal{O}_{\mathbb{P}^{h-1}_{\breve{K}}}(1).
\end{equation*} By the general properties of the inverse image functor, we then have \begin{equation*}
(\mathcal{M}^{s}_{0})^{\text{rig}}\cong\Phi^{*}\mathcal{O}_{\mathbb{P}^{h-1}_{\breve{K}}}(s) \hspace{6mm} \textnormal{for all $s\in\mathbb{Z}$.}
\end{equation*} 
\end{remark}
\subsection{Local analyticity of the $\Gamma$-action on $M^{s}_{D}$}
\label{step1}
\indent In this subsection, we show that the orbit map $(\gamma\mapsto\gamma(f\varphi_{0}^{s})):\Gamma\longrightarrow M^{s}_{D}$ explicitly given by Proposition \ref{dh} and Remark \ref{our_line_bundle_is_a_pullback_of_O(s)} is locally $K$-analytic for all $f\varphi_{0}^{s}\in M^{s}_{D}$.\\
\indent  Let $M_{h}(K_{h})$ denote the ring of $h\times h$ matrices with entries from $K_{h}$. It carries an induced topology from the identification with $K_{h}^{h^{2}}$, which endows it with a structure of a locally analytic $K_{h}$-manifold. The subset $GL_{h}(K_{h})$ of invertible matrices is open and forms a locally $K_{h}$-analytic group. Consider the subgroup $P$ of $GL_{h}(K_{h})$ defined as follows.
\[
  P:= \left\{ a=(a_{ij})_{0\leq i,j\leq h-1}\in GL_{h}(\mathfrak{o}_{h})\ \middle\vert \begin{array}{l}
    \hspace{.3cm} a_{ij},a_{0k}\in \varpi\mathfrak{o}_{h}\hspace{.2cm} \textnormal{for all} \\ 1\leq i,j,k \leq h-1 \hspace{.2cm}\textnormal{with}\hspace{.2cm} i>j
  \end{array}\right\}
\]
It is conjugate to a standard Iwahori subgroup of $GL_{h}(K_{h})$. The conditions on the entries of a matrix in $P$ force all of its diagonal entries to lie in $\mathfrak{o}_{h}^{\times}$. Since $P$ contains the ball of radius $\vert\varpi^{2}\vert$ around $a$ for any $a\in P$, $P$ is open in $GL_{h}(K_{h})$. Thus, $P$ is a locally $K_{h}$-analytic subgroup of $GL_{h}(K_{h})$. 
The inclusion map $j:\Gamma\hookrightarrow GL_{h}(K_{h})$ mentioned in (\ref{Gamma_as_a_subgroup_of_GLh}) has image in $P$.
\begin{lemma}\label{j_is_locKan} The inclusion map $j:\Gamma\hookrightarrow P$
in (\ref{Gamma_as_a_subgroup_of_GLh}) is locally $K$-analytic. 
\end{lemma}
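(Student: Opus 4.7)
The plan is to verify local $K$-analyticity entry-by-entry, using the global chart $\psi$ of (\ref{chart_for_Gamma}) on $\Gamma$ and the standard matrix-entry coordinates on the open subset $P\subset M_h(K_h)\cong K_h^{h^2}$. By Lemma \ref{laLlaK} applied to the finite extension $K_h|K$ of degree $h$, both $\Gamma$ and $P$ inherit locally $K$-analytic manifold structures (of $K$-dimensions $h^2$ and $h^3$ respectively) from their natural $K_h$-analytic structures, and both $\psi$ and the matrix-entry coordinates remain locally $K$-analytic when their codomains are regarded as $K$-manifolds. Consequently, it suffices to check that each of the $h^2$ entries of $j(\gamma)$, viewed as a $K_h$-valued function of the coordinates $(\lambda_0,\ldots,\lambda_{h-1})\in\psi(\Gamma)\subseteq K_h^h$, is locally $K$-analytic.

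Reading off the matrix in (\ref{Gamma_as_a_subgroup_of_GLh}), every such entry has the form $\varpi^{\varepsilon}\lambda_i^{\sigma^k}$ for some $\varepsilon\in\{0,1\}$ and indices $i,k\in\{0,\ldots,h-1\}$. The Frobenius $\sigma$ of $K_h$ is a $K$-algebra automorphism, so $\sigma^k$ and multiplication by $\varpi$ are $K$-linear endomorphisms of the finite-dimensional $K$-vector space $K_h$. Any $K$-linear map between finite-dimensional $K$-vector spaces is globally $K$-analytic (its $K$-power series expansion at any point is just the corresponding affine map). Therefore each entry is the composition of the $K$-analytic coordinate projection $(\lambda_0,\ldots,\lambda_{h-1})\mapsto\lambda_i$ with the $K$-analytic endomorphism $\varpi^{\varepsilon}\sigma^k$ of $K_h$, hence is locally $K$-analytic.

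Assembling these $h^2$ component functions shows that $j$ is locally $K$-analytic. I anticipate no genuine obstacle: the entire content reduces to the $K$-linearity of $\sigma$, which is exactly the observation already invoked in the discussion preceding the lemma to show that $\Gamma$ itself is a locally $K$-analytic (though not locally $K_h$-analytic) group.
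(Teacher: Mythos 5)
Your proposal is correct and follows essentially the same route as the paper's proof: both use the global chart $\psi$ on $\Gamma$ and the matrix-entry coordinates on $P$, and reduce the claim to the observation that each entry of $j(\gamma)$ is obtained from a coordinate $\lambda_i$ by applying the $K$-linear maps $\sigma^k$ and multiplication by $\varpi$, which are locally $K$-analytic (the paper phrases this as each component being ``a linear polynomial or a $K$-linear Frobenius automorphism $\sigma$ or a composition of both''). Your explicit identification of each entry as $\varpi^{\varepsilon}\lambda_i^{\sigma^k}$ and the appeal to Lemma \ref{laLlaK} for the restriction of scalars just make the paper's argument slightly more detailed.
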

\begin{proof} The global chart for $P$ induced from that for $M_{h}(K_{h})$ sends $a$ in $P$ to 
\begin{equation*}
(a_{00},a_{01},\dots a_{0(h-1)},a_{10},a_{11},\dots ,a_{(h-1)(h-2)},a_{(h-1)(h-1)})
\end{equation*} in $K_{h}^{h^{2}}$. Recall the global chart $\psi$ for $\Gamma$ from (\ref{chart_for_Gamma}). Using the global charts for both groups, it is easy to see that the corresponding map from the open subset $\psi(\Gamma)$ in $K_{h}^{h}$ to $K_{h}^{h^{2}}$ is locally $K$-analytic since each component of this map is either a linear polynomial or a $K$-linear Frobenius automorphism $\sigma$ or a composition of both, all being locally $K$-analytic. As before, we remark that $j$ is generally not locally $K_{h}$-analytic because $\sigma:\mathfrak{o}_{h}^{\times}\longrightarrow\mathfrak{o}_{h}^{\times}$ is not locally $K_{h}$-analytic unless $h=1$.
\end{proof}
\indent  The algebra $\mathcal{O}_{X^{\textnormal{rig}}_{0}}(D)$ is a $\breve{K}$-Banach algebra with respect to the multiplicative norm $\|\cdot\|_{D}$defined as follows: for $f=\sum_{\alpha\in\mathbb{N}_{0}^{h-1}}c_{\alpha}w^{\alpha}\in\mathcal{	O}_{X^{\text{rig}}_{0}}(D)$, $\|f\|_{D}:=\sup_{\alpha\in\mathbb{N}_{0}^{h-1}}\vert c_{\alpha}\vert \vert\varpi\vert^{\sum_{i=1}^{h-1}\alpha_{i}(1-\frac{i}{h})}$ (cf. \cite{bgr}, Section 6.1.5, Proposition 1 and 2). Let $P$ act on $\mathcal{O}_{X^{\textnormal{rig}}_{0}}(D)$ by $\breve{K}$-linear ring automorphisms by defining \begin{equation}\label{actI}
a(w_{i}):=\frac{a_{0i}+\sum_{j=1}^{h-1}a_{ji}w_{j}}{a_{00}+\sum_{j=1}^{h-1}a_{j0}w_{j}}
\end{equation} 
for $a\in P$ and for $1\leq i\leq h-1$. This gives an action of $P$ on $\mathcal{O}_{X^{\textnormal{rig}}_{0}}(D)$ by continuous $\breve{K}$-linear ring automorphisms which, when restricted to $\Gamma$ via $j$, coincides with the $\Gamma$-action on $\mathcal{O}_{X^{\textnormal{rig}}_{0}}(D)$ (cf. Proposition \ref{dh}). Indeed, note that $a_{00}+\sum_{j=1}^{h-1}a_{j0}w_{j}=a_{00}(1+\sum_{j=1}^{h-1}a_{00}^{-1}a_{j0}w_{j})\in(\mathcal{O}_{X^{\textnormal{rig}}_{0}}(D))^{\times}$ is a unit of norm 1, and $\big\|a_{0i}+\sum_{j=1}^{h-1}a_{ji}w_{j}\big\|_{D}=\|w_{i}\|_{D}$ by the strict triangle inequality. Altogether, $\|a(w_{i})\|_{D}=\|w_{i}\|_{D}$ which ensures that $P$ acts on $\mathcal{O}_{X^{\textnormal{rig}}_{0}}(D)$ via $\sum_{\alpha\in\mathbb{N}_{0}^{h-1}}c_{\alpha}w^{\alpha}\longmapsto\sum_{\alpha\in\mathbb{N}_{0}^{h-1}}c_{\alpha}a(w_{1})^{\alpha_{1}}\ldots a(w_{h-1})^{\alpha_{h-1}}$ in a well-defined way.\\
\indent  We now show that the above action is locally $K$-analytic.
\begin{lemma}\label{invla} 
The map $\iota:P\longrightarrow P$, $(a_{ij})_{0\leq i,j\leq h-1}\longmapsto(\iota(a)_{ij})_{0\leq i,j\leq h-1}$ defined by \begin{equation*}
\iota(a)_{ij}=
\begin{cases}
a_{ij}^{-1}, &\text{if $i=j=0$;}\\
a_{ij}, &\text{otherwise}
\end{cases}
\end{equation*}  is locally $K_{h}$-analytic, and thus locally $K$-analytic.
\end{lemma}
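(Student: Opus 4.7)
The plan is to reduce the lemma to the single nontrivial component of $\iota$, namely the inversion map on $\mathfrak{o}_h^\times$, and to show directly that this map is locally $K_h$-analytic. Then local $K$-analyticity will follow from the restriction-of-scalars result, Lemma~\ref{laLlaK}.

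Using the global chart for $P$ induced from $M_h(K_h) \simeq K_h^{h^2}$ discussed in the proof of Lemma~\ref{j_is_locKan}, the map $\iota$ is described in coordinates by leaving every entry $a_{ij}$ with $(i,j) \neq (0,0)$ unchanged and sending $a_{00}$ to $a_{00}^{-1}$. The identity map on $K_h$ is obviously locally $K_h$-analytic in each unchanged coordinate, so the entire question reduces to showing that the map
\begin{equation*}
\mathrm{inv} : \mathfrak{o}_h^\times \longrightarrow \mathfrak{o}_h^\times, \qquad x \longmapsto x^{-1},
\end{equation*}
is locally $K_h$-analytic, where $\mathfrak{o}_h^\times$ is open in $K_h$ because its elements are precisely the units of the diagonal in $P$ (as already observed just before Lemma~\ref{j_is_locKan}).

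Fix $x_0 \in \mathfrak{o}_h^\times$. For any $y \in K_h$ with $|y| < |x_0| = 1$ one has $|x_0^{-1} y| < 1$, so the geometric series
\begin{equation*}
(x_0 + y)^{-1} = x_0^{-1} \sum_{n=0}^{\infty} (-x_0^{-1})^{n} y^{n}
\end{equation*}
converges in $K_h$ and represents $\mathrm{inv}$ on the ball $B_{|x_0|}(x_0)$ by a convergent power series in $y$ with coefficients in $K_h$. This is exactly the definition of local $K_h$-analyticity at $x_0$. Since $x_0 \in \mathfrak{o}_h^\times$ was arbitrary, $\mathrm{inv}$ is locally $K_h$-analytic on all of $\mathfrak{o}_h^\times$; assembling it with the identity in the other $h^2-1$ coordinates shows that $\iota : P \to P$ is locally $K_h$-analytic.

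Finally, since $K_h|K$ is a finite extension, Lemma~\ref{laLlaK} (applied to $g = \iota$) upgrades local $K_h$-analyticity to local $K$-analyticity, completing the proof. I do not anticipate any real obstacle: the only content is the convergent expansion of inversion around a unit, which is the standard argument showing that inversion in the Lie group $K_h^\times$ is locally analytic.
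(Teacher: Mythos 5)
Your argument is correct and matches the paper's in substance: the paper's proof is a one-line citation of \cite{schplie}, Proposition 13.6 together with the fact that $K_h^\times$ is a locally $K_h$-analytic group, and your geometric-series expansion of $(x_0+y)^{-1}$ around a unit is exactly the computation hiding behind that citation, with the reduction to the single coordinate $a_{00}$ and the final appeal to Lemma \ref{laLlaK} identical. The only cosmetic point is that, with the paper's definition via closed balls and $\varepsilon$-convergent series, you should take the radius to be some $\varepsilon<1$ in the value group (e.g. $|\varpi|$) rather than the full open unit ball, since the coefficients $(-1)^n x_0^{-(n+1)}$ all have norm $1$.
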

\begin{proof}
This follows from \cite{schplie}, Proposition 13.6, and the fact that $K_{h}^{\times}$ is a locally $K_{h}$-analytic group. The local $K$-analyticity of $\iota$ follows due to restriction of scalars.
\end{proof}
\begin{proposition}\label{laofI} The action of $P$ on $\mathcal{O}_{X^{\textnormal{rig}}_{0}}(D)$ is locally $K_{h}$-analytic, and thus locally $K$-analytic, i.e., the orbit maps of the action are locally $K$-analytic. 
\end{proposition}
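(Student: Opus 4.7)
Fix $a_{0}\in P$. The plan is to show that for each $f\in\mathcal{O}_{X^{\textnormal{rig}}_{0}}(D)$, the orbit map $a\mapsto a(f)$ is represented on a neighbourhood of $a_{0}$ by a convergent power series in the variable $\epsilon := a-a_{0}\in M_{h}(K_{h})$ with coefficients in the $\breve{K}$-Banach algebra $\mathcal{O}_{X^{\textnormal{rig}}_{0}}(D)$. Because each $a$ acts by a continuous $\breve{K}$-algebra endomorphism preserving $\|\cdot\|_{D}$, it will suffice to handle each generator $w_{i}$ and then to pass to an infinite sum inside the $\breve{K}$-Banach algebra $\mathcal{F}_{\varepsilon}:=\mathcal{F}_{\varepsilon}(K_{h}^{h^{2}},\mathcal{O}_{X^{\textnormal{rig}}_{0}}(D))$ of $\varepsilon$-convergent power series, whose norm $\|\cdot\|_{\varepsilon}$ is submultiplicative.

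For the generators, write $a(w_{i})=N_{i}(a)\cdot D(a)^{-1}$ with affine numerator $N_{i}(a)=a_{0i}+\sum_{j=1}^{h-1}a_{ji}w_{j}$ and affine denominator $D(a)=a_{00}+\sum_{j=1}^{h-1}a_{j0}w_{j}$. A direct ultrametric estimate using the Iwahori-type conditions defining $P$ (together with the fact that the diagonal entries of an element of $P$ lie in $\mathfrak{o}_{h}^{\times}$) shows that $D(a_{0})$ is a unit in $\mathcal{O}_{X^{\textnormal{rig}}_{0}}(D)$ with $\|D(a_{0})^{-1}\|_{D}=1$. Writing $D(a_{0}+\epsilon)=D(a_{0})+\eta(\epsilon)$ with $\eta(\epsilon)=\epsilon_{00}+\sum_{j}\epsilon_{j0}w_{j}$, one has $\|D(a_{0})^{-1}\eta\|_{\varepsilon}\leq\varepsilon$, so for any $\varepsilon<1$ the geometric series
\[
D(a_{0}+\epsilon)^{-1}=D(a_{0})^{-1}\sum_{n\geq 0}(-D(a_{0})^{-1}\eta(\epsilon))^{n}
\]
converges in $\mathcal{F}_{\varepsilon}$ to an element of norm $\leq 1$. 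The product $A_{i}:=N_{i}(a_{0}+\epsilon)\cdot D(a_{0}+\epsilon)^{-1}\in\mathcal{F}_{\varepsilon}$ then represents the orbit map of $w_{i}$ on $B_{\varepsilon}(a_{0})$, which is contained in $P$ for $\varepsilon$ sufficiently small.

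The main obstacle is upgrading this to an infinite sum in $f$, for which we need the uniform estimate $\|A_{i}\|_{\varepsilon}\leq\|w_{i}\|_{D}$, valid provided $\varepsilon\leq|\varpi|$. An ultrametric computation using the definition of $P$ gives $\|N_{i}(a_{0})\|_{D}=\|w_{i}\|_{D}$, while the linear-in-$\epsilon$ contributions to $N_{i}(a_{0}+\epsilon)$ have $\|\cdot\|_{\varepsilon}$-norm at most $\varepsilon\leq|\varpi|\leq\|w_{i}\|_{D}$; combined with $\|D(a_{0}+\epsilon)^{-1}\|_{\varepsilon}\leq 1$ and submultiplicativity of $\|\cdot\|_{\varepsilon}$, this yields the bound. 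Given this, for $f=\sum_{\alpha}c_{\alpha}w^{\alpha}\in\mathcal{O}_{X^{\textnormal{rig}}_{0}}(D)$ the series
\[
F(\epsilon):=\sum_{\alpha}c_{\alpha}A_{1}(\epsilon)^{\alpha_{1}}\cdots A_{h-1}(\epsilon)^{\alpha_{h-1}}
\]
has $\alpha$-th term of $\mathcal{F}_{\varepsilon}$-norm at most $|c_{\alpha}|\prod_{i}\|w_{i}\|_{D}^{\alpha_{i}}$, which tends to $0$ by the defining convergence condition of $f$. Hence $F\in\mathcal{F}_{\varepsilon}$, and since evaluation $\mathcal{F}_{\varepsilon}\to\mathcal{O}_{X^{\textnormal{rig}}_{0}}(D)$ at each $\epsilon\in B_{\varepsilon}(0)$ is a continuous $\breve{K}$-algebra homomorphism, $F(\epsilon)$ agrees with $(a_{0}+\epsilon)(f)$ on $B_{\varepsilon}(a_{0})$. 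This gives local $K_{h}$-analyticity of the $P$-action at $a_{0}$, hence everywhere. Local $K$-analyticity then follows by restriction of scalars along the finite extension $K_{h}|K$ via Lemma \ref{laLlaK}.
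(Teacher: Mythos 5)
Your argument is correct; it differs from the paper's mainly in how the inversion of the denominator is handled. The paper does not expand around a point: via the map $\iota$ of Lemma \ref{invla} it treats $a_{00}^{-1}$ as a coordinate and represents $\iota(a)\longmapsto a(w^{\alpha})$ by a single power series convergent on the image of all of $P$, the geometric series $\sum_{l}\big(-a_{00}^{-1}\sum_{j}a_{j0}w_{j}\big)^{l}$ converging because $\|w_{j}\|_{D}=|\varpi|^{1-j/h}<1$, uniformly in $a$. You instead perform a Taylor expansion at each $a_{0}\in P$ and invert $D(a_{0})+\eta(\epsilon)$ by a geometric series whose convergence is forced by the smallness of the radius; this avoids Lemma \ref{invla} altogether but only produces a local series (radius at most $|\varpi|$ for your key estimate, and small enough, e.g. $|\varpi|^{2}$, to stay inside $P$), whereas the paper obtains one series valid on the whole chart — which proves the same statement, since only local analyticity is claimed. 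Beyond that, the two proofs run on the same estimates drawn from the Iwahori-type conditions on $P$: the constant term of the series representing $a(w_{i})$ has norm exactly $\|w_{i}\|_{D}$ (diagonal entries are units, the entries $a_{0i}$ and $a_{ji}$ with $j>i$ lie in $\varpi\mathfrak{o}_{h}$), the inverted denominator has norm at most $1$, and hence one gets the uniform bound by $\|w^{\alpha}\|_{D}$ which makes the sum over $\alpha$ converge by the defining condition on elements of $\mathcal{O}_{X^{\textnormal{rig}}_{0}}(D)$; the final passage from local $K_{h}$- to local $K$-analyticity via Lemma \ref{laLlaK} is identical. Your condition $\varepsilon\le|\varpi|\le\|w_{i}\|_{D}$ is exactly what controls the linear-in-$\epsilon$ contributions, so the estimate $\|A_{i}\|_{\varepsilon}\le\|w_{i}\|_{D}$ holds and your argument goes through.
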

\begin{proof}
By Lemma \ref{invla}, it is enough to show that, for each $f\in\mathcal{O}_{X^{\textnormal{rig}}_{0}}(D)$, the map $\iota(a)\longmapsto a(f)$ from $P$ to $\mathcal{O}_{X^{\textnormal{rig}}_{0}}(D)$ is locally $K_{h}$-analytic. Consider the open neighbourhood $U$ of $0$ in $K_{h}^{h^{2}}$ defined as follows.
\[
  U:= \left\{ x=(x_{1},x_{2},\dots ,x_{h^{2}})\in \mathfrak{o}_{h}^{h^{2}}\ \middle\vert \begin{array}{l}
  x_{i}, x_{qh+r}\in\varpi\mathfrak{o}_{h}\hspace{.15cm}  \textnormal{for all}\hspace{.15cm} 2\leq i\leq h \\ \hspace{.15cm}\textnormal{and}\hspace{.15cm} \textnormal{for all}\hspace{.15cm} q\geq r\hspace{.15cm} \textnormal{with} \hspace{.15cm}q,r>1 
  \end{array}\right\}
\]
Let $T=(T_{1},T_{2},\dots ,T_{h^{2}})$, and let $\mathcal{F}_{U}(K_{h}^{h^{2}},\mathcal{O}_{X^{\textnormal{rig}}_{0}}(D))$ denote the set of power series in $T$ with coefficients from $\mathcal{O}_{X^{\textnormal{rig}}_{0}}(D)$ which converge on $U$.
Like $\mathcal{O}_{X^{\textnormal{rig}}_{0}}(D)$, $\mathcal{F}_{U}(K_{h}^{h^{2}},\mathcal{O}_{X^{\textnormal{rig}}_{0}}(D))$ is also a $\breve{K}$-Banach algebra with respect to the multiplicative norm $\Big\|\sum_{\alpha\in\mathbb{N}_{0}^{h^{2}}}f_{\alpha}T^{\alpha}\Big\|_{U}:=$ \begin{align*}
\sup_{\alpha\in\mathbb{N}_{0}^{h^{2}}}\| f_{\alpha}\|_{D}\vert\varpi\vert^{(\alpha_{2}+\alpha_{3}+\dots +\alpha_{h}+\alpha_{2h+2}+\alpha_{3h+2}+\alpha_{3h+3}+\dots +\alpha_{(h-1)h+h-1})}
\end{align*} (cf. \cite{schplie}, Proposition 5.3).
Under the global chart of $P$ in the proof of \ref{j_is_locKan}, we now show that, for a monomial $w^{\alpha}\in\mathcal{O}_{X^{\textnormal{rig}}_{0}}(D)$, the map $\iota(a)\longmapsto a(w^{\alpha})$ belongs to $\mathcal{F}_{U}(K_{h}^{h^{2}},\mathcal{O}_{X^{\textnormal{rig}}_{0}}(D))$ for every $\alpha\in \mathbb{N}_{0}^{h-1}$.\\
\indent  By (\ref{actI}), we have \begin{align}\label{3bra}
a(w^{\alpha})\nonumber&=a(w_{1})^{\alpha_{1}}\dots a(w_{h-1})^{\alpha_{h-1}}\\\nonumber&=\Bigg(\frac{a_{01}+\sum_{j=1}^{h-1}a_{j1}w_{j}}{a_{00}+\sum_{j=1}^{h-1}a_{j0}w_{j}}\Bigg)^{\alpha_{1}}\dots\Bigg(\frac{a_{0(h-1)}+\sum_{j=1}^{h-1}a_{j(h-1)}w_{j}}{a_{00}+\sum_{j=1}^{h-1}a_{j0}w_{j}}\Bigg)^{\alpha_{h-1}}
\\\nonumber&=\Bigg(\prod_{i=1}^{h-1}\Big(a_{0i}+\sum_{j=1}^{h-1}a_{ji}w_{j}\Big)^{\alpha_{i}}\Bigg)(a_{00}^{-1})^{\vert \alpha\vert}\Big(1+a_{00}^{-1}\sum_{j=1}^{h-1}a_{j0}w_{j}\Big)^{-\vert \alpha\vert}\\&=\Bigg(\prod_{i=1}^{h-1}\Big(a_{0i}+\sum_{j=1}^{h-1}a_{ji}w_{j}\Big)^{\alpha_{i}}\Bigg)(a_{00}^{-1})^{\vert \alpha\vert}\Bigg(\sum_{l=0}^{\infty}\Big(-a_{00}^{-1}\sum_{j=1}^{h-1}a_{j0}w_{j}\Big)^{l}\Bigg)^{\vert \alpha\vert}
\end{align}
Thus the expression of $a(w^{\alpha})$ is a product of $(a_{00}^{-1})^{\vert \alpha\vert}$ and two big brackets. The first big bracket in (\ref{3bra}) is a product of polynomials in $a_{ij}$'s with coefficients from $\mathcal{O}_{X^{\textnormal{rig}}_{0}}(D)$, and hence is the evaluation at $\iota(a)$ of an element in $\mathcal{F}_{U}(K_{h}^{h^{2}},\mathcal{O}_{X^{\textnormal{rig}}_{0}}(D))$. Similarly, $(a_{00}^{-1})^{\vert \alpha\vert}$ is the evaluation at $\iota(a)$ of the monomial $T_{1}^{\vert \alpha\vert}$ which belongs to $\mathcal{F}_{U}(K_{h}^{h^{2}},\mathcal{O}_{X^{\textnormal{rig}}_{0}}(D))$. The second big bracket is the $\vert \alpha\vert$-th power of a certain geometric series. The $l$-th term in that series is the evaluation of the polynomial $\big(-T_{1}\sum_{j=1}^{h-1}T_{jh+1}w_{j}\big)^{l}$ at $\iota(a)$, and \begin{equation*}
\Big\|\Big(-T_{1}\sum_{j=1}^{h-1}T_{jh+1}w_{j}\Big)^{l}\Big\|_{U}=\Big(\Big\|-T_{1}\sum_{j=1}^{h-1}T_{jh+1}w_{j}\Big\|_{U}\Big)^{l}=\vert\varpi\vert^{\frac{l}{h}}
\end{equation*} Hence, the series $\sum_{l=0}^{\infty}(-T_{1}\sum_{j=1}^{h-1}T_{jh+1}w_{j})^{l}$ converges in $\mathcal{F}_{U}(K_{h}^{h^{2}},\mathcal{O}_{X^{\textnormal{rig}}_{0}}(D))$, and the map $\iota(a)\longmapsto a(w^{\alpha})\in\mathcal{F}_{U}(K_{h}^{h^{2}},\mathcal{O}_{X^{\textnormal{rig}}_{0}}(D))$ for every $\alpha\in \mathbb{N}_{0}^{h-1}$.\\
\indent  Let us calculate the norms $\|\cdot\|_{U}$ of the above power series corresponding to the terms in the expression (\ref{3bra}) or find an upper bound for them. First, $\| T_{1}^{|\alpha|}\|_{U}=1$. Since \begin{equation*}
\Big\|\sum_{l=0}^{\infty}\Big(-T_{1}\sum_{j=1}^{h-1}T_{jh+1}w_{j}\Big)^{l}\Big\|_{U}\leq\sup_{l\geq 0}\Big\|\Big(-T_{1}\sum_{j=1}^{h-1}T_{jh+1}w_{j}\Big)^{l}\Big\|_{U}=\sup_{l\geq 0}\vert \varpi\vert^{\frac{l}{h}}=1,
\end{equation*} the power series corresponding to the second big bracket in (\ref{3bra}) has the norm $\leq$ 1. The first big bracket is obtained by evaluating $\prod_{i=1}^{h-1}\Big(T_{i+1}+\sum_{j=1}^{h-1}T_{jh+i+1}w_{j}\Big)^{\alpha_{i}}$ at $\iota(a)$, and \begin{align*}
\Big\|\prod_{i=1}^{h-1}\Big(T_{i+1}+\sum_{j=1}^{h-1}T_{jh+i+1}w_{j}\Big)^{\alpha_{i}}\Big\|_{U}&=\prod_{i=1}^{h-1}\Big\|\Big(T_{i+1}+\sum_{j=1}^{h-1}T_{jh+i+1}w_{j}\Big)\Big\|_{U}^{\alpha_{i}}\\&=\prod_{i=1}^{h-1}\vert\varpi\vert^{\alpha_{i}(1-\frac{i}{h})}. 
\end{align*} Therefore, the power series corresponding to the first big bracket has the norm $\vert\varpi\vert^{\sum_{i=1}^{h-1}\alpha_{i}(1-\frac{i}{h})}$. So, for every $\alpha\in \mathbb{N}_{0}^{h-1}$, the map $\iota(a)\longmapsto a(w^{\alpha})$ is given by an element in $\mathcal{F}_{U}(K_{h}^{h^{2}},\mathcal{O}_{X^{\textnormal{rig}}_{0}}(D))$ whose norm is bounded above by $\vert\varpi\vert^{\sum_{i=1}^{h-1}\alpha_{i}(1-\frac{i}{h})}$.\\
\indent  Now given $f=\sum_{\alpha\in\mathbb{N}_{0}^{h-1}}c_{\alpha}w^{\alpha}\in\mathcal{O}_{X^{\textnormal{rig}}_{0}}(D)$, $a(f)=\sum_{\alpha\in\mathbb{N}_{0}^{h-1}}c_{\alpha}(a (w^{\alpha}))$, and for every $\alpha\in \mathbb{N}_{0}^{h-1}$, the map $\iota(a)\longmapsto c_{\alpha}(a (w^{\alpha}))$ is represented by a power series in $\mathcal{F}_{U}(K_{h}^{h^{2}},\mathcal{O}_{X^{\textnormal{rig}}_{0}}(D))$ having norm $\leq\vert c_{\alpha}\vert\vert\varpi\vert^{\sum_{i=1}^{h-1}\alpha_{i}(1-\frac{i}{h})}$. Since, \linebreak $\lim_{\vert \alpha\vert\to\infty}\vert c_{\alpha}\vert\vert\varpi\vert^{\sum_{i=1}^{h-1}\alpha_{i}(1-\frac{i}{h})}=0$, we see that the map $\iota(a)\longmapsto a(f)$ from $P$ to $\mathcal{O}_{X^{\textnormal{rig}}_{0}}(D)$ is given by a convergent power series in $\mathcal{F}_{U}(K_{h}^{h^{2}},\mathcal{O}_{X^{\textnormal{rig}}_{0}}(D))$. As $a$ is arbitrary, this implies that the action of $P$ on $\mathcal{O}_{X^{\textnormal{rig}}_{0}}(D)$ is locally $K_{h}$-analytic, and thus locally $K$-analytic by the restriction of scalars.
\end{proof}
\begin{proposition}\label{laofG} The $\breve{K}$-Banach space $\mathcal{O}_{X^{\textnormal{rig}}_{0}}(D)$ is a locally $K$-analytic representation of $\Gamma$.\end{proposition}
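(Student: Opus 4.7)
The plan is to deduce Proposition \ref{laofG} almost formally from the two preceding results, by factoring each orbit map $\rho_{f}\colon\gamma\mapsto\gamma(f)$ through the $P$-action. Indeed, by Proposition \ref{dh} the $\Gamma$-action on $\mathcal{O}_{X^{\textnormal{rig}}_{0}}(D)$ is obtained from the $P$-action by restriction along $j$, so
\[
\rho_{f}\;=\;\bigl(a\mapsto a(f)\bigr)\circ j\;\colon\;\Gamma\xrightarrow{\;j\;}P\longrightarrow\mathcal{O}_{X^{\textnormal{rig}}_{0}}(D),
\]
and both factors are locally $K$-analytic: the first by Lemma \ref{j_is_locKan}, the second by Proposition \ref{laofI} (which gives local $K_{h}$-analyticity, hence local $K$-analyticity via Lemma \ref{laLlaK}). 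First I would dispatch the remaining hypotheses of Definition \ref{lardefn}: the space $\mathcal{O}_{X^{\textnormal{rig}}_{0}}(D)$ is a $\breve{K}$-Banach space, hence barrelled and Hausdorff, and $\Gamma$ acts on it by continuous $\breve{K}$-linear endomorphisms by the last sentence of Proposition \ref{dh}. It therefore remains to establish, for each fixed $f$, local $K$-analyticity of the orbit map $\rho_{f}$ at every point $\gamma_{0}\in\Gamma$.

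Concretely, I would fix $\gamma_{0}\in\Gamma$ and set $a_{0}:=j(\gamma_{0})\in P$. Choose a chart of $P$ around $a_{0}$ in which, by the proof of Proposition \ref{laofI}, the map $a\mapsto a(f)$ is represented by a genuinely convergent power series in $\mathcal{F}_{U}(K_{h}^{h^{2}},\mathcal{O}_{X^{\textnormal{rig}}_{0}}(D))$ (after a suitable translation by $a_{0}$), together with a chart of $\Gamma$ around $\gamma_{0}$ in which Lemma \ref{j_is_locKan} expresses $j$ by convergent power series over $K$. Substituting the latter expansion into the former and reorganising yields, in a neighbourhood of $\gamma_{0}$, a Banach-valued convergent power series representation of $\rho_{f}$ over $K$. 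The only step deserving mention is that substitution of a locally $K$-analytic tuple into a Banach-valued locally $K$-analytic series again produces a locally $K$-analytic series; this is the standard chain/composition rule in the non-archimedean setting, and can be invoked from \cite{schplie}.

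The only conceptual subtlety, and the one worth flagging explicitly, is the coefficient-field bookkeeping: the $P$-action is genuinely locally $K_{h}$-analytic, whereas the embedding $j$ is only locally $K$-analytic because the Frobenius $\sigma$ appearing in its matrix coefficients is not locally $K_{h}$-analytic when $h>1$. Consequently the composite $\rho_{f}$, and hence the representation $\mathcal{O}_{X^{\textnormal{rig}}_{0}}(D)$ of $\Gamma$, is locally $K$-analytic but typically not locally $K_{h}$-analytic, which matches precisely the statement of the proposition and prepares the ground for Step~2 of the strategy, where this base field $K$ is the one that matters for the dual $(M^{s}_{0})'_{b}$.
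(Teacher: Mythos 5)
Your argument is exactly the paper's: the proof there consists of the single observation that the orbit maps factor as the composite of the locally $K$-analytic embedding $j$ (Lemma \ref{j_is_locKan}) with the locally $K$-analytic $P$-action (Proposition \ref{laofI}). Your additional discussion of the substitution/chain rule and of the barrelled-Hausdorff hypotheses merely makes explicit what the paper leaves implicit, so the proposal is correct and follows the same route.
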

\begin{proof}
This follows from the Lemma \ref{j_is_locKan} and Proposition \ref{laofI}.
\end{proof}
\begin{theorem}\label{laofGonMD}
Let $s$ be any integer, then the $\breve{K}$-Banach space $M^{s}_{D}$ is a locally $K$-analytic representation of $\Gamma$.
\end{theorem}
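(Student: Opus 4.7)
The plan is to reduce the theorem to the case $s=0$ treated in Proposition \ref{laofG}, by isolating a locally analytic cocycle. By Remark \ref{our_line_bundle_is_a_pullback_of_O(s)} the map $f\mapsto f\varphi_0^s$ gives a $\Gamma$-equivariant identification $M^s_D \simeq \mathcal{O}_{X^{\text{rig}}_0}(D)\varphi_0^s$, and semilinearity reads
\begin{equation*}
\gamma(f\varphi_0^s) \;=\; \gamma(f)\,u_\gamma^s\,\varphi_0^s, \qquad u_\gamma := \gamma(\varphi_0)/\varphi_0.
\end{equation*}
Since $\gamma$ acts on the homogeneous coordinates $[\varphi_0:\cdots:\varphi_{h-1}]$ through right multiplication by the matrix $j(\gamma)$ of (\ref{Gamma_as_a_subgroup_of_GLh}), reading off the first column and dividing by $\varphi_0$ yields
\begin{equation*}
u_\gamma \;=\; \lambda_0 + \sum_{j=1}^{h-1}\lambda_{h-j}^{\sigma^j}\,w_j \;\in\; \mathcal{O}_{X^{\text{rig}}_0}(D)
\end{equation*}
for $\gamma=\sum_{i=0}^{h-1}\lambda_i\Pi^i\in\Gamma$, which coincides with the denominator appearing in Proposition \ref{dh}.

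Because $\lambda_0\in\mathfrak{o}_h^\times$, $\lambda_{h-j}^{\sigma^j}\in\mathfrak{o}_h$, and $\|w_j\|_D=|\varpi|^{1-j/h}$, one has $u_\gamma=\lambda_0(1+v_\gamma)$ with $\|v_\gamma\|_D\leq|\varpi|^{1/h}<1$. Hence $u_\gamma$ is a unit in the $\breve{K}$-Banach algebra $\mathcal{O}_{X^{\text{rig}}_0}(D)$, with $u_\gamma^{-1}=\lambda_0^{-1}\sum_{l\geq 0}(-v_\gamma)^l$, and $u_\gamma^s$ is well-defined for every $s\in\mathbb{Z}$. The key step is to show that $\gamma\mapsto u_\gamma^s$ is locally $K$-analytic as a map into $\mathcal{O}_{X^{\text{rig}}_0}(D)$. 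In the chart $\psi$ of (\ref{chart_for_Gamma}), $\gamma\mapsto u_\gamma$ is an $\mathcal{O}_{X^{\text{rig}}_0}(D)$-linear combination of the coordinates $\lambda_0,\ldots,\lambda_{h-1}$ and their Frobenius twists $\lambda_j^{\sigma^i}$; each of these is locally $K$-analytic (though typically not $K_h$-analytic), so the map is locally $K$-analytic. For $s\geq 0$, continuity of multiplication in the Banach algebra $\mathcal{O}_{X^{\text{rig}}_0}(D)$ then gives local $K$-analyticity of $\gamma\mapsto u_\gamma^s$. For $s<0$, the uniform estimate $\|v_\gamma\|_D\leq|\varpi|^{1/h}$ on all of $\Gamma$ allows the geometric series expansion of $u_\gamma^{-1}$ to be realised as a globally convergent power series in the ambient variables, by the same computation as in the proof of Proposition \ref{laofI}; raising to the $|s|$-th power then preserves local $K$-analyticity.

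Combining with Proposition \ref{laofG}, the orbit map $\gamma\mapsto\gamma(f)\,u_\gamma^s$ is locally $K$-analytic as a product of two locally $K$-analytic maps into the Banach algebra $\mathcal{O}_{X^{\text{rig}}_0}(D)$. Since $M^s_D$ is a $\breve{K}$-Banach space (hence barrelled and Hausdorff) on which $\Gamma$ acts by continuous $\breve{K}$-linear endomorphisms — continuity of $f\varphi_0^s\mapsto\gamma(f)u_\gamma^s\varphi_0^s$ follows from Proposition \ref{dh} together with multiplication by the fixed unit $u_\gamma^s$ — all hypotheses of Definition \ref{lardefn} are satisfied, proving the theorem. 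The only technical subtlety is the uniform convergence of the geometric series for negative $s$, which relies crucially on the defining inequalities $\|w_j\|_D<1$ of the Gross--Hopkins fundamental domain $D$.
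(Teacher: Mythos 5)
Your proposal is correct and follows essentially the same route as the paper: both factor the orbit map as $\gamma\mapsto\gamma(f)\cdot\gamma(\varphi_0)^s/\varphi_0^s$, invoke Proposition \ref{laofG} for the first factor, and treat the cocycle $\gamma\mapsto u_\gamma^s=(\lambda_0+\lambda_{h-1}^{\sigma}w_1+\dots+\lambda_1^{\sigma^{h-1}}w_{h-1})^s$ directly. Your treatment of the case $s<0$ via the uniform bound $\|v_\gamma\|_D\leq|\varpi|^{1/h}$ and the geometric series is in fact slightly more careful than the paper's one-line assertion that this factor is "a linear polynomial in the coordinates of $\gamma$", which literally applies only to $s=1$.
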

\begin{proof}
Due to Remark \ref{our_line_bundle_is_a_pullback_of_O(s)}, we have a $\Gamma$-equivariant, $\mathcal{O}_{X^{\textnormal{rig}}_{0}}(D)$-linear isomorphism $M^{s}_{D}\cong\mathcal{O}_{X^{\textnormal{rig}}_{0}}(D).\varphi_{0}^{s}$ of free $\mathcal{O}_{X^{\textnormal{rig}}_{0}}(D)$-modules of rank 1. Then $M^{s}_{D}$ obtains a structure of a $\breve{K}$-Banach space with respect to the norm defined as $\| f\varphi^{s}_{0}\|_{M^{s}_{D}}:=\| f\|_{D}$. Since the $\Gamma$-action on $M^{s}_{D}$ is semilinear for its action on $\mathcal{O}_{X^{\textnormal{rig}}_{0}}(D)$, we have $\gamma(f\varphi^{s}_{0})=\gamma(f)\gamma(\varphi^{s}_{0})$ for all $\gamma\in\Gamma$ and $f\in\mathcal{O}_{X^{\textnormal{rig}}_{0}}(D)$. Now, as mentioned in the proof of Proposition \ref{dh}, $\gamma(\varphi^{s}_{0})=\gamma(\varphi_{0})^{s}=(\lambda_{0}\varphi_{0}+\lambda^{\sigma}_{h-1}\varphi_{1}+\dots +\lambda^{\sigma^{h-1}}_{1}\varphi_{h-1})^{s}=(\lambda_{0}+\lambda^{\sigma}_{h-1}w_{1}+\dots +\lambda^{\sigma^{h-1}}_{1}w_{h-1})^{s}\varphi_{0}^{s}$. So the orbit map from $\Gamma$ to $M^{s}_{D}$ is given by sending $\gamma$ to $\gamma(f\varphi^{s}_{0})=\gamma(f)(\lambda_{0}+\lambda^{\sigma}_{h-1}w_{1}+\dots +\lambda^{\sigma^{h-1}}_{1}w_{h-1})^{s}\varphi^{s}_{0}$. The map $\gamma\mapsto\gamma(f)$ is locally $K$-analytic by Proposition \ref{laofG}, and the map $\gamma\mapsto(\lambda_{0}+\lambda^{\sigma}_{h-1}w_{1}+\dots +\lambda^{\sigma^{h-1}}_{1}w_{h-1})^{s}\varphi^{s}_{0}$ is also locally $K$-analytic since it is given by a linear polynomial in the coordinates of $\gamma$. Thus, the orbit map, being a product of these two maps, is locally $K$-analytic. Therefore, $M^{s}_{D}$ is a locally analytic $\Gamma$-representation for all integers $s$.
\end{proof}
\subsection{Local analyticity of the $\Gamma$-action on $M^{s}_{0}$}
\label{step2}
\indent Let $D(\Gamma)$ denote the algebra of $\breve{K}$-valued locally $K$-analytic distributions on $\Gamma$  (cf. \cite{stladist}, Section 2). The strong topological duality induces an anti-equivalence between the category of locally $K$-analytic representations of $\Gamma$ on the $\breve{K}$-vector spaces of compact type and the category of continuous $D(\Gamma)$-modules on the nuclear $\breve{K}$-Fr\'{e}chet spaces (cf. \cite{stladist}, Corollary 3.4). Using the local $K$-analyticity of the $\Gamma$-action on $M^{s}_{D}$ obtained in the previous subsection, we now show that, at level $m=0$, the induced $\Gamma$-action on the vector space $(M^{s}_{0})'_{b}$ of compact type is locally $K$-analytic by showing that its strong topological dual $M^{s}_{0}$ is a continuous $D(\Gamma)$-module.\\
\indent  The continuity of the $\Gamma$-action on the universal deformation ring $R_{0}$ (cf. Theorem \ref{ctsthm}) leads to a continuous $\Gamma$-action on $R^{\text{rig}}_{0}$. This is implied by the next proposition.
\begin{proposition}\label{mainprop} Let $n$ and $l$ be integers with $n\geq 0$ and $l\geq 1$. If $\gamma\in\Gamma_{n}$, and if $f\in R^{\textnormal{rig}}_{0}$, then $\|\gamma(f)-f\|_{l}\leq\vert \varpi\vert^{n/l}\| f\|_{l}$.
\end{proposition}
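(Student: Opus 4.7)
The plan is to translate the integral information in Theorem \ref{ctsthm} into a metric estimate in the Gauss norm $\|\cdot\|_l$ and then bootstrap from the coordinate functions to monomials to arbitrary power series. Since $\gamma$ extends continuously to the Banach completion $R^{\textnormal{rig}}_{0,l}$ (noted in the paragraph preceding the proposition), it suffices to bound $\|\gamma(f)-f\|_l$ on the dense subspace of polynomials in $u_1,\ldots,u_{h-1}$, and by the non-archimedean triangle inequality it is enough to handle monomials $u^\alpha$.

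First, I would apply Theorem \ref{ctsthm} at level $m=0$: since $\gamma\in\Gamma_n$ acts trivially on $R_0/\mathfrak{m}_{R_0}^{n+1}$, each element $g_i:=\gamma(u_i)-u_i$ lies in $\mathfrak{m}_{R_0}^{n+1}$. Because $\mathfrak{m}_{R_0}$ is generated by $\varpi,u_1,\ldots,u_{h-1}$, an arbitrary element of $\mathfrak{m}_{R_0}^{n+1}$ is an $R_0$-linear combination of monomials $\varpi^{a_0}u^\beta$ with $a_0+|\beta|=n+1$. For such a monomial,
\begin{equation*}
\|\varpi^{a_0}u^\beta\|_l = |\varpi|^{a_0+|\beta|/l} \leq |\varpi|^{(a_0+|\beta|)/l} = |\varpi|^{(n+1)/l},
\end{equation*}
since $l\geq 1$ forces $a_0\geq a_0/l$. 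Combined with $\|h\|_l\leq 1$ for $h\in R_0$ and the ultrametric inequality, this gives $\|g_i\|_l\leq|\varpi|^{(n+1)/l}$.

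Next, the monomial estimate. Expanding $\gamma(u^\alpha)=\prod_i(u_i+g_i)^{\alpha_i}$ by the multinomial formula,
\begin{equation*}
\gamma(u^\alpha)-u^\alpha = \sum_{\substack{0\leq \beta\leq \alpha\\ |\beta|\geq 1}}\binom{\alpha}{\beta}\,u^{\alpha-\beta}\,g^\beta.
\end{equation*}
Using the submultiplicativity of $\|\cdot\|_l$ and the previous bound, each cross term with $|\beta|\geq 1$ satisfies
\begin{equation*}
\|u^{\alpha-\beta}g^\beta\|_l \leq |\varpi|^{(|\alpha|-|\beta|)/l}\cdot|\varpi|^{(n+1)|\beta|/l} = |\varpi|^{(|\alpha|+n|\beta|)/l} \leq |\varpi|^{(|\alpha|+n)/l}.
\end{equation*}
The binomial coefficients are ordinary integers of norm $\leq 1$, so the ultrametric inequality yields $\|\gamma(u^\alpha)-u^\alpha\|_l\leq|\varpi|^{n/l}\|u^\alpha\|_l$.

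Finally, for a general $f=\sum_\alpha c_\alpha u^\alpha\in R^{\textnormal{rig}}_0$, the continuous extension of $\gamma$ to $R^{\textnormal{rig}}_{0,l}$ gives $\gamma(f)-f=\sum_\alpha c_\alpha(\gamma(u^\alpha)-u^\alpha)$ with convergence in $\|\cdot\|_l$, and the ultrametric triangle inequality together with the monomial bound produces the desired estimate $\|\gamma(f)-f\|_l\leq|\varpi|^{n/l}\|f\|_l$. I do not anticipate any serious obstacle: the only mildly subtle point is reconciling the $(n+1)$-fold ideal-power membership given by Theorem \ref{ctsthm} with the $|\varpi|^{n/l}$ appearing in the target inequality. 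This is resolved by observing that the leading cross term in the multinomial expansion carries a single factor of some $g_i$, which absorbs one $u_i$ and thereby leaves exactly the $\|u^\alpha\|_l$ scaling behind.
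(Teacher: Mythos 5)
Your argument is correct, and it reaches the estimate by a somewhat different mechanism than the paper. Both proofs share the same skeleton: Theorem \ref{ctsthm} at level $m=0$ gives $\gamma(u_{i})-u_{i}\in\mathfrak{m}_{R_{0}}^{n+1}$ for $\gamma\in\Gamma_{n}$, one reduces to monomials $u^{\alpha}$, and one concludes for general $f$ by continuity of $\gamma$ and the ultrametric inequality. The difference lies in how the two middle steps are executed. The paper first converts $\|\cdot\|_{l}$ into a supremum over points of $\mathbb{B}_{l}(\overline{\breve{K}})$ via the maximum modulus principle (\cite{bgr}, (6.1.5), Proposition 5), proves the coordinate estimate $|x_{i}-y_{i}|\leq|\varpi|^{(n+1)/l}$ by a diagram of evaluation maps into $\overline{\breve{\mathfrak{o}}}/(z^{n+1})$ with $|z|=|\varpi|^{1/l}$, and then treats monomials by induction on $|\alpha|$ using the pointwise factorization $y^{\alpha}-x^{\alpha}=y_{i}y^{\beta}-x_{i}x^{\beta}$. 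You instead stay entirely with the formal Gauss norm: the bound $\|\gamma(u_{i})-u_{i}\|_{l}\leq|\varpi|^{(n+1)/l}$ falls out of the monomial generators $\varpi^{a_{0}}u^{\beta}$, $a_{0}+|\beta|=n+1$, of $\mathfrak{m}_{R_{0}}^{n+1}$ together with $\|h\|_{l}\leq 1$ for $h\in R_{0}$, and the passage to $u^{\alpha}$ is done in one shot by the multinomial expansion and (sub)multiplicativity of $\|\cdot\|_{l}$, with integral binomial coefficients of norm at most $1$. Your route is marginally more elementary (no appeal to pointwise valuations or the maximum modulus principle) and makes the exponent bookkeeping, in particular why the $(n+1)$-st ideal power yields only the factor $|\varpi|^{n/l}$ relative to $\|u^{\alpha}\|_{l}$, completely transparent; the paper's pointwise formulation fits naturally with the geometric description of the affinoid $\mathbb{B}_{l}$. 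One cosmetic point: the continuity of $\gamma$ on $R^{\textnormal{rig}}_{0,l}$ that you invoke at the end is established at the end of \S\ \ref{Rigidification and the equivariant vector bundles} (the observation that $\|\gamma(f)\|_{l}\leq\|f\|_{l}$), not in the paragraph immediately preceding the proposition, but it is indeed available and is exactly what the paper also uses in its final step.
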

\begin{proof}
Note that $R^{\text{rig}}_{0,l}$ is a generalized Tate algebra over $\breve{K}$ in the variables \linebreak $(\varpi^{-1/l}u_{i})_{1\leq i\leq h-1}$. Then by \cite{bgr}, (6.1.5), Proposition 5, we have \begin{equation*}
 \| g\|_{l}=\sup\big\lbrace\vert g(x)\vert\hspace{.1cm}\big\vert\hspace{.1cm} x\in\mathbb{B}_{l}\big(\overline{\breve{K}}\big)\big\rbrace\hspace{.2cm}\text{for any}\hspace{.2cm} g\in R^{\textnormal{rig}}_{0,l},
\end{equation*} where \begin{equation*}
\mathbb{B}_{l}\big(\overline{\breve{K}}\big)=\big\lbrace x\in\big(\overline{\breve{K}}\big)^{h-1}\hspace{.1cm}\big\vert\hspace{.1cm}\text{$\vert x_{i}\vert\leq\vert\varpi\vert^{1/l}$ for all $1\leq i \leq h-1$}\big\rbrace.
\end{equation*} Let us first prove the assertion for $f=u_{i}$ for some $1\leq i\leq h-1$. If $x\in\mathbb{B}_{l}\big(\overline{\breve{K}}\big)$ and $y=(y_{j}):=(\gamma(u_{j})(x))$, then we need to show that $\vert x_{i}-y_{i}\vert\leq\vert\varpi\vert^{(n+1)/l}$ because $\|u_{i}\|_{l}=|\varpi|^{1/l}$. Consider the commutating diagram \begin{displaymath}
\xymatrix{
        R_{0}\ar[dr]_{f\mapsto f(y)}\ar[rr]^{\gamma} & & R_{0}\ar[dl]^{f\mapsto f(x)}  \\
        & \overline{\breve{\mathfrak{o}}} &  }
\end{displaymath}
of homomorphisms of $\breve{\mathfrak{o}}$-algebras. Choosing $z\in\overline{\breve{\mathfrak{o}}}$ with $\vert z\vert = \vert\varpi\vert^{1/l}$, we have $x_{j}\in z\overline{\breve{\mathfrak{o}}}$ for all $j$. Further, $\varpi\in z\overline{\breve{\mathfrak{o}}}$ because $l\geq 1$. As a consequence, the right oblique arrow of the above diagram maps $\mathfrak{m}_{R_{0}}=(\varpi,u_{1},\ldots,u_{h-1})$ to $z\overline{\breve{\mathfrak{o}}}$. Note that $\gamma(u_{j})\in\mathfrak{m}_{R_{0}}$ so we obtain $y_{j}\in z\overline{\breve{\mathfrak{o}}}$ as well. Therefore, also the left oblique arrow maps $\mathfrak{m}_{R_{0}}$ to $z\overline{\breve{\mathfrak{o}}}$. Now consider the induced diagram 
\begin{displaymath}
\xymatrix{
        R_{0}/\mathfrak{m}_{R_{0}}^{n+1}\ar[dr]\ar[rr]^{\gamma} & & R_{0}/\mathfrak{m}_{R_{0}}^{n+1}\ar[dl]  \\
        & \overline{\breve{\mathfrak{o}}}/(z^{n+1}) &  }
\end{displaymath}
According to Theorem \ref{ctsthm}, the upper horizontal arrow is the identity. It follows that $x_{i}-y_{i}\in z^{n+1}\overline{\breve{\mathfrak{o}}}$, i.e., $\vert x_{i}-y_{i}\vert\leq\vert\varpi\vert^{(n+1)/l}$.
\\
\indent  We now prove the assertion for $f=u^{\alpha}$ by induction on $|\alpha|$. The case $|\alpha|=0$ is trivial. Let $|\alpha|>0$. Choose an index $i$ with $\alpha_{i}>0$. Define $\beta_{j}:=\alpha_{j}$ if $j\neq i$, and $\beta_{i}:=\alpha_{i}-1$. Then for $x\in\mathbb{B}_{l}\big(\overline{\breve{K}}\big)$,\begin{align*}
\vert \gamma(u^{\alpha})(x)-u^{\alpha}(x)\vert=\vert y^{\alpha}-x^{\alpha}\vert&=\vert y_{i}y^{\beta}-x_{i}x^{\beta}\vert\\&\leq\max\lbrace\vert y_{i}\vert\vert y^{\beta}-x^{\beta}\vert,\vert y_{i}-x_{i}\vert\vert x^{\beta}\vert\rbrace.
\end{align*} Now $\vert y_{i}\vert\vert y^{\beta}-x^{\beta}\vert\leq\vert\varpi\vert^{1/l}\|\gamma(u^{\beta})-u^{\beta}\|_{l}\leq\vert\varpi\vert^{(n+1)/l}\| u^{\beta}\|_{l}=\vert\varpi\vert^{n/l}\| u^{\alpha}\|_{l}$ by the induction hypothesis and $\vert y_{i}-x_{i}\vert\vert x^{\beta}\vert\leq\vert\varpi\vert^{(n+1)/l}\vert\varpi\vert^{\vert \beta\vert/l}=\vert\varpi\vert^{n/l}\| u^{\alpha}\|_{l}$ as seen above. Thus we obtain $\vert \gamma(u^{\alpha})(x)-u^{\alpha}(x)\vert\leq\vert\varpi\vert^{n/l}\|u^{\alpha}\|_{l}$ for all $x\in\mathbb{B}_{l}\big(\overline{\breve{K}}\big)$ as required.\\
\indent  Therefore if $f=\sum_{\alpha\in\mathbb{N}^{h-1}_{0}}c_{\alpha}u^{\alpha}\in R^{\textnormal{rig}}_{0}$, then by continuity of $\gamma$, we get \begin{align*}
\|\gamma(f)-f\|_{l}=\Big\|\sum_{\alpha\in\mathbb{N}^{h-1}_{0}}c_{\alpha}(\gamma(u^{\alpha})-u^{\alpha})\Big\|_{l}&\leq\sup_{\alpha\in\mathbb{N}^{h-1}_{0}}|c_{\alpha}|\|\gamma(u^{\alpha})-u^{\alpha}\|_{l}\\&\leq\sup_{\alpha\in\mathbb{N}^{h-1}_{0}}|c_{\alpha}||\varpi|^{n/l}\|u^{\alpha}\|_{l}\\&=|\varpi|^{n/l}\|f\|_{l}.
\end{align*}
\end{proof}
\indent  We write $\Gamma_{\mathbb{Q}_{p}}$ for $\Gamma$ when viewed as a locally $\mathbb{Q}_{p}$-analytic group, and $\mathfrak{g}_{\mathbb{Q}_{p}}$ for its Lie algebra $\mathfrak{g}$ when considered as a $\mathbb{Q}_{p}$-vector space. Let $d:=[K:\mathbb{Q}_{p}]$. Since $\Gamma_{\mathbb{Q}_{p}}$ is a compact locally $\mathbb{Q}_{p}$-analytic group of dimension $t:=dh^{2}$, it contains an open subgroup $\Gamma_{o}$ which is a uniform pro-$p$ group of rank $t$ (cf. \cite{ddms}, Theorem 8.32). The subgroups in its lower $p$-series $P_{i}(\Gamma_{o})$ $(i\geq 1)$ form a basis of open neighbourhoods of the identity in $\Gamma_{o}$ and are also uniform pro-$p$ groups of rank $t$ (cf. \cite{ddms}, Proposition 1.7, Proposition 1.11 (i), Theorem 3.6 (i) and Proposition 4.4). Let $n$ be a positive integer such that $\Gamma_{n}\subseteq\Gamma_{o}$. As $\Gamma_{n}$ is open in $\Gamma_{o}$, it contains $\Gamma_{*}:=P_{i}(\Gamma_{o})$ for some $i\geq 1$. In what follows, we view $\Gamma_{*}$ as a locally $\mathbb{Q}_{p}$-analytic group.\\
\indent  Let us denote by $\Lambda(\Gamma_{*}):=\breve{\mathfrak{o}}[[\Gamma_{*}]]$ the Iwasawa algebra of $\Gamma_{*}$ over $\breve{\mathfrak{o}}$. Set $b_{i}:=\gamma_{i}-1\in\Lambda(\Gamma_{*})$ and $b^{\alpha}:=b_{1}^{\alpha_{1}}\cdots b_{t}^{\alpha_{t}}$ for any $\alpha\in\mathbb{N}_{0}^{t}$ where $\lbrace\gamma_{1},\dots ,\gamma_{t}\rbrace$ is a minimal topological generating set of $\Gamma_{*}$. By \cite{ddms}, Theorem 7.20, any element $\mu\in\Lambda(\Gamma_{*})$ admits a unique expansion of the form \begin{equation*}
\text{$\mu =\sum_{\alpha\in\mathbb{N}_{0}^{t}}d_{\alpha}b^{\alpha}$ with $d_{\alpha}\in\breve{\mathfrak{o}}$ $\forall$ $\alpha\in\mathbb{N}_{0}^{t}$}
\end{equation*}
For any $l\geq 1$, this allows us to define the $\breve{K}$-norm $\|\cdot\|_{l}$ on the algebra $\Lambda(\Gamma_{*})_{\breve{K}}:=\Lambda(\Gamma_{*})\otimes_{\breve{\mathfrak{o}}}\breve{K}$ through \begin{equation}\label{lnorm}
\Big\|\sum_{\alpha\in\mathbb{N}_{0}^{t}}d_{\alpha}b^{\alpha}\Big\|_{l}:=\sup_{\alpha\in\mathbb{N}_{0}^{t}}\lbrace\vert d_{\alpha}\vert\vert \varpi\vert^{\vert \alpha\vert /l}\rbrace
\end{equation} By \cite{stadmrep}, Proposition 4.2, the norm $\|\cdot\|_{l}$ on $\Lambda(\Gamma_{*})_{\breve{K}}$ is submultiplicative. As a consequence, the completion \begin{equation*}
\Lambda(\Gamma_{*})_{\breve{K},l}=\Big\lbrace\sum_{\alpha\in\mathbb{N}_{0}^{t}}d_{\alpha}b^{\alpha}\mid d_{\alpha}\in\breve{K} , \lim_{\vert \alpha\vert\to\infty}\vert d_{\alpha}\vert\vert \varpi\vert^{\vert \alpha\vert /l}=0\Big\rbrace
\end{equation*} of $\Lambda(\Gamma_{*})_{\breve{K}}$ with respect to $\|\cdot\|_{l}$ is a $\breve{K}$-Banach algebra. The natural inclusions $\Lambda(\Gamma_{*})_{\breve{K},l+1}\hookrightarrow\Lambda(\Gamma_{*})_{\breve{K},l}$ endow the projective limit \begin{equation*}D(\Gamma_{*})=\varprojlim_{l}\Lambda(\Gamma_{*})_{\breve{K},l}\end{equation*} with the structure of a $\breve{K}$-Fr\'{e}chet algebra. By Amice's theorem, the above projective limit is indeed equal to the algebra of $\breve{K}$-valued locally $\mathbb{Q}_{p}$-analytic distributions on $\Gamma_{*}$ (cf. \cite{stadmrep}, Section 4). By fixing coset representatives $\lbrace\gamma'_{1}=1,\gamma'_{2},\ldots,\gamma'_{s}\rbrace$ of $\Gamma_{*}$ in $\Gamma_{\mathbb{Q}_{p}}$, the natural topological isomorphism $C^{an}(\Gamma_{\mathbb{Q}_{p}},\breve{K})\cong\prod_{i=1}^{s}C^{an}(\gamma'_{i}\Gamma_{*},\breve{K})$ of locally convex $\breve{K}$-vector spaces induces a topological isomorphism \begin{equation}\label{top_iso_of_distribution_alg}
D(\Gamma_{\mathbb{Q}_{p}})\cong\bigoplus_{i=1}^{s}\delta_{\gamma'_{i}} D(\Gamma_{*})\hspace{1.5cm}\text{($\delta_{\gamma'_{i}}$'s are Dirac distributions)}
\end{equation} by dualizing (cf. \cite{fe99}, Korollar 2.2.4). This defines a $\breve{K}$-Fr\'{e}chet algebra structure on $D(\Gamma_{\mathbb{Q}_{p}})$ given by the family of norms $\|\delta_{\gamma'_{1}}\mu_{1}+\cdots+\delta_{\gamma'_{s}}\mu_{s}\|_{l}:=\max_{i=1}^{s}\lbrace\|\mu_{i}\|_{l}\rbrace$ with $l\geq 1$ (cf. \cite{stadmrep}, Theorem 5.1).\\
\indent  Note that $D(\Gamma_{\mathbb{Q}_{p}})$ is not the same as the distribution algebra $D(\Gamma)$ of $\breve{K}$-valued locally $K$-analytic distributions on $\Gamma$. In fact, the natural embedding $C^{an}(\Gamma,\breve{K})\hookrightarrow C^{an}(\Gamma_{\mathbb{Q}_{p}},\breve{K})$ induces a map $D(\Gamma_{\mathbb{Q}_{p}})\longrightarrow D(\Gamma)$ which is a strict surjection and a homomorphism of $\breve{K}$-algebras by \cite{kohlinvdist}, Lemma 1.3.1. According to \cite{kohlinvdist}, Lemma 1.3.2 and Lemma 1.3.3, the kernel $I$ of the surjection $D(\Gamma_{\mathbb{Q}_{p}})\twoheadrightarrow D(\Gamma)$ is the closure of the ideal generated by all elements of the form $i(\lambda\mathfrak{x})-\lambda i(\mathfrak{x})$ with $\mathfrak{x}\in\mathfrak{g}_{\mathbb{Q}_{p}}$, $\lambda\in K$ and $i:\mathfrak{g}_{\mathbb{Q}_{p}}\hookrightarrow D(\Gamma_{\mathbb{Q}_{p}})$ denoting the natural inclusion as explained on page 10 of \cite{stladist}.
\begin{theorem}\label{laK0} The action of $\Gamma_{\mathbb{Q}_{p}}$ on $R^{\textnormal{rig}}_{0}$ extends to a continuous action of the $\breve{K}$-Fr\'{e}chet algebra $D(\Gamma_{\mathbb{Q}_{p}})$, which then factors through a continuous action of $D(\Gamma)$ on $R^{\textnormal{rig}}_{0}$. Hence the action of $\Gamma$ on the strong continuous $\breve{K}$-linear dual $(R^{\textnormal{rig}}_{0})'_{b}$ of $R^{\textnormal{rig}}_{0}$ is locally $K$-analytic.
\end{theorem}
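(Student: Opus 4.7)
The plan is to first extend the $\Gamma_{\mathbb{Q}_{p}}$-action on $R^{\textnormal{rig}}_{0}$ to a continuous action of the $\breve{K}$-Fr\'{e}chet algebra $D(\Gamma_{\mathbb{Q}_{p}},\breve{K})$, then verify that this action descends along the surjection $D(\Gamma_{\mathbb{Q}_{p}},\breve{K})\twoheadrightarrow D(\Gamma,\breve{K})$, and finally invoke the Schneider--Teitelbaum anti-equivalence (Theorem \ref{stantiequivalence}) to deduce local $K$-analyticity of the dual representation.

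For the extension step, fix a uniform pro-$p$ open subgroup $\Gamma_{*}=P_{i}(\Gamma_{o})$ with $\Gamma_{*}\subseteq\Gamma_{n}$ for some $n\geq 1$, as described in § \ref{Structure of the distribution algebra}. Writing an arbitrary $\mu\in\Lambda(\Gamma_{*})_{\breve{K},l}$ as $\mu=\sum_{\alpha}d_{\alpha}b^{\alpha}$, Proposition \ref{mainprop} gives $\|b_{i}f\|_{l}\leq|\varpi|^{n/l}\|f\|_{l}$ for every $f\in R^{\textnormal{rig}}_{0,l}$; iterating, $\|b^{\alpha}f\|_{l}\leq|\varpi|^{n|\alpha|/l}\|f\|_{l}\leq|\varpi|^{|\alpha|/l}\|f\|_{l}$ because $n\geq 1$. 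Hence the series $\mu\cdot f:=\sum_{\alpha}d_{\alpha}b^{\alpha}f$ converges in $R^{\textnormal{rig}}_{0,l}$ and satisfies $\|\mu\cdot f\|_{l}\leq\|\mu\|_{l}\|f\|_{l}$, exhibiting a continuous $\breve{K}$-Banach algebra action of $\Lambda(\Gamma_{*})_{\breve{K},l}$ on $R^{\textnormal{rig}}_{0,l}$. Passing to the projective limit over $l$ and extending via the finite decomposition (\ref{top_iso_of_distribution_alg}) produces the desired continuous $D(\Gamma_{\mathbb{Q}_{p}},\breve{K})$-module structure on $R^{\textnormal{rig}}_{0}$.

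To factor this action through $D(\Gamma,\breve{K})$, I must show that the kernel $I$ of the quotient map acts trivially. By the explicit description (\ref{description_of_I}), this reduces to proving that the derived $\mathfrak{g}_{\mathbb{Q}_{p}}$-action on $R^{\textnormal{rig}}_{0}$ given by (\ref{g_action_eqn}) is $K$-linear, i.e., $(t\mathfrak{x})(f)=t\cdot\mathfrak{x}(f)$ for $t\in K$, $\mathfrak{x}\in\mathfrak{g}_{\mathbb{Q}_{p}}$ and $f\in R^{\textnormal{rig}}_{0}$. For this I would exploit the $\Gamma$-equivariant continuous restriction map $R^{\textnormal{rig}}_{0}\hookrightarrow\mathcal{O}_{X^{\textnormal{rig}}_{0}}(D)=M^{0}_{D}$, which is injective since $X^{\textnormal{rig}}_{0}$ is an irreducible open polydisc and $D$ is a non-empty affinoid subdomain. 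The map intertwines the $\Gamma_{\mathbb{Q}_{p}}$-actions, and since the formula (\ref{g_action_eqn}) is purely intrinsic to the group action on either side, it also intertwines the resulting $\mathfrak{g}_{\mathbb{Q}_{p}}$-actions. But by Theorem \ref{laofGonMD} (applied with $s=0$), $M^{0}_{D}$ is a locally $K$-analytic $\Gamma$-representation, so its derived $\mathfrak{g}$-action is genuinely $K$-linear; transporting this back via the injection yields the required $K$-linearity on $R^{\textnormal{rig}}_{0}$.

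The final step is formal: $R^{\textnormal{rig}}_{0}\simeq\varprojlim_{l}R^{\textnormal{rig}}_{0,l}$ is a nuclear $\breve{K}$-Fr\'{e}chet space, because the restriction maps between polydiscs of strictly shrinking radii are compact, so the Schneider--Teitelbaum duality (Theorem \ref{stantiequivalence}) translates the continuous $D(\Gamma,\breve{K})$-module $R^{\textnormal{rig}}_{0}$ into a locally $K$-analytic $\Gamma$-representation on $(R^{\textnormal{rig}}_{0})'_{b}$. I expect the central obstacle to be the descent step: one needs to verify carefully that the restriction to the fundamental domain faithfully transports the infinitesimal action so that the $K$-linearity genuinely pulls back. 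The first step is essentially a mechanical consequence of Proposition \ref{mainprop}, and the concluding step invokes an established duality.
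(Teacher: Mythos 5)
Your proposal is correct and follows essentially the same route as the paper: the norm estimate from Proposition \ref{mainprop} iterated on the $b^{\alpha}$ to make $R^{\textnormal{rig}}_{0,l}$ a Banach module over $\Lambda(\Gamma_{*})_{\breve{K},l}$, the projective limit and coset decomposition to obtain the $D(\Gamma_{\mathbb{Q}_{p}},\breve{K})$-action, the continuous (and, as you rightly note, injective) $\mathfrak{g}_{\mathbb{Q}_{p}}$-equivariant embedding $R^{\textnormal{rig}}_{0}\hookrightarrow\mathcal{O}_{X^{\textnormal{rig}}_{0}}(D)$ together with Theorem \ref{laofGonMD} to kill the ideal $I$, and nuclearity plus Schneider--Teitelbaum duality to conclude. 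The only cosmetic difference is that the paper first establishes the module structure on the dense subspace $R_{0}[\tfrac{1}{\varpi}]$ (via Remark \ref{iwasawa_alg_action}) and then completes, rather than summing the series directly in $R^{\textnormal{rig}}_{0,l}$.
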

\begin{proof}
First, we show that $R^{\textnormal{rig}}_{0,l}$ is a topological Banach module over the $\breve{K}$-Banach algebra $\Lambda(\Gamma_{*})_{\breve{K},l}$ for all $l\geq 1$. To show this, let us prove by induction on $\vert \alpha\vert$ that $\| b^{\alpha}(f)\|_{l}\leq\| b^{\alpha}\|_{l}\| f\|_{l}$ for any $f\in R^{\textnormal{rig}}_{0}$. This is clear if $\vert \alpha\vert=0$. Let $|\alpha|>0$ and let $i$ be the minimal index such that $\alpha_{i}>0$. Define $\beta_{j}:=\alpha_{j}$ if $j\neq i$, and $\beta_{i}:=\alpha_{i}-1$. Since $\Gamma_{*}\subseteq \Gamma_{n}$, Proposition \ref{mainprop} and the induction hypothesis imply \begin{align*}
\| b^{\alpha}(f)\|_{l}&=\| ((\gamma_{i}-1)b^{\beta})(f)\|_{l}=\| (\gamma_{i}-1)(b^{\beta}(f))\|_{l}\leq\vert\varpi\vert^{n/l}\| b^{\beta}(f)\|_{l}\\&\leq\vert\varpi\vert^{1/l}\| b^{\beta}\|_{l}\| f\|_{l}\leq\vert\varpi\vert^{1/l}\vert\varpi\vert^{\vert \beta\vert /l}\| f\|_{l}=\vert\varpi\vert^{(\vert \beta\vert+1) /l}\| f\|_{l}=\| b^{\alpha}\|_{l}\| f\|_{l}  
\end{align*} as required.\\
\indent  By Remark \ref{iwasawa_alg_action}, we then have $\|\mu(f)\|_{l}\leq\|\mu\|_{l}\| f\|_{l}$ for all $\mu\in\Lambda(\Gamma_{*})_{\breve{K}}$ and $f\in R_{0}[\frac{1}{\varpi}]=R_{0}\otimes_{\breve{\mathfrak{o}}}\breve{K}$. Hence the map $\Lambda(\Gamma_{*})_{\breve{K}}\times R_{0}[\frac{1}{\varpi}]\longrightarrow R_{0}[\frac{1}{\varpi}]$ $((\mu,f)\mapsto\mu(f))$ is continuous if $\Lambda(\Gamma_{*})_{\breve{K}}$ and $R_{0}[\frac{1}{\varpi}]$ are endowed with the respective $\|\cdot\|_{l}$-topologies, and if the left hand side carries the product topology. Since $R_{0}[\frac{1}{\varpi}]$ is dense in $R^{\textnormal{rig}}_{0,l}$, we obtain a map $\Lambda(\Gamma_{*})_{\breve{K},l}\times R^{\textnormal{rig}}_{0,l}\longrightarrow R^{\textnormal{rig}}_{0,l}$ by passing to completions. By continuity, it gives $R^{\textnormal{rig}}_{0,l}$ the structure of a topological Banach module over the $\breve{K}$-Banach algebra $\Lambda(\Gamma_{*})_{\breve{K},l}$. Taking projective limits over $l$, we obtain a continuous map $D(\Gamma_{*})\times R^{\textnormal{rig}}_{0}\longrightarrow R^{\textnormal{rig}}_{0}$ giving $R^{\textnormal{rig}}_{0}$ the structure of a continuous module over $D(\Gamma_{*})$. Because of the topological isomorphism (\ref{top_iso_of_distribution_alg}), $R^{\textnormal{rig}}_{0}$ becomes a continuous module over $D(\Gamma_{\mathbb{Q}_{p}})$.\\
\indent  To see that the $D(\Gamma_{\mathbb{Q}_{p}})$-action on $R^{\textnormal{rig}}_{0}$ factors through a continuous action of $D(\Gamma)$, it suffices to check  that $i(\lambda\mathfrak{x})(f)=\lambda i(\mathfrak{x})(f)$ for all $\lambda\in K$, $\mathfrak{x}\in\mathfrak{g}_{\mathbb{Q}_{p}}\subseteq D(\Gamma_{\mathbb{Q}_{p}})$ and $f\in R^{\textnormal{rig}}_{0}$. Here we make use of Theorem \ref{laofGonMD}. Being a locally $K$-analytic $\Gamma$-representation, $\mathcal{O}_{X^{\textnormal{rig}}_{0}}(D)$ is a $D(\Gamma)$-module and thus carries an action of the Lie algebra $\mathfrak{g}$ (cf. \cite{stladist}, Proposition 3.2). Thus, $i(\lambda\mathfrak{x})(f)=\lambda i(\mathfrak{x})(f)$ holds for all $f\in \mathcal{O}_{X^{\textnormal{rig}}_{0}}(D)$. As the $K$-linear inclusion $R^{\text{rig}}_{0}\hookrightarrow \mathcal{O}_{X^{\textnormal{rig}}_{0}}(D)$ is continuous, it is $\mathfrak{g}_{\mathbb{Q}_{p}}$-equivariant. Hence the equality $i(\lambda\mathfrak{x})(f)=\lambda i(\mathfrak{x})(f)$ is also true for all $f\in R^{\text{rig}}_{0}$.\\
\indent  Now it follows from \cite{schnfa}, Proposition 19.9 and the arguments proving the claim on page 98, that the $\breve{K}$-Fr\'{e}chet space $R^{\textnormal{rig}}_{0}$ is nuclear. Therefore, \cite{stladist}, Corollary 3.4 implies that the locally convex $\breve{K}$-vector space $(R^{\textnormal{rig}}_{0})'_{b}$ is of compact type and that the action of $\Gamma$ obtained by dualizing is locally $K$-analytic. 
\end{proof}
\indent  The preceding theorem can be generalized as follows. Let $\Gamma_{*}$ be a uniform pro-$p$ group contained in $\Gamma_{2n+1}$ for some positive integer $n$.
\begin{theorem}\label{genlaK0}
The action of $\Gamma_{\mathbb{Q}_{p}}$ on $M^{s}_{0}$ extends to a continuous action of the $\breve{K}$-Fr\'{e}chet algebra $D(\Gamma_{\mathbb{Q}_{p}})$, which then factors through a continuous action of $D(\Gamma)$ on $M^{s}_{0}$. Hence the action of $\Gamma$ on the strong continuous $\breve{K}$-linear dual $(M^{s}_{0})'_{b}$ of $M^{s}_{0}$ is locally $K$-analytic for any $s\in\mathbb{Z}$. 
\end{theorem}
\begin{proof}
Choose a generator $\delta$ of the $R_{0}$-module $\textnormal{Lie}(\mathbb{H}^{(0)})^{\otimes s}$. Then by (\ref{G_eqv_iso_eqn1}) and (\ref{G_eqv_iso_eqn3}), $M^{s}_{0}=R^{\textnormal{rig}}_{0}\delta$ and $M^{s}_{0,l}=R^{\textnormal{rig}}_{0,l}\delta$. The topology on $M^{s}_{0,l}$ is defined by the norm $\| f\delta\|_{l}:=\|f\|_{l}$. Let $\gamma(\delta)=f_{0}\delta$ then by $\Gamma$-equivariance, we have $\gamma(f\delta)=\gamma(f)\gamma(\delta)=\gamma(f)f_{0}\delta$ for all $f\delta\in M^{s}_{0}$. Hence \begin{align}\label{simplifying_eqn1}
\gamma(f\delta)-f\delta=(\gamma(f)f_{0}-f)\delta&=(\gamma(f)f_{0}-ff_{0}+ff_{0}-f)\delta\\&=((\gamma(f)-f)f_{0}+f(f_{0}-1))\delta\nonumber
\end{align} Now if $\gamma\in\Gamma_{*}\subseteq\Gamma_{2n+1}$ and if $f\delta\in M^{s}_{0}$, then $\|\gamma(f)-f\|_{l}\leq |\varpi|^{\frac{2n+1}{l}}\|f\|_{l}$ by Proposition \ref{mainprop} and $\gamma(\delta)-\delta=(f_{0}-1)\delta\in\mathfrak{m}^{n+1}_{R_{0}}\textnormal{Lie}(\mathbb{H}^{(0)})^{\otimes s}$ by Theorem \ref{ctsthm2}, i.e., $f_{0}-1\in\mathfrak{m}_{R_{0}}^{n+1}$. Since $\| y\|_{l}\leq\vert\varpi\vert^{1/l}$ for any $y\in\mathfrak{m}_{R_{0}}=(\varpi,u_{1},\cdots,u_{h-1})$, $\|f_{0}-1\|_{l}\leq|\varpi|^{\frac{n+1}{l}}$ and $\|f_{0}\|_{l}\leq\max\lbrace\|f_{0}-1\|_{l},1\rbrace=1$.  
Thus by the multiplicativity of the norm $\|\cdot\|_{l}$ on $R^{\textnormal{rig}}_{0}$ and by (\ref{simplifying_eqn1}), we have \begin{align*}
\|\gamma(f\delta)-f\delta\|_{l}&=\|(\gamma(f)-f)f_{0}+f(f_{0}-1)\|_{l}\\&\leq\max\lbrace\|(\gamma(f)-f)\|_{l}\|f_{0}\|_{l},\|f\|_{l}\|(f_{0}-1)\|_{l}\rbrace\\&\leq\max\lbrace|\varpi|^{\frac{2n+1}{l}}\|f\|_{l},|\varpi|^{\frac{n+1}{l}}\|f\|_{l}\rbrace\\&=|\varpi|^{\frac{n+1}{l}}\|f\|_{l}=|\varpi|^{\frac{n+1}{l}}\|f\delta\|_{l}
\end{align*}
The rest of the proof now proceeds along the same lines as for Theorem \ref{laK0}.
\end{proof}
\subsection{Local analyticity of the $\Gamma$-action on $M^{s}_{m}$ with $m>0$}
\label{step3}
\indent In this subsection, as the title indicates, we extend the theorems of the previous subsection to higher levels $m>0$. The following observation together with the continuity of the $\Gamma$-action on $R_{m}$ and on $R^{\text{rig}}_{0}$ (cf. Theorem \ref{ctsthm} and Proposition \ref{mainprop} respectively) allows us to show the continuity of the $\Gamma$-action on $R^{\text{rig}}_{m}$ for $m>0$.
\begin{lemma}\label{km}  For every $m\geq 0$, there exists a positive integer $k_{m}$ such that $\mathfrak{m}_{R_{m}}^{n}\subseteq\mathfrak{m}_{R_{0}}R_{m}$ for all $n\geq k_{m}$.
\end{lemma}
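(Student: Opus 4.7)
\begin{proof}[Proof proposal]
The plan is to reduce the claim to the fact that the maximal ideal of an Artinian local ring is nilpotent. The key observation is that the quotient $R_{m}/\mathfrak{m}_{R_{0}}R_{m}$ is Artinian, so its maximal ideal is nilpotent, which is exactly what we need.

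First, the case $m=0$ is trivial (take $k_{0}=1$), so assume $m\geq 1$. Recall from Part~2 of Theorem~\ref{fundthm} that $R_{0}\hookrightarrow R_{m}$ is a finite flat extension of local rings; by \cite{matcrt}, Theorem~7.10, it is in fact free of finite rank $r=|G_{0}/G_{m}|$. Since $R_{0}\longrightarrow R_{m}$ is a \emph{local} homomorphism of local rings, we have $\mathfrak{m}_{R_{0}}\subseteq\mathfrak{m}_{R_{m}}$, and hence the quotient $A:=R_{m}/\mathfrak{m}_{R_{0}}R_{m}$ is a local ring with maximal ideal $\overline{\mathfrak{m}}:=\mathfrak{m}_{R_{m}}/\mathfrak{m}_{R_{0}}R_{m}$ and residue field $k^{\textnormal{sep}}$.

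Now $A$ is a quotient of $R_{m}\otimes_{R_{0}}(R_{0}/\mathfrak{m}_{R_{0}})$, which is a finite-dimensional $k^{\textnormal{sep}}$-algebra of dimension at most $r$. In particular, $A$ is an Artinian local ring, so its maximal ideal $\overline{\mathfrak{m}}$ is nilpotent (cf. \cite{matcrt}, Theorem 2.14). Choose $k_{m}\geq 1$ such that $\overline{\mathfrak{m}}^{k_{m}}=0$ in $A$. Lifting back to $R_{m}$, this says precisely that $\mathfrak{m}_{R_{m}}^{k_{m}}\subseteq\mathfrak{m}_{R_{0}}R_{m}$. For any $n\geq k_{m}$, we then have
\begin{equation*}
\mathfrak{m}_{R_{m}}^{n}\subseteq\mathfrak{m}_{R_{m}}^{k_{m}}\subseteq\mathfrak{m}_{R_{0}}R_{m},
\end{equation*}
as required.
\end{proof}

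There is essentially no obstacle: the lemma is a standard commutative-algebra consequence of the finiteness of $R_{m}$ over $R_{0}$. The only thing to verify carefully is that $R_{0}\longrightarrow R_{m}$ is a local homomorphism, which is built into the deformation-theoretic construction (both rings have residue field $k^{\textnormal{sep}}$ and the map is $\breve{\mathfrak{o}}$-linear). Alternatively, one could argue directly by noting that $\mathfrak{m}_{R_{m}}/\mathfrak{m}_{R_{0}}R_{m}$ is a finitely generated module over the Artinian ring $A$ and applying Nakayama, but the Artinian-ring formulation above is the cleanest.
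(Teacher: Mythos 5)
Your proof is correct and is essentially the paper's own argument: both rest on the observation that $R_{m}/\mathfrak{m}_{R_{0}}R_{m}$ is finite-dimensional over $k^{\textnormal{sep}}$, hence an Artinian local ring whose maximal ideal is nilpotent (the paper just unwinds this via the stabilizing descending chain plus Nakayama, exactly the alternative you mention at the end). The only cosmetic point is to double-check the precise reference in \cite{matcrt} for nilpotence of the maximal ideal of an Artinian local ring; the mathematics is fine.
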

\begin{proof}
Since $R_{m}$ is a finite free module over $R_{0}$, $R_{m}/\mathfrak{m}_{R_{0}}R_{m}$ is a finite dimensional vector space over $R_{0}/\mathfrak{m}_{R_{0}}=\overline{k}$. Moreover, $R_{m}/\mathfrak{m}_{R_{0}}R_{m}$ is still a Noetherian local ring with the maximal ideal $\mathfrak{m}_{R_{m}}/\mathfrak{m}_{R_{0}}R_{m}$. The powers $(\mathfrak{m}_{R_{m}}/\mathfrak{m}_{R_{0}}R_{m})^{n}$, $n\in\mathbb{N}$, of the ideal $\mathfrak{m}_{R_{m}}/\mathfrak{m}_{R_{0}}R_{m}$ form a descending sequence of finite dimensional subspaces which eventually must become stationary. Let $k_{m}$ be a positive integer such that $(\mathfrak{m}_{R_{m}}/\mathfrak{m}_{R_{0}}R_{m})^{n+1}=(\mathfrak{m}_{R_{m}}/\mathfrak{m}_{R_{0}}R_{m})^{n}$ for all $n\geq k_{m}$. Then by Nakayama's lemma, for all $n\geq k_{m}$, $(\mathfrak{m}_{R_{m}}/\mathfrak{m}_{R_{0}}R_{m})^{n}=0$, in other words, $\mathfrak{m}_{R_{m}}^{n}\subseteq\mathfrak{m}_{R_{0}}R_{m}$.
\end{proof}
\begin{proposition}\label{mainprop2} Let $m$, $n$ and $l$ be integers with $m\geq 1$, $l\geq 1$ and $n\geq k_{m}-1$ where $k_{m}$ is as stated in Lemma \ref{km}. If $\gamma\in\Gamma_{n+m}$ and if $f\in R^{\textnormal{rig}}_{m}$, then $\|\gamma(f)-f\|_{l}\leq\vert \varpi\vert^{1/l}\| f\|_{l}$.
\end{proposition}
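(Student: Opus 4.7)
The plan is to reduce to the level-$0$ case (Proposition \ref{mainprop}) by decomposing $f$ along a basis of $R_m$ over $R_0$, and to control the discrepancy coming from $\gamma$ acting on the basis by combining Theorem \ref{ctsthm} with Lemma \ref{km}. Concretely, I will fix once and for all a free $R_0$-basis $\{e_1,\dots,e_r\}$ of $R_m$, which by (\ref{rigidification_is_a_basechange}) is simultaneously an $R^{\text{rig}}_0$-basis of $R^{\text{rig}}_m$, and the norm $\|\cdot\|_l$ on $R^{\text{rig}}_m$ is computed componentwise in this basis. Writing an arbitrary element as $f=\sum_i f_i e_i$ with $f_i\in R^{\text{rig}}_0$, I will split
\begin{equation*}
\gamma(f)-f \;=\; \sum_i \gamma(f_i)\bigl(\gamma(e_i)-e_i\bigr) \;+\; \sum_i \bigl(\gamma(f_i)-f_i\bigr)\,e_i,
\end{equation*}
and bound the two summands separately.

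For the second summand, since $\gamma\in\Gamma_{n+m}$ and $n+m\geq 1$, Proposition \ref{mainprop} applied at level $0$ gives $\|\gamma(f_i)-f_i\|_l\leq |\varpi|^{(n+m)/l}\|f_i\|_l\leq|\varpi|^{1/l}\|f_i\|_l$, so the $R^{\text{rig}}_m$-norm of $\sum_i(\gamma(f_i)-f_i)e_i$ is at most $|\varpi|^{1/l}\|f\|_l$. For the first summand the key point is that $\gamma(e_i)-e_i$ lies in $\mathfrak{m}_{R_0}R_m$: indeed, Theorem \ref{ctsthm} shows that $\gamma\in\Gamma_{n+m}$ acts trivially on $R_m/\mathfrak{m}_{R_m}^{n+1}$, hence $\gamma(e_i)-e_i\in\mathfrak{m}_{R_m}^{n+1}$, and the hypothesis $n\geq k_m-1$ combined with Lemma \ref{km} forces $\mathfrak{m}_{R_m}^{n+1}\subseteq\mathfrak{m}_{R_0}R_m$. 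Expanding $\gamma(e_i)-e_i=\sum_j a_{ij}e_j$ with $a_{ij}\in\mathfrak{m}_{R_0}=(\varpi,u_1,\dots,u_{h-1})$, every coefficient $a_{ij}$ satisfies $\|a_{ij}\|_l\leq|\varpi|^{1/l}$ by the explicit description of $\|\cdot\|_l$ on $R^{\text{rig}}_0$.

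To finish, I note that the action of $\Gamma$ on $R^{\text{rig}}_0$ is norm-contracting, $\|\gamma(f_i)\|_l\leq\|f_i\|_l$, as already observed in the paragraph preceding Remark \ref{mLText} (since $\|\gamma(u_i)\|_l\leq|\varpi|^{1/l}$ forces $\|\gamma(g)\|_l\leq\|g\|_l$ for all $g\in R^{\text{rig}}_0$). The $j$-th coefficient of $\sum_i\gamma(f_i)(\gamma(e_i)-e_i)$ is $\sum_i\gamma(f_i)a_{ij}$, and by the ultrametric and submultiplicative properties of $\|\cdot\|_l$ on $R^{\text{rig}}_0$,
\begin{equation*}
\Big\|\sum_i\gamma(f_i)a_{ij}\Big\|_l \leq \max_i\|\gamma(f_i)\|_l\,\|a_{ij}\|_l \leq |\varpi|^{1/l}\max_i\|f_i\|_l = |\varpi|^{1/l}\|f\|_l.
\end{equation*}
Combining the two bounds via the ultrametric inequality yields $\|\gamma(f)-f\|_l\leq|\varpi|^{1/l}\|f\|_l$, as claimed.

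I do not anticipate any serious obstacle: the entire argument is a direct assembly of Proposition \ref{mainprop}, Theorem \ref{ctsthm}, and Lemma \ref{km}, with the role of Lemma \ref{km} being precisely to convert the $\mathfrak{m}_{R_m}$-adic smallness of $\gamma(e_i)-e_i$ (which is what the deformation-theoretic argument of Theorem \ref{ctsthm} delivers) into the $\mathfrak{m}_{R_0}$-adic, hence norm-theoretic, smallness needed on $R^{\text{rig}}_m$. The mild technical point is simply to verify that the choice of basis $\{e_i\}$ does not affect the argument, which is automatic because $R_m$ is free over $R_0$ and the norm is defined in terms of such a basis.
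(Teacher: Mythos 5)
Your proof is correct and follows essentially the same route as the paper's: decompose $f$ along an $R_0$-basis of $R_m$, use Theorem \ref{ctsthm} together with Lemma \ref{km} to place $\gamma(e_i)-e_i$ in $\mathfrak{m}_{R_0}R_m$ (hence of norm at most $|\varpi|^{1/l}$), and invoke Proposition \ref{mainprop} at level $0$ for the $f_i$. The only differences are cosmetic — you split $\gamma(f_i)\gamma(e_i)-f_ie_i$ as $\gamma(f_i)(\gamma(e_i)-e_i)+(\gamma(f_i)-f_i)e_i$ rather than the paper's $(\gamma(f_i)-f_i)\gamma(e_i)+f_i(\gamma(e_i)-e_i)$, and you bound the first term coefficientwise instead of citing the submultiplicativity of $\|\cdot\|_l$ from Lemma \ref{lnorm_is_algebra_norm}.
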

\begin{proof} Write $f=f_{1}e_{1}+\cdots +f_{r}e_{r}$ where $\lbrace e_{1},\cdots, e_{r}\rbrace$ is a basis of $R_{m}$ over $R_{0}$ and $f_{i}\in R^{\textnormal{rig}}_{0}$ for all $1\leq i\leq r$. Let $x_{i}:=\gamma(e_{i})-e_{i}$. Then $x_{i}\in\mathfrak{m}_{R_{m}}^{n+1}$ for all $1\leq i\leq r$ by Theorem \ref{ctsthm} and thus by Lemma \ref{km}, $x_{i}\in\mathfrak{m}_{R_{0}}R_{m}$ for all $1\leq i\leq r$. Since $\| y\|_{l}\leq\vert\varpi\vert^{1/l}$ for any $y\in\mathfrak{m}_{R_{0}}=(\varpi,u_{1},\cdots,u_{h-1})$, $\| x_{i}\|_{l}\leq\vert\varpi\vert^{1/l}$ for all $1\leq i\leq r$. Now note that
\begin{align*}
\|\gamma(f)-f\|_{l}&\leq\max_{1\leq i\leq r}\lbrace\|\gamma(f_{i}e_{i})-f_{i}e_{i}\|_{l}\rbrace =\max_{1\leq i\leq r}\lbrace\|\gamma(f_{i})\gamma(e_{i})-f_{i}e_{i}\|_{l}\rbrace\\&=\max_{1\leq i\leq r}\lbrace\|\gamma(f_{i})\gamma(e_{i})-f_{i}\gamma(e_{i})+f_{i}\gamma(e_{i})-f_{i}e_{i}\|_{l}\rbrace \\&=\max_{1\leq i\leq r}\lbrace\|(\gamma(f_{i})-f_{i})\gamma(e_{i})+(\gamma(e_{i})-e_{i})f_{i}\|_{l}\rbrace\\& =\max_{1\leq i\leq r}\lbrace\|(\gamma(f_{i})-f_{i})(e_{i}+x_{i})+x_{i}f_{i}\|_{l}\rbrace.  
\end{align*}
Then Lemma \ref{lnorm_is_algebra_norm} and Proposition \ref{mainprop} imply that for every $1\leq i\leq r$,
\begin{align*}\|(\gamma(f_{i})-f_{i})(e_{i}+x_{i})+x_{i}f_{i}\|_{l}&\leq\max\lbrace\|(\gamma(f_{i})-f_{i})(e_{i}+x_{i})\|_{l},\| x_{i}f_{i}\|_{l}\rbrace\\&\leq\max\lbrace\|(\gamma(f_{i})-f_{i})\|_{l},\| x_{i}\|_{l}\| f_{i}\|_{l}\rbrace\\&\leq\max\lbrace\vert\varpi\vert^{(n+m)/l}\| f_{i}\|_{l},\vert\varpi\vert^{1/l}\| f_{i}\|_{l}\rbrace\\&=\vert\varpi\vert^{1/l}\| f_{i}\|_{l}\end{align*} where we use that $e_{i}+x_{i}=\gamma(e_{i})\in R_{m}$ has $\|\cdot\|_{l}$-norm less than or equal to 1.
Therefore,  
$\|\gamma(f)-f\|_{l}\leq\max_{1\leq i\leq r}\lbrace\vert\varpi\vert^{1/l}\| f_{i}\|_{l}\rbrace=\vert \varpi\vert^{1/l}\| f\|_{l}$.
\end{proof}                                                                                                                                                                                              \indent We now arbitrarily fix a level $m\geq 1$. As before, we have a uniform pro-$p$ group $\Gamma_{o}$ of rank $t$ as an open subgroup of $\Gamma_{\mathbb{Q}_{p}}$. We also fix a positive integer $n\geq k_{m}-1$ such that $\Gamma_{n+m}\subseteq\Gamma_{o}$. Then $\Gamma_{n+m}$ contains $\Gamma_{*}:=P_{i}(\Gamma_{o})$ for some $i\geq 1$ which is also a uniform pro-$p$ group of rank $t$.\\
\indent  Let $\lbrace\gamma_{1},\dots ,\gamma_{t}\rbrace$ be an ordered basis of $\Gamma_{*}$ and let $b_{i}:=\gamma_{i}-1\in\Lambda(\Gamma_{*})$. Then as before, we equip the $\breve{K}$-algebra $\Lambda(\Gamma_{*})_{\breve{K}}$ with the sub-multiplicative norm $\|\cdot\|_{l}$ defined in (\ref{lnorm}) for every positive integer $l$. The natural inclusions $\Lambda(\Gamma_{*})_{\breve{K},l+1}\hookrightarrow\Lambda(\Gamma_{*})_{\breve{K},l}$ of $\breve{K}$-Banach completions  endow the projective limit $D(\Gamma_{*})=\varprojlim_{l}\Lambda(\Gamma_{*})_{\breve{K},l}$ with the structure of a $\breve{K}$-Fr\'{e}chet algebra which is equal to the algebra of $\breve{K}$-valued locally $K$-analytic distributions on $\Gamma_{*}$.
\begin{proposition}\label{lam} For any integer $l\geq 1$, the action of $\Gamma_{*}$ on $R^{\textnormal{rig}}_{m}$ extends to $R^{\textnormal{rig}}_{m,l}$ and makes $R^{\textnormal{rig}}_{m,l}$ a topological Banach module over the $\breve{K}$-Banach algebra $\Lambda(\Gamma_{*})_{\breve{K},l}$. The action of $\Gamma_{\mathbb{Q}_{p}}$ on $R^{\textnormal{rig}}_{m}$ extends to a continuous action of the $\breve{K}$-Fr\'{e}chet algebra $D(\Gamma_{\mathbb{Q}_{p}})$. 
\end{proposition}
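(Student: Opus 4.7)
The plan is to mirror the proof of Theorem~\ref{laK0}, working with $R^{\textnormal{rig}}_{m,l}$ in place of $R^{\textnormal{rig}}_{0,l}$ and using Proposition~\ref{mainprop2} in place of Proposition~\ref{mainprop} as the crucial contractivity estimate. By the choice of $n\ge k_{m}-1$ one has $\Gamma_{*}\subseteq\Gamma_{n+m}$, so Proposition~\ref{mainprop2} yields $\|\gamma(f)-f\|_{l}\le|\varpi|^{1/l}\|f\|_{l}$ for every $\gamma\in\Gamma_{*}$ and every $f\in R^{\textnormal{rig}}_{m}$. This is exactly the input needed to drive an inductive norm bound on the monomials $b^{\alpha}\in\Lambda(\Gamma_{*})$.

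First I would prove by induction on $|\alpha|$ that $\|b^{\alpha}(f)\|_{l}\le\|b^{\alpha}\|_{l}\|f\|_{l}=|\varpi|^{|\alpha|/l}\|f\|_{l}$ holds for every $f\in R_{m}\otimes_{\breve{\mathfrak{o}}}\breve{K}$. The case $|\alpha|=0$ is trivial; for the step, pick the minimal index $i$ with $\alpha_{i}>0$, set $\beta:=(\alpha_{1},\ldots,\alpha_{i}-1,\ldots,\alpha_{t})$, and estimate
\[
\|b^{\alpha}(f)\|_{l}=\|(\gamma_{i}-1)(b^{\beta}(f))\|_{l}\le|\varpi|^{1/l}\|b^{\beta}(f)\|_{l}\le|\varpi|^{1/l}|\varpi|^{|\beta|/l}\|f\|_{l}=\|b^{\alpha}\|_{l}\|f\|_{l},
\]
the first inequality by Proposition~\ref{mainprop2} (noting that $b^{\beta}(f)\in R_{m}\otimes_{\breve{\mathfrak{o}}}\breve{K}\subset R^{\textnormal{rig}}_{m}$ since $\Gamma$ stabilises $R_{m}$) and the second by the induction hypothesis. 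Expanding an arbitrary $\mu\in\Lambda(\Gamma_{*})_{\breve{K}}$ in its monomial expansion (cf.\ Remark~\ref{iwasawa_alg_action}) then gives $\|\mu(f)\|_{l}\le\|\mu\|_{l}\|f\|_{l}$.

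Next I would pass to completions. The subring $R_{m}\otimes_{\breve{\mathfrak{o}}}\breve{K}=R_{m}\otimes_{R_{0}}(R_{0}\otimes_{\breve{\mathfrak{o}}}\breve{K})$ is dense in $R^{\textnormal{rig}}_{m,l}\simeq R_{m}\otimes_{R_{0}}R^{\textnormal{rig}}_{0,l}$, because $R_{0}\otimes_{\breve{\mathfrak{o}}}\breve{K}$ is dense in $R^{\textnormal{rig}}_{0,l}$ and the norm on $R^{\textnormal{rig}}_{m,l}$ is the coordinate maximum by Lemma~\ref{lnorm_is_algebra_norm}. Hence the above estimate extends by continuity to a bounded bilinear map $\Lambda(\Gamma_{*})_{\breve{K},l}\times R^{\textnormal{rig}}_{m,l}\longrightarrow R^{\textnormal{rig}}_{m,l}$, turning $R^{\textnormal{rig}}_{m,l}$ into a Banach module over the $\breve{K}$-Banach algebra $\Lambda(\Gamma_{*})_{\breve{K},l}$. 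Taking the projective limit over $l\ge 1$ then produces a continuous action of $D(\Gamma_{*},\breve{K})=\varprojlim_{l}\Lambda(\Gamma_{*})_{\breve{K},l}$ on $R^{\textnormal{rig}}_{m}=\varprojlim_{l}R^{\textnormal{rig}}_{m,l}$ that extends the given action of $\Gamma_{*}$.

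Finally, I would globalise from $\Gamma_{*}$ to $\Gamma_{\mathbb{Q}_{p}}$ via the topological direct-sum decomposition (\ref{top_iso_of_distribution_alg}). Fixing coset representatives $\gamma'_{1},\ldots,\gamma'_{s}$ of $\Gamma_{*}$ in $\Gamma_{\mathbb{Q}_{p}}$, the Dirac distributions $\delta_{\gamma'_{i}}$ act on $R^{\textnormal{rig}}_{m}$ through the already-available continuous $\breve{K}$-algebra automorphisms, and assembling these with the summand actions of $D(\Gamma_{*},\breve{K})$ yields the desired continuous $D(\Gamma_{\mathbb{Q}_{p}},\breve{K})$-module structure. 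The only genuine obstacle in this plan is the inductive contractivity estimate of the first step, which however reduces to a single clean application of Proposition~\ref{mainprop2}; everything that follows is formal extension and projective-limit bookkeeping.
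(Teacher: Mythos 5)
Your proposal is correct and follows essentially the same route as the paper: the same induction on $|\alpha|$ using Proposition~\ref{mainprop2} to get $\|b^{\alpha}(f)\|_{l}\le\|b^{\alpha}\|_{l}\|f\|_{l}$, then Remark~\ref{iwasawa_alg_action}, density of $R_{m}[\frac{1}{\varpi}]$ in $R^{\textnormal{rig}}_{m,l}$, passage to completions and projective limits, and the decomposition (\ref{top_iso_of_distribution_alg}) to pass from $D(\Gamma_{*},\breve{K})$ to $D(\Gamma_{\mathbb{Q}_{p}},\breve{K})$. The only cosmetic difference is that you run the induction over $R_{m}\otimes_{\breve{\mathfrak{o}}}\breve{K}$ while the paper states it for all of $R^{\textnormal{rig}}_{m}$; both work since the dense subalgebra suffices before completing.
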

\begin{proof} The proof is similar to that of Theorem \ref{laK0}. First, we prove by induction on $\vert \alpha\vert$ that $\| b^{\alpha}(f)\|_{l}\leq\| b^{\alpha}\|_{l}\| f\|_{l}$ for any $f\in R^{\textnormal{rig}}_{m}$. This is clear if $\vert \alpha\vert=0$. Let $|\alpha|>0$ and let $i$ be the minimal index such that $\alpha_{i}>0$. Define $\beta_{j}:=\alpha_{j}$ if $j\neq i$ and $\beta_{i}:=\alpha_{i}-1$. Then Proposition \ref{mainprop2} and the induction hypothesis imply \begin{align*}
\| b^{\alpha}(f)\|_{l}&=\| ((\gamma_{i}-1)b^{\beta})(f)\|_{l}=\| (\gamma_{i}-1)(b^{\beta}(f))\|_{l}\leq\vert\varpi\vert^{1/l}\| b^{\beta}(f)\|_{l}\\&\leq\vert\varpi\vert^{1/l}\| b^{\beta}\|_{l}\| f\|_{l}\leq\vert\varpi\vert^{1/l}\vert\varpi\vert^{\vert \beta\vert /l}\| f\|_{l}=\vert\varpi\vert^{(\vert \beta\vert+1) /l}\| f\|_{l}=\| b^{\alpha}\|_{l}\| f\|_{l}  
\end{align*} as required. By Remark \ref{iwasawa_alg_action}, this immediately gives $\|\mu(f)\|_{l}\leq\|\mu\|_{l}\| f\|_{l}$ for all $\mu\in\Lambda(\Gamma_{*})_{\breve{K}}$ and $f\in R_{m}[\frac{1}{\varpi}]=R_{m}\otimes_{\breve{\mathfrak{o}}}\breve{K}$. Hence the map $\Lambda(\Gamma_{*})_{\breve{K}}\times R_{m}[\frac{1}{\varpi}]\longrightarrow R_{m}[\frac{1}{\varpi}]$ $((\mu,f)\mapsto\mu(f))$ is continuous if $\Lambda(\Gamma_{*})_{\breve{K}}$ and $R_{m}[\frac{1}{\varpi}]$ are endowed with the respective $\|\cdot\|_{l}$-topologies and if the left hand side carries the product topology. Since $R_{m}[\frac{1}{\varpi}]$ is dense in $R^{\textnormal{rig}}_{m,l}$, we obtain a map $\Lambda(\Gamma_{*})_{\breve{K},l}\times R^{\textnormal{rig}}_{m,l}\longrightarrow R^{\textnormal{rig}}_{m,l}$ by passing to completions. By continuity, it gives $R^{\textnormal{rig}}_{m,l}$ the structure of a topological Banach module over the $\breve{K}$-Banach algebra $\Lambda(\Gamma_{*})_{\breve{K},l}$.\\
\indent  Taking projective limits over $l$, we obtain a continuous map $D(\Gamma_{*})\times R^{\textnormal{rig}}_{m}\longrightarrow R^{\textnormal{rig}}_{m}$ giving $R^{\textnormal{rig}}_{m}$ the structure of a continuous module over $D(\Gamma_{*})$. As $D(\Gamma_{\mathbb{Q}_{p}})$ is topologically isomorphic to the locally convex direct sum $\bigoplus_{\gamma\Gamma_{*}\in \Gamma_{\mathbb{Q}_{p}}/\Gamma_{*}}\delta_{\gamma} D(\Gamma_{*})$, $R^{\textnormal{rig}}_{m}$ is a continuous module over $D(\Gamma_{\mathbb{Q}_{p}})$.
 \end{proof}
\indent  We want to show that the $D(\Gamma_{\mathbb{Q}_{p}})$-action on $R^{\text{rig}}_{m}$ factors through a continuous $D(\Gamma)$-action. As mentioned in the introduction, the idea is to use the local $K$-analyticity of the $\Gamma$-action at level $m=0$ obtained in Theorem \ref{laK0} and the \'{e}taleness property of the extension $R^{\text{rig}}_{m}|R^{\text{rig}}_{0}$.\\
\indent  For a ring homomorphism $A\to B$ of commutative unital rings, let $\textnormal{Der}_{A}(B,B)$ denote the $B$-module of $A$-linear derivations from $B$ to $B$, and let $\Omega_{B/A}$ denote the $B$-module of differentials of $B$ over $A$. 
 \begin{lemma}\label{derivationextensionlemma} Any $\breve{K}$-linear derivation from $R^{\textnormal{rig}}_{0}$ to $R^{\textnormal{rig}}_{0}$ extends uniquely to a $\breve{K}$-linear derivation from $R^{\textnormal{rig}}_{m}$ to $R^{\textnormal{rig}}_{m}$.
\end{lemma}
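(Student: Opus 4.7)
The plan is to derive the statement as an immediate consequence of the fact that $R^{\textnormal{rig}}_{m}$ is formally \'{e}tale over $R^{\textnormal{rig}}_{0}$, via the transitivity property of K\"{a}hler differentials under formally \'{e}tale extensions.

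First I would verify formal \'{e}taleness: by Remark \ref{Galois_action_on_LT_tower} the ring extension $R_{m}[\tfrac{1}{\varpi}]\,|\,R_{0}[\tfrac{1}{\varpi}]$ is finite \'{e}tale, and since \'{e}taleness is preserved under base change, the extension $R^{\textnormal{rig}}_{m}\simeq R_{m}[\tfrac{1}{\varpi}]\otimes_{R_{0}[\tfrac{1}{\varpi}]}R^{\textnormal{rig}}_{0}$ is \'{e}tale over $R^{\textnormal{rig}}_{0}$, hence formally \'{e}tale. Applying Lemma \ref{formallyetale} to the sequence $\breve{K}\hookrightarrow R^{\textnormal{rig}}_{0}\hookrightarrow R^{\textnormal{rig}}_{m}$ then yields the isomorphism
\begin{equation*}
\Omega_{R^{\textnormal{rig}}_{m}/\breve{K}}\ \simeq\ \Omega_{R^{\textnormal{rig}}_{0}/\breve{K}}\otimes_{R^{\textnormal{rig}}_{0}}R^{\textnormal{rig}}_{m}.
\end{equation*}

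Next I would obtain the existence of the extension by combining this isomorphism with the usual identification $\textnormal{Der}_{\breve{K}}(B,B)\simeq \textnormal{Hom}_{B}(\Omega_{B/\breve{K}},B)$ and the tensor-hom adjunction, getting the chain
\begin{equation*}
\textnormal{Der}_{\breve{K}}(R^{\textnormal{rig}}_{0},R^{\textnormal{rig}}_{0})\ \hookrightarrow\ \textnormal{Hom}_{R^{\textnormal{rig}}_{0}}(\Omega_{R^{\textnormal{rig}}_{0}/\breve{K}},R^{\textnormal{rig}}_{m})\ \simeq\ \textnormal{Der}_{\breve{K}}(R^{\textnormal{rig}}_{m},R^{\textnormal{rig}}_{m})
\end{equation*}
displayed in the excerpt above. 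A direct inspection of this chain shows that a derivation $d\in\textnormal{Der}_{\breve{K}}(R^{\textnormal{rig}}_{0},R^{\textnormal{rig}}_{0})$ is sent to a derivation $\tilde{d}\in\textnormal{Der}_{\breve{K}}(R^{\textnormal{rig}}_{m},R^{\textnormal{rig}}_{m})$ whose restriction to $R^{\textnormal{rig}}_{0}$ coincides with $d$, so $\tilde{d}$ is genuinely an extension of $d$.

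For uniqueness I would argue as follows. If $\tilde{d}_{1},\tilde{d}_{2}$ are two $\breve{K}$-linear derivations from $R^{\textnormal{rig}}_{m}$ to $R^{\textnormal{rig}}_{m}$ both restricting to $d$, then $\tilde{d}_{1}-\tilde{d}_{2}$ is an $R^{\textnormal{rig}}_{0}$-linear derivation of $R^{\textnormal{rig}}_{m}$ into itself, hence corresponds to an element of $\textnormal{Hom}_{R^{\textnormal{rig}}_{m}}(\Omega_{R^{\textnormal{rig}}_{m}/R^{\textnormal{rig}}_{0}},R^{\textnormal{rig}}_{m})$. But formal unramifiedness forces $\Omega_{R^{\textnormal{rig}}_{m}/R^{\textnormal{rig}}_{0}}=0$ (cf. \cite[Tag 00UO]{stacks-project}), so $\tilde{d}_{1}=\tilde{d}_{2}$. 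I do not expect any serious obstacle here: the entire argument is formal once the \'{e}taleness of $R^{\textnormal{rig}}_{m}|R^{\textnormal{rig}}_{0}$ is granted, and the most delicate point is merely to make sure the identifications used in the Hom-tensor chain are compatible with the natural derivation structure, which is standard.
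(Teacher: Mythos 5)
Your proposal is correct and follows essentially the same route as the paper: étaleness of $R^{\textnormal{rig}}_{m}|R^{\textnormal{rig}}_{0}$ by base change from $R_{m}[\frac{1}{\varpi}]\,|\,R_{0}[\frac{1}{\varpi}]$, the base-change isomorphism for $\Omega_{R^{\textnormal{rig}}_{m}/\breve{K}}$ via Lemma \ref{formallyetale}, and the resulting chain of Hom-identifications. Your explicit uniqueness argument via $\Omega_{R^{\textnormal{rig}}_{m}/R^{\textnormal{rig}}_{0}}=0$ is a welcome clarification of a point the paper leaves implicit in its displayed chain.
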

\begin{proof}
Since $R_{m}[\frac{1}{\varpi}]$ is \'{e}tale over $R_{0}[\frac{1}{\varpi}]$ (cf. Theorem 2.1.2 (ii), \cite{strdeform}), $R^{\textnormal{rig}}_{m}\cong R_{m}[\frac{1}{\varpi}]\otimes_{R_{0}[\frac{1}{\varpi}]}R^{\textnormal{rig}}_{0}$ is \'{e}tale over $R^{\textnormal{rig}}_{0}$ by \cite[Tag 00U0]{stacks-project}, and so is formally \'{e}tale by \cite[Tag 00UR]{stacks-project}. Then, using \cite[Tag 031K]{stacks-project} and \cite[Tag 00UO]{stacks-project}, we get that $\Omega_{R^{\textnormal{rig}}_{m}/\breve{K}}\cong\Omega_{R^{\textnormal{rig}}_{0}/\breve{K}}\otimes_{R^{\textnormal{rig}}_{0}}R^{\textnormal{rig}}_{m}$. Therefore \begin{align*}
\textnormal{Der}_{\breve{K}}(R^{\textnormal{rig}}_{0},R^{\textnormal{rig}}_{0})&\cong\textnormal{Hom}_{R^{\textnormal{rig}}_{0}}(\Omega_{R^{\textnormal{rig}}_{0}/\breve{K}},R^{\textnormal{rig}}_{0})\\&\hookrightarrow\textnormal{Hom}_{R^{\textnormal{rig}}_{0}}(\Omega_{R^{\textnormal{rig}}_{0}/\breve{K}},R^{\textnormal{rig}}_{m})\\&\cong\textnormal{Hom}_{R^{\textnormal{rig}}_{m}}(\Omega_{R^{\textnormal{rig}}_{0}/\breve{K}}\otimes_{R^{\textnormal{rig}}_{0}}R^{\textnormal{rig}}_{m},R^{\textnormal{rig}}_{m})\\&\cong\textnormal{Hom}_{R^{\textnormal{rig}}_{m}}(\Omega_{R^{\textnormal{rig}}_{m}/\breve{K}},R^{\textnormal{rig}}_{m})\\&\cong\textnormal{Der}_{\breve{K}}(R^{\textnormal{rig}}_{m},R^{\textnormal{rig}}_{m}).
\end{align*}
\end{proof}
\begin{theorem}\label{laKm}
The action of $D(\Gamma_{\mathbb{Q}_{p}})$ on $R^{\textnormal{rig}}_{m}$ factors through a continuous action of $D(\Gamma)$ on $R^{\textnormal{rig}}_{m}$. Hence the action of $\Gamma$ on the strong continuous $\breve{K}$-linear dual $(R^{\textnormal{rig}}_{m})'_{b}$ of $R^{\textnormal{rig}}_{m}$ is locally $K$-analytic.
\end{theorem}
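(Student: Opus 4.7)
The plan is to exploit the explicit description of the kernel $I$ of the strict surjection $D(\Gamma_{\mathbb{Q}_{p}},\breve{K}) \twoheadrightarrow D(\Gamma,\breve{K})$ recorded in (\ref{description_of_I}): $I$ is the closure of the two-sided ideal generated by the elements $i(t\mathfrak{x}) - t \cdot i(\mathfrak{x})$ with $\mathfrak{x} \in \mathfrak{g}_{\mathbb{Q}_{p}}$ and $t \in K$. Using separate continuity of the $D(\Gamma_{\mathbb{Q}_{p}},\breve{K})$-action on $R^{\textnormal{rig}}_{m}$ obtained in Proposition \ref{lam}, it is enough to check that each such generator annihilates $R^{\textnormal{rig}}_{m}$; the closed two-sided ideal they generate will then act as zero, and the induced $D(\Gamma,\breve{K})$-action will automatically be continuous because the quotient map $D(\Gamma_{\mathbb{Q}_{p}},\breve{K}) \twoheadrightarrow D(\Gamma,\breve{K})$ is strict (and the quotient topology is precisely the Fréchet topology on $D(\Gamma,\breve{K})$ described in \S\ref{Structure of the distribution algebra}).

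First I would observe that, for every $\mathfrak{x} \in \mathfrak{g}_{\mathbb{Q}_{p}}$, the continuous $\breve{K}$-linear operator $i(\mathfrak{x})$ on $R^{\textnormal{rig}}_{m}$ is in fact a derivation. This follows from the defining formula (\ref{g_action_eqn}), the fact that $\Gamma$ acts on $R^{\textnormal{rig}}_{m}$ by continuous $\breve{K}$-algebra automorphisms, and joint continuity of multiplication in the Fréchet algebra $R^{\textnormal{rig}}_{m}$, via the usual telescoping identity in the limit. Hence both $i(t\mathfrak{x})$ and $t\cdot i(\mathfrak{x})$ are continuous $\breve{K}$-linear derivations of $R^{\textnormal{rig}}_{m}$ into itself, and so is their difference. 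Now the inclusion $R^{\textnormal{rig}}_{0} \hookrightarrow R^{\textnormal{rig}}_{m}$ is $\Gamma$-equivariant, so the restrictions of $i(t\mathfrak{x})$ and $t\cdot i(\mathfrak{x})$ to $R^{\textnormal{rig}}_{0}$ coincide with the corresponding Lie algebra operators on $R^{\textnormal{rig}}_{0}$, and these agree by Theorem \ref{laK0}. Applying Lemma \ref{derivationextensionlemma} to the tower $\breve{K} \hookrightarrow R^{\textnormal{rig}}_{0} \hookrightarrow R^{\textnormal{rig}}_{m}$, the zero derivation on $R^{\textnormal{rig}}_{0}$ admits the zero derivation as its unique $\breve{K}$-linear extension to $R^{\textnormal{rig}}_{m}$, so $i(t\mathfrak{x}) = t\cdot i(\mathfrak{x})$ on all of $R^{\textnormal{rig}}_{m}$, which is precisely what was needed.

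The second assertion of the theorem then follows by the same argument that closes the proof of Theorem \ref{laK0}. The isomorphism (\ref{rigidification_is_a_basechange}) realises $R^{\textnormal{rig}}_{m}$ as a finite free module of rank $r = |G_{0}/G_{m}|$ over the nuclear Fréchet algebra $R^{\textnormal{rig}}_{0}$, hence $R^{\textnormal{rig}}_{m} \simeq (R^{\textnormal{rig}}_{0})^{r}$ as a Fréchet space is nuclear; invoking Theorem \ref{stantiequivalence} for the continuous $D(\Gamma,\breve{K})$-action just produced, one concludes that $(R^{\textnormal{rig}}_{m})'_{b}$ is of compact type and carries a locally $K$-analytic $\Gamma$-action. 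I expect the only genuinely delicate point to be verifying cleanly that $i(\mathfrak{x})$ acts on $R^{\textnormal{rig}}_{m}$ as an honest $R^{\textnormal{rig}}_{m}$-valued derivation (rather than merely as a continuous $\breve{K}$-linear endomorphism); once that identification is secured, the étaleness of $R^{\textnormal{rig}}_{m}\vert R^{\textnormal{rig}}_{0}$ encoded in Lemma \ref{derivationextensionlemma}, together with the level-zero local $K$-analyticity of Theorem \ref{laK0}, does the remaining work almost formally.
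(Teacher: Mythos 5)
Your proposal follows essentially the same route as the paper's proof: you verify that each $i(\mathfrak{x})$ acts as a continuous $\breve{K}$-linear derivation on $R^{\textnormal{rig}}_{m}$, observe that the generators $i(t\mathfrak{x})-t\,i(\mathfrak{x})$ of $I$ vanish on $R^{\textnormal{rig}}_{0}$ by Theorem \ref{laK0}, invoke the étaleness result Lemma \ref{derivationextensionlemma} to propagate this vanishing uniquely to $R^{\textnormal{rig}}_{m}$, and finish with nuclearity of $R^{\textnormal{rig}}_{m}\simeq (R^{\textnormal{rig}}_{0})^{r}$ and Theorem \ref{stantiequivalence}. The paper carries out the same telescoping computation you sketch to confirm the derivation property, so the only difference is stylistic: you make explicit that separate continuity and strictness of the quotient map justify passing from the generators of $I$ to the closed ideal, which the paper leaves implicit.
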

\begin{proof}
Recall the inclusion map $i:\mathfrak{g}_{\mathbb{Q}_{p}}\hookrightarrow D(\Gamma_{\mathbb{Q}_{p}})$ from the discussion before \ref{laK0}. For every $\mathfrak{x}\in\mathfrak{g}_{\mathbb{Q}_{p}}$, $i(\mathfrak{x})\in D(\Gamma_{\mathbb{Q}_{p}})$ acts on $R^{\textnormal{rig}}_{m}$ as a $\breve{K}$-linear derivation from $R^{\text{rig}}_{m}$ to $R^{\text{rig}}_{m}$. Let $\lambda\in K$ and $\mathfrak{x}\in\mathfrak{g}_{\mathbb{Q}_{p}}$ be arbitrary, and consider the distibution $i(\lambda\mathfrak{x})-\lambda i(\mathfrak{x})\in D(\Gamma_{\mathbb{Q}_{p}})$. It gives rise to a zero derivation on $R^{\textnormal{rig}}_{0}$ by Theorem \ref{laK0} and thus by Lemma \ref{derivationextensionlemma}, it is also zero on $R^{\textnormal{rig}}_{m}$. This means that the action of $D(\Gamma_{\mathbb{Q}_{p}})$ on $R^{\textnormal{rig}}_{m}$ factors through a continuous action of $D(\Gamma)$ on $R^{\textnormal{rig}}_{m}$.\\
\indent  As the $\breve{K}$-Fr\'{e}chet space $R^{\textnormal{rig}}_{m}$ is topologically isomorphic to $\bigoplus_{i=1}^{r} R^{\textnormal{rig}}_{0}$, it follows from \cite{schnfa}, Proposition 19.7, that $R^{\textnormal{rig}}_{m}$ is nuclear. Therefore, \cite{stladist}, Corollary 3.4 implies that the locally convex $\breve{K}$-vector space $(R^{\textnormal{rig}}_{m})'_{b}$ is of compact type and that the action of $\Gamma$ obtained by dualizing is locally $K$-analytic.
\end{proof}
\indent  Similar to the before, Theorem \ref{laKm} can be generalized as follows. Fix $m\geq 1$, $n\geq k_{m}-1$ and a uniform pro-$p$ group $\Gamma_{*}\subseteq\Gamma_{2n+m+1}$ with $k_{m}$ as in Lemma \ref{km}.
\begin{theorem}\label{genlaKm}
The action of $\Gamma_{\mathbb{Q}_{p}}$ on $M^{s}_{m}$ extends to a continuous action of the $\breve{K}$-Fr\'{e}chet algebra $D(\Gamma_{\mathbb{Q}_{p}})$, which then factors through a continuous action of $D(\Gamma)$ on $M^{s}_{m}$. Hence the action of $\Gamma$ on the strong continuous $\breve{K}$-linear dual $(M^{s}_{m})'_{b}$ of $M^{s}_{m}$ is locally $K$-analytic for any $s\in\mathbb{Z}$. 
\end{theorem}
\begin{proof}
Using Theorem \ref{ctsthm2}, Lemma \ref{km} and Proposition \ref{mainprop2}, the proof of the first part of the assertion is similar to that of Theorem \ref{genlaK0}.\\
\indent  Observe that the isomorphism $\text{Lie}(\mathbb{H}^{(m)})\cong\text{Lie}(\mathbb{H}^{(0)})\otimes_{R_{0}}R_{m}$ is $\Gamma$ -equivariant for the diagonal $\Gamma$-action on the right. Therefore, we have the following $\Gamma$-equivariant isomorphisms by (\ref{G_eqv_iso_eqn1}) 
\begin{align*}
M^{s}_{m}&\cong R^{\text{rig}}_{m}\otimes_{R_{m}}\textnormal{Lie}(\mathbb{H}^{(m)})\\&\cong R^{\text{rig}}_{m}\otimes_{R_{0}}\textnormal{Lie}(\mathbb{H}^{(0)})\\&\cong R^{\text{rig}}_{m}\otimes_{R^{\text{rig}}_{0}}R^{\text{rig}}_{0}\otimes_{R_{0}}\textnormal{Lie}(\mathbb{H}^{(0)})\\&\cong R^{\text{rig}}_{m}\otimes_{R^{\text{rig}}_{0}}M^{s}_{0}
\end{align*} with $\Gamma$-acting diagonally on all the tensor products. As a consequence, the $\mathfrak{g}_{\mathbb{Q}_{p}}$-action on $f\otimes\delta\in M^{s}_{m}$ is given by $\mathfrak{x}(f\otimes\delta)=\mathfrak{x}(f)\otimes\delta+f\otimes\mathfrak{x}(\delta)$. However, by Theorem \ref{genlaK0} and Theorem \ref{laKm}, $M^{s}_{0}$ and $R^{\text{rig}}_{m}$ are not only $\mathfrak{g}_{\mathbb{Q}_{p}}$-modules but also $\mathfrak{g}$-modules. Thus, it follows that the $D(\Gamma_{\mathbb{Q}_{p}})$-action on $M^{s}_{m}$ factors through a continuous $D(\Gamma)$-action as required. 
\end{proof}
\begin{remark} The method described above can also be used to show the local analyticity of the $\Gamma$-action on the dual space of the global sections of a large class of \emph{Drinfeld bundles} over the Lubin-Tate moduli space (cf. \cite{Koh11}, Section 3). If $V$ is a finite dimensional $\breve{K}$-linear smooth representation of $\Gamma$ then, for any $m\geq 0$, the free $R^{\text{rig}}_{m}$-module $R^{\text{rig}}_{m}\otimes_{\breve{K}}V$ with the diagonal $\Gamma$-action induces a $\Gamma$-equivariant vector bundle over $X^{\text{rig}}_{m}$ whose Fr\'{e}chet space of global sections is $R^{\text{rig}}_{m}\otimes_{\breve{K}}V$ (cf. \cite{Koh11}, Theorem 1.2 and Corollary A.3). By choosing a basis $\lbrace b_{1}, b_{2},\ldots,b_{d}\rbrace$ of $V$ over $\breve{K}$, the Fr\'{e}chet topology of $R^{\text{rig}}_{m}\otimes_{\breve{K}}V$ can be given by the family of the norms $\|f_{1}b_{1}+f_{2}b_{2}+\dots +f_{d}b_{d}\|_{l}:=\max_{1\leq i\leq d}\lbrace\|f_{i}\|_{l}\rbrace$ with $l\in\mathbb{N}$, where the $l$-norms on $R^{\text{rig}}_{m}$ are as defined in Section \ref{Rigidification and the equivariant vector bundles}. It then follows from Proposition \ref{mainprop} and Proposition \ref{mainprop2} that if $\gamma$ belongs to a small enough open subgroup of $\Gamma$ ($\subseteq$ intersection of stabilizers of $b_{i}$'s) then \begin{align*}
\|\gamma(f_{1}b_{1}+f_{2}b_{2}+\dots &+f_{d}b_{d})-(f_{1}b_{1}+f_{2}b_{2}+\dots +f_{d}b_{d})\|_{l}\\&=\|(\gamma(f_{1})-f_{1})b_{1}+(\gamma(f_{2})-f_{2})b_{2}+\dots +(\gamma(f_{d})-f_{d})b_{d}\|_{l}\\&=\max_{1\leq i\leq d}\lbrace\|\gamma(f_{i})-f_{i}\|_{l}\rbrace\\&\leq|\varpi|^{1/l}\max_{1\leq i\leq d}\lbrace\|f_{i}\|_{l}\rbrace\\&=|\varpi|^{1/l}\|f_{1}b_{1}+f_{2}b_{2}+\dots +f_{d}b_{d}\|_{l}.
\end{align*}
Therefore, similar to the the proof of Theorem \ref{laK0}, the $\Gamma$-action on $R^{\text{rig}}_{m}\otimes_{\breve{K}}V$ extends to a continuous action of the Fr\'{e}chet algebra $D(\Gamma_{\mathbb{Q}_{p}})$. Since $R^{\text{rig}}_{m}$ is a $\mathfrak{g}$-module and $V$ is annihilated by $\mathfrak{g}$, the $D(\Gamma_{\mathbb{Q}_{p}})$-action on $R^{\text{rig}}_{m}\otimes_{\breve{K}}V$ factors through a continuous action of $D(\Gamma)$ making its strong topological dual a locally $K$-analytic $\Gamma$-representation.  
\end{remark}
\section{Locally finite vectors in the global sections of equivariant vector bundles}
\indent  This section is devoted to study representation-theoretic aspects of the \linebreak $\Gamma$-representations $M^{s}_{m}$ which include a complete description of the $\Gamma$-locally finite (algebraic) vectors in $M^{s}_{m}$ for all $s\in\mathbb{Z}$ and $m\geq 0$.
\subsection{Locally finite vectors in the $\Gamma$-representations $M^{s}_{0}$}
\begin{definition} Let $G$ be a topological group and $V$ be a vector space over a field $F$ equipped with an $F$-linear $G$-action. We say that a vector $v\in V$ is \emph{locally finite} (or \emph{$G$-locally finite}) if there is an open subgroup $H$ of $G$ and a finite dimensional $H$-stable subspace $W$ of $V$ containing $v$.\footnote{In \cite{eme04}, Proposition-Definition 4.1.8, the notion of a locally finite vector is defined for the vector spaces over a complete non-archimedean field $F$, and requires locally finite vector $v$ to be contained in a \emph{continuous} finite dimensional $H$-representation $W$ for its natural Hausdorff topology as a finite dimensional $F$-vector space. Since all the $\Gamma$-representations we are concerned with in this section are continuous representations on $\breve{K}$-Fr\'{e}chet spaces, the continuity condition is automatically satisfied.} It follows easily that the set $V_{\textnormal{lf}}$ of all locally finite vectors of $V$ forms a $G$-stable subspace. We call $V$ a locally finite representation of $G$ if $V_{\textnormal{lf}}=V$. If $V$ and $W$ are $F$-linear $G$-representations, and if $f:V\longrightarrow W$ is an $F$-linear $G$-equivariant map, then clearly $f(V_{\textnormal{lf}})\subseteq W_{\textnormal{lf}}$.
\end{definition}
\indent  To calculate locally finite vectors, we make extensive use of the Lie algebra action. Let $U(\mathfrak{g})$ be the universal enveloping algebra of the Lie algebra $\mathfrak{g}$ of $\Gamma$ over $K$. Note that $\mathfrak{g}$ is isomorphic to the Lie algebra associated with the associative $K$-algebra $B_{h}$. Thus, $\mathfrak{g}\hookrightarrow\mathfrak{g}\otimes_{K}K_{h}\cong\mathfrak{gl}_{h}(K_{h})$. Denote by $\mathfrak{x}_{ij}\in\mathfrak{gl}_{h}(K_{h})$ the matrix with entry 1 at the place $(i,j)$ and zero everywhere else. By Theorem \ref{laofGonMD}, the $\Gamma$-representation $M^{s}_{D}\cong\mathcal{O}_{X^{\textnormal{rig}}_{0}}(D)\varphi_{0}^{s}$ carries a continuous linear action of $U(\mathfrak{g})\otimes_{K}\breve{K}\cong U(\mathfrak{gl}_{h}(K_{h}))\otimes_{K_{h}}\breve{K}\hookrightarrow D(\Gamma)$. Since $GL_{h}(K_{h})$ acts on the projective coordinates $\varphi_{0},\ldots,\varphi_{h-1}$ by fractional linear transformations, one can explicitly determine this Lie algebra action using the formula $\mathfrak{x}(f)=\frac{d}{dt}\text{exp}(t\mathfrak{x})(f)|_{t=0}$.
\begin{lemma}\label{explicit_g-action_lemma} Let $i$, $j$ and $s$ be integers with $0\leq i,j\leq h-1$. Put $w_{0}:=1$. If $f\in\mathcal{O}_{X^{\textnormal{rig}}_{0}}(D)$ then 
\begin{equation}\label{explicit_g-action_formulae}
\mathfrak{x}_{ij}(f\varphi^{s}_{0})=
\begin{cases}
w_{i}\frac{\partial f}{\partial w_{j}}\varphi^{s}_{0}, &\text{if $j\neq 0$;}\\
(sf-\sum_{l=1}^{h-1}w_{l}\frac{\partial f}{\partial w_{l}})\varphi^{s}_{0}, &\text{if $i=j=0$;}\\
w_{i}(sf-\sum_{l=1}^{h-1}w_{l}\frac{\partial f}{\partial w_{l}})\varphi^{s}_{0}, &\text{if $i>j=0$.}
\end{cases}
\end{equation}
\end{lemma}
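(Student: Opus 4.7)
The plan is to deduce the formulas from the explicit fractional-linear action of the Iwahori subgroup $P \subset GL_h(K_h)$ on $\mathcal{O}_{X^{\text{rig}}_0}(D)\varphi_0^s$ coming from Remark \ref{our_line_bundle_is_a_pullback_of_O(s)} and formula (\ref{actI}). Since $\mathfrak{g} \otimes_K K_h \simeq \mathfrak{gl}_h(K_h)$ sits inside $\text{Lie}(P)$, each $\mathfrak{x}_{ij}$ acts via the recipe (\ref{g_action_eqn}), and it suffices to differentiate the one-parameter subgroup $t \mapsto \exp(t\mathfrak{x}_{ij})$ in $GL_h(K_h)$ at $t=0$. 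Because $\mathfrak{x}_{ij}^2 = 0$ for $i\neq j$, we have $\exp(t\mathfrak{x}_{ij}) = I + t\mathfrak{x}_{ij}$ in that case, while for $i=j$ one has $\exp(t\mathfrak{x}_{ii}) = I + (e^t - 1)\mathfrak{x}_{ii}$ since $\mathfrak{x}_{ii}$ is idempotent.

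First I will make explicit how $a = \exp(t\mathfrak{x}_{ij})$ transforms the generators $w_k$ and the section $\varphi_0^s$. Keeping the convention that $\varphi_0^s$ transforms by the $s$-th power of $\gamma(\varphi_0)/\varphi_0$ (equivalently by the cocycle coming from $(\mathcal{M}^s_0)^{\text{rig}} \simeq \Phi^*\mathcal{O}_{\mathbb{P}^{h-1}_{\breve{K}}}(s)$), the formula (\ref{actI}) gives, for $a = I + t e_{ij}$ with $i,j \neq 0$, the transformations $w_j \mapsto w_j + tw_i$, $w_k \mapsto w_k$ for $k \neq j$, and $\varphi_0^s \mapsto \varphi_0^s$. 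Differentiating yields the first case of the formula via the chain rule. The case $i=0$, $j \neq 0$ is formally identical once one sets $w_0 := 1$, since then $w_j \mapsto w_j + t$ while $\varphi_0^s$ is still fixed.

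The cases where $j=0$ are the interesting ones because $\varphi_0$ itself gets moved. When $i=j=0$, the action sends $\varphi_0 \mapsto e^t\varphi_0$ while leaving the other $\varphi_k$ unchanged, so $w_k \mapsto e^{-t}w_k$ and $\varphi_0^s \mapsto e^{st}\varphi_0^s$; differentiating and applying the Leibniz rule produces the Euler-type operator $sf - \sum_l w_l \partial_{w_l} f$. When $i > j = 0$, the action sends $\varphi_0 \mapsto \varphi_0 + t\varphi_i = (1+tw_i)\varphi_0$ and fixes the other $\varphi_k$'s, so $w_k \mapsto w_k/(1+tw_i)$ and $\varphi_0^s \mapsto (1+tw_i)^s \varphi_0^s$; differentiating at $t = 0$ gives $\mathfrak{x}_{i0}(w_k) = -w_iw_k$ and $\mathfrak{x}_{i0}(\varphi_0^s) = sw_i\varphi_0^s$, and the Leibniz rule then gives the third formula.

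The only genuine subtlety is keeping the Leibniz and chain rules compatible with the $\breve{K}$-linear continuous $U(\mathfrak{g})$-action guaranteed by Theorem \ref{laofGonMD}: the derivative in (\ref{g_action_eqn}) exists in the Banach topology of $M^s_D$, and the differentiation of a locally analytic orbit map against a power-series variable commutes with power-series expansion, so applying the derivation term-by-term to an element $f = \sum_\alpha c_\alpha w^\alpha \in \mathcal{O}_{X^{\text{rig}}_0}(D)$ is legitimate. Once this is noted, the three displayed formulas follow directly from the monomial case. I do not foresee a genuine obstacle; the main care needed is simply keeping track of the sign and of the extra $s$-term that appears precisely because $\mathfrak{x}_{i0}$ and $\mathfrak{x}_{00}$ act non-trivially on the trivialisation $\varphi_0^s$ of $(\mathcal{M}^s_0)^{\text{rig}}\big|_D$.
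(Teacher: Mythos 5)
Your computation is correct and is exactly the standard argument: differentiate the fractional-linear action of the one-parameter subgroups $I+t\mathfrak{x}_{ij}$ (resp.\ $I+(e^{t}-1)\mathfrak{x}_{ii}$) of $P$ on the coordinates $w_{k}=\varphi_{k}/\varphi_{0}$ and on the trivialisation $\varphi_{0}^{s}$, which is legitimate by the local $K_{h}$-analyticity of the $P$-action established in Proposition \ref{laofI}. The paper itself gives no computation and simply cites \cite{kohliwamo}, Lemma 4.1, which carries out the same calculation in the case $K=\mathbb{Q}_{p}$; your write-up supplies precisely that verification, including the correct sign and the extra $s$-term in the $j=0$ cases.
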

\begin{proof}
This is exactly same as \cite{kohliwamo}, Lemma 4.1, which treats the case $K=\mathbb{Q}_{p}$. 
\end{proof} 
\indent  Given a Lie subalgebra $\mathfrak{h}\subseteq\mathfrak{gl}_{h}(K_{h})$, and a $\breve{K}$-linear $\mathfrak{gl}_{h}(K_{h})$-representation $W$, the $\breve{K}$-subspace of \emph{$\mathfrak{h}$-invariants of $W$} is the subspace \textnormal{$\lbrace w\in W|\mathfrak{x}(w)=0$ for all $\mathfrak{x}\in\mathfrak{h}\rbrace$} of $W$. Let us denote by $\mathfrak{n}$ the Lie subalgebra of $\mathfrak{gl}_{h}(K_{h})$ consisting of strictly upper triangular matrices. For later use, we calculate the $\mathfrak{g}$-invariants and the $\mathfrak{n}$-invariants of $\mathcal{O}_{X^{\textnormal{rig}}_{0}}(D)$ in the next lemma using the formulae (\ref{explicit_g-action_formulae}).
\begin{lemma}\label{gninvO(D)}
$\mathcal{O}_{X^{\textnormal{rig}}_{0}}(D)^{\mathfrak{g}=0}=\mathcal{O}_{X^{\textnormal{rig}}_{0}}(D)^{\mathfrak{n}=0}=\breve{K}$.
\end{lemma}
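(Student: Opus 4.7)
The plan is to prove the chain of inclusions $\breve{K}\subseteq \mathcal{O}_{X^{\textnormal{rig}}_{0}}(D)^{\mathfrak{g}=0}\subseteq \mathcal{O}_{X^{\textnormal{rig}}_{0}}(D)^{\mathfrak{n}=0}\subseteq\breve{K}$ and conclude equality throughout. The first inclusion is immediate from the fact that $\Gamma$ acts on $\mathcal{O}_{X^{\textnormal{rig}}_{0}}(D)$ by $\breve{K}$-algebra automorphisms (Proposition \ref{dh}), so the derived Lie algebra action annihilates the constants $\breve{K}$. The second inclusion uses that the $\mathfrak{g}$-action is $\breve{K}$-linear and hence extends to a $\breve{K}$-linear action of $\mathfrak{g}\otimes_{K}\breve{K}\simeq\mathfrak{gl}_{h}(K_{h})\otimes_{K_{h}}\breve{K}$, which contains the strictly upper triangular subalgebra $\mathfrak{n}\subseteq\mathfrak{gl}_{h}(K_{h})$; any vector killed by all of $\mathfrak{g}$ is therefore killed by every element of $\mathfrak{n}$ as well. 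So the entire proof reduces to computing the $\mathfrak{n}$-invariants.

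For the last inclusion, I would invoke Lemma \ref{explicit_g-action_lemma} with $s=0$. The matrices $\mathfrak{x}_{ij}$ spanning $\mathfrak{n}$ satisfy $0\leq i<j\leq h-1$, hence always have $j\neq 0$, so the first case of the formulae applies and gives $\mathfrak{x}_{ij}(f)=w_{i}\tfrac{\partial f}{\partial w_{j}}$. Specializing to $i=0$ (recalling the convention $w_{0}=1$) and letting $j$ range over $1,\ldots,h-1$, an $\mathfrak{n}$-invariant $f\in \mathcal{O}_{X^{\textnormal{rig}}_{0}}(D)$ must satisfy $\tfrac{\partial f}{\partial w_{j}}=0$ for every $j=1,\ldots,h-1$.

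The final step is to observe that in the generalized Tate algebra description (\ref{expression_of_O(D)}), such an $f=\sum_{\alpha}c_{\alpha}w^{\alpha}$ can only have the term $c_{0}$ nonzero: differentiating term by term (valid in any convergent power series ring) forces $\alpha_{j}c_{\alpha}=0$ for all $\alpha$ and all $j\geq 1$, so $c_{\alpha}=0$ unless $\alpha=0$, i.e. $f\in\breve{K}$. This gives $\mathcal{O}_{X^{\textnormal{rig}}_{0}}(D)^{\mathfrak{n}=0}\subseteq\breve{K}$ and closes the chain.

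No serious obstacle arises here; the argument is essentially a one-line calculation once the explicit Lie algebra formulae of Lemma \ref{explicit_g-action_lemma} are in hand. The only point requiring a moment's care is the compatibility between $\mathfrak{g}$-invariance and $\mathfrak{n}$-invariance via the scalar extension $\mathfrak{g}\otimes_{K}\breve{K}\simeq\mathfrak{gl}_{h}(K_{h})\otimes_{K_{h}}\breve{K}$, but this is already set up in the paragraphs preceding the lemma.
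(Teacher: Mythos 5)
Your proposal is correct and follows essentially the same route as the paper: the chain of inclusions $\breve{K}\subseteq \mathcal{O}_{X^{\textnormal{rig}}_{0}}(D)^{\mathfrak{g}=0}\subseteq \mathcal{O}_{X^{\textnormal{rig}}_{0}}(D)^{\mathfrak{n}=0}$, followed by applying the formulae of Lemma \ref{explicit_g-action_lemma} to the elements $\mathfrak{x}_{0j}$ to deduce $\frac{\partial f}{\partial w_{j}}=0$ for all $j$ and hence that $f$ is constant. The extra care you take in justifying the first two inclusions (the $\breve{K}$-linearity of the $\mathfrak{g}$-action and the scalar extension to $\mathfrak{gl}_{h}(K_{h})\otimes_{K_{h}}\breve{K}$) is a welcome elaboration of points the paper leaves implicit.
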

\begin{proof}
Since $\breve{K}\subseteq\mathcal{O}_{X^{\textnormal{rig}}_{0}}(D)^{\mathfrak{g}=0}\subseteq\mathcal{O}_{X^{\textnormal{rig}}_{0}}(D)^{\mathfrak{n}=0}$, it suffices to show that the latter is $\breve{K}$. Now if $f\in\mathcal{O}_{X^{\textnormal{rig}}_{0}}(D)^{\mathfrak{n}=0}$ then applying the formulae (\ref{explicit_g-action_formulae}), we get $\mathfrak{x}_{0j}(f)=\frac{\partial f}{\partial w_{j}}=0$ for all $1\leq j\leq h-1$. Therefore, $f$ must be a constant power series.
\end{proof}
\indent  We now compute the space $(M^{s}_{D})_{\text{lf}}$ of locally finite vectors in the $\Gamma$-representation $M^{s}_{D}$. The key step is the following lemma based on Lemma \ref{explicit_g-action_lemma}:
%The lemma \ref{explicit_g-action_lemma} allows us to derive the following result which is a key step in the theorem that follows: 
\begin{lemma}\label{polysubspace_contained_in_homopolygensubspace} The subspace $\breve{K}[w_{1},\dots ,w_{h-1}]\varphi^{s}_{0}$ of $M^{s}_{D}$ is contained in $(U(\mathfrak{g})\otimes_{K}\breve{K})(f\varphi_{0}^{s})$ for any non-zero homogeneous polynomial $f\in\breve{K}[w_{1},\dots ,w_{h-1}]$ of total degree $d>s$. 
\end{lemma}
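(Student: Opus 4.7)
The plan is to exploit the explicit formulas (\ref{explicit_g-action_formulae}) to show that $W := (U(\mathfrak{g})\otimes_{K}\breve{K})(f\varphi_{0}^{s})$ contains the entire subspace $\breve{K}[w_{1},\dots,w_{h-1}]\varphi_{0}^{s}$ of $M^{s}_{D}$. Writing $V_{e} \subseteq \breve{K}[w_{1},\dots,w_{h-1}]$ for the subspace of homogeneous polynomials of degree $e$, the claim reduces to proving $V_{e}\varphi_{0}^{s} \subseteq W$ for every $e \geq 0$. I would proceed by first filling out the degree-$d$ piece via an irreducibility argument, then descending to every smaller degree, and finally ascending to every larger one.

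For the irreducibility step, I would restrict attention to the Lie subalgebra spanned by $\mathfrak{x}_{ij}$ with $1 \leq i,j \leq h-1$; after base change to $\breve{K}$, this is a copy of $\mathfrak{gl}_{h-1}(\breve{K})$, and by (\ref{explicit_g-action_formulae}) it acts on $V_{e}\varphi_{0}^{s}$ through the operators $w_{i}\,\partial/\partial w_{j}$, identifying $V_{e}\varphi_{0}^{s}$ with the $e$-th symmetric power $\textnormal{Sym}^{e}(\breve{K}^{h-1})$ of the standard representation. Since $\textnormal{Sym}^{e}(\breve{K}^{h-1})$ is absolutely irreducible in characteristic zero, any non-zero element of $V_{e}\varphi_{0}^{s}$ lying in $W$ forces $V_{e}\varphi_{0}^{s} \subseteq W$; applied to $f \in V_{d}$, this yields $V_{d}\varphi_{0}^{s} \subseteq W$ as a starting point. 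To descend, I would use $\mathfrak{x}_{0j} = \partial/\partial w_{j}$, which is collectively surjective from $V_{e}$ onto $V_{e-1}$ for $e \geq 1$ (each monomial in $V_{e-1}$ is, up to a non-zero scalar, the $\partial/\partial w_{j}$-derivative of a monomial in $V_{e}$). Combining descent with the irreducibility step inductively gives $V_{e}\varphi_{0}^{s} \subseteq W$ for all $0 \leq e \leq d$.

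To ascend, I would invoke $\mathfrak{x}_{i0}$, whose action on $g\varphi_{0}^{s}$ with $g \in V_{e}$ is $(s-e)\,w_{i}\,g\,\varphi_{0}^{s}$ by (\ref{explicit_g-action_formulae}). For every $e \geq d$ the hypothesis $d > s$ forces $s-e < 0$, so this scalar is non-zero; letting $i$ range over $1,\dots,h-1$ and invoking the irreducibility step once more then propagates $V_{e}\varphi_{0}^{s} \subseteq W$ to $V_{e+1}\varphi_{0}^{s} \subseteq W$, and an induction starting from $e = d$ exhausts all remaining degrees.

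The single delicate point of the argument, and the reason for the hypothesis $d > s$, is precisely the ascending step: had we started at degree $d \leq s$, the critical degree $e = s$ would halt the induction, since $\mathfrak{x}_{i0}$ acts by zero on $V_{s}\varphi_{0}^{s}$ and no other Lie algebra element raises the degree. Starting strictly above $s$ sidesteps this obstruction entirely, and the remaining ingredients — absolute irreducibility of symmetric powers of the standard $\mathfrak{gl}_{h-1}$-representation and surjectivity of partial derivatives between consecutive homogeneous components — are entirely standard and require no further input beyond Lemma \ref{explicit_g-action_lemma}.
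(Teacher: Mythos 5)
Your argument is correct and rests on the same two consequences of Lemma \ref{explicit_g-action_lemma} that the paper's proof uses, namely $\mathfrak{x}_{0j}(f\varphi_{0}^{s})=\frac{\partial f}{\partial w_{j}}\varphi_{0}^{s}$ and $\mathfrak{x}_{i0}(f\varphi_{0}^{s})=(s-d)w_{i}f\varphi_{0}^{s}$ for homogeneous $f$ of degree $d$, together with the observation that the scalar $s-e$ vanishes only at $e=s$. The organization, however, differs in a way worth recording. The paper argues monomial by monomial: for a target monomial of degree $\leq s$ it differentiates $f\varphi_{0}^{s}$ down to $\varphi_{0}^{s}$ and then multiplies back up, and for degree $>s$ it ``reverses the procedure'', multiplying up from $f\varphi_{0}^{s}$ first and then differentiating down --- a step stated rather tersely, since $\partial^{\alpha}(w^{\beta}f)$ is not itself a monomial and one must still check that such elements span each graded piece. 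You instead work with entire homogeneous components: the irreducibility of $\mathrm{Sym}^{e}$ of the standard representation under the $\mathfrak{gl}_{h-1}$-block spanned by the $\mathfrak{x}_{ij}$ with $i,j\geq 1$ lets you pass from the single vector $f\varphi_{0}^{s}$ to the full degree-$d$ component, after which the descent (collective surjectivity of the $\partial/\partial w_{j}$) and the ascent (non-vanishing of $s-e$ for $e\geq d>s$) fill in every other degree; in fact the raising and lowering operators already span each new graded piece, so the irreducibility input is only genuinely needed once, at degree $d$. This buys a cleaner, fully explicit treatment of the monomials of degree $>s$ at the cost of one extra (standard) representation-theoretic ingredient that the paper does not invoke.
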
 
\begin{proof} Using (\ref{explicit_g-action_formulae}), we have for all $0<i,j\leq h-1$ \begin{align*}
&\mathfrak{x}_{0j}(f\varphi^{s}_{0})=\frac{\partial f}{\partial w_{j}}\varphi^{s}_{0}\\
&\mathfrak{x}_{i0}(f\varphi^{s}_{0})=(s-d)w_{i}f\varphi_{0}^{s}.
\end{align*} 
To obtain $g\varphi^{s}_{0}$ with a monomial $g$ of total degree $\leq s$, first reduce $f\varphi^{s}_{0}$ to $\varphi^{s}_{0}$ by applying suitable $\mathfrak{x}_{0j}$ $(j\neq 0)$ to it iteratively and then apply appropriate $\mathfrak{x}_{i0}$ $(i>0)$ to $\varphi^{s}_{0}$ to get the desired element $g\varphi^{s}_{0}$. To obtain $g\varphi^{s}_{0}$ with a monomial $g$ of total degree $>s$, reverse the procedure, i.e., first apply appropriate $\mathfrak{x}_{i0}$ $(i>0)$ to $f\varphi^{s}_{0}$ and then reduce the result to $g\varphi^{s}_{0}$ by applying suitable $\mathfrak{x}_{0j}$ $(j\neq 0)$ to it.
\end{proof} 
\indent  For any $s\in\mathbb{Z}$, we define the $\breve{K}$-subspace of $M^{s}_{D}$ by \begin{equation*}
V_{s}:=\sum_{\vert \alpha\vert\leq s}\breve{K}w^{\alpha}\varphi^{s}_{0}.
\end{equation*} Note that we have $V_{s}=0$ if $s<0$. For $s\geq 0$, it is easy to see that $V_{s}$ stable under the action of $\Gamma$. To see this, it is sufficient to prove that $\gamma(w^{\alpha}\varphi^{s}_{0})\in V_{s}$ for any $w^{\alpha}\varphi^{s}_{0}$ with $\vert \alpha\vert\leq s$ and for any $\gamma=\sum_{i=0}^{h-1}\lambda_{i}\Pi^{i}\in\Gamma$. In this case, using the action of the matrix (\ref{Gamma_as_a_subgroup_of_GLh}) on the projective coordinates $[\varphi_{0}:\ldots:\varphi_{h-1}]$, we find \begin{align*} &\gamma(w^{\alpha}\varphi^{s}_{0})=\gamma(w_{1}^{\alpha_{1}}\dots w_{h-1}^{\alpha_{h-1}}\varphi^{s}_{0})\\&=\gamma(\varphi_{1}^{\alpha_{1}}\dots \varphi_{h-1}^{\alpha_{h-1}}\varphi^{s-\vert \alpha\vert}_{0})\\&=\gamma(\varphi_{1})^{\alpha_{1}}\dots\gamma(\varphi_{h-1})^{\alpha_{h-1}}\gamma(\varphi_{0})^{s-\vert \alpha\vert}\\&=(\varpi\lambda_{1}\varphi_{0}+\dots +\varpi\lambda_{2}^{\sigma^{h-1}}\varphi_{h-1})^{\alpha_{1}}\dots(\varpi\lambda_{h-1}\varphi_{0}+\dots +\lambda_{0}^{\sigma^{h-1}}\varphi_{h-1})^{\alpha_{h-1}}\\&\hspace{7.5cm} (\lambda_{0}\varphi_{0}+\dots +\lambda_{1}^{\sigma^{h-1}}\varphi_{h-1})^{s-\vert \alpha\vert}\\&=(\varpi\lambda_{1}+\dots +\varpi\lambda_{2}^{\sigma^{h-1}}w_{h-1})^{\alpha_{1}}\dots(\varpi\lambda_{h-1}+\dots +\lambda_{0}^{\sigma^{h-1}}w_{h-1})^{\alpha_{h-1}}\\&\hspace{7.5cm} (\lambda_{0}+\dots +\lambda_{1}^{\sigma^{h-1}}w_{h-1})^{s-\vert \alpha\vert}\varphi^{s}_{0}\\&\in V_{s}.
\end{align*}  
\begin{theorem}\label{top_finiteness_of_MsD_thm} The $\Gamma$-representation $M^{s}_{D}$ is topologically irreducible if $s<0$, and if $s\geq 0$ then $V_{s}$ is a topologically irreducible sub-representation of $M^{s}_{D}$ with topologically irreducible quotient $M^{s}_{D}/V_{s}$.
\end{theorem}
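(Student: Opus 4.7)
The key operator in this argument is $\mathfrak{x}_{00}\in\mathfrak{g}$, which by \eqref{explicit_g-action_formulae} acts on $M^{s}_{D}\simeq\mathcal{O}_{X^{\text{rig}}_{0}}(D)\varphi_{0}^{s}$ diagonally, with eigenvalue $s-d$ on the finite-dimensional subspace $V^{(d)}:=\{f\varphi_{0}^{s}:f\text{ is homogeneous of degree }d\}$. My plan is to use this grading, together with the derivations $\mathfrak{x}_{0j}=\partial_{w_{j}}$ and Lemma~\ref{polysubspace_contained_in_homopolygensubspace}, to show that any closed $\Gamma$-stable subspace of $M^{s}_{D}$ either is killed by the quotient $M^{s}_{D}\to M^{s}_{D}/V_{s}$ or exhausts all of $M^{s}_{D}$, using density of polynomials in the Tate algebra \eqref{expression_of_O(D)} to close the argument.

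If $s\geq 0$, the subspace $V_{s}=\bigoplus_{d\leq s}V^{(d)}$ is finite-dimensional, hence closed. Its $\Gamma$-stability is immediate from Proposition~\ref{dh} (or from \eqref{explicit_g-action_formulae}, noting that $\mathfrak{x}_{i0}$ acts on $V^{(d)}$ as multiplication by $(s-d)w_{i}$, which vanishes on the top slice $d=s$). Under the identification $V_{s}\simeq\mathrm{Sym}^{s}(B_{h}\otimes_{K_{h}}\breve{K})$, extending scalars to $\breve{K}$ gives $\mathfrak{g}\otimes_{K}\breve{K}\simeq\mathfrak{gl}_{h}(\breve{K})$, under which $V_{s}$ becomes the $s$-th symmetric power of the standard representation; this is classically irreducible. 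Since any $\Gamma$-stable subspace of $V_{s}$ is a fortiori $\mathfrak{g}\otimes\breve{K}$-stable, $V_{s}$ is topologically irreducible.

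Now let $W\subseteq M^{s}_{D}$ be a closed $\Gamma$-stable subspace which is non-zero if $s<0$ and properly contains $V_{s}$ if $s\geq 0$; I will show $W=M^{s}_{D}$. Pick $h\varphi_{0}^{s}\in W$ outside $V_{s}$; in the case $s\geq 0$, subtract $h_{\leq s}\varphi_{0}^{s}\in V_{s}\subseteq W$ to assume $h$ is supported in degrees $>s$. Set $d_{\ast}:=s+1$ if $s\geq 0$ and $d_{\ast}:=0$ if $s<0$; in both cases $d_{\ast}>s$. Letting $d_{0}$ be the lowest degree occurring in $h$, the fact that a non-zero polynomial of degree $d_{0}$ admits, for every $k\leq d_{0}$, some multi-index $\gamma$ with $|\gamma|=k$ and $\partial^{\gamma}h_{d_{0}}\neq 0$ (iterate: a non-constant polynomial has some non-zero first derivative) lets me choose $\gamma$ with $|\gamma|=d_{0}-d_{\ast}$ and pass, via the action of $\mathfrak{x}_{01}^{\gamma_{1}}\cdots\mathfrak{x}_{0,h-1}^{\gamma_{h-1}}$, to an element $h'\varphi_{0}^{s}=\sum_{d\geq d_{\ast}}g_{d}\varphi_{0}^{s}\in W$ with $g_{d_{\ast}}\neq 0$.

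The critical step is to isolate $g_{d_{\ast}}\varphi_{0}^{s}$ by Lagrange interpolation in $\mathfrak{x}_{00}$. Consider
\[
P_{N}(t)\,:=\,\frac{1}{N!}\prod_{d=d_{\ast}+1}^{d_{\ast}+N}\bigl(t-(s-d)\bigr)\,\in\,\mathbb{Q}[t],
\]
which satisfies $P_{N}(s-d_{\ast})=1$, $P_{N}(s-d)=0$ for $d_{\ast}<d\leq d_{\ast}+N$, and a direct computation gives $P_{N}(s-d)=(-1)^{N}\binom{d-d_{\ast}-1}{N}\in\mathbb{Z}$ for $d>d_{\ast}+N$. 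Since $\mathfrak{x}_{00}$ is diagonal with eigenvalue $s-d$ on $V^{(d)}$,
\[
P_{N}(\mathfrak{x}_{00})(h'\varphi_{0}^{s})\,=\,g_{d_{\ast}}\varphi_{0}^{s}\,+\sum_{d>d_{\ast}+N}P_{N}(s-d)\,g_{d}\varphi_{0}^{s}.
\]
The crucial $p$-adic estimate is that $|P_{N}(s-d)|\leq 1$, because this value is an integer; hence the Banach norm of the tail is bounded by $\sup_{d>d_{\ast}+N}\|g_{d}\varphi_{0}^{s}\|_{D}$, which tends to $0$ as $N\to\infty$ by convergence of $h'\varphi_{0}^{s}$ in the Tate norm. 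Closedness of $W$ forces $g_{d_{\ast}}\varphi_{0}^{s}\in W$, a non-zero homogeneous polynomial of degree $d_{\ast}>s$; Lemma~\ref{polysubspace_contained_in_homopolygensubspace} then gives $\breve{K}[w_{1},\dots,w_{h-1}]\varphi_{0}^{s}\subseteq W$, and density of polynomials in $\mathcal{O}_{X^{\text{rig}}_{0}}(D)$ finishes the argument. The main obstacle is precisely this $p$-adic estimate: an interpolation that simultaneously killed eigenvalues on both sides of $d_{\ast}$ would introduce factorial denominators of unbounded $p$-adic size, so the preliminary reduction by differentiation to make $d_{\ast}$ the \emph{lowest} occurring degree is essential in order to keep the interpolation coefficients $p$-integral.
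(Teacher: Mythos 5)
Your proposal is correct and is essentially the paper's own proof: your interpolation operator $P_{N}(\mathfrak{x}_{00})$ is exactly the closed form of the paper's recursion $f_{n}=\frac{1}{n}\bigl((d+n-s)f_{n-1}+\mathfrak{x}_{00}(f_{n-1})\bigr)$, producing the same integral tail coefficients $(-1)^{N}\binom{d-d_{*}-1}{N}$, followed by the same appeal to Lemma~\ref{polysubspace_contained_in_homopolygensubspace} and density of polynomials. The only deviations are cosmetic: the preliminary differentiation forcing the lowest degree down to $s+1$ (resp.\ $0$) is unnecessary, since one may simply place the interpolation nodes above whatever the actual lowest occurring degree $d_{0}>s$ is (as the paper does), and your appeal to the classical irreducibility of $\mathrm{Sym}^{s}$ of the standard representation replaces the paper's explicit raising/lowering argument for $V_{s}$.
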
 
\begin{proof}
\textbf{Case $s<0$}: Let $V$ be a non-zero closed $\Gamma$-stable subspace of $M^{s}_{D}$. Let $f_{0}\varphi^{s}_{0}\in V$ where $f_{0}=\sum_{\alpha\in\mathbb{N}_{0}^{h-1}}c_{\alpha}w^{\alpha}\neq 0$ and $d$ $(\geq 0)$ be the smallest natural number such that $c_{\alpha}\neq 0$ for some $\alpha\in\mathbb{N}_{0}^{h-1}$ with $\vert \alpha\vert=d$. Thus, $f_{0}\varphi^{s}_{0}=\Big(\sum_{\vert \alpha\vert =d}c_{\alpha}w^{\alpha}+\sum_{i=1}^{\infty}\sum_{\vert \alpha\vert =d+i}c_{\alpha}w^{\alpha}\Big)\varphi^{s}_{0}\in V$ where $\sum_{\vert \alpha\vert =d}c_{\alpha}w^{\alpha}\neq 0$. For $n>0$, define a sequence of elements of $M^{s}_{D}$ inductively as follows: 
\begin{equation*}
f_{n}\varphi^{s}_{0}:=\frac{1}{n}\Big((d+n-s)f_{n-1}\varphi^{s}_{0}+\mathfrak{x}_{00}(f_{n-1}\varphi^{s}_{0})\Big).\end{equation*} Since $V$ is closed and $\Gamma$-stable, $V$ is stable under the action of the Lie algebra and thus $f_{n}\in V$ for all $n\in\mathbb{N}_{0}$.\\
\indent  We prove by induction on $n$ that
\begin{equation}\label{simplifying_eqn2}
f_{n}\varphi^{s}_{0}=\Bigg(\sum_{\vert \alpha\vert =d}c_{\alpha}w^{\alpha}+\sum_{i=1}^{\infty}\sum_{\vert \alpha\vert =d+i}(-1)^{n}\binom{i-1}{n}c_{\alpha}w^{\alpha}\Bigg)\varphi^{s}_{0}.
\end{equation} Here the generalized binomial coefficients are defined by $\binom{x}{n}:=\frac{x(x-1)\dots (x-n+1)}{n!}$ for any $x\in\mathbb{Z}$ and $n\in\mathbb{N}_{0}$. The case $n=0$ is true by definition. Assuming that (\ref{simplifying_eqn2}) holds for $n-1$, we compute using Lemma \ref{explicit_g-action_lemma} that
\begin{align*}
&f_{n}\varphi^{s}_{0}\\&=\frac{1}{n}\Big((d+n-s)f_{n-1}\varphi^{s}_{0}+\mathfrak{x}_{00}(f_{n-1}\varphi^{s}_{0})\Big)\\&=\frac{1}{n}\Bigg((d+n-s)\Big(\sum_{\vert \alpha\vert =d}c_{\alpha}w^{\alpha}+\sum_{i=1}^{\infty}\sum_{\vert \alpha\vert =d+i}(-1)^{n-1}\binom{i-1}{n-1}c_{\alpha}w^{\alpha}\Big)\varphi^{s}_{0}\\&\ \ \ \ \ \ \ \ +\mathfrak{x}_{00}\Bigg(\Big(\sum_{\vert \alpha\vert =d}c_{\alpha}w^{\alpha}+\sum_{i=1}^{\infty}\sum_{\vert \alpha\vert =d+i}(-1)^{n-1}\binom{i-1}{n-1}c_{\alpha}w^{\alpha}\Big)\varphi^{s}_{0}\Bigg)\Bigg)\\&=\frac{1}{n}\Bigg((d+n-s)\Big(\sum_{\vert \alpha\vert =d}c_{\alpha}w^{\alpha}+\sum_{i=1}^{\infty}\sum_{\vert \alpha\vert =d+i}(-1)^{n-1}\binom{i-1}{n-1}c_{\alpha}w^{\alpha}\Big)\varphi^{s}_{0}\\&\ \ \ \ \  +\Bigg(\sum_{\vert \alpha\vert =d}(s-d)c_{\alpha}w^{\alpha}+\sum_{i=1}^{\infty}\sum_{\vert \alpha\vert =d+i}(s-(d+i))(-1)^{n-1}\binom{i-1}{n-1}c_{\alpha}w^{\alpha}\Bigg)\varphi^{s}_{0}\Bigg)\\
\end{align*}
\begin{align*}
&=\Bigg(\sum_{\vert \alpha\vert =d}c_{\alpha}w^{\alpha}+\sum_{i=1}^{\infty}\sum_{\vert \alpha\vert =d+i}(n-i)(-1)^{n-1}\frac{1}{n}\binom{i-1}{n-1}c_{\alpha}w^{\alpha}\Bigg)\varphi^{s}_{0}\\&=\Bigg(\sum_{\vert \alpha\vert =d}c_{\alpha}w^{\alpha}+\sum_{i=1}^{\infty}\sum_{\vert \alpha\vert =d+i}(-1)^{n}\binom{i-1}{n}c_{\alpha}w^{\alpha}\Bigg)\varphi^{s}_{0}
\end{align*}
\indent  We now claim that the sequence $f_{n}\varphi^{s}_{0}$ converges to $\Big(\sum_{\vert \alpha\vert =d}c_{\alpha}w^{\alpha}\Big)\varphi^{s}_{0}$ as $n$ tends to $\infty$ with respect to the norm $\|\cdot\|_{M^{s}_{D}}$ defined in the proof of Theorem \ref{laofGonMD} As $f_{0}\in\mathcal{O}_{X^{\textnormal{rig}}_{0}}(D)$, we know that given $\varepsilon >0$, there exists $N_{\varepsilon}\in\mathbb{N}$ such that for all $\alpha\in\mathbb{N}_{0}^{h-1}$ with $\vert \alpha\vert > N_{\varepsilon}$, we have  $\vert c_{\alpha}\vert \vert\varpi\vert^{\sum_{i=1}^{h-1}\alpha_{i}(1-\frac{i}{h})}<\varepsilon$. Therefore,  $\sup_{\vert \alpha\vert >N_{\varepsilon}}\vert c_{\alpha}\vert \vert\varpi\vert^{\sum_{i=1}^{h-1}\alpha_{i}(1-\frac{i}{h})}<\varepsilon$. Now for all $n>N_{\varepsilon}-d$, using (\ref{simplifying_eqn2}), we have \begin{align*}\Big\| f_{n}\varphi^{s}_{0}-\Big(\sum_{\vert \alpha\vert =d}c_{\alpha}w^{\alpha}\Big)\varphi^{s}_{0}\Big\|_{M^{s}_{D}}&=\Big\|\Big(\sum_{i=1}^{\infty}\sum_{\vert \alpha\vert =d+i}(-1)^{n}\binom{i-1}{n}c_{\alpha}w^{\alpha}\Big)\varphi^{s}_{0}\Big\|_{M^{s}_{D}}\\&=\Big\|\sum_{i=1}^{\infty}\sum_{\vert \alpha\vert =d+i}(-1)^{n}\binom{i-1}{n}c_{\alpha}w^{\alpha}\Big\|_{D}\\&=\Big\|\sum_{i=n+1}^{\infty}\sum_{\vert \alpha\vert =d+i}(-1)^{n}\binom{i-1}{n}c_{\alpha}w^{\alpha}\Big\|_{D}\\&\leq\Big\|\sum_{\vert \alpha\vert >d+n}c_{\alpha}w^{\alpha}\Big\|_{D}\\&=\sup_{\vert \alpha\vert >d+n}\vert c_{\alpha}\vert \vert\varpi\vert^{\sum_{i=1}^{h-1}\alpha_{i}(1-\frac{i}{h})}\\&\leq\sup_{\vert \alpha\vert >N_{\varepsilon}}\vert c_{\alpha}\vert \vert\varpi\vert^{\sum_{i=1}^{h-1}\alpha_{i}(1-\frac{i}{h})}<\varepsilon
\end{align*} Hence, $f_{n}\varphi^{s}_{0}$ converges to $\Big(\sum_{\vert \alpha\vert =d}c_{\alpha}w^{\alpha}\Big)\varphi^{s}_{0}$ as $n\to\infty$ and $\Big(\sum_{\vert \alpha\vert =d}c_{\alpha}w^{\alpha}\Big)\varphi^{s}_{0}\in V$ because $V$ is closed. Since $\sum_{\vert \alpha\vert =d}c_{\alpha}w^{\alpha}$ is a non-zero homogeneous polynomial of total degree $d\geq 0>s$, Lemma \ref{polysubspace_contained_in_homopolygensubspace} implies that $\breve{K}[w_{1},\dots ,w_{h-1}]\varphi^{s}_{0}\subseteq V$. Since $\breve{K}[w_{1},\dots ,w_{h-1}]\varphi^{s}_{0}$ is dense in $M^{s}_{D}=\mathcal{O}_{X^{\textnormal{rig}}_{0}}(D)\varphi_{0}^{s}$ and $V$ is closed, it follows that $V=M^{s}_{D}$. Hence, $M^{s}_{D}$ is topologically irreducible for all $s<0$.\\
\indent  \textbf{Case $s\geq 0$}: Let $V$ be a non-zero closed $\Gamma$-stable subspace of $V_{s}$. Then $V$ is stable under the action of the Lie algebra $\mathfrak{g}$ and thus it becomes a module over $U(\mathfrak{g})\otimes_{K}\breve{K}$. As mentioned in the proof of Lemma \ref{polysubspace_contained_in_homopolygensubspace}, any non-zero element $f\varphi^{s}_{0}$ of $V$ can be reduced to $\varphi^{s}_{0}$ by applying suitable $\mathfrak{x}_{0j}$ $(j\neq 0)$ to it iteratively and then $\varphi^{s}_{0}$ can be converted into any monomial of total degree $\leq s$ multiplied with $\varphi^{s}_{0}$ by applying appropriate $\mathfrak{x}_{i0}$ $(i>0)$ to it. Therefore $V=V_{s}$ and $V_{s}$ is topologically irreducible.\\
\indent  Now let $\phi :M^{s}_{D}\longrightarrow M^{s}_{D}/V_{s}$ be the canonical surjective map and let $W\subset M^{s}_{D}/V_{s}$ be a non-zero, closed $\Gamma$-stable subspace. Then $\phi^{-1}(W)$ is a non-zero, closed $\Gamma$-stable subspace of $M^{s}_{D}$ not equal to $V_{s}$. Let $\Big(\sum_{\alpha\in\mathbb{N}_{0}^{h-1}}c_{\alpha}w^{\alpha}\Big)\varphi^{s}_{0}+V_{s}$ be a non-zero element of $W$. Then $f_{0}\varphi^{s}_{0}:=\Big(\sum_{\vert \alpha\vert>s}c_{\alpha}w^{\alpha}\Big)\varphi^{s}_{0}\neq 0\in\phi^{-1}(W)$. Let $d>s$ be the smallest natural number such that $c_{\alpha}\neq 0$ for some $\alpha\in\mathbb{N}_{0}^{h-1}$ with $\vert \alpha\vert=d$. Thus, \begin{equation*}
f_{0}\varphi^{s}_{0}=\Bigg(\sum_{\vert \alpha\vert =d}c_{\alpha}w^{\alpha}+\sum_{i=1}^{\infty}\sum_{\vert \alpha\vert =d+i}c_{\alpha}w^{\alpha}\Bigg)\varphi^{s}_{0}\in \phi^{-1}(W)
\end{equation*} where $\sum_{\vert \alpha\vert =d}c_{\alpha}w^{\alpha}\neq 0$.\\
\indent  As in the case of $s<0$, we define a sequence of elements in $\phi^{-1}(W)$ inductively for $n>0$ as follows: \begin{equation*}
f_{n}\varphi^{s}_{0}:=\frac{1}{n}\Big((d+n-s)f_{n-1}\varphi^{s}_{0}+\mathfrak{x}_{00}(f_{n-1}\varphi^{s}_{0})\Big).\end{equation*} Using exactly the same proof in the previous case of $s<0$, it can be shown that $f_{n}\varphi^{s}_{0}$ converges to $\Big(\sum_{\vert \alpha\vert =d}c_{\alpha}w^{\alpha}\Big)\varphi^{s}_{0}$ as $n\to\infty$ and $\Big(\sum_{\vert \alpha\vert =d}c_{\alpha}w^{\alpha}\Big)\varphi^{s}_{0}\in \phi^{-1}(W)$ because $\phi^{-1}(W)$ is closed. Since $\sum_{\vert \alpha\vert =d}c_{\alpha}w^{\alpha}$ is a non-zero homogeneous polynomial of total degree $d>s$, it follows from Lemma \ref{polysubspace_contained_in_homopolygensubspace} that $\breve{K}[w_{1},\dots ,w_{h-1}].\varphi^{s}_{0}\subseteq \phi^{-1}(W)$. Since $\breve{K}[w_{1},\dots ,w_{h-1}]\varphi^{s}_{0}$ is dense in $M^{s}_{D}=\mathcal{O}_{X^{\textnormal{rig}}_{0}}(D)\varphi_{0}^{s}$ and $\phi^{-1}(W)$ is closed, it follows that $\phi^{-1}(W)=M^{s}_{D}$. Hence $W=\phi(M^{s}_{D})=M^{s}_{D}/V_{s}$. Thus $M^{s}_{D}/V_{s}$ is topologically irreducible.
\end{proof}   
\begin{corollary}\label{lf0} For all $s\in\mathbb{Z}$, we have \begin{equation*}
(M^{s}_{D})_{\textnormal{lf}}=V_{s}.
\end{equation*} Thus, $(M^{s}_{D})_{\textnormal{lf}}$ is zero if $s<0$ and is a finite dimensional irreducible representation of $\Gamma$ if $s\geq 0$.
\end{corollary}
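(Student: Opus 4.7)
The inclusion $V_s \subseteq (M^s_D)_{\text{lf}}$ is essentially tautological: when $s<0$ we have $V_s = 0$, and when $s \geq 0$ the proof of Theorem \ref{top_finiteness_of_MsD_thm} already exhibits $V_s$ as a finite-dimensional $\Gamma$-stable subspace, so every vector of $V_s$ is locally finite with witness $H = \Gamma$, $W = V_s$. The plan is therefore to establish the reverse inclusion $(M^s_D)_{\text{lf}} \subseteq V_s$ by leveraging the compactness of $\Gamma$ together with the topological irreducibility statements already proved.

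The first step is a general reduction. Because $\Gamma$ is compact, any open subgroup $H$ has finite index, so if $v \in (M^s_D)_{\text{lf}}$ lies in a finite-dimensional $H$-stable subspace $W$, then the sum $\sum_{\gamma \Gamma \in \Gamma/H} \gamma W$ is still finite-dimensional and is actually $\Gamma$-stable. Consequently $(M^s_D)_{\text{lf}}$ equals the union of all finite-dimensional $\Gamma$-stable subspaces of $M^s_D$, and it suffices to show every such subspace is contained in $V_s$.

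Fix a finite-dimensional $\Gamma$-stable subspace $W \subseteq M^s_D$. Since $M^s_D$ is a Hausdorff $\breve{K}$-Banach space, $W$ is automatically closed. For $s < 0$, Theorem \ref{top_finiteness_of_MsD_thm} asserts that $M^s_D$ itself is topologically irreducible, so $W$ is either $0$ or all of $M^s_D$; the latter is impossible because $\breve{K}[w_1,\ldots,w_{h-1}]\varphi_0^s$ is a dense infinite-dimensional subspace, so $W=0$ and $(M^s_D)_{\text{lf}} = 0 = V_s$. For $s \geq 0$, consider instead the image $\overline{W}$ of $W$ under the continuous surjection $M^s_D \twoheadrightarrow M^s_D/V_s$. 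This image is finite-dimensional, hence closed, and is $\Gamma$-stable; by the topological irreducibility of the quotient $M^s_D/V_s$ proved in Theorem \ref{top_finiteness_of_MsD_thm}, and since this quotient is again infinite-dimensional (it contains the linearly independent classes of $w^\alpha \varphi_0^s$ for $|\alpha|>s$), we conclude $\overline{W} = 0$, i.e.\ $W \subseteq V_s$. Taking the union over all such $W$ yields $(M^s_D)_{\text{lf}} \subseteq V_s$.

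The trailing assertions of the corollary are then immediate: $V_s = 0$ if $s<0$, while for $s \geq 0$ the subspace $V_s$ is finite-dimensional and, since every subspace of a finite-dimensional space is closed, its topological irreducibility from Theorem \ref{top_finiteness_of_MsD_thm} upgrades to irreducibility as a $\breve{K}$-linear $\Gamma$-representation. There is really no serious obstacle here: once Theorem \ref{top_finiteness_of_MsD_thm} is in hand, the corollary is a soft consequence of compactness and the correspondence between locally finite vectors and finite-dimensional $\Gamma$-stable subspaces.
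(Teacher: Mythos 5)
Your proof is correct and follows essentially the same route as the paper: both deduce the corollary from the irreducibility statements of Theorem \ref{top_finiteness_of_MsD_thm} by a dimension-count contradiction. The only difference is that you first use compactness of $\Gamma$ to replace the finite-dimensional $H$-stable subspace by a finite-dimensional $\Gamma$-stable one, which lets you invoke the \emph{statement} of the theorem directly, whereas the paper keeps the $H$-stable subspace (closed, hence $\mathfrak{g}$-stable) and appeals to the \emph{proof} of the theorem, which only uses the Lie algebra action and is therefore insensitive to passing to the open subgroup $H$; your reduction is a slightly cleaner way of handling the same point.
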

\begin{proof} Since $\Gamma$ is compact, any $v\in (M^{s}_{D})_{\textnormal{lf}}$ is contained in a finite dimensional $\Gamma$-subrepresentation of $M^{s}_{D}$. Now the corollary immediately follows from Theorem \ref{top_finiteness_of_MsD_thm} and the fact that $M^{s}_{D}$ is not a finite dimensional $\breve{K}$-vector space.
\end{proof}
\begin{remark}\label{highest_wt_of_Vs} For $s\geq 0$, the finite-dimensional $\Gamma$-representation $V_{s}$ is also a $\mathfrak{gl}(K_{h})$-module. Let $\mathfrak{t}\subset\mathfrak{sl}_{h}(K_{h})$ be the Cartan subalgebra of $\mathfrak{sl}_{h}(K_{h})$ consisting of diagonal matrices, and let $\lbrace\varepsilon_{1},\ldots,\varepsilon_{h-1}\rbrace$ be the basis of the root system $(\mathfrak{sl}_{h}(K_{h}),\mathfrak{t})$ given by $\varepsilon_{i}(\text{diag}(t_{0},\ldots,t_{h-1})):=t_{i-1}-t_{i}$. Define the fundamental dominant weight $\chi_{0}:=\frac{1}{h}\sum_{i=1}^{h-1}(h-i)\varepsilon_{i}\in\mathfrak{t}^{*}$. Then, by the same proof as in \cite{kohliwamo}, Proposition 4.3, it follows that $V_{s}$ is an irreducible $\mathfrak{sl}_{h}(K_{h})$-representation of highest weight $s\chi_{0}$. Although this is stronger than saying that $V_{s}$ is an irreducible $\Gamma$-representation, our result (Theorem \ref{top_finiteness_of_MsD_thm}) also gives information about the $\Gamma$-representation $M^{s}_{D}$ when $s<0$, and about the quotient $M^{s}_{D}/V_{s}$ when $s\geq 0$.
\end{remark}
\indent  The Corollary \ref{lf0} leads us to calculate locally finite vectors in the global sections $M^{s}_{0}$ over $X^{\text{rig}}_{0}$. Recall from \cite{bgr}, (9.3.4), Example 3, that the rigid analytic projective space $\mathbb{P}^{h-1}_{\breve{K}}$ has a finite admissible covering by the $(h-1)$-dimensional closed unit polydiscs $V_{i}:=\text{Sp}(\breve{K}\langle\frac{\varphi_{0}}{\varphi_{i}},\ldots,\frac{\varphi_{h-1}}{\varphi_{i}}\rangle)$, $0\leq i\leq h-1$. If $V_{ij}:=\text{Sp}(\breve{K}\langle\frac{\varphi_{0}}{\varphi_{i}},\ldots,\frac{\varphi_{h-1}}{\varphi_{i}},(\frac{\varphi_{j}}{\varphi_{i}})^{-1}\rangle)$ for $0\leq i,j\leq h-1$, then gluing the $V_{i}$'s along the identification $V_{ij}\cong V_{ji}$ of affinoid subdomains via 
$\breve{K}\langle\frac{\varphi_{0}}{\varphi_{i}},\ldots,\frac{\varphi_{h-1}}{\varphi_{i}},(\frac{\varphi_{j}}{\varphi_{i}})^{-1}\rangle=\breve{K}\langle\frac{\varphi_{0}}{\varphi_{j}},\ldots,\frac{\varphi_{h-1}}{\varphi_{j}},(\frac{\varphi_{i}}{\varphi_{j}})^{-1}\rangle$ gives the rigid analytic projective space $\mathbb{P}^{h-1}_{\breve{K}}$. The affinoid covering $\lbrace V_{i}\rbrace_{0\leq i\leq h-1}$ allows us to describe the construction of the line bundles $\mathcal{O}_{\mathbb{P}^{h-1}_{\breve{K}}}(s)$ on the rigid analytic projective space in a way analogous to the classical construction. For $s\geq 0$, define its sections over the affinoid space $V_{i}$; $\mathcal{O}_{\mathbb{P}^{h-1}_{\breve{K}}}(s)(V_{i}):=\breve{K}\Big\langle\frac{\varphi_{0}}{\varphi_{i}},\ldots,\frac{\varphi_{h-1}}{\varphi_{i}}\Big\rangle\varphi_{i}^{s}$; to be a free module of rank 1 generated by $\varphi_{i}^{s}$ over $\mathcal{O}_{\mathbb{P}^{h-1}_{\breve{K}}}(V_{i})=\breve{K}\langle\frac{\varphi_{0}}{\varphi_{i}},\ldots,\frac{\varphi_{h-1}}{\varphi_{i}}\rangle$, and the transition functions $\psi_{ij}:V_{ij}\iso V_{ji}$ induced by the homomorphisms of affinoid $\breve{K}$-algebras \begin{equation*}
\breve{K}\Big\langle\frac{\varphi_{0}}{\varphi_{j}},\ldots,\frac{\varphi_{h-1}}{\varphi_{j}},\Big(\frac{\varphi_{i}}{\varphi_{j}}\Big)^{-1}\Big\rangle\varphi_{j}^{s}\xrightarrow{\textnormal{multiply by}\hspace{.1cm}\frac{\varphi_{i}^{s}}{\varphi_{j}^{s}}}\breve{K}\Big\langle\frac{\varphi_{0}}{\varphi_{i}},\ldots,\frac{\varphi_{h-1}}{\varphi_{i}},\Big(\frac{\varphi_{j}}{\varphi_{i}}\Big)^{-1}\Big\rangle\varphi_{i}^{s}
\end{equation*} for all $0\leq i,j\leq h-1$. The above datum gives rise to a locally free $\mathcal{O}_{\mathbb{P}^{h-1}_{\breve{k}}}$-module $\mathcal{O}_{\mathbb{P}^{h-1}_{\breve{K}}}(s)$ of rank 1. For $s<0$, $\mathcal{O}_{\mathbb{P}_{\breve{K}}^{h-1}}(s)$
turns out to be the $\mathcal{O}_{\mathbb{P}^{h-1}_{\breve{k}}}$-linear dual of $\mathcal{O}_{\mathbb{P}_{\breve{K}}^{h-1}}(-s)$. It then follows easily from the above description that the global sections of $\mathcal{O}_{\mathbb{P}_{\breve{K}}^{h-1}}(s)$ are the $\breve{K}$-vector space of homogeneous polynomials of degree $s$ in the variables $\varphi_{i}$'s if $s\geq 0$, and are 0 otherwise. The line bundles $\mathcal{O}_{\mathbb{P}_{\breve{K}}^{h-1}}(s)$ carry a canonical action of $\Gamma$ induced by its action on the projective space $\mathbb{P}_{\breve{K}}^{h-1}$. \\
\indent  Now for any $\mathcal{O}_{X^{\textnormal{rig}}_{0}}$-module $\mathcal{F}$ and $\mathcal{O}_{\mathbb{P}^{h-1}_{\breve{K}}}-$module $\mathcal{G}$, there is a canonical bijection $\textnormal{Hom}_{\mathcal{O}_{X^{\textnormal{rig}}_{0}}-\textnormal{mod}}(\Phi^{*}\mathcal{G},\mathcal{F})\iso\textnormal{Hom}_{\mathcal{O}_{\mathbb{P}^{h-1}_{\breve{K}}}-\textnormal{mod}}(\mathcal{G},\Phi_{*}\mathcal{F})$, where $\Phi:X^{\textnormal{rig}}_{0}\longrightarrow\mathbb{P}^{h-1}_{\breve{K}}$ is the Gross-Hopkins' period morphism. The morphism id$_{\Phi^{*}\mathcal{G}}$ corresponds to the adjunction morphism $\textnormal{ad}:\mathcal{G}\longrightarrow\Phi_{*}\Phi^{*}\mathcal{G}$. Let $\mathcal{G}=\mathcal{O}_{\mathbb{P}^{h-1}_{\breve{K}}}(s)$ with $s\in\mathbb{Z}$. The period morphism $\Phi$ is constructed in such a way that $\Phi^{*}\mathcal{O}_{\mathbb{P}^{h-1}_{\breve{K}}}(s)\cong(\mathcal{M}^{s}_{0})^{\textnormal{rig}}$ (cf. Remark \ref{our_line_bundle_is_a_pullback_of_O(s)*}). This gives us a map $\textnormal{ad}:\mathcal{O}_{\mathbb{P}^{h-1}_{\breve{K}}}(s)\longrightarrow \Phi_{*}(\mathcal{M}^{s}_{0})^{\textnormal{rig}}$ of $\mathcal{O}_{\mathbb{P}^{h-1}_{\breve{K}}}$-modules. Taking global sections, we get a homomorphism of $\Gamma$-representations \begin{align*}
\textnormal{ad}_{\mathbb{P}^{h-1}_{\breve{K}}}:\mathcal{O}_{\mathbb{P}^{h-1}_{\breve{K}}}(s)(\mathbb{P}^{h-1}_{\breve{K}})\longrightarrow \Phi_{*}(\mathcal{M}^{s}_{0})^{\textnormal{rig}}(\mathbb{P}^{h-1}_{\breve{K}})&=(\mathcal{M}^{s}_{0})^{\textnormal{rig}}(\Phi^{-1}(\mathbb{P}^{h-1}_{\breve{K}}))\\&=(\mathcal{M}^{s}_{0})^{\textnormal{rig}}(X^{\textnormal{rig}}_{0})\\&=M^{s}_{0}.
\end{align*} 
\begin{lemma} The map $
\textnormal{ad}_{\mathbb{P}^{h-1}_{\breve{K}}}$ is injective.
\end{lemma}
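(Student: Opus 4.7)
The plan is to reduce the statement to a polynomial identity on the Gross-Hopkins fundamental domain $D$, where the line bundle $(\mathcal{M}^{s}_{0})^{\textnormal{rig}}$ is explicitly trivialized. For $s<0$ the source $\mathcal{O}_{\mathbb{P}^{h-1}_{\breve{K}}}(s)(\mathbb{P}^{h-1}_{\breve{K}})$ is zero and the map is vacuously injective, so one may assume $s\geq 0$. The standard affinoid covering $\{V_{i}\}_{0\leq i\leq h-1}$ recalled before the lemma identifies $\mathcal{O}_{\mathbb{P}^{h-1}_{\breve{K}}}(s)(\mathbb{P}^{h-1}_{\breve{K}})$ with the $\breve{K}$-space of homogeneous polynomials of degree $s$ in projective coordinates $X_{0},\ldots,X_{h-1}$. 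By Remark \ref{our_line_bundle_is_a_pullback_of_O(s)*} the isomorphism $(\mathcal{M}^{1}_{0})^{\textnormal{rig}}\simeq\Phi^{*}\mathcal{O}_{\mathbb{P}^{h-1}_{\breve{K}}}(1)$ sends $X_{i}$ to $v_{i}$, so $\textnormal{ad}_{\mathbb{P}^{h-1}_{\breve{K}}}$ is the evaluation map $F(X_{0},\ldots,X_{h-1})\longmapsto F(v_{0},\ldots,v_{h-1})\in M^{s}_{0}$, where, since $(\mathcal{M}^{s}_{0})^{\textnormal{rig}}\simeq((\mathcal{M}^{1}_{0})^{\textnormal{rig}})^{\otimes s}$, the products $v_{0}^{\alpha_{0}}\cdots v_{h-1}^{\alpha_{h-1}}$ with $|\alpha|=s$ are interpreted in the $s$-fold tensor power $(M^{1}_{0})^{\otimes s}=M^{s}_{0}$.

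Assume $F(v_{0},\ldots,v_{h-1})=0$ in $M^{s}_{0}$. The next step is to restrict to $D$ and use the trivialization $M^{s}_{D}\simeq\mathcal{O}_{X^{\textnormal{rig}}_{0}}(D)\,\varphi_{0}^{s}$ from Remark \ref{our_line_bundle_is_a_pullback_of_O(s)}. Both $M^{s}_{0}$ and $M^{s}_{D}$ are free of rank one over $R^{\textnormal{rig}}_{0}$ and $\mathcal{O}_{X^{\textnormal{rig}}_{0}}(D)$ respectively, so the injectivity of the restriction $M^{s}_{0}\longrightarrow M^{s}_{D}$ reduces to that of the ring map $R^{\textnormal{rig}}_{0}\hookrightarrow\mathcal{O}_{X^{\textnormal{rig}}_{0}}(D)$, which is clear because $R^{\textnormal{rig}}_{0}$ is an integral domain and a non-zero analytic function on the connected open polydisc $X^{\textnormal{rig}}_{0}$ cannot vanish identically on the non-empty affinoid subdomain $D$. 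The relations $\varphi_{0}v_{i}=\varphi_{i}v_{0}$ together with $\varphi_{0}\in\mathcal{O}_{X^{\textnormal{rig}}_{0}}(D)^{\times}$ yield $v_{i}|_{D}=w_{i}\varphi_{0}$ in $M^{1}_{D}$ for all $0\leq i\leq h-1$, where $w_{0}:=1$ and $\varphi_{0}$ now denotes the chosen generator of $M^{1}_{D}$. Writing $F=\sum_{|\alpha|=s}c_{\alpha}X^{\alpha}$ and passing to the $s$-fold tensor power gives
\begin{equation*}
F(v_{0},\ldots,v_{h-1})|_{D}=F(1,w_{1},\ldots,w_{h-1})\,\varphi_{0}^{s}.
\end{equation*}

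Consequently $F(1,w_{1},\ldots,w_{h-1})=0$ in $\mathcal{O}_{X^{\textnormal{rig}}_{0}}(D)$. The explicit description (\ref{expression_of_O(D)}) presents $\mathcal{O}_{X^{\textnormal{rig}}_{0}}(D)$ as a ring of power series in $w_{1},\ldots,w_{h-1}$ with unique coefficient expansion, so the polynomial subring $\breve{K}[w_{1},\ldots,w_{h-1}]$ embeds into $\mathcal{O}_{X^{\textnormal{rig}}_{0}}(D)$. Hence $F(1,w_{1},\ldots,w_{h-1})$ must vanish as a polynomial, and by homogeneity of $F$ the reconstruction identity $F(X_{0},\ldots,X_{h-1})=X_{0}^{s}\,F(1,X_{1}/X_{0},\ldots,X_{h-1}/X_{0})$ forces $F=0$. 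The argument is essentially a dehomogenization on $D$ and no step poses a serious obstacle; the only point requiring care is the correct identification of the adjunction map in terms of the global sections $v_{i}$, which is supplied directly by Remark \ref{our_line_bundle_is_a_pullback_of_O(s)*}.
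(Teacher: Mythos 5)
Your proof is correct and follows essentially the same route as the paper: both arguments restrict to the fundamental domain $D$, use the trivialization $M^{s}_{D}\simeq\mathcal{O}_{X^{\textnormal{rig}}_{0}}(D)\varphi_{0}^{s}$ coming from $\Phi|_{D}$ being an isomorphism, and reduce injectivity to the fact that a homogeneous polynomial vanishing after dehomogenization in $w_{1},\ldots,w_{h-1}$ must be zero (the paper packages this as the statement that global sections of $\mathcal{O}_{\mathbb{P}^{h-1}_{\breve{K}}}(s)$ restrict bijectively onto $V_{s}\subset\mathcal{O}_{\mathbb{P}^{h-1}_{\breve{K}}}(s)(\Phi(D))$, together with a commutative square of restriction maps). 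Your explicit verification of the injectivity of $M^{s}_{0}\to M^{s}_{D}$ is harmless but not actually needed for the direction of the argument you use.
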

\begin{proof}
The period morphism $\Phi$, when restricted to the affinoid subdomain $D$, is an isomorphism. Thus $(\mathcal{M}^{s}_{0})^{\textnormal{rig}}(D)\cong\Phi^{*}\mathcal{O}_{\mathbb{P}^{h-1}_{\breve{K}}}(s)(D)\cong\mathcal{O}_{\mathbb{P}^{h-1}_{\breve{K}}}(s)(\Phi(D))$. Also we have $(\mathcal{M}^{s}_{0})^{\textnormal{rig}}(D)\cong\mathcal{O}_{X^{\textnormal{rig}}_{0}}(D)\varphi^{s}_{0}\cong\mathcal{O}_{\mathbb{P}^{h-1}_{\breve{K}}}(\Phi(D))\varphi^{s}_{0}$. As a result, it follows from the preceding discussion on the line bundles that $\mathcal{O}_{\mathbb{P}^{h-1}_{\breve{K}}}(s)(\mathbb{P}^{h-1}_{\breve{K}})$ maps bijectively onto $V_{s}\subset\mathcal{O}_{\mathbb{P}^{h-1}_{\breve{K}}}(s)(\Phi(D))$ under the restriction map. The lemma now follows from the following commutative diagram with vertical restriction maps.
$$
\xymatrixcolsep{9pc}
\xymatrixrowsep{3pc}
\xymatrix{
\mathcal{O}_{\mathbb{P}^{h-1}_{\breve{K}}}(s)(\mathbb{P}^{h-1}_{\breve{K}}) \ar[r]^{\textnormal{ad}_{\mathbb{P}^{h-1}_{\breve{K}}}} \ar@{^{(}->}[d]
 & \Phi_{*}(\mathcal{M}^{s}_{0})^{\textnormal{rig}}(\mathbb{P}^{h-1}_{\breve{K}})=M^{s}_{0} \ar[d]\\
\mathcal{O}_{\mathbb{P}^{h-1}_{\breve{K}}}(s)(\Phi(D)) \ar[r]_{\textnormal{ad}_{\Phi(D)}}^{\cong}  & \Phi_{*}(\mathcal{M}^{s}_{0})^{\textnormal{rig}}(\Phi(D))=M^{s}_{D}
}
$$
\end{proof}
\begin{corollary}\label{lf1}
For all $s\in\mathbb{Z}$, we have an isomorphism of $\Gamma$-representations \begin{equation*}
(M^{s}_{0})_{\textnormal{lf}}=\mathcal{O}_{\mathbb{P}^{h-1}_{\breve{K}}}(s)(\mathbb{P}^{h-1}_{\breve{K}})\cong V_{s}.
\end{equation*} Thus, $(M^{s}_{0})_{\textnormal{lf}}$ is zero if $s<0$ and is a finite dimensional irreducible representation of $\Gamma$ if $s\geq 0$.
\end{corollary}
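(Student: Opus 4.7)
My plan is to sandwich $(M^{s}_{0})_{\textnormal{lf}}$ between $\mathcal{O}_{\mathbb{P}^{h-1}_{\breve{K}}}(s)(\mathbb{P}^{h-1}_{\breve{K}})$ and $(M^{s}_{D})_{\textnormal{lf}}$, and then invoke the previously established identification $(M^{s}_{D})_{\textnormal{lf}} = V_{s}$ from Corollary \ref{lf0}, together with the lemma preceding the corollary which identifies the image of $\mathcal{O}_{\mathbb{P}^{h-1}_{\breve{K}}}(s)(\mathbb{P}^{h-1}_{\breve{K}})$ inside $M^{s}_{D}$ with $V_{s}$. Since all the inequalities will be forced to be equalities by dimension, the result will follow without any new computation.

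More precisely, the first step is to observe that the continuous $\breve{K}$-linear embedding $R^{\textnormal{rig}}_{0}\hookrightarrow\mathcal{O}_{X^{\textnormal{rig}}_{0}}(D)$ obtained by restriction, combined with the freeness of $\textnormal{Lie}(\mathbb{H}^{(0)})^{\otimes s}$ over $R_{0}$ and the isomorphism (\ref{G_eqv_iso_eqn1}), yields a $\Gamma$-equivariant injection $M^{s}_{0}\hookrightarrow M^{s}_{D}$. Because locally finite vectors are functorial under $\Gamma$-equivariant linear maps, this produces an inclusion
\begin{equation*}
(M^{s}_{0})_{\textnormal{lf}} \,\hookrightarrow\, (M^{s}_{D})_{\textnormal{lf}} \,=\, V_{s},
\end{equation*}
where the equality is Corollary \ref{lf0}. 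The second step is to construct the map $\textnormal{ad}_{\mathbb{P}^{h-1}_{\breve{K}}}:\mathcal{O}_{\mathbb{P}^{h-1}_{\breve{K}}}(s)(\mathbb{P}^{h-1}_{\breve{K}})\to M^{s}_{0}$ already introduced before the statement, which is $\Gamma$-equivariant by functoriality of $\Phi$ and injective by the preceding lemma. Its target is automatically contained in $(M^{s}_{0})_{\textnormal{lf}}$ since $\mathcal{O}_{\mathbb{P}^{h-1}_{\breve{K}}}(s)(\mathbb{P}^{h-1}_{\breve{K}})$ is finite dimensional over $\breve{K}$ and $\Gamma$-stable.

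Combining both steps gives the chain
\begin{equation*}
\mathcal{O}_{\mathbb{P}^{h-1}_{\breve{K}}}(s)(\mathbb{P}^{h-1}_{\breve{K}}) \,\hookrightarrow\, (M^{s}_{0})_{\textnormal{lf}} \,\hookrightarrow\, (M^{s}_{D})_{\textnormal{lf}} = V_{s},
\end{equation*}
and the commutative square in the preceding lemma shows that the composite of these two injections is precisely the identification $\mathcal{O}_{\mathbb{P}^{h-1}_{\breve{K}}}(s)(\mathbb{P}^{h-1}_{\breve{K}})\simeq V_{s}$ coming from the restriction map to $\Phi(D)$. Consequently the composite is a $\breve{K}$-linear isomorphism, which forces both intermediate injections to be isomorphisms. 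This yields the desired equality $(M^{s}_{0})_{\textnormal{lf}} = \mathcal{O}_{\mathbb{P}^{h-1}_{\breve{K}}}(s)(\mathbb{P}^{h-1}_{\breve{K}}) \simeq V_{s}$ of $\Gamma$-representations. The final dichotomy then follows at once: for $s<0$ the projective space has no global sections of $\mathcal{O}(s)$, while for $s\geq 0$ the representation $V_{s}$ is finite dimensional and irreducible by Theorem \ref{top_finiteness_of_MsD_thm}. The only subtle point in the whole argument is making sure that the square in the preceding lemma really identifies the composite with the restriction map, but this is immediate from the construction of $\textnormal{ad}$ via $\Phi^{*}\mathcal{O}_{\mathbb{P}^{h-1}_{\breve{K}}}(s)\simeq(\mathcal{M}^{s}_{0})^{\textnormal{rig}}$ (Remark \ref{our_line_bundle_is_a_pullback_of_O(s)*}); no further input is needed.
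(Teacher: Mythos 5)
Your proposal is correct and follows essentially the same route as the paper: the paper likewise sandwiches $(M^{s}_{0})_{\textnormal{lf}}$ via the $\Gamma$-equivariant inclusion $M^{s}_{0}\hookrightarrow M^{s}_{D}$ together with the injective adjunction map $\textnormal{ad}_{\mathbb{P}^{h-1}_{\breve{K}}}$, obtaining $\mathcal{O}_{\mathbb{P}^{h-1}_{\breve{K}}}(s)(\mathbb{P}^{h-1}_{\breve{K}})\subseteq(M^{s}_{0})_{\textnormal{lf}}\subseteq(M^{s}_{D})_{\textnormal{lf}}=V_{s}$ and concluding by the identification of the outer two spaces through the restriction to $\Phi(D)$. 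No gaps.
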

\begin{proof}
The inclusion $R^{\text{rig}}_{0}\hookrightarrow\mathcal{O}_{X^{\textnormal{rig}}_{0}}(D)$ gives rise to a $\Gamma$-equivariant inclusion $M^{s}_{0}\hookrightarrow M^{s}_{D}\cong\mathcal{O}_{X^{\textnormal{rig}}_{0}}(D)\otimes_{R_{0}}\text{Lie}(\mathbb{H}^{(0)})^{\otimes s}$ using (\ref{G_eqv_iso_eqn1}) and the freeness of $\text{Lie}(\mathbb{H}^{(0)})^{\otimes s}$ as an $R_{0}$-module. As $\mathcal{O}_{\mathbb{P}^{h-1}_{\breve{K}}}(s)(\mathbb{P}^{h-1}_{\breve{K}})$ is a finite dimensional $\breve{K}$-vector space, we have for $s\geq 0$, $\mathcal{O}_{\mathbb{P}^{h-1}_{\breve{K}}}(s)(\mathbb{P}^{h-1}_{\breve{K}})\subseteq (M^{s}_{0})_{\textnormal{lf}}\subseteq (M^{s}_{D})_{\textnormal{lf}}=V_{s}$, where the first and the last $\breve{K}$-vector spaces are isomorphic as mentioned in the proof of the previous lemma. 
\end{proof} 
\begin{remark}\label{Vs_is_sym-s-part_of_V1} From now on, we identify the subrepresentation $\mathcal{O}_{\mathbb{P}^{h-1}_{\breve{K}}}(s)(\mathbb{P}^{h-1}_{\breve{K}})$ of $M^{s}_{0}$ with $V_{s}$. For $s=1$, the $\Gamma$-locally finite subrepresentation $V_{1}$ of $M^{1}_{0}$ is the representation $\mathbb{V}$ mentioned in the construction of the period morphism $\Phi$ (cf. the paragraph after Corollary \ref{Lie(Ems)_generically_flat}), and thus is isomorphic to the $h$-dimensional $\Gamma$-representation $B_{h}\otimes_{K_{h}}\breve{K}$. Since $\mathcal{O}_{\mathbb{P}^{h-1}_{\breve{K}}}(s)(\mathbb{P}^{h-1}_{\breve{K}})$ is same as the $s$-th symmetric power $\text{Sym}^{s}(\mathcal{O}_{\mathbb{P}^{h-1}_{\breve{K}}}(1)(\mathbb{P}^{h-1}_{\breve{K}}))$ of $\mathcal{O}_{\mathbb{P}^{h-1}_{\breve{K}}}(1)(\mathbb{P}^{h-1}_{\breve{K}})$, we obtain the isomorphism, \begin{equation*}
(M^{s}_{0})_{\textnormal{lf}}= V_{s}\cong \textnormal{Sym}^{s}(B_{h}\otimes_{K_{h}}\breve{K})
\end{equation*} of $\Gamma$-representations for all $s\geq 0$. 
\end{remark}
\subsection{Locally finite vectors in the $\Gamma$-representations $M^{s}_{m}$ with $m>0$}
We compute the locally finite vectors in two parts: $s\leq0$ and $s>0$. The idea here is to use the commuting actions of $\Gamma$ and the finite group $G_{0}/G_{m}$ on $M^{s}_{m}$.
\begin{center}
\textbf{Part I : $s\leq0$}
\end{center}
\begin{lemma}\label{finiteextlemma} Let $G$ be a finite group acting on an integral domain $R$ by ring automorphisms such that the subring of $G$-invariants $R^{G}$ is a perfect field $F$. Then $R$ is a field and the extension $R/F$ is finite. 
\end{lemma}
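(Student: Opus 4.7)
The plan is to produce for each element of $R$ a monic annihilating polynomial with coefficients in $F$, and then extract both conclusions from elementary algebraic considerations plus Artin's theorem. Concretely, for any $r \in R$ I would form the orbit polynomial
\[
P_{r}(X) := \prod_{g \in G}(X - g(r)) \in R[X].
\]
Its coefficients are the elementary symmetric functions of the finite orbit $\{g(r)\}_{g \in G}$, hence $G$-invariant, hence lie in $R^{G} = F$. Thus $P_{r} \in F[X]$ and $P_{r}(r) = 0$, so every element of $R$ is algebraic over $F$ of degree at most $|G|$.

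Next I would show that $R$ is a field. Fix a nonzero $r \in R$ and let $m_{r} \in F[X]$ be its minimal polynomial, which is irreducible and divides $P_{r}$. Because $R$ is an integral domain and $r \neq 0$, one cannot have $m_{r} = X$, so $m_{r}(0)$ is a nonzero element of $F$. Writing $m_{r}(X) = X^{d} + a_{d-1} X^{d-1} + \cdots + a_{0}$ with $a_{0} \in F^{\times}$ and evaluating at $r$ exhibits
\[
r^{-1} = -a_{0}^{-1}\bigl(r^{d-1} + a_{d-1} r^{d-2} + \cdots + a_{1}\bigr) \in F[r] \subseteq R,
\]
so every nonzero element of $R$ is a unit and $R$ is a field.

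To conclude that $[R:F]$ is finite, I would invoke Artin's theorem on fixed fields of finite groups of field automorphisms: letting $N \trianglelefteq G$ be the kernel of the induced map $G \to \textnormal{Aut}(R)$, the quotient $G/N$ acts faithfully on the field $R$ with fixed field still equal to $F$, so Artin's theorem gives a Galois extension $R/F$ of degree $|G/N|$; in particular $[R:F] \leq |G| < \infty$. The only delicate point in the argument is the passage from ``every element of $R$ is algebraic over $F$'' to ``$R$ is a field'', which depends on the integral-domain hypothesis to ensure $m_{r}(0) \neq 0$; the perfectness of $F$ plays no role in either conclusion of this lemma and is presumably reserved for the next step in the paper, where separability of $R/F$ will matter.
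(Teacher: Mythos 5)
Your argument is correct. The first two steps coincide with the paper's: you form the orbit polynomial $\prod_{g\in G}(X-g(r))$, observe its coefficients are $G$-invariant hence in $F$, and use the integral-domain hypothesis to pass from ``integral over $F$'' to ``$R$ is a field'' (the paper phrases this more tersely, but it is the same mechanism; your detour through the minimal polynomial is fine, since $F[r]\subseteq R$ being a domain makes $m_{r}$ irreducible). Where you genuinely diverge is the finiteness step: the paper cites Lang, Chapter VI, Lemma 1.7, i.e.\ the statement that a \emph{separable} algebraic extension in which every element has degree at most $|G|$ is finite of degree at most $|G|$ --- and this is exactly where the perfectness of $F$ enters, to guarantee separability (without it, bounded elementwise degree does not imply finiteness, as $\mathbb{F}_{p}(x,y)\supset\mathbb{F}_{p}(x^{p},y^{p})$ shows). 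You instead pass to the faithful quotient $G/N$ acting on the field $R$ with fixed field $F$ and invoke Artin's theorem, which directly yields that $R/F$ is finite (indeed Galois) of degree $|G/N|\leq |G|$, with no separability input. Both routes are valid; yours buys a slightly stronger conclusion and shows, as you correctly note, that the perfectness hypothesis is superfluous for the lemma as stated, while the paper's route is a one-line citation tailored to its application, where $F$ has characteristic zero anyway.
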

\begin{proof} 
If $\alpha\in R$ then $\prod_{\sigma\in G}(t-\sigma(\alpha))$ is a monic polynomial of degree $|G|$ with coefficients in $R^{G}=F$, and has $\alpha$ as a root. This implies that every nonzero $\alpha$ has a unique inverse, since $R$ is an integral domain. The second assertion now follows from \cite{langalgebra}, Chapter VI, Lemma 1.7.   
\end{proof}

\indent  Let $\breve{K}_{m}$ denote the $m$-th Lubin-Tate extension of $\breve{K}$. This is a finite Galois extension of $\breve{K}$ obtained by adjoining $\varpi^{m}$-torsion points of any Lubin-Tate formal $\mathfrak{o}$-module over $\mathfrak{o}$ to it. It is a non-trivial result of M. Strauch (cf. \cite{strgeo}, Corollary 3.4 (ii)) that $\breve{K}_{m}\subset R^{\textnormal{rig}}_{m}$. In fact, $\breve{K}_{m}$ is stable under the actions of $G_{0}/G_{m}$ and $\Gamma$ on $ R^{\textnormal{rig}}_{m}$. For $g\in G_{0}/G_{m}$, $\gamma\in\Gamma$ and $\alpha\in\breve{K}_{m}$, these actions are given by $g(\alpha)=\textnormal{det}(g)^{-1}(\alpha)$ and $\gamma(\alpha)=\textnormal{Nrd}(\gamma)(\alpha)$ viewing $\breve{K}$ as a left $\mathfrak{o}^{\times}$-module via the map $\mathfrak{o}^{\times}\twoheadrightarrow(\mathfrak{o}/\varpi^{m}\mathfrak{o})^{\times}\cong\text{Gal}(\breve{K}_{m}/\breve{K})$ (cf. \cite{strgeo}, Theorem 4.4).
\begin{theorem}\label{lfinRm} For all $m\geq0$, $(M^{0}_{m})_{\textnormal{lf}}=(R^{\textnormal{rig}}_{m})_{\textnormal{lf}}=\breve{K}_{m}$.  
\end{theorem}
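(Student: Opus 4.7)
My approach splits the statement into the two inclusions. The equality $(M^{0}_{m})_{\textnormal{lf}} = (R^{\textnormal{rig}}_{m})_{\textnormal{lf}}$ is built into the conventions, since $\textnormal{Lie}(\mathbb{H}^{(m)})^{\otimes 0} = R_{m}$ gives $M^{0}_{m} \simeq R^{\textnormal{rig}}_{m}$ via (\ref{G_eqv_iso_eqn1}). The inclusion $\breve{K}_{m} \subseteq (R^{\textnormal{rig}}_{m})_{\textnormal{lf}}$ I would read off from Remark \ref{mLText}: Strauch's theorem exhibits $\breve{K}_{m}$ as a finite-dimensional, $\Gamma$-stable $\breve{K}$-subspace of $R^{\textnormal{rig}}_{m}$, so every vector in it is tautologically locally finite.

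For the reverse inclusion, set $V := (R^{\textnormal{rig}}_{m})_{\textnormal{lf}}$. The first step is to verify that $V$ is more than a $\breve{K}$-subspace: it is a $\breve{K}$-subalgebra of $R^{\textnormal{rig}}_{m}$ and is stable under the commuting $G_{0}/G_{m}$-action. The subalgebra property is routine: the intersection of two open subgroups of $\Gamma$ is open, and if $f_{1},f_{2}$ lie in finite-dimensional stable subspaces $W_{1},W_{2}$, then $f_{1}f_{2}$ lies in the finite-dimensional $H_{1}\cap H_{2}$-stable subspace $\textnormal{span}_{\breve{K}}(W_{1}\cdot W_{2})$. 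The $G_{0}$-stability uses only that the $\Gamma$- and $G_{0}$-actions commute (cf. Remark \ref{Gamma_and_G0_actions_commute}): if $W$ is finite-dimensional and $H$-stable and $g \in G_{0}$, then $g(W)$ is again finite-dimensional and $H$-stable.

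The decisive step is to show $V^{G_{0}/G_{m}} = \breve{K}$. Since $R^{\textnormal{rig}}_{m}\big|R^{\textnormal{rig}}_{0}$ is finite Galois with group $G_{0}/G_{m}$, one has $V^{G_{0}/G_{m}} = V \cap R^{\textnormal{rig}}_{0}$. For any $f$ in this intersection, if $f \in W$ with $W$ a finite-dimensional $H$-stable subspace of $R^{\textnormal{rig}}_{m}$, then $W \cap R^{\textnormal{rig}}_{0}$ is a finite-dimensional $H$-stable subspace of $R^{\textnormal{rig}}_{0}$ containing $f$, so Corollary \ref{lf1} applied with $s = 0$ (where $V_{0} = \breve{K}$) forces $f \in \breve{K}$. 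Applying Lemma \ref{finiteextlemma} to the finite group $G_{0}/G_{m}$ acting on the integral domain $V$, with invariants the perfect field $\breve{K}$, I conclude that $V$ itself is a finite field extension of $\breve{K}$ sitting inside $R^{\textnormal{rig}}_{m}$. Since every element of such a $V$ is algebraic over $\breve{K}$ and $\breve{K}_{m}$ is the full algebraic closure of $\breve{K}$ in $R^{\textnormal{rig}}_{m}$ by Strauch's analysis of the geometrically connected components, we obtain $V \subseteq \breve{K}_{m}$, completing the proof.

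The principal obstacle will be two intertwined technical points, both of which I would extract from \cite{strgeo}: first, that $R^{\textnormal{rig}}_{m}$ is an integral domain (so that the subring $V$ is an integral domain and Lemma \ref{finiteextlemma} is applicable), and second, that any element of $R^{\textnormal{rig}}_{m}$ algebraic over $\breve{K}$ already lies in $\breve{K}_{m}$ (so that the finite field extension $V \supseteq \breve{K}$ is trapped inside $\breve{K}_{m}$). Both reduce to the statement that $X^{\textnormal{rig}}_{m}$ is connected over $\breve{K}$ and becomes geometrically connected precisely after base change to $\breve{K}_{m}$, which is the content of Strauch's description of the connected components of the Lubin-Tate tower.
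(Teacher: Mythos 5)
Your proposal is correct and follows essentially the same route as the paper: the easy inclusion via the open kernel of $\Gamma\xrightarrow{\textnormal{Nrd}}(\mathfrak{o}/\varpi^{m}\mathfrak{o})^{\times}$, the observation that $(R^{\textnormal{rig}}_{m})_{\textnormal{lf}}$ is a $G_{0}/G_{m}$-stable subring with invariants $(R^{\textnormal{rig}}_{0})_{\textnormal{lf}}=\breve{K}$, Lemma \ref{finiteextlemma} to get a finite field extension, and the fact that $\breve{K}_{m}$ is separably (hence, in characteristic zero, algebraically) closed in $R^{\textnormal{rig}}_{m}$ to conclude. The two technical inputs you flag as needing extraction from Strauch are exactly the ones the paper draws from \cite{strgeo} and the proof of \cite{Koh11}, Theorem 1.4.
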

\begin{proof}
The kernel of the composition map $\Gamma\xrightarrow{\textnormal{Nrd}}\mathfrak{o}^{\times}\longrightarrow(\mathfrak{o}/\varpi^{m}\mathfrak{o})^{\times}$ is an open subgroup of $\Gamma$ which acts trivially on $\breve{K}_{m}$. Thus $\breve{K}_{m}\subseteq (R^{\textnormal{rig}}_{m})_{\textnormal{lf}}$. Notice that $(R^{\textnormal{rig}}_{m})_{\textnormal{lf}}$ is a subring of $R^{\textnormal{rig}}_{m}$ and is stable under the action of $G_{0}/G_{m}$. To see the stability, let $f\in (R^{\textnormal{rig}}_{m})_{\textnormal{lf}}$ and $V$ be a finite dimensional $H$-subrepresentation of $R^{\textnormal{rig}}_{m}$ containing $f$ for some open subgroup $H$ of $\Gamma$. Let $g\in G_{0}/G_{m}$. Then the $\breve{K}$-vector space $gV$ is $H$-stable since the actions of $H$ and $G_{0}/G_{m}$ on $V$ commute. Thus $gV$ is a finite dimensional $H$-subrepresentation of $R^{\textnormal{rig}}_{m}$ containing $gf$ implying that $gf$ is locally finite. Now it follows from \cite{Koh11} Theorem 1.4 (i) and Corollary \ref{lf1} that $(R^{\textnormal{rig}}_{m})_{\textnormal{lf}}^{G_{0}/G_{m}}=((R^{\textnormal{rig}}_{m})^{G_{0}/G_{m}})_{\textnormal{lf}}=(R^{\textnormal{rig}}_{0})_{\textnormal{lf}}=\breve{K}$. As $(R^{\textnormal{rig}}_{m})_{\textnormal{lf}}$ is an integral domain due to \cite{Koh11}, Theoem 1.2 (i) and $G_{0}/G_{m}$ is finite, $(R^{\textnormal{rig}}_{m})_{\textnormal{lf}}$ is a finite field extension of $\breve{K}$ by Lemma \ref{finiteextlemma}. So it is also finite over $\breve{K}_{m}$. However, Strauch's result that $X^{\text{rig}}_{m}$ is geometrically connected over $\breve{K}_{m}$ implies that $\breve{K}_{m}$ is separably closed in $R^{\textnormal{rig}}_{m}$ (cf. \cite{Koh11}, Theorem 1.4). Therefore $(R^{\textnormal{rig}}_{m})_{\textnormal{lf}}=\breve{K}_{m}$.	
\end{proof}
\begin{remark}\label{ginvinRm} By Theorem \ref{laKm}, we have a $\mathfrak{g}$-action on $R^{\textnormal{rig}}_{m}$. The subspace of $\mathfrak{g}$-invariants $(R^{\textnormal{rig}}_{m})^{\mathfrak{g}=0}$ forms a subring of $R^{\textnormal{rig}}_{m}$, and is stable under the action of $G_{0}/G_{m}$ because the $G_{0}/G_{m}$-action on $R^{\textnormal{rig}}_{m}$ is continuous and commutes with that of $\Gamma$. As said in the proof of Theorem \ref{lfinRm}, the kernel of the composition map $\Gamma\xrightarrow{\textnormal{Nrd}}\mathfrak{o}^{\times}\longrightarrow(\mathfrak{o}/\varpi^{m}\mathfrak{o})^{\times}$ is an open subgroup of $\Gamma$ which acts trivially on $\breve{K}_{m}$. Thus, $\breve{K}_{m}\subseteq (R^{\textnormal{rig}}_{m})^{\mathfrak{g}=0}$. Proceeding similarly as above, we have $((R^{\textnormal{rig}}_{m})^{\mathfrak{g}=0})^{G_{0}/G_{m}}=((R^{\textnormal{rig}}_{m})^{G_{0}/G_{m}})^{\mathfrak{g}=0}=(R^{\textnormal{rig}}_{0})^{\mathfrak{g}=0}=\breve{K}$ (cf. Lemma \ref{gninvO(D)}). Then by the same arguments as above, we get $(R^{\textnormal{rig}}_{m})^{\mathfrak{g}=0}=\breve{K}_{m}$.
\end{remark}
\indent  For all integers $s$, the $\Gamma$-equivariant isomorphism $M^{s}_{m}\cong R^{\text{rig}}_{m}\otimes_{R^{\text{rig}}_{0}}M^{s}_{0}$ (cf. proof of Theorem \ref{genlaKm}) and the freeness of the $R^{\text{rig}}_{0}$-module $M^{s}_{0}$ give rise to a $\Gamma$-equivariant inclusion $M^{s}_{0}\subset M^{s}_{m}$ of $\breve{K}$-vector spaces. Consequently, we have $(M^{s}_{0})_{\textnormal{lf}}\subseteq (M^{s}_{m})_{\textnormal{lf}}$. Using the above theorem, we see that $(M^{s}_{m})_{\textnormal{lf}}$ is a module over $(R^{\textnormal{rig}}_{m})_{\textnormal{lf}}=\breve{K}_{m}$, and thus we obtain a natural map \begin{equation*}
\breve{K}_{m}\otimes_{\breve{K}}(M^{s}_{0})_{\textnormal{lf}}\longrightarrow (M^{s}_{m})_{\textnormal{lf}}
\end{equation*} of $\breve{K}$-vector spaces. Our objective is to show that this map is an isomorphism of $\breve{K}[\Gamma]$-modules for all $s$. 
\begin{lemma}\label{lflemma} Suppose $V$ and $W$ are two representations of a topological group $G$ over a field $F$ such that one of them, say $W$, is finite dimensional. Consider the representation $V\otimes_{F}W$ with diagonal $G$-action. Then $(V\otimes_{F}W)_{\textnormal{lf}}=V_{\textnormal{lf}}\otimes_{F}W$.
\end{lemma}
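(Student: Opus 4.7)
The inclusion $V_{\mathrm{lf}}\otimes_{F}W\subseteq(V\otimes_{F}W)_{\mathrm{lf}}$ should be the easy half. Given $v\in V_{\mathrm{lf}}$, pick an open subgroup $H\subseteq G$ and a finite-dimensional $H$-stable subspace $V_{0}\subseteq V$ with $v\in V_{0}$. Since $W$ is itself a (finite-dimensional) $G$-representation, it is $H$-stable, hence $V_{0}\otimes_{F}W$ is a finite-dimensional $H$-stable subspace of $V\otimes_{F}W$ containing $v\otimes w$ for every $w\in W$. Because $(V\otimes_{F}W)_{\mathrm{lf}}$ is an $F$-subspace, this yields $V_{\mathrm{lf}}\otimes_{F}W\subseteq(V\otimes_{F}W)_{\mathrm{lf}}$ by linearity.

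For the reverse inclusion, the plan is to exploit the identification $V\otimes_{F}W\simeq\mathrm{Hom}_{F}(W^{*},V)$, valid because $W$ is finite dimensional, under which $v\otimes w$ corresponds to $(w^{*}\mapsto w^{*}(w)v)$. Endow $\mathrm{Hom}_{F}(W^{*},V)$ with the natural $G$-action $(g\cdot\varphi)(w^{*}):=g(\varphi(g^{-1}\cdot w^{*}))$; a direct check gives $g(\varphi(w^{*}))=(g\cdot\varphi)(g\cdot w^{*})$, and the identification with $V\otimes_{F}W$ becomes $G$-equivariant.

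Now take $x\in(V\otimes_{F}W)_{\mathrm{lf}}$ lying in a finite-dimensional $H$-stable subspace $U\subseteq V\otimes_{F}W$ for some open subgroup $H\subseteq G$. Under the identification above, view $U$ as a finite-dimensional $H$-stable subspace of $\mathrm{Hom}_{F}(W^{*},V)$ and define
\begin{equation*}
V':=\sum_{\varphi\in U}\varphi(W^{*})\subseteq V,
\end{equation*}
which is finite dimensional since $U$ and $W^{*}$ are. The identity $h(\varphi(w^{*}))=(h\cdot\varphi)(h\cdot w^{*})$ together with the $H$-stability of both $U$ and $W^{*}$ shows that $V'$ is $H$-stable, so $V'\subseteq V_{\mathrm{lf}}$. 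Writing $x=\sum_{i}v_{i}\otimes w_{i}$ with $\{w_{i}\}$ a basis of $W$ and evaluating the corresponding $\varphi_{x}\in U$ on the dual basis $\{w_{i}^{*}\}$ recovers $v_{i}=\varphi_{x}(w_{i}^{*})\in V'\subseteq V_{\mathrm{lf}}$. Hence $x\in V_{\mathrm{lf}}\otimes_{F}W$, completing the proof.

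I expect the only subtle point to be checking equivariance of the identification $V\otimes_{F}W\simeq\mathrm{Hom}_{F}(W^{*},V)$ and verifying that $V'$ is $H$-stable; the rest is formal. No continuity hypothesis is needed since the argument is purely algebraic, which matches the bare definition of locally finite vectors used here (cf.\ Remark \ref{emerton_defn}).
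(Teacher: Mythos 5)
Your proof is correct and follows essentially the same route as the paper's: both directions hinge on contracting against the dual basis of $W^{*}$ to extract the components $v_{i}$ of $x$ and showing they are locally finite. The paper phrases this via the $G$-equivariant contraction $V\otimes W\otimes W^{*}\longrightarrow V$ applied to the locally finite vectors $x\otimes f_{i}$, whereas you package it as the equivariant identification $V\otimes_{F}W\simeq\mathrm{Hom}_{F}(W^{*},V)$ together with the explicit finite-dimensional $H$-stable subspace $V'=\sum_{\varphi\in U}\varphi(W^{*})$; the two formulations are interchangeable and both of your verifications (equivariance of the identification and $H$-stability of $V'$) go through.
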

\begin{proof}
We omit the subscript $F$ in $\otimes_{F}$, as all the tensor products are over $F$. The inclusion $V_{\textnormal{lf}}\otimes W\subseteq (V\otimes W)_{\textnormal{lf}}$ is clear. Let $W^{*}$ be the $F$-linear dual of $W$ equipped with the contragredient $G$-action, i.e., $(gf)(w)=f(g^{-1}w)$ for all $g\in G, w\in W$ and $f\in W^{*}$. Choose an $F$-basis $\lbrace w_{1},\ldots ,w_{d}\rbrace$ of $W$, and let $\lbrace f_{1},\ldots ,f_{d}\rbrace$ be the dual basis of $W^{*}$ (i.e. $f_{i}(w_{j})=\delta_{ij}$). Then the natural evaluation map $W\otimes W^{*}\longrightarrow F$ $(w\otimes f\mapsto f(w))$ is $G$-equivariant for the diagonal $G$-action on the left and for the trivial $G$-action on the right. Tensoring both sides with $V$, we get a $G$-equivariant map $\phi:V\otimes W\otimes W^{*}\longrightarrow V$ for the diagonal $G$-action on the left, sending $v\otimes w\otimes f$ to $f(w)v$. Because of its $G$-equivariance, $\phi$ maps locally finite vectors to locally finite vectors. Now let $x\in (V\otimes W)_{\textnormal{lf}}$. Then $x$ can be uniquely written as $x=\sum_{i=1}^{d}x_{i}\otimes w_{i}$ for some $x_{1},\ldots ,x_{d}\in V$. Since $W^{*}$ is finite dimensional, $x\otimes f_{i}\in (V\otimes W)_{\textnormal{lf}}\otimes (W^{*})_{\textnormal{lf}}\subseteq (V\otimes W\otimes W^{*})_{\textnormal{lf}}$ for all $1\leq i\leq d$. Hence, $\phi(x\otimes f_{i})=x_{i}\in V_{\textnormal{lf}}$ for all $1\leq i\leq d$. Therefore, $x\in V_{\textnormal{lf}}\otimes W$.         	
\end{proof}
\begin{theorem}\label{lf2} For all $s<0$ and for all $m\geq 0$, $(M^{s}_{m})_{\textnormal{lf}}\cong\breve{K}_{m}\otimes_{\breve{K}}(M^{s}_{0})_{\textnormal{lf}}=0$.
\end{theorem}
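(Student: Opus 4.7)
The plan is to prove the vanishing $(M^{s}_{m})_{\textnormal{lf}}=0$ for $s<0$ and $m\geq 0$, since the claimed isomorphism with $\breve{K}_{m}\otimes_{\breve{K}}(M^{s}_{0})_{\textnormal{lf}}$ is then automatic, $(M^{s}_{0})_{\textnormal{lf}}$ being zero by Corollary~\ref{lf1}. The strategy is to wedge $M^{s}_{m}$ into a module on which generic flatness (Remark~\ref{Lie(Ems)_generically_flat}) allows us to compute locally finite vectors directly.

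First I would exploit the direction of the exact sequence (\ref{exact_seq_of_global_sections_of_eqv_bundles}): for $s<0$ it reads
\begin{equation*}
0\longrightarrow M^{s}_{m}\longrightarrow R^{\text{rig}}_{m}\otimes_{R_{m}}\textnormal{Lie}(\mathbb{E}^{(m)})^{\otimes s}\longrightarrow R^{\text{rig}}_{m}\otimes_{R_{m}}\textnormal{Lie}((\mathbb{H}^{(m)})')^{\otimes s}\longrightarrow 0,
\end{equation*}
so $M^{s}_{m}$ sits as a $\breve{K}[\Gamma\times G_{0}]$-submodule of the middle term. By the generic flatness isomorphism (\ref{Lie(Ems)_is_generically_flat_iso}) the middle term is identified with $R^{\text{rig}}_{m}\otimes_{\breve{K}}(B_{h}\otimes_{K_{h}}\breve{K})^{\otimes s}$, with diagonal $\Gamma$-action and $G_{0}$ acting only on the $R^{\text{rig}}_{m}$-factor (through $G_{0}/G_{m}$). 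Since $(B_{h}\otimes_{K_{h}}\breve{K})^{\otimes s}$ is finite-dimensional, Lemma~\ref{lflemma} combined with Theorem~\ref{lfinRm} yields
\begin{equation*}
\bigl(R^{\text{rig}}_{m}\otimes_{\breve{K}}(B_{h}\otimes_{K_{h}}\breve{K})^{\otimes s}\bigr)_{\textnormal{lf}}=\breve{K}_{m}\otimes_{\breve{K}}(B_{h}\otimes_{K_{h}}\breve{K})^{\otimes s},
\end{equation*}
which is a finite-dimensional $\breve{K}$-space. Consequently $(M^{s}_{m})_{\textnormal{lf}}$ is forced into this finite-dimensional envelope.

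The second step uses the commuting $G_{0}/G_{m}$-action. Since $\breve{K}_{m}\subseteq(R^{\text{rig}}_{m})_{\textnormal{lf}}$ (Theorem~\ref{lfinRm}) and the set of locally finite vectors is closed under multiplication by locally finite scalars, $(M^{s}_{m})_{\textnormal{lf}}$ is a $\breve{K}_{m}$-submodule of $\breve{K}_{m}\otimes_{\breve{K}}(B_{h}\otimes_{K_{h}}\breve{K})^{\otimes s}$; it is also $G_{0}/G_{m}$-stable because the $\Gamma$- and $G_{0}$-actions on $M^{s}_{m}$ commute. Using that $M^{s}_{m}\simeq R^{\text{rig}}_{m}\otimes_{R^{\text{rig}}_{0}}M^{s}_{0}$ (cf.\ the proof of Theorem~\ref{genlaKm}) together with $(R^{\text{rig}}_{m})^{G_{0}/G_{m}}=R^{\text{rig}}_{0}$, I would deduce $(M^{s}_{m})^{G_{0}/G_{m}}=M^{s}_{0}$, hence
\begin{equation*}
\bigl((M^{s}_{m})_{\textnormal{lf}}\bigr)^{G_{0}/G_{m}}=\bigl((M^{s}_{m})^{G_{0}/G_{m}}\bigr)_{\textnormal{lf}}=(M^{s}_{0})_{\textnormal{lf}}=0
\end{equation*}
by Corollary~\ref{lf1}.

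Finally I would close the argument by Galois descent. The $G_{0}/G_{m}$-action on $\breve{K}_{m}\otimes_{\breve{K}}(B_{h}\otimes_{K_{h}}\breve{K})^{\otimes s}$ factors through the Galois group $\textnormal{Gal}(\breve{K}_{m}/\breve{K})\simeq(\mathfrak{o}/\varpi^{m}\mathfrak{o})^{\times}$ via the reduced determinant (cf.\ Remark~\ref{mLText}), and it acts by Galois automorphisms on the first tensor factor while fixing the second. Any $\breve{K}_{m}$-submodule of this free $\breve{K}_{m}$-module which is stable under the semi-linear Galois action descends, i.e.\ it equals $\breve{K}_{m}\otimes_{\breve{K}}V^{G_{0}/G_{m}}$ for $V=(M^{s}_{m})_{\textnormal{lf}}$. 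Combined with the vanishing of the invariants obtained in the previous step, this forces $(M^{s}_{m})_{\textnormal{lf}}=0$, completing the proof. The main obstacle here is conceptual rather than computational: one must recognise that, although $M^{s}_{m}$ itself has no obvious generically flat model over $\breve{K}$, the universal additive extension $\mathbb{E}^{(m)}$ provides one by Gross-Hopkins, and that the direction of the four-term sequence is favourable precisely when $s<0$. The Galois-descent finish is then a standard piece of semi-linear algebra.
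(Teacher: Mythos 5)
Your proof is correct, and its first half coincides exactly with the paper's: for $s<0$ the sequence (\ref{exact_seq_of_global_sections_of_eqv_bundles}) embeds $M^{s}_{m}$ $(\Gamma\times G_{0}/G_{m})$-equivariantly into $R^{\textnormal{rig}}_{m}\otimes_{R_{m}}\textnormal{Lie}(\mathbb{E}^{(m)})^{\otimes s}$, whose locally finite vectors are computed via (\ref{Lie(Ems)_is_generically_flat_iso}), Lemma \ref{lflemma} and Theorem \ref{lfinRm} to be $\breve{K}_{m}\otimes_{\breve{K}}(B_{h}\otimes_{K_{h}}\breve{K})^{\otimes s}$. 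The two arguments then diverge in how they exploit the covering group. The paper observes that $SL_{h}(\mathfrak{o}/\varpi^{m}\mathfrak{o})$ acts \emph{trivially} on this envelope, so that $(M^{s}_{m})_{\textnormal{lf}}$ is equal to its own $SL_{h}(\mathfrak{o}/\varpi^{m}\mathfrak{o})$-invariants; it then identifies $(M^{s}_{m})^{SL_{h}(\mathfrak{o}/\varpi^{m}\mathfrak{o})}$ with $\breve{K}_{m}\otimes_{\breve{K}}M^{s}_{0}$ and applies Lemma \ref{lflemma} a second time together with Corollary \ref{lf1}. You instead take invariants under the full group $G_{0}/G_{m}$, obtaining $((M^{s}_{m})_{\textnormal{lf}})^{G_{0}/G_{m}}=(M^{s}_{0})_{\textnormal{lf}}=0$ directly, and then recover the vanishing of $(M^{s}_{m})_{\textnormal{lf}}$ itself by Galois descent for the semilinear $\textnormal{Gal}(\breve{K}_{m}/\breve{K})$-action on the free $\breve{K}_{m}$-module $\breve{K}_{m}\otimes_{\breve{K}}(B_{h}\otimes_{K_{h}}\breve{K})^{\otimes s}$. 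Both routes are valid: the paper's $SL_{h}$-trick avoids descent entirely at the cost of a second invocation of Lemma \ref{lflemma} and the intermediate identification of the $SL_{h}$-invariants, while your version gets the level-$0$ reduction in one step but must justify that $(M^{s}_{m})_{\textnormal{lf}}$ is a Galois-stable $\breve{K}_{m}$-subspace of the envelope and that descent applies — all of which you do correctly (the $\breve{K}_{m}$-module structure comes from $\breve{K}_{m}\subseteq(R^{\textnormal{rig}}_{m})_{\textnormal{lf}}$, and the $G_{0}/G_{m}$-action on the envelope factors through $\det$ by Remark \ref{mLText}).
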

\begin{proof} Recall from Corollary \ref{Lie(Ems)_generically_flat} that we have an isomorphism \begin{equation*}
R^{\textnormal{rig}}_{m}\otimes_{R_{m}}\textnormal{Lie}(\mathbb{E}^{(m)})^{\otimes s}\cong R^{\textnormal{rig}}_{m}\otimes_{\breve{K}}(B_{h}\otimes_{K_{h}}\breve{K})^{\otimes s}
\end{equation*} of $\Gamma$-representations. As a result, using Lemma \ref{lflemma} together with Theorem \ref{lfinRm}, we obtain locally finite vectors in the global sections of $\mathcal{L}\text{ie}(\mathbb{E}^{(m)})^{\otimes s}$, \begin{equation*}
(R^{\textnormal{rig}}_{m}\otimes_{R_{m}}\textnormal{Lie}(\mathbb{E}^{(m)})^{\otimes s})_{\textnormal{lf}}\cong \breve{K}_{m}\otimes_{\breve{K}}(B_{h}\otimes_{K_{h}}\breve{K})^{\otimes s}.
\end{equation*}
Then, since $s<0$, the $(\Gamma\times(G_{0}/G_{m}))$-equivariant inclusion \begin{equation*}
M^{s}_{m}\subset R^{\textnormal{rig}}_{m}\otimes_{R_{m}}\textnormal{Lie}(\mathbb{E}^{(m)})^{\otimes s}
\end{equation*} from (\ref{exact_seq_of_global_sections_of_eqv_bundles}) gives rise to a $(\Gamma\times (G_{0}/G_{m}))$-equivariant inclusion \begin{equation*}
(M^{s}_{m})_{\textnormal{lf}}\subseteq\breve{K}_{m}\otimes_{\breve{K}}(B_{h}\otimes_{K_{h}}\breve{K})^{\otimes s}
\end{equation*}  of $\breve{K}$-vector spaces. As the action of $SL_{h}(\mathfrak{o}/\varpi^{m}\mathfrak{o})\subset G_{0}/G_{m}$ on the right hand side above is trivial, we get $(M^{s}_{m})_{\textnormal{lf}}=\big(M^{s}_{m}\big)_{\textnormal{lf}}^{SL_{h}(\mathfrak{o}/\varpi^{m}\mathfrak{o})}=\Big(\big(M^{s}_{m}\big)^{SL_{h}(\mathfrak{o}/\varpi^{m}\mathfrak{o})}\Big)_{\textnormal{lf}}$, where the latter equality is due to the fact that the both group actions on $M^{s}_{m}$ commute. Therefore,
\begin{align*}(M^{s}_{m})_{\textnormal{lf}}=\Big(\big(M^{s}_{m}\big)^{SL_{h}(\mathfrak{o}/\varpi^{m}\mathfrak{o})}\Big)_{\textnormal{lf}}&\cong\Big(\big(R^{\textnormal{rig}}_{m}\otimes_{R^{\textnormal{rig}}_{0}}M^{s}_{0}\big)^{SL_{h}(\mathfrak{o}/\varpi^{m}\mathfrak{o})}\Big)_{\textnormal{lf}}\\&\cong\Big(\big(R^{\textnormal{rig}}_{m}\big)^{SL_{h}(\mathfrak{o}/\varpi^{m}\mathfrak{o})}\otimes_{R^{\textnormal{rig}}_{0}}M^{s}_{0}\Big)_{\textnormal{lf}}\\&\cong\big((\breve{K}_{m}\otimes_{\breve{K}}R^{\textnormal{rig}}_{0})\otimes_{R^{\textnormal{rig}}_{0}}M^{s}_{0}\big)_{\textnormal{lf}}\\&\cong\big(\breve{K}_{m}\otimes_{\breve{K}}M^{s}_{0}\big)_{\textnormal{lf}}=\breve{K}_{m}\otimes_{\breve{K}}(M^{s}_{0})_{\textnormal{lf}}=0
\end{align*}
where the second isomorphism holds because $M^{s}_{0}$ is free over $R^{\text{rig}}_{0}$ with trivial $G_{0}/G_{m}$-action, and the third isomorphism holds because $\big(R^{\textnormal{rig}}_{m}\big)^{SL_{h}(\mathfrak{o}/\varpi^{m}\mathfrak{o})}$ is Galois over $R^{\textnormal{rig}}_{0}$ with the Galois group isomorphic to $\frac{G_{0}/G_{m}}{SL_{h}(\mathfrak{o}/\varpi^{m}\mathfrak{o})}\cong(\mathfrak{o}/\varpi^{m}\mathfrak{o})^{\times}\cong\textnormal{Gal}(\breve{K}_{m}/\breve{K})$ and $\breve{K}_{m}\subseteq R^{\text{rig}}_{m}$. For the second last equality in the above, we use Lemma \ref{lflemma} again. The final result then follows from Corollary \ref{lf1}.
\end{proof}
\begin{center}
\textbf{Part II : $s>0$}
\end{center}
\indent  To compute the locally finite vectors in $M^{s}_{m}$ for $s>0$, we make use of the action of the group $G^{0}:=\lbrace g\in GL_{h}(K)|\hspace*{.1cm}\textnormal{det}(g)\in\mathfrak{o}^{\times}\rbrace$ on \textit{the Lubin-Tate tower} $\big(X_{m}^{\textnormal{rig}}\big)_{m\in\mathbb{N}_{0}}$ described in \cite{strdeform}, Section 2.2.2. Given $g\in G^{0}$ and $m\geq 0$, for every $m'\geq m$ sufficiently large (depending on $g$), there is a morphism $g_{m',m}:X_{m'}^{\textnormal{rig}}\longrightarrow X_{m}^{\textnormal{rig}}$ of rigid analytic spaces satisfying the following properties:
\begin{enumerate}[leftmargin=*]
\item[1.] For all $g\in G^{0}$ and for all $n\geq m''\geq m'\geq m$, we have $g_{n,m}=\pi_{m',m}\circ g_{m'',m'} \circ \pi_{n,m''}$, where recall that $\pi_{m',m}:X_{m'}^{\textnormal{rig}}\longrightarrow X_{m}^{\textnormal{rig}}$ denotes the covering morphism. In particular, if $g=e$, and if $m=m'=m''$, then we get $e_{n,m}=\pi_{n,m}$ for all $n\geq m$ because $e_{m,m}=\text{id}_{ X_{m}^{\textnormal{rig}}}$ by definition (cf. \cite{strdeform}, Section 2.2.2).
\item[2.] $(gh)_{m'',m}=g_{m',m}\circ h_{m'',m}$ for all $g,h\in G^{0}$ and for all $m''\geq m' \geq m$.
\item[3.] Set $\Phi_{m}:=\Phi\circ\pi_{m,0}:X^{\textnormal{rig}}_{m}\longrightarrow\mathbb{P}^{h-1}_{\breve{K}}$. Then $\Phi_{m'}=\Phi_{m}\circ g_{m',m}$ for all $g\in G^{0}$, $m'\geq m$.
\item[4.] All $g_{m',m}$ are $\Gamma$-equivariant morphisms.
\item[5.] For $g\in GL_{h}(\mathfrak{o})$ and $m\geq 0$, $g_{m,m}$ is defined. The gives an action of $GL_{h}(\mathfrak{o})$ on $X_{m}^{\textnormal{rig}}$ which factors through $GL_{h}(\mathfrak{o}/\varpi^{m}\mathfrak{o})=G_{0}/G_{m}$. The induced $G_{0}/G_{m}$-action coincides with the $G_{0}/G_{m}$-action introduced in Section \ref{the_group_actions}.
\end{enumerate}

\indent  Let $D_{m}:=\pi_{m,0}^{-1}(D)$ where $D$ is the Gross-Hopkins fundamental domain $D$ in $X^{\textnormal{rig}}_{0}$. The admissible open $D_{m}$ is a $\Gamma$-stable affinoid subdomain because $\pi_{m,0}$ is a finite, $\Gamma$-equivariant morphism, and $D$ is $\Gamma$-stable. For every $g\in G^{0}$ and $m\geq 0$, we define a $g$-translate of $D_{m}$ \hspace{0cm} as $gD_{m}:=g_{m',m}(D_{m'})$ by choosing $m'\geq m$ large enough. Note that this definition is independent of the choice of $m'$, since by property 1, for $m''\geq m'\geq m$,
$g_{m'',m}(D_{m''})=g_{m',m}(\pi_{m'',m'}(D_{m''}))=g_{m',m}(\pi_{m'',m'}(\pi_{m'',0}^{-1}(D)))=g_{m',m}(\pi_{m'',m'}(\pi_{m'',m'}^{-1}(\pi_{m',0}^{-1}(D))))=g_{m',m}(D_{m'})$, using that $\pi_{m'',m'}$ is surjective. 
\begin{proposition}\label{gDmisacovering} The set $\lbrace gD_{m}\rbrace_{g\in G^{0}}$ forms an admissible affinoid covering of $\Phi_{m}^{-1}(\Phi(D))$ consisting of $\Gamma$-stable affinoid subdomains.
\end{proposition}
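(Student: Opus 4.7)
The plan is to verify the four assertions implicit in the statement: each $gD_{m}$ is (i) an affinoid subdomain of $X^{\textnormal{rig}}_{m}$, (ii) $\Gamma$-stable; and together the $gD_{m}$ (iii) cover $\Phi_{m}^{-1}(\Phi(D))$, with (iv) the resulting covering being admissible.

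For (i) and (ii), I would first note that $D_{m'}=\pi_{m',0}^{-1}(D)$ is itself an affinoid subdomain of $X^{\textnormal{rig}}_{m'}$ (preimage of an affinoid under the finite \'{e}tale morphism $\pi_{m',0}$) and is $\Gamma$-stable, since $D$ is $\Gamma$-stable and $\pi_{m',0}$ is $\Gamma$-equivariant. The morphism $g_{m',m}$ is \'{e}tale by the construction of the $G^{0}$-action on the tower recalled in \cite{strdeform}, and $\Gamma$-equivariant by property (iv); its restriction to the affinoid $D_{m'}$ therefore factors through a finite \'{e}tale morphism onto $gD_{m}$, showing that $gD_{m}$ is an affinoid subdomain of $X^{\textnormal{rig}}_{m}$. $\Gamma$-stability of $gD_{m}$ is then immediate from the $\Gamma$-equivariance of $g_{m',m}$.

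For (iii), the inclusion $gD_{m}\subseteq\Phi_{m}^{-1}(\Phi(D))$ is immediate from property (iii): for $y\in D_{m'}$ one has $\Phi_{m}(g_{m',m}(y))=\Phi_{m'}(y)=\Phi(\pi_{m',0}(y))\in\Phi(D)$. For the reverse inclusion, given $x\in\Phi_{m}^{-1}(\Phi(D))$, I would exploit that $\Phi$ restricts to an isomorphism $D\iso\Phi(D)$ (\cite{gh}, Corollary 23.15) to locate the unique point $z\in D$ with $\Phi(z)=\Phi_{m}(x)$, lift $x$ and $z$ to compatible points in the full tower $\varprojlim_{m'}X^{\textnormal{rig}}_{m'}$, and then use that the $G^{0}$-action on this tower is transitive on the fibres of the extended period morphism. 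Applying an element $g\in G^{0}$ that sends the lift of $z$ to that of $x$ and descending back to a finite level $m'$ produces $y\in D_{m'}$ with $g_{m',m}(y)=x$, so $x\in gD_{m}$.

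For (iv), I would observe that $\Phi_{m}^{-1}(\Phi(D))=\pi_{m,0}^{-1}(\Phi^{-1}(\Phi(D)))$ is a quasi-compact admissible open of $X^{\textnormal{rig}}_{m}$, since $\Phi^{-1}(\Phi(D))$ is quasi-compact (the period map is \'{e}tale and $\Phi(D)$ is affinoid) and $\pi_{m,0}$ is finite. Hence a finite subcollection of the $gD_{m}$ already covers it, and a finite covering of a quasi-compact admissible open by affinoid subdomains is automatically admissible. The main obstacle will be making the transitivity claim in (iii) rigorous: one has to invoke a Rapoport--Zink type description of the Lubin-Tate tower together with the precise form of the $G^{0}$-equivariance of Gross-Hopkins' period morphism in order to transport the point $z\in D$ to a chosen preimage $x$ of $\Phi(z)$ after going sufficiently far up the tower. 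The remaining three points reduce to standard facts about rigid geometry once this transitivity has been established.
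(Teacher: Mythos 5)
The paper does not prove this proposition from first principles: it cites the cellular decomposition of the Lubin--Tate tower, \cite{fgl}, Proposition I.7.1, which rests on \cite{gh}, Corollary 23.26, and only adds that the $\Gamma$-stability of $gD_{m}$ follows from the $\Gamma$-equivariance of the maps $g_{m',m}$. Your attempt to reconstruct the argument contains a genuine error and leaves the central step unproved.

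The error is in step (iv). You claim $\Phi_{m}^{-1}(\Phi(D))$ is quasi-compact because $\Phi$ is \'{e}tale and $\Phi(D)$ is affinoid, so that finitely many $gD_{m}$ suffice and admissibility is automatic. This is false: \'{e}tale morphisms of rigid spaces need not be quasi-compact, and the Gross--Hopkins period map is the standard example --- its fibres are infinite discrete sets, parametrized essentially by $G^{0}$ modulo the stabilizer of the cell. That is exactly why the covering $\lbrace gD_{m}\rbrace_{g\in G^{0}}$ is genuinely infinite (for $h\geq 2$ no finite subfamily covers $\Phi_{m}^{-1}(\Phi(D))$, whose cells accumulate towards the boundary of the open polydisc), and the admissibility of this infinite affinoid covering is precisely the nontrivial content of \cite{fgl}, Proposition I.7.1; it cannot be extracted from quasi-compactness. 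Separately, the surjectivity in step (iii) --- that every point of $\Phi_{m}^{-1}(\Phi(D))$ lies in some $gD_{m}$ --- is the heart of the proposition, and you defer it to a transitivity statement for the $G^{0}$-action on fibres of the period morphism at infinite level which you do not establish; that statement is exactly \cite{gh}, Corollary 23.26, i.e.\ the same external input the paper invokes. A smaller gap of the same kind occurs in step (i): the image of the affinoid $D_{m'}$ under the finite \'{e}tale $g_{m',m}$ is not automatically an affinoid subdomain of $X^{\textnormal{rig}}_{m}$, and this too is part of what the cited cellular decomposition verifies.
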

\begin{proof}
This is a part of the cellular decomposition of the Lubin-Tate tower in \cite{fgl}, Proposition I.7.1 relying on \cite{gh}, Corollary 23.26. The $\Gamma$-stability of $gD_{m}$ follows from (iv) and that of $D_{m'}$.
\end{proof}
\begin{lemma}\label{inclusionofaffisubd} The maps $\mathcal{O}_{\mathbb{P}^{h-1}_{\breve{K}}}(\Phi(D))\longrightarrow\mathcal{O}_{X^{\textnormal{rig}}_{m}}(gD_{m})\longrightarrow\mathcal{O}_{X^{\textnormal{rig}}_{m'}}(D_{m'})$ of affinoid $\breve{K}$-algebras induced by the morphisms $D_{m'}\xrightarrow{g_{m',m}}gD_{m}\xrightarrow{\Phi_{m}}\Phi(D)$ are injective  for all $g\in G^{0}$ and $m'\geq m$. 
\end{lemma}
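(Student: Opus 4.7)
The plan is to prove injectivity of each of the two maps by identifying in each case a surjective morphism on the rigid-analytic side and using that the relevant affinoids are reduced.

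First, I would treat the composition $\mathcal{O}_{\mathbb{P}^{h-1}_{\breve{K}}}(\Phi(D))\longrightarrow\mathcal{O}_{X^{\textnormal{rig}}_{m'}}(D_{m'})$. By properties (i) and (iii) of the $G^{0}$-action on the Lubin-Tate tower, it is induced by $\Phi_{m'}|_{D_{m'}}:D_{m'}\longrightarrow\Phi(D)$, which factors as $D_{m'}\xrightarrow{\pi_{m',0}|_{D_{m'}}}D\xrightarrow{\Phi|_{D}}\Phi(D)$. The second factor is an isomorphism by the Gross-Hopkins result recalled before Proposition \ref{dh}, and the first factor is a finite étale Galois cover (restriction of $\pi_{m',0}$ to the preimage $D_{m'}=\pi_{m',0}^{-1}(D)$, using the finite étale Galois property from Remark \ref{Galois_action_on_LT_tower}). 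In particular, $\mathcal{O}_{X^{\textnormal{rig}}_{m'}}(D_{m'})$ is a finite free module of positive rank over $\mathcal{O}_{\mathbb{P}^{h-1}_{\breve{K}}}(\Phi(D))$ via the composed map, so the composition of algebra maps is injective, and hence the first factor $\mathcal{O}_{\mathbb{P}^{h-1}_{\breve{K}}}(\Phi(D))\longrightarrow\mathcal{O}_{X^{\textnormal{rig}}_{m}}(gD_{m})$ must be injective as well.

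For the second map $\mathcal{O}_{X^{\textnormal{rig}}_{m}}(gD_{m})\longrightarrow\mathcal{O}_{X^{\textnormal{rig}}_{m'}}(D_{m'})$, my approach would be to use that $g_{m',m}|_{D_{m'}}:D_{m'}\longrightarrow gD_{m}$ is surjective on rigid points, simply by the definition $gD_{m}:=g_{m',m}(D_{m'})$. The affinoid $gD_{m}$ is reduced: $R_{m}$ is a regular local ring by Theorem \ref{fundthm}, hence $R^{\textnormal{rig}}_{m}$ is reduced, and reducedness passes to any affinoid subdomain. Applying the Nullstellensatz for affinoid algebras over $\breve{K}$, any element in the kernel of the pullback vanishes at every $\overline{\breve{K}}$-point of $gD_{m}$ (each such point lies in the image of $g_{m',m}|_{D_{m'}}$), and hence the element is zero.

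The only place requiring a small verification is that $\pi_{m',0}|_{D_{m'}}:D_{m'}\to D$ really is finite étale Galois, but this is immediate from the finite étale Galois property of $\pi_{m',0}$ and $D_{m'}=\pi_{m',0}^{-1}(D)$. I do not anticipate any serious obstacle: the lemma reduces formally to (a) the Gross-Hopkins isomorphism $\Phi|_{D}:D\iso\Phi(D)$, (b) finite étaleness of the covering $\pi_{m',0}$, and (c) the Nullstellensatz for affinoid algebras combined with reducedness of the Lubin-Tate rigid spaces.
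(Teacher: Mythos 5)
Your argument is correct but takes a genuinely different route for the second map. For the composition $\mathcal{O}_{\mathbb{P}^{h-1}_{\breve{K}}}(\Phi(D))\longrightarrow\mathcal{O}_{X^{\textnormal{rig}}_{m'}}(D_{m'})$, you exploit the finite free module structure (via $\Phi|_D$ being an isomorphism and $R_{m'}$ being finite free over $R_0$) together with the fact that $\mathcal{O}_{\mathbb{P}^{h-1}_{\breve{K}}}(\Phi(D))$ is a domain, whereas the paper argues via flatness of $\Phi$ and $\pi_{m',0}$; these are essentially equivalent and both rely on the domain property of $\mathcal{O}_{X^{\text{rig}}_0}(D)$ from BGR (6.1.5), which you should cite explicitly since finite-freeness of positive rank alone is not enough. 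The real divergence is for the second map $\mathcal{O}_{X^{\textnormal{rig}}_{m}}(gD_{m})\longrightarrow\mathcal{O}_{X^{\textnormal{rig}}_{m'}}(D_{m'})$: the paper goes to a higher level $m''$ on which $g^{-1}_{m'',m'}$ is defined, shows that the triple composite is the flat covering map $\pi_{m'',m}$, and then decomposes $gD_{m}$ into connected components to deal with the fact that $\mathcal{O}_{X^{\textnormal{rig}}_{m}}(gD_{m})$ need not be a domain; you instead use surjectivity of $g_{m',m}|_{D_{m'}}$ on rigid points (built into the definition $gD_m:=g_{m',m}(D_{m'})$), reducedness of $gD_m$ (inherited from the regular ring $R_m$, using that reducedness is a property of stalks and hence passes to affinoid subdomains), and the affinoid Nullstellensatz. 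Your route avoids both the level-raising and the connected-component bookkeeping and is shorter; the paper's approach, by contrast, packages all the injectivity claims under a single flatness-plus-domain pattern, which is perhaps more uniform. Both are valid.
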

\begin{proof}
By property 3, the composition $\Phi_{m}\circ g_{m',m}=\Phi_{m'}=\Phi\circ\pi_{m',0}$ is flat because $\Phi$ and $\pi_{m',0}$ are flat. Hence the composition map $\mathcal{O}_{\mathbb{P}^{h-1}_{\breve{K}}}(\Phi(D))\longrightarrow\mathcal{O}_{X^{\textnormal{rig}}_{m'}}(D_{m'})$ of affinoid $\breve{K}$-algebras is flat. Since $\mathcal{O}_{\mathbb{P}^{h-1}_{\breve{K}}}(\Phi(D))\cong\mathcal{O}_{X^{\textnormal{rig}}_{0}}(D)$ is an integral domain (cf. \cite{bgr}, (6.1.5), Proposition 2), the map $\mathcal{O}_{\mathbb{P}^{h-1}_{\breve{K}}}(\Phi(D))\longrightarrow\mathcal{O}_{X^{\textnormal{rig}}_{m'}}(D_{m'})$ is injective.\\
\indent  To show that the other map $\mathcal{O}_{X^{\textnormal{rig}}_{m}}(gD_{m})\longrightarrow\mathcal{O}_{X^{\textnormal{rig}}_{m'}}(D_{m'})$ is injective, choose $m''\geq m'$ large enough so that $g^{-1}_{m'',m'}:X^{\textnormal{rig}}_{m''}\longrightarrow X^{\textnormal{rig}}_{m'}$ is defined. Using properties 1 and 2, we have $g^{-1}_{m'',m'}(gD_{m''})=g^{-1}_{m'',m'}(g_{n,m''}(D_{n}))=e_{n,m'}(D_{n})=\pi_{n,m'}(D_{n})=D_{m'}$ and thus $gD_{m}=g_{m',m}(D_{m'})=g_{m',m}(g^{-1}_{m'',m'}(gD_{m''}))$ = $e_{m'',m}(gD_{m''})=\pi_{m'',m}(gD_{m''})$. In other words, \begin{equation*}
\big(g_{m',m}\circ g^{-1}_{m'',m'}\big)\big|_{gD_{m''}}=\pi_{m'',m}.\end{equation*} 
Hence the induced composition $\mathcal{O}_{X^{\textnormal{rig}}_{m}}(gD_{m})\longrightarrow\mathcal{O}_{X^{\textnormal{rig}}_{m'}}(D_{m'})\longrightarrow\mathcal{O}_{X^{\textnormal{rig}}_{m''}}(gD_{m''})$ of the maps of affinoid $\breve{K}$-algebras is flat. Now it is not clear if the algebra $\mathcal{O}_{X^{\textnormal{rig}}_{m}}(gD_{m})$ is an integral domain. However, we can decompose $gD_{m}$ into its finitely many disjoint connected components $gD_{m}=\bigsqcup_{i=1}^{r} U_{i}$ so that each $\mathcal{O}_{X^{\textnormal{rig}}_{m}}(U_{i})$ is an integral domain (cf. discussion after \cite{bgr}, (9.1.4), Proposition 8 as well as \cite{con}, Lemma 2.1.4). This decomposition also gives a decomposition $gD_{m''}=\bigsqcup_{i=1}^{r}(\pi_{m'',m}|_{gD_{m''}})^{-1}(U_{i})$ of $gD_{m''}$ into disjoint admissible open subsets. By the same argument as in the first paragraph, each map $\mathcal{O}_{X^{\textnormal{rig}}_{m}}(U_{i})\longrightarrow\mathcal{O}_{X^{\textnormal{rig}}_{m''}}((\pi_{m'',m}|_{gD_{m''}})^{-1}(U_{i}))$ is injective. As a consequence, the composition \linebreak $\mathcal{O}_{X^{\textnormal{rig}}_{m}}(gD_{m})\longrightarrow\mathcal{O}_{X^{\textnormal{rig}}_{m''}}(gD_{m''})$ is injective since it is the finite direct product of all these maps.
\end{proof} 
\begin{remark}\label{O(gDm)isDmod_part1}
The affinoid subdomain $D_{m}$, by definition, is the same as the fibre product $X^{\textnormal{rig}}_{m}\times_{X^{\textnormal{rig}}_{0}}D$ for the maps $\pi_{m,0}:X^{\textnormal{rig}}_{m}\longrightarrow X^{\textnormal{rig}}_{0}$ and $D\hookrightarrow X^{\textnormal{rig}}_{0}$. Thus, we have an isomorphism $\mathcal{O}_{X^{\textnormal{rig}}_{m}}(D_{m})\cong R^{\textnormal{rig}}_{m}\otimes_{R^{\textnormal{rig}}_{0}}\mathcal{O}_{X^{\textnormal{rig}}_{0}}(D) $ because $R^{\textnormal{rig}}_{m}\big|R^{\textnormal{rig}}_{0}$ is finite. The Galois group $G_{0}/G_{m}=\textnormal{Gal}(R^{\textnormal{rig}}_{m}\big|R^{\textnormal{rig}}_{0})$ acts on $\mathcal{O}_{X^{\textnormal{rig}}_{m}}(D_{m})$ via $\sum_{i=1}^{r}f_{i}\otimes f'_{i}\mapsto\sum_{i=1}^{r}g(f_{i})\otimes f'_{i}$ for $g\in G_{0}/G_{m}$, which gives an action on $\mathcal{O}_{X^{\textnormal{rig}}_{m}}(D_{m})$ by $\mathcal{O}_{X^{\textnormal{rig}}_{0}}(D)$-linear automorphisms. Hence the extension $\mathcal{O}_{X^{\textnormal{rig}}_{m}}(D_{m})|\mathcal{O}_{X^{\textnormal{rig}}_{0}}(D)$ is finite Galois with the Galois group $G_{0}/G_{m}$. Consequently, for all $m\geq 0$, the extension of coordinate rings $\mathcal{O}_{X^{\textnormal{rig}}_{m}}(D_{m})|\mathcal{O}_{\mathbb{P}^{h-1}_{\breve{K}}}(\Phi(D))$ induced by the map $\Phi_{m}$ is finite Galois with the same Galois group. 
\end{remark}
\begin{remark}\label{O(gDm)isDmod} As both $R^{\textnormal{rig}}_{m}$ and $\mathcal{O}_{X^{\textnormal{rig}}_{0}}(D)$ are $\mathfrak{g}$-modules (cf. Proposition \ref{laofG}, Theorem \ref{laKm}), we have a $\mathfrak{g}$-action on $\mathcal{O}_{X^{\textnormal{rig}}_{m}}(D_{m})\cong R^{\textnormal{rig}}_{m}\otimes_{R^{\textnormal{rig}}_{0}}\mathcal{O}_{X^{\textnormal{rig}}_{0}}(D)$. Namely, if $\mathfrak{x}\in\mathfrak{g}$ then on simple tensors, $\mathfrak{x}(f\otimes f')=\mathfrak{x}(f)\otimes f'+f\otimes\mathfrak{x}(f')$. The $\mathfrak{g}$-action on $\mathcal{O}_{X^{\textnormal{rig}}_{m'}}(D_{m'})$ restricts to the subalgebra $\mathcal{O}_{X^{\textnormal{rig}}_{m}}(gD_{m})$, because by Remark \ref{O(gDm)isDmod_part1}, $\mathcal{O}_{X^{\textnormal{rig}}_{m}}(gD_{m})$ is a $\Gamma$-stable submodule of the finitely generated  $\mathcal{O}_{\mathbb{P}^{h-1}_{\breve{K}}}(\Phi(D))$-module $\mathcal{O}_{X^{\textnormal{rig}}_{m'}}(D_{m'})$ and hence is closed in $\mathcal{O}_{X^{\textnormal{rig}}_{m'}}(D_{m'})$ by \cite{bgr}, (3.7.3), Proposition 1. Denoting by $\text{Ad}_{\gamma}$ the adjoint automorphism of $\mathfrak{g}$ corresponding to $\gamma\in\Gamma$, we remark that the actions of $\Gamma$ and $\mathfrak{g}$ on $\mathcal{O}_{X^{\textnormal{rig}}_{m}}(gD_{m})$ are compatible in the sense that $\gamma(\mathfrak{x}(f))=\text{Ad}_{\gamma}(\mathfrak{x})(\gamma(f))$, since the Lie algebra action comes from the action of the distribution algebra $D(\Gamma)$ on $R^{\text{rig}}_{m}$ and $\mathcal{O}_{X^{\textnormal{rig}}_{0}}(D)$. Using the isomorphism $(\mathcal{M}^{s}_{m})^{\text{rig}}(gD_{m})\cong\mathcal{O}_{X^{\textnormal{rig}}_{m}}(gD_{m})\otimes_{R^{\text{rig}}_{m}}M^{s}_{m}$ and Theorem \ref{genlaKm}, one obtains that $(\mathcal{M}^{s}_{m})^{\text{rig}}(gD_{m})$ carries compatible actions of $\Gamma$ and $\mathfrak{g}$ for all $s\in\mathbb{Z}$.  
\end{remark}
\begin{proposition}\label{ninvariantsequalginvariants} For all $g\in G^{0}$ and $m\geq 0$, $\mathcal{O}_{X^{\textnormal{rig}}_{m}}(gD_{m})_{\textnormal{lf}}=\mathcal{O}_{X^{\textnormal{rig}}_{m}}(gD_{m})^{\mathfrak{g}=0}=\mathcal{O}_{X^{\textnormal{rig}}_{m}}(gD_{m})^{\mathfrak{n}=0}$. All these $\breve{K}$-vector spaces are finite dimensional. 
\end{proposition}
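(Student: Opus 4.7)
The plan is to establish the two non-trivial inclusions $\mathcal{O}(gD_m)^{\mathfrak{n}=0} \subseteq \mathcal{O}(gD_m)^{\mathfrak{g}=0}$ and $\mathcal{O}(gD_m)_{\textnormal{lf}} \subseteq \mathcal{O}(gD_m)^{\mathfrak{g}=0}$; the reverse inclusions are immediate. Indeed, $\mathfrak{n}\subset\mathfrak{gl}_h(K_h) = \mathfrak{g}\otimes_K K_h \subset \mathfrak{g}\otimes_K\breve{K}$ acts through the natural $\breve{K}$-linear extension of the $\mathfrak{g}$-action, so $\mathfrak{g}$-invariants are automatically $\mathfrak{n}$-invariants; and by Lemma \ref{inclusionofaffisubd} together with Theorem \ref{laKm}, the $\Gamma$-action on $\mathcal{O}_{X^{\textnormal{rig}}_m}(gD_m)$ inherits local $K$-analyticity, so a $\mathfrak{g}$-invariant $f$ has locally analytic orbit map with vanishing derivative at the identity, hence is fixed by an open subgroup of $\Gamma$ and therefore locally finite in $\breve{K} f$. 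Finite-dimensionality of all three spaces will drop out of the analysis below.

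For the first non-trivial inclusion, I would first observe that $\mathcal{O}_{X^{\textnormal{rig}}_m}(gD_m)$ is a finitely generated module over $\mathcal{O}_{\mathbb{P}^{h-1}_{\breve{K}}}(\Phi(D)) \simeq \mathcal{O}_{X^{\textnormal{rig}}_0}(D)$, since it embeds in the finite extension $\mathcal{O}_{X^{\textnormal{rig}}_{m'}}(D_{m'})$ of the latter by Lemma \ref{inclusionofaffisubd} and the base is Noetherian. Given $f \in \mathcal{O}(gD_m)^{\mathfrak{n}=0}$ with minimal polynomial $p(T) = T^n + c_{n-1}T^{n-1} + \cdots + c_0$ over the base, applying $\mathfrak{x}\in\mathfrak{n}$ to $p(f)=0$ and using $\mathfrak{x}(f)=0$ yields the relation $\sum_{i} \mathfrak{x}(c_i) f^i = 0$; minimality forces $\mathfrak{x}(c_i)=0$, so by Lemma \ref{gninvO(D)} (transported along the $\Gamma$-equivariant isomorphism $\Phi|_D$) each $c_i$ lies in $\breve{K}$. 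Hence $f$ is integral over $\breve{K}$. Since $\Gamma$ acts by $\breve{K}$-algebra automorphisms, the orbit $\Gamma\cdot f$ is contained in the finite root set of $p$ inside $\mathcal{O}(gD_m)$, and continuity of the $\Gamma$-action forces the stabilizer of $f$ to be an open subgroup of $\Gamma$; therefore $\mathfrak{g} = \textnormal{Lie}(\Gamma)$ annihilates $f$. Finite-dimensionality of $\mathcal{O}(gD_m)^{\mathfrak{n}=0}$ follows by decomposing $\mathcal{O}(gD_m)$ as the product of coordinate rings of its finitely many connected components (the idempotents being automatically $\mathfrak{n}$-fixed), on each of which the $\mathfrak{n}$-invariants form a $\breve{K}$-subfield in which every element is algebraic of bounded degree, hence a finite extension of $\breve{K}$.

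The final step $\mathcal{O}(gD_m)_{\textnormal{lf}} \subseteq \mathcal{O}(gD_m)^{\mathfrak{g}=0}$ will be the main obstacle. Given $f$ in a finite-dimensional $H$-stable subspace $V$ for some open $H\subseteq\Gamma$, view $V$ as a finite-dimensional $\mathfrak{gl}_h(K_h)$-module via the $\breve{K}$-linear extension of the $\mathfrak{g}$-action. The key observation is that the center of $\mathfrak{gl}_h(K_h)$ acts as zero on $\mathcal{O}(gD_m)$: Lemma \ref{explicit_g-action_lemma} applied with $s=0$ shows that $\sum_i \mathfrak{x}_{ii}$ acts as zero on $\mathcal{O}_{X^{\textnormal{rig}}_0}(D)$, and the uniqueness of derivation-extensions along the étale morphism $\mathcal{O}_{\mathbb{P}^{h-1}_{\breve{K}}}(\Phi(D)) \hookrightarrow \mathcal{O}(gD_m)$ (cf.~Lemma \ref{derivationextensionlemma}) propagates this vanishing to $\mathcal{O}(gD_m)$. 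So $V$ descends to a finite-dimensional $\mathfrak{sl}_h(K_h)$-module, and by complete reducibility together with highest weight theory it decomposes into irreducibles, each generated by a highest weight vector lying in $V^{\mathfrak{n}=0}\subseteq\mathcal{O}(gD_m)^{\mathfrak{n}=0}=\mathcal{O}(gD_m)^{\mathfrak{g}=0}$ by the previous paragraph. Such a vector is in particular $\mathfrak{t}$-invariant, hence of weight zero, so the corresponding irreducible summand is trivial; consequently $\mathfrak{sl}_h(K_h)$, and hence $\mathfrak{g}$, annihilates all of $V$, placing $f$ in $\mathcal{O}(gD_m)^{\mathfrak{g}=0}$. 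The delicate point here is combining the integrality consequence of the minimal-polynomial argument with the specific vanishing of the central character in degree $s=0$ to exclude non-trivial finite-dimensional subrepresentations.
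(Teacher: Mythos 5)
Your argument is correct in substance but takes a genuinely different route from the paper's. The paper does not use minimal polynomials or highest weight theory at this point; it exploits the Galois action of $G_{0}/G_{m'}$ on $\mathcal{O}_{X^{\textnormal{rig}}_{m'}}(D_{m'})$, which commutes with the $\Gamma$-action: the $G_{0}/G_{m'}$-invariants of the locally finite vectors (resp.\ of the $\mathfrak{g}$- and $\mathfrak{n}$-invariants) descend to $\mathcal{O}_{X^{\textnormal{rig}}_{0}}(D)_{\textnormal{lf}}=\mathcal{O}_{X^{\textnormal{rig}}_{0}}(D)^{\mathfrak{n}=0}=\breve{K}$ by Corollary \ref{lf0} and Lemma \ref{gninvO(D)}, so finiteness of the Galois group makes all three spaces integral over $\breve{K}$ in one stroke; decomposing $gD_{m}$ into its connected components $U_{i}$ and evaluating at a classical point then identifies each space with a product $\prod_{i}K_{i}$ of finite field extensions of $\breve{K}$, and the inclusion $K_{i}\subseteq\mathcal{O}_{X^{\textnormal{rig}}_{m}}(U_{i})^{\mathfrak{g}=0}$ is obtained by the same finite-orbit/open-stabilizer argument you use. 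Your substitutes --- a derivation-theoretic minimal-polynomial argument for the $\mathfrak{n}$-invariants, and Weyl complete reducibility plus highest weight theory for the locally finite vectors --- are legitimate; the latter is precisely the technique the paper saves for Proposition \ref{keyprop}. The paper's route buys uniformity (one integrality argument covers all three spaces at once); yours has the merit of handling the locally finite vectors by pure Lie theory rather than through the covering-group action.

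Two steps need repair. First, $\mathcal{O}_{X^{\textnormal{rig}}_{m}}(gD_{m})$ need not be an integral domain (the paper points this out in the proof of Lemma \ref{inclusionofaffisubd}), so neither ``the minimal polynomial of $f$ over the base'' nor ``the finite root set of $p$'' is meaningful as written; you must pass to the connected components at the outset (the idempotents are indeed $\mathfrak{n}$-fixed, as you note), where each $\mathcal{O}_{X^{\textnormal{rig}}_{m}}(U_{i})$ is a domain, finite over $A:=\mathcal{O}_{\mathbb{P}^{h-1}_{\breve{K}}}(\Phi(D))$. Second, even on a component the minimal polynomial of $f$ a priori has coefficients only in $\textnormal{Frac}(A)$, whereas Lemma \ref{gninvO(D)} computes $A^{\mathfrak{n}=0}$, not $\textnormal{Frac}(A)^{\mathfrak{n}=0}$; since $A\simeq\mathcal{O}_{X^{\textnormal{rig}}_{0}}(D)$ is regular, hence normal, and $f$ is integral over $A$, the coefficients $c_{i}$ do lie in $A$ and the lemma applies. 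A smaller point: for $\mathcal{O}_{X^{\textnormal{rig}}_{m}}(gD_{m})^{\mathfrak{g}=0}\subseteq\mathcal{O}_{X^{\textnormal{rig}}_{m}}(gD_{m})_{\textnormal{lf}}$, vanishing of the derivative at the identity alone does not force local constancy of the orbit map; what you need is that $\mathfrak{g}(f)=0$ kills all higher coefficients of the local power series expansion via the $U(\mathfrak{g})$-action. With these adjustments your proof goes through, and it recovers the finite-dimensionality statement as well.
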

\begin{proof}
Let $g\in G^{0}, m\geq 0$ be arbitrary, and $m'\geq m$ so that $g_{m',m}$ is defined. As seen in the proof of Lemma \ref{inclusionofaffisubd}, the composition $\mathcal{O}_{\mathbb{P}^{h-1}_{\breve{K}}}(\Phi(D))\hookrightarrow\mathcal{O}_{X^{\textnormal{rig}}_{m}}(gD_{m})\hookrightarrow\mathcal{O}_{X^{\textnormal{rig}}_{m'}}(D_{m'})$ is induced by $\Phi_{m'}$. The $\Gamma$-equivariance of $g_{m',m}$ and of $\Phi_{m}$ yields the inclusions $\mathcal{O}_{\mathbb{P}^{h-1}_{\breve{K}}}(\Phi(D))_{\textnormal{lf}}\hookrightarrow\mathcal{O}_{X^{\textnormal{rig}}_{m}}(gD_{m})_{\textnormal{lf}}\hookrightarrow\mathcal{O}_{X^{\textnormal{rig}}_{m'}}(D_{m'})_{\textnormal{lf}}$ of $\breve{K}$-algebras. The Galois action on $\mathcal{O}_{X^{\textnormal{rig}}_{m'}}(D_{m'})$ commutes with the $\Gamma$-action. As a result, $\mathcal{O}_{X^{\textnormal{rig}}_{m'}}(D_{m'})_{\textnormal{lf}}$ is stable under the Galois action and $\big(\mathcal{O}_{X^{\textnormal{rig}}_{m'}}(D_{m'})_{\textnormal{lf}}\big)^{G_{0}/G_{m'}}\\=\big(\mathcal{O}_{X^{\textnormal{rig}}_{m'}}(D_{m'})^{G_{0}/G_{m'}}\big)_{\textnormal{lf}}=\mathcal{O}_{\mathbb{P}^{h-1}_{\breve{K}}}(\Phi(D))_{\textnormal{lf}}=\mathcal{O}_{X^{\textnormal{rig}}_{0}}(D)_{\textnormal{lf}}=\breve{K}$ (cf. Corollary \ref{lf0}). Since $G_{0}/G_{m'}$ is finite, $\mathcal{O}_{X^{\textnormal{rig}}_{m'}}(D_{m'})_{\textnormal{lf}}$ is integral over $\breve{K}$, and thus $\mathcal{O}_{X^{\textnormal{rig}}_{m}}(gD_{m})_{\textnormal{lf}}$ is integral over $\breve{K}$. As before, we write $gD_{m}=\bigsqcup_{i=1}^{r} U_{i}$ where $U_{i}$s are the connected components of $gD_{m}$. Let $\Gamma_{i}$ be the stabilizer of $U_{i}$ in $\Gamma$; then each $\Gamma_{i}$ has a finite index in $\Gamma$. Let $\Gamma_{o}\subseteq\Gamma$ be an open subgroup which is a uniform pro-$p$ group. Then for every $i$, the intersection $\Gamma_{i}\cap\Gamma_{o}$ has a finite index in $\Gamma_{o}$, and thus is open in $\Gamma_{o}$ by \cite{ddms}, Theorem 1.17. As a result, $\Gamma_{i}\cap\Gamma_{o}$ is open in $\Gamma$, and \begin{equation*}
\Gamma_{i}=\bigcup_{\bar{\gamma}\in\Gamma_{i}/\Gamma_{i}\cap\Gamma_{o}}\gamma(\Gamma_{i}\cap\Gamma_{o})
\end{equation*} implies that $\Gamma_{i}$ is open in $\Gamma$ for all $i$. Hence their intersection $\Gamma':=\bigcap_{i=1}^{r}\Gamma_{i}$ is again an open subgroup of $\Gamma$.\\
\indent  Now the decomposition $\mathcal{O}_{X^{\textnormal{rig}}_{m}}(gD_{m})\cong\prod_{i=1}^{r}\mathcal{O}_{X^{\textnormal{rig}}_{m}}(U_{i})$ of $\breve{K}$-algebras is $\Gamma'$-equivariant for the componentwise $\Gamma'$-action on the right. Thus the compactness of $\Gamma$ gives the decomposition  $\mathcal{O}_{X^{\textnormal{rig}}_{m}}(gD_{m})_{\textnormal{lf}}=\mathcal{O}_{X^{\textnormal{rig}}_{m}}(gD_{m})_{\Gamma'-\textnormal{lf}}\cong\prod_{i=1}^{r}\mathcal{O}_{X^{\textnormal{rig}}_{m}}(U_{i})_{\Gamma'-\textnormal{lf}}$ of locally finite vectors. Denote by $K_{i}$ the integral closure of $\breve{K}$ in the integral domain $\mathcal{O}_{X^{\textnormal{rig}}_{m}}(U_{i})$ for each $i$. It then follows that $K_{i}$ is a field extension of $\breve{K}$. Since every projection $\mathcal{O}_{X^{\textnormal{rig}}_{m}}(gD_{m})_{\textnormal{lf}}\longrightarrow\mathcal{O}_{X^{\textnormal{rig}}_{m}}(U_{i})_{\Gamma'-\textnormal{lf}}$ is a surjective $\breve{K}$-algebra homomorphism, the integrality of $\mathcal{O}_{X^{\textnormal{rig}}_{m}}(gD_{m})_{\textnormal{lf}}$ over $\breve{K}$ implies that $\mathcal{O}_{X^{\textnormal{rig}}_{m}}(U_{i})_{\Gamma'-\textnormal{lf}}$ is integral over $\breve{K}$ for all $i$. Therefore, $\mathcal{O}_{X^{\textnormal{rig}}_{m}}(U_{i})_{\Gamma'-\textnormal{lf}}\subseteq K_{i}$ for all $i$. On the other hand, for each $i$, $K_{i}$ is $\Gamma'$-stable as $\Gamma'$ acts $\breve{K}$-linearly on $\mathcal{O}_{X^{\textnormal{rig}}_{m}}(U_{i})$. Now for any classical point $x\in U_{i}$, the composition map $K_{i}\hookrightarrow\mathcal{O}_{X^{\textnormal{rig}}_{m}}(U_{i})\twoheadrightarrow\kappa(x)$ is injective, and because $\kappa(x)|\breve{K}$ is finite, $K_{i}|\breve{K}$ is a finite extension. This gives the other inclusion $K_{i}\subseteq\mathcal{O}_{X^{\textnormal{rig}}_{m}}(U_{i})_{\Gamma'-\textnormal{lf}} $ for all $i$. Thus we have $\mathcal{O}_{X^{\textnormal{rig}}_{m}}(gD_{m})_{\textnormal{lf}}=\prod_{i=1}^{r}K_{i}$ with each $K_{i}$ a finite field extension of $\breve{K}$.\\
\indent  We now claim that $\mathcal{O}_{X^{\textnormal{rig}}_{m}}(gD_{m})^{\mathfrak{g}=0}=\mathcal{O}_{X^{\textnormal{rig}}_{m}}(gD_{m})^{\mathfrak{n}=0}=\prod_{i=1}^{r}K_{i}$. Note that $\mathcal{O}_{X^{\textnormal{rig}}_{m}}(U_{i})$ is $\mathfrak{g}$-stable for all $i$ because the projection map $\mathcal{O}_{X^{\textnormal{rig}}_{m}}(gD_{m})\twoheadrightarrow\mathcal{O}_{X^{\textnormal{rig}}_{m}}(U_{i})$ of affinoid $\breve{K}$-algebras is surjective, continuous and $\Gamma'$-equivariant. Then all arguments in the last two paragraphs carry over to these cases since $\mathcal{O}_{\mathbb{P}^{h-1}_{\breve{K}}}(\Phi(D))^{\mathfrak{g}=0}=\mathcal{O}_{\mathbb{P}^{h-1}_{\breve{K}}}(\Phi(D))^{\mathfrak{n}=0}=\breve{K}$ (cf. Lemma \ref{gninvO(D)}). The only thing that remains to be shown is $K_{i}\subseteq\mathcal{O}_{X^{\textnormal{rig}}_{m}}(U_{i})^{\mathfrak{g}=0}$ for all $i$ : Write $K_{i}=\breve{K}[\alpha_{i}]$. Then the set $\lbrace\gamma(\alpha_{i})\rbrace_{\gamma\in\Gamma'}$ is finite as $\Gamma'$ takes $\alpha_{i}$ to its conjugates. Therefore, the stabilizer $\Gamma'_{i}$ of $\alpha_{i}$ in $\Gamma'$ has a finite index in $\Gamma'$, and thus we obtain the open subgroup $\Gamma'_{i}$ of $\Gamma$ which acts trivially on $K_{i}$.
\end{proof}
\indent  The embedding $\Gamma\hookrightarrow GL_{h}(K_{h})$ in (\ref{Gamma_as_a_subgroup_of_GLh}) extends to an embedding $B_{h}^{\times}\hookrightarrow GL_{h}(K_{h})$ of locally $K$-analytic groups via the same map. This yields an action of $B_{h}^{\times}$ on $\mathbb{P}_{\breve{K}}^{h-1}$. The $\Gamma$-action on $X^{\textnormal{rig}}_{m}$ is extended to the full group $B_{h}^{\times}$ by letting $b\in B_{h}^{\times}$ act by the action of $(1,b,\tilde{\sigma}^{-\text{val}(\text{Nrd}(b))})\in GL_{h}(K)\times B_{h}^{\times}\times W_{K}$ given on page 20 of \cite{car}. Here $\tilde{\sigma}$ denotes a lift of the Frobenius in the Weil group $W_{K}$ and $\text{val}$ is the normalized valuation of $K$. The maps $\Phi_{m}$ are equivariant for the extended  $B_{h}^{\times}$-action for all $m$.
\begin{lemma}\label{coveringofXm} The set $\lbrace\Pi^{i}\Phi(D)\rbrace_{0\leq i\leq h-1}$ forms an admissible covering of $\mathbb{P}_{\breve{K}}^{h-1}$. Thus, $X^{\textnormal{rig}}_{m}$ has an admissible covering $\lbrace \Pi^{i}\Phi_{m}^{-1}(\Phi(D))\rbrace_{0\leq i\leq h-1}$ for all $m\geq 0$.
\end{lemma}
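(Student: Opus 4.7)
My plan is to reduce the assertion about $X^{\textnormal{rig}}_{m}$ to the assertion about $\mathbb{P}^{h-1}_{\breve{K}}$ by pulling back along the $B_{h}^{\times}$-equivariant period morphism $\Phi_{m}$, and then to prove the latter by an explicit valuation analysis on homogeneous coordinates.

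First I would make the description of $\Phi(D)$ in projective coordinates completely explicit. Using the defining inequalities (\ref{DefnD}) of $D$ together with $w_{i}=\varphi_{i}/\varphi_{0}$ and the fact that $\Phi$ restricts to an isomorphism $D\iso\Phi(D)$ (cf. Corollary~23.15 of \cite{gh}), one sees that $\Phi(D)$ is the affinoid subdomain of $\mathbb{P}^{h-1}_{\breve{K}}$ consisting of points $[z_{0}:\ldots:z_{h-1}]$ satisfying $|z_{i}|\leq|\varpi|^{1-i/h}|z_{0}|$ for $1\leq i\leq h-1$. Introduce the quantity $\mu_{i}:=h\cdot v(z_{i})+i\in\mathbb{Q}\cup\{\infty\}$; this depends on the chosen homogeneous representative, but only up to a global shift by $h\cdot v(c)\in h\mathbb{Z}$, so the class of $\mu_{i}$ modulo $h$ and the relative differences $\mu_{i}-\mu_{j}$ are well-defined. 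In this notation the condition $P\in\Phi(D)$ becomes $\mu_{0}\leq\mu_{i}-h$ for all $i\geq 1$, i.e.\ $\mu_{0}$ is strictly minimum among the $\mu_{i}$'s.

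Next I would establish the covering $\bigcup_{i=0}^{h-1}\Pi^{i}\Phi(D)=\mathbb{P}^{h-1}_{\breve{K}}$. Given any point $P=[z_{0}:\ldots:z_{h-1}]$, pick a representative with $\min_{i}v(z_{i})=0$ and let $k\in\{0,1,\ldots,h-1\}$ be the unique index for which $\mu_{k}$ attains the minimum (uniqueness follows from the fact that the $\mu_{i}$'s all lie in distinct classes modulo $h$, since $\mu_{i}\equiv i\pmod{h}$ for $z_{i}\neq 0$). Applying $\Pi^{-k}$, whose effect on homogeneous coordinates is readily computed from the matrix $j(\Pi)^{-k}$ in (\ref{Gamma_as_a_subgroup_of_GLh}) and amounts to a cyclic permutation of the $z_{i}$'s with suitable $\varpi$-factors, one verifies that the new $0$-th coordinate of $\Pi^{-k}\cdot P$ has valuation placing it into $\Phi(D)$, using that $\Pi^{h}=\varpi$ acts trivially on $\mathbb{P}^{h-1}_{\breve{K}}$. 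This valuation bookkeeping will be the main technical step; once it is set up, the verification is mechanical.

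For admissibility, the collection $\{\Pi^{i}\Phi(D)\}_{0\leq i\leq h-1}$ is a finite collection of affinoid subdomains of the proper rigid space $\mathbb{P}^{h-1}_{\breve{K}}$ whose union is the whole space. To verify the admissibility axiom I would intersect with the standard affinoid covering $\{V_{i}\}_{0\leq i\leq h-1}$ recalled at the end of \S4.1; each $V_{i}\cap\Pi^{j}\Phi(D)$ is a rational subdomain of $V_{i}$, and the finite collection of these for fixed $i$ is an admissible affinoid covering of $V_{i}$ by the preceding covering statement. Finally, for the assertion on $X^{\textnormal{rig}}_{m}$, the $B_{h}^{\times}$-equivariance of $\Phi_{m}$ gives $\Phi_{m}^{-1}(\Pi^{i}\Phi(D))=\Pi^{i}\Phi_{m}^{-1}(\Phi(D))$, and admissible coverings pull back to admissible coverings under morphisms of rigid spaces, so the second assertion follows from the first together with surjectivity of $\Phi_{m}=\Phi\circ\pi_{m,0}$.
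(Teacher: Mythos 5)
The reduction steps in your proposal are fine: the identification $\Phi_m^{-1}(\Pi^{i}\Phi(D))=\Pi^{i}\Phi_m^{-1}(\Phi(D))$ via $B_h^{\times}$-equivariance, the fact that preimages of admissible coverings are admissible, and the admissibility of a finite covering by affinoid subdomains checked against the standard covering $\{V_i\}$ are all unproblematic. (For comparison, the paper does none of this explicitly: it simply quotes \cite{gh}, Corollary 23.21, where the covering statement is proved.) The genuine content of the lemma is therefore exactly the set-theoretic covering $\bigcup_{i}\Pi^{i}\Phi(D)=\mathbb{P}^{h-1}_{\breve K}$, and this is where your argument breaks down.

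Concretely: from $|z_i|\leq|\varpi|^{1-i/h}|z_0|$ you correctly derive the condition $\mu_i\geq\mu_0+h$ for all $i\geq 1$ (with $\mu_i=h\,v(z_i)+i$), but the paraphrase ``i.e.\ $\mu_0$ is strictly minimum among the $\mu_i$'s'' is false — the condition demands a margin of $h$, not mere strict minimality — and your covering recipe then uses the false paraphrase. Already for $h=2$ and $P=[1:1]$ one has $\mu_0=0<\mu_1=1$, so your rule selects $k=0$ and asserts $P\in\Phi(D)$; but $|z_1/z_0|=1>|\varpi|^{1/2}$, so $P\notin\Phi(D)$ (it lies in the other translate $\Pi\Phi(D)$). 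Thus the ``mechanical verification'' you defer is precisely where the argument fails: choosing $k$ as the argmin of the $\mu_i$ does not place $\Pi^{-k}P$ in $\Phi(D)$, and the correct choice of $k$ requires a genuinely more careful analysis of the valuation profile $i\mapsto v(z_i)$ together with the exact way $\Pi$ permutes coordinates and inserts $\varpi$-factors — this is the content of Gross--Hopkins' Corollary 23.21. A further, independent problem is your uniqueness claim: $\mu_i\equiv i\pmod h$ only holds when $v(z_i)\in\mathbb{Z}$, whereas points of the rigid space $\mathbb{P}^{h-1}_{\breve K}$ have coordinates in arbitrary finite (possibly ramified) extensions of $\breve K$, so the minimum need not be attained at a unique index and the residue-class argument does not apply. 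Also note that membership in $\Phi(D)$ imposes $h-1$ inequalities, not just a condition on the new $0$-th coordinate. To make this route work you would have to redo the valuation bookkeeping correctly (handling non-integral valuations and ties); otherwise the honest course is to do what the paper does and cite \cite{gh}, Corollary 23.21, keeping your (correct) equivariance and admissibility remarks for the passage to $X^{\textnormal{rig}}_m$.
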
 
\begin{proof} This is proved as a part of \cite{gh}, Corollary 23.21.
\end{proof}
\indent  For $0\leq i\leq h-1$, $s\geq 0$, $m\geq 0$, define $N^{s}_{m}(i):=(\mathcal{M}^{s}_{m})^{\textnormal{rig}}(\Pi^{i}\Phi^{-1}_{m}(\Phi(D)))$ and $A_{m}(i):=N^{0}_{m}(i)=\mathcal{O}_{X^{\textnormal{rig}}_{m}}(\Pi^{i}\Phi^{-1}_{m}(\Phi(D)))$. Note that each $\Pi^{i}\Phi_{m}^{-1}(\Phi(D))$ is $\Gamma$-stable because the conjugation by $\Pi^{-i}$ $(\gamma\mapsto\Pi^{-i}\gamma\Pi^{i})$ is an automorphism of $\Gamma$. Therefore, all $A_{m}(i)$ and $N^{s}_{m}(i)$ are $\Gamma$-representations. Moreover, they are also $\mathfrak{g}$-modules as explained below.\\
\indent  Because of Proposition \ref{gDmisacovering}, we have the exact diagram
\[
\xymatrix{
A_{m}(0)\ar[r]^-{r}&\prod_{g\in G^{0}}\mathcal{O}_{X^{\textnormal{rig}}_{m}}(gD_{m})\ar@<-.6ex>[r]_-{r_{2}}\ar@<.6ex>[r]^-{r_{1}}&\prod_{g,g'\in G^{0}}\mathcal{O}_{X^{\textnormal{rig}}_{m}}(gD_{m}\cap g'D_{m})
}
\]      
with maps given by $r(f)=\big(f\big|_{gD_{m}}\big)_{g\in G^{0}}$, $r_{1}((f_{g})_{g\in G^{0}})=\big(f_{g}\big|_{gD_{m}\cap g'D_{m}}\big)_{g,g'\in G^{0}}$, and $r_{2}((f_{g})_{g\in G^{0}})=\big(f_{g'}\big|_{gD_{m}\cap g'D_{m}}\big)_{g,g'\in G^{0}}$. The continuity of the restriction maps $\mathcal{O}_{X^{\textnormal{rig}}_{m}}(gD_{m})\longrightarrow\mathcal{O}_{X^{\textnormal{rig}}_{m}}(gD_{m}\cap g'D_{m})$ between affinoid $\breve{K}$-algebras implies that the maps $r_{1}$ and $r_{2}$ are continuous for the product topology on their source and target. The Remark \ref{O(gDm)isDmod} allows us to view $\prod_{g\in G^{0}}\mathcal{O}_{X^{\textnormal{rig}}_{m}}(gD_{m})$ as a $\mathfrak{g}$-module with the componentwise $\mathfrak{g}$-action. Now, $A_{m}(0)$ can be identified with the kernel of the continuous map 
\begin{align*}
r_{1}-r_{2}:\prod_{g\in G^{0}}\mathcal{O}&_{X^{\textnormal{rig}}_{m}}(gD_{m})\longrightarrow\prod_{g,g'\in G^{0}}\mathcal{O}_{X^{\textnormal{rig}}_{m}}(gD_{m}\cap g'D_{m})\\&(f_{g})_{g\in G^{0}}\longmapsto r_{1}((f_{g})_{g\in G^{0}})-r_{2}((f_{g})_{g\in G^{0}}).
\end{align*} Hence, $A_{m}(0)$ is a closed $\Gamma$-stable subspace of $\prod_{g\in G^{0}}\mathcal{O}_{X^{\textnormal{rig}}_{m}}(gD_{m})$ as $r$ is $\Gamma$-equivariant. Consequently, $A_{m}(0)$ is stable under the induced $\mathfrak{g}$-action.\\
\indent  Observe that the isomorphism \begin{equation*}
(\mathcal{M}^{s}_{m})^{\text{rig}}(gD_{m}\cap g'D_{m})\cong\mathcal{O}_{X^{\textnormal{rig}}_{m}}(gD_{m}\cap g'D_{m})\otimes_{R^{\text{rig}}_{m}}M^{s}_{m}
\end{equation*} yields a $\mathfrak{g}$-action on $(\mathcal{M}^{s}_{m})^{\text{rig}}(gD_{m}\cap g'D_{m})$ (cf. Theorem \ref{genlaKm} and Remark \ref{O(gDm)isDmod}). The restriction maps $(\mathcal{M}^{s}_{m})^{\text{rig}}(gD_{m})\longrightarrow(\mathcal{M}^{s}_{m})^{\text{rig}}(gD_{m}\cap g'D_{m})$ are continuous for the topology of finitely generated Banach modules. Then by the similar argument as in the last paragraph, $N^{s}_{m}(0)$ carries a $\mathfrak{g}$-module structure. The $\mathfrak{g}$-action and the $\Gamma$-action on $A_{m}(0)$ and on $N^{s}_{m}(0)$ are compatible with each other (cf. Remark \ref{O(gDm)isDmod}). The action of $\Pi^{i}$ on $\Phi^{-1}_{m}(\Phi(D))$ induces an isomorphism $\Pi^{i}:N^{s}_{m}(i)\iso N^{s}_{m}(0)$ of the sections of $(\mathcal{M}^{s}_{m})^{\text{rig}}$. The $\mathfrak{g}$-action on $N^{s}_{m}(i)$ is then given by $\mathfrak{x}(n)=\Pi^{-i}(\text{Ad}_{\Pi^{i}}(\mathfrak{x})(\Pi^{i}(n)))$ for $\mathfrak{x}\in\mathfrak{g}, n\in N^{s}_{m}(i)$ and $1\leq i\leq h-1$.\\
\indent  Now since $M^{s}_{m}$ is generated over $R^{\textnormal{rig}}_{m}$ by $V_{s}$ (cf. (\ref{exact_seq_of_global_sections_of_eqv_bundles}), Proposition \ref{Lie(E0)_is_generically_flat}), $N^{s}_{m}(i)$ is generated by $V_{s}$ as an $A_{m}(i)$-module for all $0\leq i\leq h-1$. Let $A_{m}(i)^{\mathfrak{g}=0}V_{s}$ and $A_{m}(i)^{\mathfrak{g}=0}\varphi_{0}^{s}$ denote the $A_{m}(i)^{\mathfrak{g}=0}$-submodules of $N^{s}_{m}(i)$ generated by $V_{s}$ and $\varphi_{0}^{s}$ respectively.
\begin{proposition}\label{keyprop} For all $0\leq i\leq h-1$, $s\geq 0$, $m\geq 0$, we have $N^{s}_{m}(i)_{\textnormal{lf}}\subseteq A_{m}(i)^{\mathfrak{g}=0}V_{s}$ and $(N^{s}_{m}(i)_{\textnormal{lf}})^{\mathfrak{n}=0}\subseteq A_{m}(i)^{\mathfrak{g}=0}\varphi_{0}^{s}$. \end{proposition}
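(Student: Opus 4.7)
\noindent The plan is to prove the second inclusion first by a direct gluing argument on the admissible covering $\{gD_m\}_{g\in G^0}$ of Proposition \ref{gDmisacovering}, and then to deduce the first inclusion by combining the second with Engel's theorem, Weyl's complete reducibility of finite-dimensional $\mathfrak{sl}_h(\breve{K})$-representations, and highest weight theory. The main difficulty will lie in the first inclusion: one must identify every highest weight summand of a locally finite $\Gamma$-subrepresentation of $N^s_m(i)$ with the specific irreducible $V_s$ of highest weight $s\chi_0$ (cf.\ Remark \ref{highest_wt_of_Vs}). I carry out the case $i=0$; the case $i>0$ follows from the same argument after translating the covering by $\Pi^i\in B_h^\times$, which normalises $\Gamma$ in $B_h^\times$ and commutes with the $G^0$-action.

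\noindent For the second inclusion, let $v \in (N^s_m(0)_{\textnormal{lf}})^{\mathfrak{n}=0}$. Since $\varphi_0$ is nowhere vanishing on $\Phi(D)$, its pullback $v_0^s=\varphi_0^s$ generates the line bundle $(\mathcal{M}^s_m)^{\textnormal{rig}}$ over all of $\Phi^{-1}_m(\Phi(D))$, so on each $gD_m$ I may write $v|_{gD_m} = f_g\cdot v_0^s$ for a unique $f_g \in \mathcal{O}(gD_m)$. Specialising Lemma \ref{explicit_g-action_lemma} to $f=1$ gives $\mathfrak{x}_{0j}(\varphi_0^s)=0$ in $V_s$ for every $j\neq 0$, an identity that persists in $N^s_m(0)$ and restricts to each $gD_m$. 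The Leibniz rule therefore yields $\mathfrak{x}_{0j}(f_g v_0^s) = \mathfrak{x}_{0j}(f_g)\cdot v_0^s$, so $\mathfrak{n}(v)=0$ forces $\mathfrak{n}(f_g)=0$, and Proposition \ref{ninvariantsequalginvariants} then gives $f_g \in \mathcal{O}(gD_m)^{\mathfrak{g}=0}$. On overlaps, uniqueness of the decomposition $v|_{(\cdot)} = f\cdot v_0^s$ forces $f_g = f_{g'}$, so the family $(f_g)_g$ glues to a single $a\in A_m(0)$; the closed $\mathfrak{g}$-equivariant embedding $A_m(0)\hookrightarrow\prod_g \mathcal{O}(gD_m)$ transfers $\mathfrak{g}$-invariance to $a$, yielding $v = a\cdot\varphi_0^s\in A_m(0)^{\mathfrak{g}=0}\varphi_0^s$.

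\noindent For the first inclusion, let $v\in N^s_m(0)_{\textnormal{lf}}$ and set $W := \breve{K}\langle\Gamma v\rangle$. Compactness of $\Gamma$ together with local finiteness of $v$ make $W$ a finite-dimensional continuous $\Gamma$-representation, hence a $\mathfrak{g}$-module which via the isomorphism $\mathfrak{g}\otimes_K \breve{K}\simeq\mathfrak{gl}_h(\breve{K})$ (using $K_h\subset\breve{K}$) becomes a finite-dimensional $\mathfrak{gl}_h(\breve{K})$-module. By Weyl's theorem $W$ decomposes as $W=\bigoplus_\lambda W_\lambda$ into irreducible $\mathfrak{sl}_h(\breve{K})$-summands, and Engel's theorem supplies a non-zero $w_\lambda\in W_\lambda^{\mathfrak{n}=0}\subseteq(N^s_m(0)_{\textnormal{lf}})^{\mathfrak{n}=0}$ for every $\lambda$. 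The second inclusion then gives $w_\lambda = a_\lambda\varphi_0^s$ with $a_\lambda\in A_m(0)^{\mathfrak{g}=0}$; because $\mathfrak{g}$ annihilates $a_\lambda$, the vector $w_\lambda$ shares the $\mathfrak{t}$-weight $s\chi_0$ of $\varphi_0^s$, so $W_\lambda$ is the irreducible of highest weight $s\chi_0$, abstractly isomorphic to $V_s$. The $\mathfrak{g}$-equivariant $\breve{K}$-linear map $V_s\to N^s_m(0)$, $u\mapsto a_\lambda u$ (equivariance uses $\mathfrak{g}(a_\lambda)=0$), carries $\varphi_0^s$ to $w_\lambda$, and by irreducibility of $V_s$ its image $a_\lambda V_s = U(\mathfrak{g}\otimes_K\breve{K})w_\lambda$ equals $W_\lambda$. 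Hence every $W_\lambda\subseteq A_m(0)^{\mathfrak{g}=0}V_s$, and summing yields $v\in W\subseteq A_m(0)^{\mathfrak{g}=0}V_s$.
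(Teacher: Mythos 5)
Your proof is correct and follows essentially the same route as the paper: both arguments rest on the fact that $\varphi_0^s$ freely generates $N^s_m(0)$ over $A_m(0)$, on Proposition \ref{ninvariantsequalginvariants} to convert $\mathfrak{n}$-invariance of the coefficient function into $\mathfrak{g}$-invariance via the embedding $A_m(0)\hookrightarrow\prod_{g}\mathcal{O}_{X^{\textnormal{rig}}_{m}}(gD_m)$, and on Weyl's complete reducibility plus highest weight theory to reduce a locally finite vector to its $\mathfrak{n}$-invariants, with the case $i>0$ handled by conjugating with $\Pi^i$. The only difference is organizational: you establish the second inclusion first and then recover the first by pinning down the highest weight $s\chi_0$ of each simple summand and mapping $V_s$ in by multiplication with $a_\lambda$, whereas the paper obtains the first inclusion directly from $W=\Gamma.W^{\mathfrak{n}=0}\subseteq\Gamma.(A_m(0)^{\mathfrak{g}=0}\varphi_0^s)=A_m(0)^{\mathfrak{g}=0}V_s$ and records the second along the way.
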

\begin{proof}
We first show that $N^{s}_{m}(0)_{\textnormal{lf}}\subseteq A_{m}(0)^{\mathfrak{g}=0}V_{s}$. Noticing $\varphi_{0}\in\mathcal{O}_{\mathbb{P}_{\breve{K}}^{h-1}}(\Phi(D))^{\times}$ $\hookrightarrow$ $A_{m}(0)^{\times}$ implies that $\varphi^{s}_{0}$ alone generates $N_{m}^{s}(0)$ as a free $A_{m}(0)$-module of rank one. Now let $W\subseteq N^{s}_{m}(0)$ be a finite dimensional $\Gamma$-stable subspace. As an $\mathfrak{sl}_{h}(K_{h})$-representation, $W$ decomposes as a direct sum of simple $\mathfrak{sl}_{h}(K_{h})$-modules by Weyl's complete reducibility theorem. From highest weight theory, we know that each simple module in the decomposition is generated by an element annihilated by the subalgebra $\mathfrak{n}$ of strictly upper triangular matrices. Now $W^{\mathfrak{n}=0}\subseteq N^{s}_{m}(0)^{\mathfrak{n}=0}=(A_{m}(0)\varphi^{s}_{0})^{\mathfrak{n}=0}=A_{m}(0)^{\mathfrak{n}=0}\varphi^{s}_{0}$ because $\mathfrak{n}\varphi^{s}_{0}=0$ (cf. Lemma \ref{explicit_g-action_lemma}). Let $f\in A_{m}(0)^{\mathfrak{n}=0}$, then $f\big|_{gD_{m}}\in \mathcal{O}_{X^{\textnormal{rig}}_{m}}(gD_{m})^{\mathfrak{n}=0}=\mathcal{O}_{X^{\textnormal{rig}}_{m}}(gD_{m})^{\mathfrak{g}=0}$ for all $g\in G^{0}$ by Proposition \ref{ninvariantsequalginvariants}. The $\mathfrak{g}$-linear injection $A_{m}(0)\hookrightarrow\prod_{g\in G^{0}}\mathcal{O}_{X^{\textnormal{rig}}_{m}}(gD_{m})$ of $\breve{K}$-algebras induces an equality \begin{equation*}
A_{m}(0)^{\mathfrak{g}=0}=A_{m}(0)\cap\prod_{g\in G^{0}}\mathcal{O}_{X^{\textnormal{rig}}_{m}}(gD_{m})^{\mathfrak{g}=0}.
\end{equation*} Therefore, $f\in A_{m}(0)^{\mathfrak{g}=0}$, and hence $A_{m}(0)^{\mathfrak{g}=0}=A_{m}(0)^{\mathfrak{n}=0}$. This gives us \linebreak $(N^{s}_{m}(0)_{\textnormal{lf}})^{\mathfrak{n}=0}\subseteq A_{m}(0)^{\mathfrak{g}=0}\varphi_{0}^{s}$. As explained earlier, $N^{s}_{m}(0)_{\textnormal{lf}}$ is generated as an $\mathfrak{sl}_{h}(K_{h})$-module by its $\mathfrak{n}$-invariants $(N^{s}_{m}(0)_{\textnormal{lf}})^{\mathfrak{n}=0}$. Thus  
\begin{equation*}
N^{s}_{m}(0)_{\textnormal{lf}}=\mathfrak{sl}_{h}(K_{h}) .(N^{s}_{m}(0)_{\textnormal{lf}})^{\mathfrak{n}=0}\subseteq \mathfrak{sl}_{h}(K_{h}) .(A_{m}(0)^{\mathfrak{g}=0}\varphi^{s}_{0})=A_{m}(0)^{\mathfrak{g}=0}V_{s}.
\end{equation*} The last equality follows from the fact that $\varphi^{s}_{0}$ is a highest weight vector in $V_{s}$ (cf. Remark \ref{highest_wt_of_Vs}).\\
\indent  If the $\Gamma$-action on $\Phi_{m}^{-1}(\Phi(D))$ is changed via the automorphism $\gamma\mapsto\Pi^{-i}\gamma\Pi^{i}$, then the map $\Pi^{i}:\Phi_{m}^{-1}(\Phi(D))\iso\Pi^{i}\Phi_{m}^{-1}(\Phi(D))$ is a $\Gamma$-equivariant isomorphism. We note that the new $\Gamma$-action does not change the locally finite vectors in $N^{s}_{m}(0)$. Writing $\varphi_{h}:=\varphi_{0}$ formally, we have an induced isomorphism $\Pi^{i}:(N^{s}_{m}(i))_{\textnormal{lf}}\iso (N^{s}_{m}(0))_{\textnormal{lf}}$ mapping $\varphi_{0}^{s}$ to $\varphi_{h-i}^{s}$, and the $\mathfrak{n}$-invariants onto the $\mathfrak{n}_{i}:=\textnormal{Ad}_{\Pi^{i}}(\mathfrak{n})$-invariants for all $0\leq i\leq h-1$. Therefore,
\begin{align*}
(N^{s}_{m}(i)_{\textnormal{lf}})^{\mathfrak{n}=0}=(\Pi^{i})^{-1}((N^{s}_{m}(0)_{\textnormal{lf}})^{\mathfrak{n}_{i}=0})&\subseteq (\Pi^{i})^{-1}((A_{m}(0)^{\mathfrak{g}=0}V_{s})^{\mathfrak{n}_{i}=0})\\&=(\Pi^{i})^{-1}(A_{m}(0)^{\mathfrak{g}=0}V_{s}^{\mathfrak{n}_{i}=0})\\&=(\Pi^{i})^{-1}(A_{m}(0)^{\mathfrak{g}=0}\varphi^{s}_{h-i})\\&=A_{m}(i)^{\mathfrak{g}=0}\varphi_{0}^{s}.
\end{align*}
As before, this also implies $N^{s}_{m}(i)_{\textnormal{lf}}\subseteq A_{m}(i)^{\mathfrak{g}=0}V_{s}$ for all $0\leq i\leq h-1$.
\end{proof}
\begin{theorem}\label{lf3} For all $s\geq 0$, $m\geq 0$, we have an isomorphism 
\begin{equation*}
(M^{s}_{m})_{\textnormal{lf}}\cong\breve{K}_{m}\otimes_{\breve{K}}V_{s}\cong\breve{K}_{m}\otimes_{\breve{K}}\mathcal{O}_{\mathbb{P}_{\breve{K}}^{h-1}}(s)(\mathbb{P}_{\breve{K}}^{h-1})\cong\breve{K}_{m}\otimes_{\breve{K}}\textnormal{Sym}^{s}(B_{h}\otimes_{K_{h}}\breve{K})
\end{equation*} of $\Gamma$-representations for the diagonal $\Gamma$-action on the tensor products. The representation $(M^{s}_{m})_{\textnormal{lf}}$ is a finite dimensional semi-simple locally algebraic representation of $\Gamma$. 
\end{theorem}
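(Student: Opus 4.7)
The plan is to prove $(M^s_m)_{\textnormal{lf}} = \breve{K}_m \cdot V_s$ inside $M^s_m$ by a sandwich, and then read off the three stated isomorphisms. I begin with the forward inclusion $\breve{K}_m \otimes_{\breve{K}} V_s \hookrightarrow (M^s_m)_{\textnormal{lf}}$: the multiplication map $a \otimes v \mapsto av$ is injective because the $SL_h(\mathfrak{o}/\varpi^m\mathfrak{o})$-invariant subring of $R^{\textnormal{rig}}_m$ is canonically $\breve{K}_m \otimes_{\breve{K}} R^{\textnormal{rig}}_0$ (finite Galois theory for the extension $R^{\textnormal{rig}}_m|R^{\textnormal{rig}}_0$), while a $\breve{K}$-basis of $V_s$ becomes $\breve{K}$-linearly independent in $R^{\textnormal{rig}}_0$ upon dividing by a free generator of the rank-one $R^{\textnormal{rig}}_0$-module $M^s_0$. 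That the image consists of locally finite vectors is then immediate from Lemma \ref{lflemma} applied to the locally finite $\breve{K}_m$ (Theorem \ref{lfinRm}) and the finite-dimensional $V_s$.

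For the reverse inclusion I would first determine the $\mathfrak{n}$-invariants of $(M^s_m)_{\textnormal{lf}}$ and then pass to the whole representation via highest weight theory. Given $f \in ((M^s_m)_{\textnormal{lf}})^{\mathfrak{n}=0}$, restrict $f$ to each piece of the admissible covering $\{\Pi^i \Phi_m^{-1}(\Phi(D))\}_{0 \leq i \leq h-1}$ supplied by Lemma \ref{coveringofXm}; since these pieces are $\Gamma$-stable and the restriction is $\Gamma$-equivariant, each $f_i \in N^s_m(i)$ is again locally finite and $\mathfrak{n}$-invariant, so Proposition \ref{keyprop} forces $f_i = a_i \varphi_0^s$ with $a_i \in A_m(i)^{\mathfrak{g}=0}$. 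Because $\varphi_0$ does not vanish identically on any connected component of $X^{\textnormal{rig}}_m$ (which is finite étale over the connected open polydisc $X^{\textnormal{rig}}_0$ on which $\varphi_0$ is a non-zero rigid analytic function), $\varphi_0^s$ is a non-zero-divisor in each $A_m(i)$ as well as on overlaps, so the $a_i$ agree on overlaps and glue by the sheaf axiom to a single $a \in R^{\textnormal{rig}}_m$. Locality of the $\mathfrak{g}$-action (which extends to a continuous $D(\Gamma,\breve{K})$-module structure by Theorem \ref{laKm}) transports the invariance from the pieces to the global section, forcing $a \in (R^{\textnormal{rig}}_m)^{\mathfrak{g}=0} = \breve{K}_m$ by Remark \ref{ginvinRm}. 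Together with the evident reverse containment $\breve{K}_m\varphi_0^s \subseteq ((M^s_m)_{\textnormal{lf}})^{\mathfrak{n}=0}$ (since $\mathfrak{n}(\varphi_0^s) = 0$ by Lemma \ref{explicit_g-action_lemma}), this gives $((M^s_m)_{\textnormal{lf}})^{\mathfrak{n}=0} = \breve{K}_m \varphi_0^s$.

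Now $(M^s_m)_{\textnormal{lf}}$ is a union of finite-dimensional $\mathfrak{sl}_h(K_h)$-stable subspaces (by compactness of $\Gamma$), each of which is completely reducible by Weyl's theorem; as every irreducible finite-dimensional $\mathfrak{sl}_h$-representation is generated over the enveloping algebra by its $\mathfrak{n}$-invariants, one obtains $(M^s_m)_{\textnormal{lf}} = U(\mathfrak{g} \otimes_K \breve{K}) \cdot ((M^s_m)_{\textnormal{lf}})^{\mathfrak{n}=0}$. Since $\Gamma$ acts on $\breve{K}_m$ through the finite discrete quotient $\textnormal{Gal}(\breve{K}_m/\breve{K})$ (Remark \ref{mLText}), $\mathfrak{g}$ annihilates $\breve{K}_m$, whence $U(\mathfrak{g} \otimes_K \breve{K})(\breve{K}_m \varphi_0^s) = \breve{K}_m \cdot U(\mathfrak{g} \otimes_K \breve{K})\varphi_0^s = \breve{K}_m V_s$, the last equality being the irreducibility of $V_s$ as $\mathfrak{sl}_h(K_h)$-representation with highest weight vector $\varphi_0^s$ (Remark \ref{highest_wt_of_Vs}). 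Combined with the forward inclusion this yields $(M^s_m)_{\textnormal{lf}} = \breve{K}_m \otimes_{\breve{K}} V_s$; the identifications with $\mathcal{O}_{\mathbb{P}^{h-1}_{\breve{K}}}(s)(\mathbb{P}^{h-1}_{\breve{K}})$ and $\textnormal{Sym}^s(B_h \otimes_{K_h} \breve{K})$ are then Corollary \ref{lf1} and Remark \ref{Vs_is_sym-s-part_of_V1}. Semi-simplicity is automatic: on the open subgroup $\ker(\Gamma \xrightarrow{\textnormal{Nrd}} (\mathfrak{o}/\varpi^m\mathfrak{o})^\times)$ the representation $\breve{K}_m \otimes V_s$ is a direct sum of copies of the irreducible $V_s$, and a Maschke-style averaging in characteristic zero promotes this to $\Gamma$-semi-simplicity, since $\Gamma$ acts on $\breve{K}_m \otimes V_s$ through a finite quotient modulo that open subgroup.

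The principal obstacle is the gluing step in paragraph two: one must verify that $\varphi_0^s$ is truly a non-zero-divisor on each affinoid piece and on the overlaps (so that the local formula $f_i = a_i \varphi_0^s$ determines $a_i$ uniquely and forces agreement on overlaps), and confirm that sheaf-theoretic gluing preserves the $\mathfrak{g}$-invariance encoded locally by membership in $A_m(i)^{\mathfrak{g}=0}$. Both points are plausible from the geometry --- the non-vanishing of $\varphi_0$ on the connected base $X^{\textnormal{rig}}_0$ and the locality of the Lie algebra action on $R^{\textnormal{rig}}_m$ --- but demand careful articulation against the structure of the covering $\{\Pi^i \Phi_m^{-1}(\Phi(D))\}$. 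Once these are settled, the passage from $\mathfrak{n}$-invariants to the full locally finite part via highest weight theory and the semi-simplicity statement are entirely formal.
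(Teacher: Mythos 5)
Your argument reproduces the paper's proof essentially step for step: the hard direction is obtained exactly as in the paper by restricting an $\mathfrak{n}$-invariant locally finite vector to the covering $\lbrace\Pi^{i}\Phi_{m}^{-1}(\Phi(D))\rbrace_{i}$, invoking Proposition \ref{keyprop}, gluing, and landing in $(R^{\textnormal{rig}}_{m})^{\mathfrak{g}=0}=\breve{K}_{m}$ via Remark \ref{ginvinRm}; and the passage from $\mathfrak{n}$-invariants to all of $(M^{s}_{m})_{\textnormal{lf}}$ is the same complete-reducibility/highest-weight argument (the paper takes the $\Gamma$-span where you take the $U(\mathfrak{g}\otimes_{K}\breve{K})$-span --- equivalent here). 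The ``principal obstacle'' you flag is settled in the paper in the way you anticipate, but more cleanly: $R^{\textnormal{rig}}_{m}$ is an integral domain, so $r\mapsto r\varphi_{0}^{s}$ is injective on $R^{\textnormal{rig}}_{m}$ and remains injective after the flat base changes to $\mathcal{O}_{X^{\textnormal{rig}}_{m}}(Y_{i}\cap Y_{j})$, which forces the local factors $a_{i}$ to agree on overlaps; and $\mathfrak{g}$-invariance of the glued section is automatic from the $\mathfrak{g}$-equivariance of the injection $R^{\textnormal{rig}}_{m}\hookrightarrow\prod_{i}A_{m}(i)$, so no further work is needed there. You genuinely deviate from the paper only in two auxiliary steps, both valid. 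For the injectivity of $\breve{K}_{m}\otimes_{\breve{K}}V_{s}\longrightarrow M^{s}_{m}$ you use Galois descent via the $SL_{h}(\mathfrak{o}/\varpi^{m}\mathfrak{o})$-invariants of $R^{\textnormal{rig}}_{m}$, whereas the paper separates weight spaces using the diagonal elements $\mathfrak{x}_{00},\mathfrak{x}_{ii}$ of Lemma \ref{explicit_g-action_lemma} together with the fact that $\mathfrak{g}$ kills $\breve{K}_{m}$; your route recycles an identification already used for Theorem \ref{lf2}, the paper's is self-contained at the Lie-algebra level. For semi-simplicity you use Maschke averaging over the finite quotient of $\Gamma$ by $\ker(\textnormal{Nrd}\bmod\varpi^{m})$ --- which does require the (true, by Zariski density of that open subgroup) observation that $V_{s}$ remains irreducible on it --- whereas the paper decomposes $\breve{K}_{m}$ into smooth irreducibles and cites Prasad's theorem that the tensor product of a smooth irreducible with an algebraic irreducible representation is irreducible; the latter is slightly stronger, since it exhibits the irreducible constituents explicitly and is what yields the local algebraicity recorded in Remark \ref{la_rmk}.
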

\begin{proof}
As before, $(M^{s}_{m})_{\textnormal{lf}}$ is generated as an $\mathfrak{sl}_{h}(K_{h})$-module by its $\mathfrak{n}$-invariants. Let $x\in((M^{s}_{m})_{\textnormal{lf}})^{\mathfrak{n}=0}$. Then, using the preceding proposition, $x\big|_{\Pi^{i}\Phi_{m}^{-1}(\Phi(D))}\in(N^{s}_{m}(i)_{\textnormal{lf}})^{\mathfrak{n}=0}\subseteq A_{m}(i)^{\mathfrak{g}=0}\varphi_{0}^{s}$ for all $0\leq i\leq h-1$. Let $Y_{i}:=\Pi^{i}\Phi_{m}^{-1}(\Phi(D))$, and write $x\big|_{Y_{i}}=f_{i}\varphi^{s}_{0}$ with $f_{i}\in A_{m}(i)^{\mathfrak{g}=0}$.\\
\indent  For all $0\leq i,j\leq h-1$, we have $\big(f_{i}\big|_{Y_{i}\cap Y_{j}}-f_{j}\big|_{Y_{i}\cap Y_{j}}\big)\varphi^{s}_{0}=x\big|_{Y_{i}\cap Y_{j}}-x\big|_{Y_{i}\cap Y_{j}}=0$. Now $M^{s}_{m}$ is free over the integral domain $R^{\textnormal{rig}}_{m}$, and contains $\varphi^{s}_{0}\neq 0$. Hence the map $(r\mapsto r\varphi^{s}_{0})$ from $R^{\textnormal{rig}}_{m}$ to $M^{s}_{m}$ is injective and remains injective after any flat base change. In particular, the map $(r\mapsto r\varphi^{s}_{0}):\mathcal{O}_{X^{\textnormal{rig}}_{m}}(Y_{i}\cap Y_{j})\longrightarrow(\mathcal{M}^{s}_{m})^{\textnormal{rig}}(Y_{i}\cap Y_{j})$ is injective, and thus $f_{i}\big|_{Y_{i}\cap Y_{j}}=f_{j}\big|_{Y_{i}\cap Y_{j}}$ for all $0\leq i,j\leq h-1$. Therefore, by the sheaf axioms, the functions $(f_{i})_{i}$ glue together to a global section $f\in R^{\textnormal{rig}}_{m}$ and $x=f\varphi^{s}_{0}$. Since $f\big|_{Y_{i}}=f_{i}\in A_{m}(i)^{\mathfrak{g}=0}$ for all $i$, and the map $R^{\textnormal{rig}}_{m}\hookrightarrow\prod_{i=0}^{h-1}A_{m}(i)$ is $\mathfrak{g}$-equivariant, $f\in(R^{\textnormal{rig}}_{m})^{\mathfrak{g}=0}=\breve{K}_{m}$ (cf. Remark \ref{ginvinRm}). Hence $x\in\breve{K}_{m}\varphi^{s}_{0}$. As a result, $(M^{s}_{m})_{\textnormal{lf}}\subseteq\mathfrak{sl}_{h}(K_{h}).(\breve{K}_{m}\varphi^{s}_{0})=\breve{K}_{m}V_{s}$. The other inclusion $\breve{K}_{m}V_{s}\subseteq (M^{s}_{m})_{\textnormal{lf}}$ is easy to see as $(M^{s}_{m})_{\textnormal{lf}}$ is a module over $(R^{\textnormal{rig}}_{m})_{\textnormal{lf}}=\breve{K}_{m}$, and $V_{s}=(M^{s}_{0})_{\textnormal{lf}}\subseteq (M^{s}_{m})_{\textnormal{lf}}$.\\
\indent  Now to justify the isomorphism $\breve{K}_{m}\otimes_{\breve{K}}V_{s}\cong\breve{K}_{m}V_{s}$, it is enough to show that the natural map 
\begin{align*}
& \hspace{1.6cm} \breve{K}_{m}\otimes_{\breve{K}}V_{s}\longrightarrow \breve{K}_{m}V_{s}\\&
\sum_{0\leq|\alpha|\leq s}c_{\alpha}(1\otimes w^{\alpha}\varphi_{0}^{s})\longmapsto\sum_{0\leq|\alpha|\leq s}c_{\alpha} w^{\alpha}\varphi_{0}^{s}
\end{align*}
is injective. Here the set $\lbrace 1\otimes w^{\alpha}\varphi_{0}^{s}\rbrace_{0\leq|\alpha|\leq s}$ forms a $\breve{K}_{m}$-basis of $\breve{K}_{m}\otimes_{\breve{K}}V_{s}$. By Lemma \ref{explicit_g-action_lemma}, we have $\mathfrak{x}_{00}(w^{\alpha}\varphi_{0}^{s})=(s-|\alpha|)w^{\alpha}\varphi_{0}^{s}$ and $\mathfrak{x}_{ii}(w^{\alpha}\varphi_{0}^{s})=\alpha_{i}w^{\alpha}\varphi_{0}^{s}$ for all $1\leq i\leq h-1$. Since $\mathfrak{g}$ annihilates $\breve{K}_{m}$, if $\sum_{0\leq|\alpha|\leq s}c_{\alpha} w^{\alpha}\varphi_{0}^{s}=0$, one can use the above actions of the diagonal matrices iteratively to deduce that each summand $c_{\alpha} w^{\alpha}\varphi_{0}^{s}$ is zero, and therefore $c_{\alpha}=0$ for all $0\leq |\alpha|\leq s$.\\
\indent  Unlike $(M^{s}_{0})_{\text{lf}}=V_{s}$, the space of locally finite vectors $(M^{s}_{m})_{\text{lf}}\cong\breve{K}_{m}\otimes_{\breve{K}}V_{s}$ at level $m>0$ is not an irreducible $\Gamma$-representation as it properly contains the representation $V_{s}$. However, it is semi-simple and this can be seen as follows. The action of $\Gamma$ on $\breve{K}_{m}$ factors through a finite group. As a result, $\breve{K}_{m}$ decomposes into a direct sum $\breve{K}_{m}\cong\bigoplus_{i=1}^{n}W_{i}$ of irreducible representations. This gives us a decomposition \begin{equation}\label{ss_decomp}
 \breve{K}_{m}\otimes_{\breve{K}}V_{s}\cong \bigoplus_{i=1}^{n}(W_{i}\otimes_{\breve{K}}V_{s}).
\end{equation} Now we note that $V_{s}\cong\textnormal{Sym}^{s}(B_{h}\otimes_{K_{h}}\breve{K})$ is an irreducible algebraic representation of $\Gamma\cong\mathfrak{o}_{B_{h}}^{\times}$ (cf. Theorem \ref{top_finiteness_of_MsD_thm} and \cite{kohliwamo}, Remark 4.4), and $\breve{K}_{m}$ is a smooth representation of $\Gamma$ by Remark \ref{ginvinRm}. Thus every direct summand in (\ref{ss_decomp}) is a tensor product of a smooth irreducible representation and an irreducible algebraic representation of $\Gamma$. Such a product is an irreducible locally algebraic representation by \cite{stug}, Appendix by Dipendra Prasad, Theorem 1. As a consequence, $(M^{s}_{m})_{\text{lf}}$ is a semi-simple locally algebraic representation of $\Gamma$ and exhausts all locally algebraic vectors in $M^{s}_{m}$ as every locally algebraic vector is locally finite by definition (cf. \cite{eme04}, paragraph after Definition 4.2.1). 
\end{proof}

\end{document}